\newtheorem{Thm}{Theorem}
\newtheorem{Conj}[Thm]{Conjecture}
\newtheorem{Prop}[Thm]{Proposition}
\newtheorem{Def}[Thm]{Definition}
\newtheorem{Def/Thm}[Thm]{Definition/Theorem}
\newtheorem{Cor}[Thm]{Corollary}
\newtheorem{Lemma}[Thm]{Lemma}
\theoremstyle{remark}
\newtheorem{Rmk}[Thm]{Remark}
\newcommand{\F}{{\mathsf{M}}}
\newcommand{\ot }{\otimes}
\newcommand{\ra }{\rightarrow}
\newcommand{\lra }{\longrightarrow}
\newcommand{\Spec}{{\mathrm{Spec}}}
\newcommand{\lann}{\langle\langle}
\newcommand{\rann}{\rangle\rangle}
\newcommand{\lannn}{\left\langle\left\langle}
\newcommand{\rannn}{\right\rangle\right\rangle}
\newcommand{\G}{{\bf G}}
\newcommand{\PP }{{\mathbb P}}
\newcommand{\QQ }{{\mathbb Q}}
\newcommand{\CC }{{\mathbb C}}
\newcommand{\ZZ }{{\mathbb Z}}
\newcommand{\vir}{\mathrm{vir}}
\newcommand{\DD}{\mathsf{D}}
\newcommand{\T}{{\mathsf{T}}}
\newcommand{\lan}{\langle}
\newcommand{\ran}{\rangle}
\newcommand{\dsI}{\mathds{I}}
\newcommand{\pP}{\mathsf{P}}
\newcommand{\ppl}{{\mathsf{P}}\left[}
\newcommand{\ppr}{\right]}
\newcommand{\rarr}{\longrightarrow}
\def \proj {{\mathbb{P}}}
\newcommand{\com}{{\mathbb{C}}}
\newcommand{\oh}{{\mathcal O}}
\begin{document}

\title[Equivariant holomorphic anomaly equation]
{Equivariant holomorphic anomaly equation}

\author{Hyenho Lho}
\address{Department of Mathematics, ETH Z\"urich}
\email {hyenho.lho@math.ethz.ch}
\date{July 2018}.

\begin{abstract} 
In \cite{LP} the fundamental relationship between stable quotient
invariants and the B-model for local $\PP^2$
in all genera was studied under some specialization of equivariant variables. We generalize the argument of \cite{LP} to full equivariant settings without the specialization. Our main results are the proof of holomorphic anomaly equations for the equivariant Gromov-Witten theories of local $\PP^2$ and local $\PP^3$. We also state the generalization to full equivariant formal quintic theory of the result in \cite{LP2}.

 \end{abstract}

\maketitle

\setcounter{tocdepth}{1} 
\tableofcontents

\setcounter{section}{-1}

\section{Introduction}

\subsection{Equivariant local $\PP^n$ theories.}\label{twth}
Equivariant local $\PP^n$ theories can be constructed as follows. Let the algebraic torus
$$\mathsf{T}_{n+1}=(\CC^*)^{n+1}$$
act with the standard linearization on $\PP^n$ with weights $\lambda_0,\dots,\lambda_n$ on the vector space $H^0(\PP^n,\mathcal{O}_{\PP^n}(1))$.
 Let $\overline{M}_g(\PP^n,d)$ be the moduli space of stable maps to $\PP^n$ equipped with the canonical $\mathsf{T}_{n+1}$-action, and let
 $$\mathsf{C}\rightarrow\overline{M}_g(\PP^n,d)\,,\,\,f:\mathcal{C}\rightarrow\PP^n\,,\,\,\mathsf{S}=f^*\mathcal{O}_{\PP^n}(-1)\rightarrow\mathsf{C}$$
be the standard universal structures. 

The equivariant Gromov-Witten invariants of the local $\PP^n$ are defined via the equivariant integrals
\begin{align}\label{GW}
    N_{g,d}^{\mathsf{GW}}=\int_{[\overline{M}_g(\PP^n,d)]^{\text{vir}}}e\Big(-R\pi_*f^* \mathcal{O}_{\PP^n}(-n-1)\Big)\,.
\end{align}
The integral \eqref{GW} defines a rational function in $\lambda_i$
$$N_{g,d}^{\mathsf{GW}}\in \QQ(\lambda_0,\dots,\lambda_n)\,.$$

Over the moduli space of stable quotients, there is a universal
 curve
\begin{equation}\label{ggtt}
\pi: \mathcal{C} \rightarrow \overline{Q}_{g}(\PP^n,d)
\end{equation}
with a universal quotient
$$0 \rarr \mathsf{S} \rarr \com^N \otimes \oh_{\mathcal{C}} \stackrel{q_U}{\rarr} 
\mathsf{Q}\rarr 0.$$
The equivariant stable quotient invariants of the local $\PP^n$ are defined via the equivariant integrals
\begin{align}\label{SQ}
    N^{\mathsf{SQ}}_{g,d}=\int_{[\overline{Q}_g(\PP^n,d)]^{\text{vir}}}e\Big(-R\pi_*\mathsf{S}\Big)\,.
\end{align}
The integral \eqref{SQ} also defines a rational function in $\lambda_i$
$$N_{g,d}^{\mathsf{SQ}}\in \QQ(\lambda_0,\dots,\lambda_n)\,.$$
We refer the reader to \cite[Section 1]{LP} for a more leisurely treatment of stable quotients.

In \cite{LP} it was observed that the analysis of $I$-function in \cite{ZaZi} plays important role in the study of local $\PP^n$ theories. But the result in \cite{ZaZi} holds only after the specialization to $(n+1)$-th root of unity $\zeta_{n+1}$,
\begin{align*}
    \lambda_i=\zeta_{n+1}^i\,.
\end{align*}
In order to generalize the results in \cite{LP} to full equivariant theories, one needs the analogous generalization of the results in \cite{ZaZi} to full equivariant settings. This will be studied in Appendix.

\subsection{Holomorphic anomaly for $K\PP^2$.}\label{holp2} We state the precise form of the holomorphic anomaly equations for local $\PP^2$. Denote by $K\PP^2$ the total space of the canonical bundle over $\PP^2$.
 Let $H\in H^2(K\PP^2,\QQ)$ be the hyperplane class obtained from $\PP^2$, and let
 
\begin{align*}
    \mathcal{F}^{\mathsf{GW}}_{g,m}(Q)=\lan H,\dots,H\ran^{\mathsf{GW}}_{g,m}=\sum^{\infty}_{d=0}Q^d\int_{[\overline{M}_{g,m}(K\PP^2,d)]^{\text{vir}}}\prod^m_{i=1}\text{ev}^*_i(H)\,,\\
    \mathcal{F}^{\mathsf{SQ}}_{g,m}(q)=\lan H,\dots,H\ran^{\mathsf{SQ}}_{g,m}=\sum^{\infty}_{d=0}Q^d\int_{[\overline{Q}_{g,m}(K\PP^2,d)]^{\text{vir}}}\prod^m_{i=1}\text{ev}^*_i(H)\,.
\end{align*} 
be the Gromov-Witten and stable quotient series respectively (involving the evaluation morphisms at the markings). The relationship between the Gromov-Witten and stable quotient invariants of $K\PP^2$ is proven in \cite{CKg} in case $2g-2+n > 0$:
\begin{align}\label{3456}
    \mathcal{F}^{\mathsf{GW}}_{g,m}(Q(q))=\mathcal{F}^{\mathsf{SQ}}_{g,m}(q)\,,
\end{align}
where $Q(q)$ is the mirror map,
$$I_1^{K\PP^2}(q)=\text{log}(q)+3\sum^{\infty}_{d=1}(-q)^d\frac{(3d-1)!}{(d!)^3}\,,$$

$$Q(q)=\text{exp}\left(I_1^{K\PP^2}(q)\right)=q\cdot \text{exp}\left(3\sum^{\infty}_{d=1}(-q)^d\frac{(3d-1)!}{(d!)^3}\right)\,.$$

To state the holomorphic anomaly equations, we need the following additional series in $q$.

\begin{align*}
    L(q)&=(1+27q)^{-\frac{1}{3}}=1-9q+162q^2+\dots\,,\\
    C_1(q)&=q\frac{d}{dq}I_1^{K\PP^2}\,,\\
    A_2(q)&=\frac{1}{L^3}\left(3 \frac{q\frac{d}{dq}C_1}{C_1}+1-\frac{L^3}{2}\right)\,.
\end{align*}
We also need new series $L_i(q)$ defined by roots of following degree $3$ polynomial in $\mathcal{L}$ for $i=0,1,2$:

$$(1+27q)\mathcal{L}^3-(\lambda_0+\lambda_1+\lambda_2)\mathcal{L}^2+(\lambda_0\lambda_1+\lambda_1\lambda_2+\lambda_2\lambda_0)\mathcal{L}-\lambda_0\lambda_2\lambda_3\,,$$
with initial conditions,
$$L_i(0)=\lambda_i\,.$$
Let $f_2$ be the polynomial of degree $2$ in variable $x$ over $\CC(\lambda_0,\lambda_1,\lambda_2)$ defined by
$$f_2(x):=(\lambda_0+\lambda_1+\lambda_2)x^2-2(\lambda_0\lambda_1+\lambda_1\lambda_2+\lambda_2\lambda_0)x+3\lambda_0\lambda_1\lambda_2\,.$$
The ring $$\mathds{G}_2:=\CC(\lambda_0,\lambda_1,\lambda_2)[L_0^{\pm 1},L_1^{\pm 1},L_2^{\pm 1},f(L_0)^{-\frac{1}{2}},f(L_1)^{-\frac{1}{2}},f(L_2)^{-\frac{1}{2}}]$$ will play a basic role in our paper. Consider the free polynomial rings in the variables $A_2$ and $C_1^{-1}$ over $\mathds{G}_2$,
\begin{align}\label{LOR}
    \mathds{G}_2[A_2]\,\,,\,\,\,\mathds{G}_2[A_2,C_1^{-1}]\,.
\end{align}
We have canonical maps
\begin{align}\label{LOM2}
    \mathds{G}_2[A_2]\rightarrow \CC(\lambda_0,\lambda_1,\lambda_2)[[q]]\,\,,\,\,\,\mathds{G}_2[A_2,C_1^{-1}]\rightarrow \CC(\lambda_0,\lambda_1,\lambda_2)[[q]]
\end{align}
given by assigning the above defined series $A_2(q)$ and $C_1^{-1}$ to the variables $A_2$ and $C_1^{-1}$ respectively. Therefore we can consider elements of the rings \eqref{LOR} either as free polynomials in the variables $A_2$ and $C_1^{-1}$ or as series in $q$.

Let $F(q)\in \CC(\lambda_0,\lambda_1,\lambda_2)[[q]]$ be a series in $q$. When we write
$$F(q)\in \mathds{G}_2[A_2]\,,$$
we mean there is a cononical lift $F\in\mathds{G}_2[A_2]$ for which
$$F\rightarrow F(q)\in\CC(\lambda_0,\lambda_1,\lambda_2)[[q]]$$
under the map \eqref{LOM2}. The symbol $F$ without the argument $q$ is the canonical lift. The notation
$$F(q)\in\mathds{G}_2[A_2,C_1^{-1}]$$
is parallel.

Let $T$ be the standard coordinate mirror to $t=\text{log}(q)$,
$$T=I^{K\PP^2}_1(q)\,.$$
Then $Q(q)=\text{exp}(T)$ is the mirror map.
\begin{Conj}\label{ooo}
 For the stable quotient invariants of $K\PP^2$,
 \begin{itemize}
  \item[(i)] $\mathcal{F}^{\text{SQ}}_g(q)\in \mathds{G}_2[A_2]$ for $g\ge2$,
  \item[(ii)] $\mathcal{F}^{\mathsf{SQ}}_g$ is of degree at most $3g-3$ with respect to $A_2$,
  \item[(iii)] $\frac{\partial^k \mathcal{F}^{\mathsf{SQ}}}{\partial T^k}(q)\in \mathds{G}_2[A_2,C_1^{-1}]$ for $g\ge 1$ and $k\ge 1$,
  \item[(iv)] $\frac{\partial^k\mathcal{F}^{\mathsf{SQ}_g}}{\partial T^k}$ is homogeneous of degree $k$ with respect to $C_1^{-1}$.
 \end{itemize}
\end{Conj}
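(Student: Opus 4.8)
The plan is to follow the strategy of \cite{LP}, replacing its genus-zero input by the full equivariant $I$-function analysis carried out in the Appendix, the equivariant refinement of \cite{ZaZi}. First I would apply virtual $\mathsf{T}_3$-localization to the integrals \eqref{SQ}, writing each coefficient of $\mathcal{F}^{\mathsf{SQ}}_g$ as a sum over decorated graphs whose vertices are distributed among the three torus-fixed points of $\PP^2$. Each graph contributes a product of vertex factors, which are Hodge-type integrals over $\overline{M}_{g_v,n_v}$ dressed by the localized virtual class, together with edge (propagator) factors. The genus-zero data needed to evaluate these factors, namely the fundamental solution of the quantum differential equation and its Birkhoff factorization (the $R$-matrix), is precisely what the Appendix supplies in the full equivariant range. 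The central output to extract there is that the asymptotic expansion of the equivariant $I$-function at the $i$-th fixed point is governed by the series $L_i(q)$, the root of the cubic normalized by $L_i(0)=\lambda_i$; this is the equivariant substitute for the single series $L(q)$ of \cite{LP}, to which it specializes as $L_i=\zeta_3^i L$ when $\lambda_i=\zeta_3^i$.

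Second, I would build the differential backbone of the ring $\mathds{G}_2$. Writing $D=q\frac{d}{dq}$ and differentiating the defining cubic, one uses the identity $L_i P'(L_i)=f_2(L_i)$ (where $P$ is the cubic) to obtain $D L_i=-27\,q\,L_i^{4}/f_2(L_i)$. Since the product of roots gives $1+27q=\lambda_0\lambda_1\lambda_2/(L_0L_1L_2)$, the variable $q$ itself lies in $\mathds{G}_2$; together with $f_2(L_i)^{-1}=(f_2(L_i)^{-1/2})^{2}\in\mathds{G}_2$ this shows $D L_i\in\mathds{G}_2$, and a parallel computation gives $D\, f_2(L_i)^{-1/2}\in\mathds{G}_2$. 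Hence $\mathds{G}_2$ is closed under $D$. I would then record the two companion relations extracted from the definitions of $A_2$ and $C_1$ and the $I$-function asymptotics: a Riccati-type identity placing $D A_2$ in $\mathds{G}_2[A_2]$, and $DC_1/C_1\in\mathds{G}_2[A_2]$. Together these give $D\colon\mathds{G}_2[A_2]\to\mathds{G}_2[A_2]$ and show that the mirror derivative $\partial/\partial T=\frac{1}{C_1}D$ raises the $C_1^{-1}$-degree by exactly one.

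Third, parts (i) and (ii) follow by induction on $g$ from the graph sum: the localization/Givental--Teleman reconstruction expresses $\mathcal{F}^{\mathsf{SQ}}_g$ through the genus-zero $R$-matrix, whose entries and associated propagator all lie in $\mathds{G}_2[A_2]$ by the first two steps, while a weight count bounds the number of propagator factors by $3g-3=\dim\overline{M}_g$, giving both the membership and the degree bound. Parts (iii) and (iv) are then formal: since $\partial/\partial T=\frac{1}{C_1}D$ with $D$ preserving $\mathds{G}_2[A_2]$ and $DC_1/C_1\in\mathds{G}_2[A_2]$, each $T$-derivative lands in $\mathds{G}_2[A_2,C_1^{-1}]$ and contributes exactly one factor of $C_1^{-1}$, so that $\partial^k\mathcal{F}^{\mathsf{SQ}}_g/\partial T^k$ is homogeneous of degree $k$ in $C_1^{-1}$.

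The main obstacle will be the equivariant $I$-function analysis underlying the first step. In the specialization $\lambda_i=\zeta_3^i$ of \cite{LP} the three fixed-point asymptotics are interchanged by the cyclic symmetry and collapse to a single series, whereas in the full equivariant setting the roots $L_0,L_1,L_2$ are genuinely distinct and one must control three independent asymptotic expansions together with the edge contributions coupling distinct fixed points. The delicate point is to show that the normalization factors $f_2(L_i)^{-1/2}$, arising from the square roots of the Hessian of the superpotential at the three critical points, are exactly those needed for closure under $D$ and for the expression of the propagator, and that no further transcendental functions enter. Verifying this compatibility is what the equivariant extension of \cite{ZaZi} in the Appendix is designed to provide.
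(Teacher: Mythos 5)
Your proposal follows essentially the same route as the paper: localization over decorated graphs (Propositions \ref{VE}--\ref{VEL}), vertex and edge contributions controlled by the asymptotic $R$-matrix data of the equivariant $I$-function (Lemmas \ref{L1}--\ref{L2}, resting on Conjecture \ref{RPoly} proven in the Appendix under the stated specialization of the $\lambda_i$), the edge count $3g-3$ for the degree bound, and the relation \eqref{drule} making $\mathds{G}_2[A_2]=\mathds{G}_2[X]$ closed under $\DD$ so that each $\partial/\partial T=\frac{1}{C_1}\DD$ contributes exactly one factor of $C_1^{-1}$. The only cosmetic difference is that you phrase (i)--(ii) as an induction on $g$, whereas the paper reads them off directly from the graph sum; the substance is identical.
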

\noindent Here, $\mathcal{F}^{\mathsf{SQ}}_g=\mathcal{F}^{\mathsf{SQ}}_{g,0}$.

\begin{Conj}\label{HAE}
The holomorphic anomaly equations for the stable quotient invariants of $K\PP^2$ hold for $g\ge 2$:
$$\frac{1}{C_1^2}\frac{\partial \mathcal{F}^{\mathsf{SQ}}_g}{\partial A_2}=\frac{1}{2}\sum^{g-1}_{i=1} \frac{\partial\mathcal{F}^{\mathsf{SQ}}_{g-i}}{\partial T}\frac{\partial\mathcal{F}^{\mathsf{SQ}}_{i}}{\partial T}+\frac{1}{2}\frac{\partial^2\mathcal{F}^{\mathsf{SQ}}_{g-1}}{\partial T^2}\,.$$
\end{Conj}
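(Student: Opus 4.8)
The plan is to derive Conjecture \ref{HAE} from the finite generation statement of Conjecture \ref{ooo} together with the reconstruction of the higher genus stable quotient potentials as sums over stable graphs in the sense of Givental--Teleman. First I would write down the full equivariant $I$-function of $K\PP^2$ and extract from it the fundamental solution $S$ of the associated quantum differential equation. By the equivariant generalization of \cite{ZaZi} carried out in the Appendix, the asymptotic expansion of $S$ near the relevant point is controlled by the three roots $L_0,L_1,L_2$ of the degree $3$ polynomial and by the series $C_1$ and $A_2$; here the factors $f_2(L_i)^{-1/2}$ are precisely what is needed for the ring $\mathds{G}_2$ to be closed under the operations below. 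In the specialized theory of \cite{LP} the single function $L=(1+27q)^{-1/3}$ sufficed because the cubic degenerated at $\lambda_i=\zeta_3^i$; the essential new feature of the equivariant setting is that the three roots $L_i$ must be carried independently.

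The second ingredient is a closed differential system for the generators. Writing $D=q\frac{d}{dq}$, I would establish explicit Ramanujan-type formulas expressing $DL_i$, $DC_1$, and $DA_2$ as elements of $\mathds{G}_2[A_2,C_1^{-1}]$. Since $C_1 = D\,T$ along the mirror map $T=I_1^{K\PP^2}(q)$, we have $\frac{\partial}{\partial T}=C_1^{-1}D$, so every application of $\partial_T$ contributes one factor of $C_1^{-1}$; this is the source of the homogeneity in parts (iii)--(iv) of Conjecture \ref{ooo}. Birkhoff factorization of $S$ then produces the $R$-matrix, and I would show by induction that its entries lie in $\mathds{G}_2[A_2,C_1^{-1}]$ with the degree and homogeneity bounds of Conjecture \ref{ooo}, the bound $3g-3$ in $A_2$ reflecting the number of edges of a genus $g$ stable graph.

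With the $R$-matrix in hand, the Givental--Teleman formula expresses $\mathcal{F}^{\mathsf{SQ}}_g$ as a sum over stable graphs $\Gamma$ of genus $g$, with vertex contributions given by topological terms dressed by $R$-legs and each edge contributing the propagator bilinear form $V$. The crucial structural fact I would prove is that the $A_2$-dependence of the whole construction is affine and concentrated entirely in the propagator: the $R$-legs at the vertices are $A_2$-independent, and the edge factor depends on $A_2$ linearly. Granting this, $\frac{\partial}{\partial A_2}$ acts on the graph sum simply by cutting one edge, replacing it by the bivector $\frac{\partial V}{\partial A_2}$, which by the explicit normalization equals $C_1^2$ times the tensor that joins two half-edges each carrying a $\partial_T$-insertion of $H$. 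Cutting separating edges reconstructs $\frac{1}{2}\sum_{i=1}^{g-1}\partial_T\mathcal{F}^{\mathsf{SQ}}_{g-i}\,\partial_T\mathcal{F}^{\mathsf{SQ}}_i$, while cutting non-separating edges yields $\frac{1}{2}\partial_T^2\mathcal{F}^{\mathsf{SQ}}_{g-1}$, which is exactly the right-hand side of Conjecture \ref{HAE} after dividing by $C_1^2$.

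The main obstacle is the structural fact in the third paragraph: that $\partial_{A_2}$ acts only through the propagator. Equivalently, all $A_2$-dependence of $R$ must be confined to the single entry realizing the propagator, and this entry must depend on $A_2$ in the precise affine way dictated by $\frac{\partial V}{\partial A_2}=C_1^2\cdot(\text{gluing tensor})$. Verifying this requires the fine asymptotic analysis of the equivariant $I$-function from the Appendix, and it is more delicate than in \cite{LP} because the three roots $L_i$ are genuinely distinct rather than related by multiplication by $\zeta_3$; one must check that the mixed terms involving different $L_i$ organize into the $\mathds{G}_2$-coefficients of the propagator and do not introduce spurious $A_2$-dependence at the vertices. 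Once this is settled, both Conjecture \ref{ooo} and Conjecture \ref{HAE} follow together from the same graph-sum analysis.
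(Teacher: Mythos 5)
Your proposal is correct and follows essentially the same route as the paper: the $A_2$-dependence of the graph sum is isolated in the edge terms and is affine there (Lemmas \ref{L1} and \ref{L2}), differentiation in $A_2$ cuts an edge into a product of two $H$-insertion legs normalized by $C_1^2$, and the separating versus non-separating dichotomy together with wall-crossing and the divisor equation yields the two terms on the right-hand side. The only cosmetic difference is that you package the graph sum via Givental--Teleman reconstruction from the $R$-matrix, whereas the paper obtains the identical structure directly from $\mathsf{T}$-virtual localization on the stable quotient moduli combined with the wall-crossing of Ciocan-Fontanine and Kim, with the required input on the $R_{k,i}$ (Conjecture \ref{RPoly}) supplied by the Appendix under the stated specialization of the $\lambda_i$.
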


The derivative of $\mathcal{F}^{\mathsf{SQ}}_g$ with respect to $A_2$ in the above equation is well-defined since
$$\mathcal{F}^{\mathsf{SQ}}_g\in \mathds{G}_2[A_2]$$
by part (i) of Conjecture \ref{HAE}. By parts (ii) and (iii),
$$\frac{\partial \mathcal{F}^{\mathsf{SQ}}_{g-i}}{\partial T}\frac{\partial \mathcal{F}^{\mathsf{SQ}}_{i}}{\partial T}\,\,,\,\,\frac{\partial^2 \mathcal{F}^{\mathsf{SQ}}_{g-1}}{\partial T^2}\in \mathds{G}_2[A_2,C_1^{-1}]$$
are both of degree $2$ in $C_1^{-1}$. Hence, the holomorphic anomaly equation of Conjecture \ref{HAE} may be viewed as holding in $\mathds{G}[A_2]$ since the factors of $C_1^{-1}$ on both sides cancel. If we use the specializations by primitive third root of unity $\zeta$
$$\lambda_i=\zeta^i\,,$$
the holomorphic anomaly equations here for $K\PP^2$ recover the precise form presented in \cite[(4.27)]{ASYZ} via B-model physics.

Conjecture \ref{HAE} determine $\mathcal{F}_g^{\mathsf{SQ}}\in\mathds{G}_2[A_2]$ uniquely as a polynomial in $A_2$ up to a constant term in $\mathds{G}_2$. The degree of the constant term can be bounded. Therefore Conjecture \ref{HAE} determine $\mathcal{F}_g^{\mathsf{SQ}}$ from the lower genus theory together with a finite amount of date.

We will prove the following special cases of the conjectures in Section \ref{hafp}.

\begin{Thm}\label{MT1}
Conjecture \ref{ooo} holds for the choices of $\lambda_0,\lambda_1,\lambda_2$ such that
$$\lambda_i \ne \lambda_j \,\, \text{for}\,\,i \ne j\,,\\$$
$$(\lambda_0\lambda_1+\lambda_1\lambda_2+\lambda_2\lambda_0)^2-3\lambda_0\lambda_1\lambda_2(\lambda_0+\lambda_1+\lambda_2)=0\,.$$
\end{Thm}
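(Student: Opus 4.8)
The plan is to adapt the argument of \cite{LP} to the full equivariant setting, feeding in the equivariant refinement of the $I$-function analysis of \cite{ZaZi} carried out in the Appendix. Write $e_1,e_2,e_3$ for the elementary symmetric functions of $\lambda_0,\lambda_1,\lambda_2$. The first observation is that, up to the factor $4$, the discriminant of $f_2$ equals $(\lambda_0\lambda_1+\lambda_1\lambda_2+\lambda_2\lambda_0)^2-3\lambda_0\lambda_1\lambda_2(\lambda_0+\lambda_1+\lambda_2)$, so the hypothesis says exactly that $f_2$ has a double root $r=e_2/e_1$. Consequently $f_2(L_i)=e_1(L_i-r)^2$ and $f_2(L_i)^{-1/2}=e_1^{-1/2}(L_i-r)^{-1}$, so the ring of interest collapses to the ring of rational functions
$$\mathds{G}_2=\CC(\lambda_0,\lambda_1,\lambda_2)\big[e_1^{-1/2}\big]\big[L_0^{\pm1},L_1^{\pm1},L_2^{\pm1},(L_0-r)^{-1},(L_1-r)^{-1},(L_2-r)^{-1}\big].$$
All genuine ramification has disappeared, and it is this simplification --- unavailable for general $\lambda_i$ --- that lets the finite-generation arguments of \cite{LP} run without essential modification.

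Next I would check that $\mathds{G}_2[A_2]$ is preserved by $q\frac{d}{dq}$, which reduces parts (iii)--(iv) of Conjecture \ref{ooo} to parts (i)--(ii). By Vieta applied to the defining cubic, $L_0L_1L_2=e_3/(1+27q)$, so $(1+27q)^{\pm1}\in\mathds{G}_2$ and hence $L^{\pm3}\in\mathds{G}_2$. Differentiating the cubic at $\mathcal L=L_i$ gives
$$q\frac{d}{dq}L_i=\frac{-27\,q\,L_i^{4}}{f_2(L_i)}\in\mathds{G}_2,$$
using $1+27q=e_1L_i^{-1}-e_2L_i^{-2}+e_3L_i^{-3}$; the corresponding statement for $(L_i-r)^{-1}$ is immediate. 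Combined with the relations of \cite{LP} expressing $q\frac{d}{dq}C_1$ and $q\frac{d}{dq}A_2$ as elements of $C_1\cdot\mathds{G}_2[A_2]$ and of $\mathds{G}_2[A_2]$ respectively, this shows $\mathds{G}_2[A_2]$ is a differential ring and that $\partial_T=C_1^{-1}q\frac{d}{dq}$ carries $\mathds{G}_2[A_2]$ into $C_1^{-1}\mathds{G}_2[A_2]$. Parts (iii) and (iv) then follow formally from part (i): for $g\ge2$ each of the $k$ applications of $\partial_T$ to $\mathcal{F}^{\mathsf{SQ}}_g\in\mathds{G}_2[A_2]$ contributes precisely one factor of $C_1^{-1}$, giving both membership in $\mathds{G}_2[A_2,C_1^{-1}]$ and homogeneity of degree $k$ in $C_1^{-1}$; the genus-one case is handled by the explicit formula for $\partial_T\mathcal{F}^{\mathsf{SQ}}_1$, whose primitive contains a logarithmic term but whose derivatives lie in $C_1^{-1}\mathds{G}_2[A_2]$.

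The substance is therefore parts (i)--(ii). Following \cite{LP}, I would express $\mathcal{F}^{\mathsf{SQ}}_g$ by the localization graph sum for the $\mathsf{T}_3$-action (legitimate since the $\lambda_i$ are distinct, so the equivariant quantum cohomology is semisimple), writing $\mathcal{F}^{\mathsf{SQ}}_g=\sum_\Gamma \frac{1}{|\mathrm{Aut}\,\Gamma|}\mathrm{Cont}_\Gamma$ over genus-$g$ stable graphs, with vertex contributions given by Hodge/$\psi$-integrals paired against the fundamental solution and edge contributions given by the propagator. The key lemma is that, after the change of frame recorded above, every vertex factor lies in $\mathds{G}_2$ and every edge factor equals $A_2$ times an element of $\mathds{G}_2$. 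This is where the Appendix enters: the equivariant asymptotic analysis of the $I$-function identifies the entries of the fundamental solution and of the $R$-matrix as rational functions of the $L_i$ (hence elements of $\mathds{G}_2$ under the hypothesis) and shows that the single non-equivariant series $A_2$ still governs the propagator, the $\lambda$-dependence entering only through the $\mathds{G}_2$-valued change of basis between the $L_i$ and the non-equivariant frame. Granting the lemma, part (i) is immediate, and part (ii) follows from the standard fact that a genus-$g$ stable graph with no legs has at most $3g-3$ edges, so at most $3g-3$ factors of $A_2$ appear.

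I expect the key lemma of the last paragraph to be the main obstacle: proving that the a priori several equivariant edge propagators all reduce to the single non-equivariant series $A_2$ with coefficients in $\mathds{G}_2$. This is precisely the step that forces the use of the double-root hypothesis, which is what makes the frame data $f_2(L_i)^{-1/2}$ rational in the $L_i$ and thereby allows the equivariant propagators to be rewritten in the required one-variable form. A secondary, bookkeeping-heavy difficulty is the sharp degree control in (ii) and (iv); for this I would reuse the Euler-characteristic count of \cite{LP}, which is insensitive to the equivariant deformation once the building blocks are known to lie in $\mathds{G}_2[A_2]$.
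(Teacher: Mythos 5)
Your proposal is correct and follows essentially the same route as the paper: the double-root observation on $f_2$ is exactly what makes the Appendix's admissibility argument work (yielding $R_{k,i}\in\mathds{G}_2$, i.e.\ Conjecture \ref{RPoly} under the stated specialization), after which (i)--(ii) follow from the localization decomposition of Proposition \ref{VE} with vertex terms in $\mathds{G}_2$ and edge terms of degree at most one in $A_2$ (Lemmas \ref{L1}--\ref{L2}, using $A_2=\frac{1}{L^3}(3X+1-\frac{L^3}{2})$ and the $3g-3$ edge bound), and (iii)--(iv) from closure of $\mathds{G}_2[A_2]=\mathds{G}_2[X]$ under $\DD$ via \eqref{drule}. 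The only cosmetic difference is that you verify $\DD$-closure of $\mathds{G}_2$ directly from the defining cubic rather than citing \eqref{drule} alone, which is a harmless (and correct) elaboration.
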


\begin{Thm}\label{MT2}
Conjecture \ref{HAE} holds for the choices of $\lambda_0,\lambda_1,\lambda_2$ such that
$$\lambda_i \ne \lambda_j \,\, \text{for}\,\,i \ne j\,,\\$$
$$(\lambda_0\lambda_1+\lambda_1\lambda_2+\lambda_2\lambda_0)^2-3\lambda_0\lambda_1\lambda_2(\lambda_0+\lambda_1+\lambda_2)=0\,.$$
\end{Thm}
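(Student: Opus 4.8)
The plan is to deduce Conjecture \ref{HAE} from a Givental--Teleman-type graph-sum formula for the higher-genus stable-quotient potential, using the ring-membership already secured in Theorem \ref{MT1} and the full-equivariant $I$-function asymptotics of the Appendix (the generalization of \cite{ZaZi}). Under the hypothesis $\lambda_i\neq\lambda_j$ the equivariant quantum cohomology of $\PP^2$ is semisimple, so the higher-genus theory is reconstructed from genus $0$: writing $\mathcal{F}^{\mathsf{SQ}}_g$ as a sum over stable graphs $\Gamma$ of genus $g$, each $\Gamma$ contributes a product of vertex factors (descendent integrals dressed by the $R$-matrix), one propagator per edge, and leg factors. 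The $R$-matrix is the unique solution of the quantum differential equation normalized by the $I$-function; its entries, expressed in the $q$-variable through the mirror map, are exactly the functions organized by Theorem \ref{MT1}, hence lie in $\mathds{G}_2$.

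The mechanism producing the anomaly equation is the identification of $A_2$ as the propagator variable through the precise relation
\begin{equation*}
\frac{\partial\,\mathrm{prop}_e}{\partial A_2}=C_1^2
\end{equation*}
(up to the unit gluing bivector); equivalently, the edge contribution equals $C_1^2 A_2$ plus terms free of $A_2$. Granting this, $\partial/\partial A_2$ applied to the graph sum differentiates one propagator at a time,
\begin{equation*}
\frac{\partial \mathcal{F}^{\mathsf{SQ}}_g}{\partial A_2}=\sum_{\Gamma}\frac{1}{|\mathrm{Aut}(\Gamma)|}\sum_{e\in E(\Gamma)}\frac{\partial\,\mathrm{prop}_e}{\partial A_2}\prod_{e'\neq e}\mathrm{prop}_{e'}\prod_v(\mathrm{vertex})\,,
\end{equation*}
and differentiating a single edge is the same as cutting it and capping the two resulting half-edges with the bivector. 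This is the standard degeneration picture.

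To finish the matching I would interpret the two new half-edges. Each created marking carries an insertion of $H$, which by the divisor equation and the definition $T=I_1^{K\PP^2}$ is a $\partial/\partial T$-derivative of the corresponding potential. Cutting a \emph{separating} edge splits $\Gamma$ into a genus-$i$ and a genus-$(g-i)$ component and produces $\partial_T\mathcal{F}^{\mathsf{SQ}}_{g-i}\,\partial_T\mathcal{F}^{\mathsf{SQ}}_i$; cutting a \emph{non-separating} edge lowers the genus by one on a connected graph and produces $\partial_T^2\mathcal{F}^{\mathsf{SQ}}_{g-1}$. The factor $C_1^2$ from $\partial_{A_2}\mathrm{prop}_e$ is cancelled exactly by the $1/C_1^2$ on the left of Conjecture \ref{HAE}, the automorphism factors $|\mathrm{Aut}(\Gamma)|$ reproduce the two coefficients $\frac{1}{2}$, and parts (iii)--(iv) of Conjecture \ref{ooo} (from Theorem \ref{MT1}) guarantee the $C_1$-bookkeeping is consistent. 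Assembling the separating and non-separating cuts then yields the stated equation.

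The main obstacle is the propagator relation $\partial\,\mathrm{prop}_e/\partial A_2=C_1^2$ in the \emph{full} equivariant setting. In the unspecialized theory the raw edge contribution is a sum over the three torus-fixed points of $\PP^2$ whose summands involve the branch functions $f_2(L_i)^{-1/2}$ appearing in $\mathds{G}_2$, and a priori this sum need not be affine-linear in $A_2$. This is precisely where the hypothesis
\begin{equation*}
(\lambda_0\lambda_1+\lambda_1\lambda_2+\lambda_2\lambda_0)^2-3\lambda_0\lambda_1\lambda_2(\lambda_0+\lambda_1+\lambda_2)=0
\end{equation*}
enters: it is the vanishing of the discriminant of $f_2$, so $f_2$ is a perfect square and each $f_2(L_i)^{-1/2}$ is \emph{rational} in $L_i$. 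With the square-root branches removed, the three-point fixed-point sum collapses --- by a partial-fraction computation governed by the defining cubic for the $L_i$ --- to an explicit rational expression that I expect to linearize in $A_2$ with leading coefficient $C_1^2$, recovering on this locus the degeneration identity proved in \cite{LP} at $\lambda_i=\zeta^i$. Establishing this collapse is the technical heart of the argument; the remaining graph combinatorics is formal.
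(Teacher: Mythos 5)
Your proposal follows essentially the same route as the paper's proof: the localization graph sum over stable graphs, the fact (Lemma \ref{L2}) that each edge contribution is affine-linear in $X$ (equivalently $A_2$) with the $X$-coefficient equal to $C_1^2$ times the product of the two $H$-leg factors, so that $\frac{1}{C_1^2}\partial_{A_2}$ cuts one edge at a time and yields the separating and non-separating boundary terms, which are then converted to $\partial_T$-derivatives via the divisor equation and the wall-crossing of \cite{CKg}. One localized correction: the linearity in $A_2$ is \emph{not} where the discriminant hypothesis enters --- it holds structurally, because $X=\DD C_1/C_1$ appears only, and only linearly, in the Birkhoff factor of $\mathds{S}(H^2)$ (Lemma \ref{RPoly2} together with the relation $N_2=-\tfrac12 C_2+\tfrac12 L^3$); the hypothesis that $f_2$ has vanishing discriminant is used solely to prove $R_{k,i}\in\mathds{G}_2$ (Conjecture \ref{RPoly}) in the Appendix, which is what places all vertex, edge and leg coefficients in the correct ring, so your proposed ``partial-fraction collapse'' is unnecessary for the linearization step.
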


\subsection{Holomorphic anomaly equations for $K\PP^3$.}\label{holp22} We state the precise form of the holomorphic anomaly equations for local $\PP^3$. Since the study of local $\PP^3$ will be parallel to the study of local $\PP^2$, we will sometime use the same notations for local $\PP^2$ and local $\PP^3$. Since the study of two theories are logically independent in our paper, the indication of each notation will be clear from the context. Denote by $K\PP^3$ the total space of the canonical bundle over $\PP^3$.
 Let $H\in H^2(K\PP^3,\QQ)$ be the hyperplane class obtained from $\PP^3$, and let
 
\begin{align*}
    \mathcal{F}^{\mathsf{GW}}_{g,m}[a,b](Q)=\lan \tau_0(H)^a\tau_0(H^2)^b\ran^{\mathsf{GW}}_{g,m}=\sum^{\infty}_{d=0}Q^d\int_{[\overline{M}_{g,m}(K\PP^3,d)]^{\text{vir}}}\prod^a_{i=1}\text{ev}^*_i(H)\prod^{a+b}_{i=a+1}\text{ev}^*_i(H^2)\,,\\
    \mathcal{F}^{\mathsf{SQ}}_{g,m}[a,b](q)=\lan\tau_0(H)^a\tau_0(H^2)^b \ran^{\mathsf{SQ}}_{g,m}=\sum^{\infty}_{d=0}q^d\int_{[\overline{Q}_{g,m}(K\PP^3,d)]^{\text{vir}}}\prod^a_{i=1}\text{ev}^*_i(H)\prod^{a+b}_{i=a+1}\text{ev}^*_i(H^2)\,.
\end{align*} 
be the Gromov-Witten and stable quotient series respectively with $a+b=m$ (involving the evaluation morphisms at the markings). The relationship between the Gromov-Witten and stable quotient invariants of $K\PP^3$ is proven in \cite{CKg} in case $2g-2+n > 0$:
\begin{align}\label{34567}
    \mathcal{F}^{\mathsf{GW}}_{g,m}[a,b](Q(q))=\mathcal{F}^{\mathsf{SQ}}_{g,m}[a,b](q)\,,
\end{align}
where $Q(q)$ is the mirror map,
$$I_1^{K\PP^3}(q)=\text{log}(q)+4\sum^{\infty}_{d=1}q^d\frac{(4d-1)!}{(d!)^4}\,,$$

$$Q(q)=\text{exp}\left(I_1^{K\PP^3}(q)\right)=q\cdot \text{exp}\left(4\sum^{\infty}_{d=1}q^d\frac{(4d-1)!}{(d!)^4}\right)\,.$$

To state the holomorphic anomaly equations, we need the following additional series in $q$.

\begin{align*}
    L(q)&=(1-4^4q)^{-\frac{1}{4}}=1+64q+10240q^2+\dots\,,\\
    C_1(q)&=q\frac{d}{dq}I_1^{K\PP^3}\,,\\
    A_2(q)&=\frac{q\frac{d}{dq}C_1}{C_1}\,.
\end{align*}
We will define extra series $B_2(q)$, $B_4(q) \in \CC[[q]]$ in \eqref{SB}.
We also need new series $L_i(q)$ defined by roots of following degree $4$ polynomial in $\mathcal{L}$ for $i=0,1,2,3$:

$$(1-4^4q)\mathcal{L}^4-s_1\mathcal{L}^3+s_2\mathcal{L}^2-s_3\mathcal{L}+s_4\,,$$
with initial conditions,
$$L_i(0)=\lambda_i\,.$$
Here, $s_k$ is $k$-th elementary symmetric function in $\lambda_0,\dots,\lambda_3$.
Let $f_3$ be the polynomial of degree $3$ in variable $x$ over $\CC(\lambda_0,\dots,\lambda_3)$ defined by
$$f_3(x):=s_1 x^3-2s_2 x^2+3s_3 x-4 s_4\,.$$
The ring $$\mathds{G}_3:=\CC(\lambda_0,\dots,\lambda_3)[L_0^{\pm 1},\dots,L_3^{\pm 1},f(L_0)^{-\frac{1}{2}},\dots,f(L_3)^{-\frac{1}{2}}]$$ will play a basic role in our paper. Consider the free polynomial rings in the variables $A_2$, $B_2$, $B_4$ and $C_1^{-1}$ over $\mathds{G}_3$,
\begin{align}\label{LOR333}
    \mathds{G}_3[A_2,B_2,B_4,C_1^{\pm 1}]\,.
\end{align}
We have canonical map
\begin{align}\label{LOM2333}
   \mathds{G}_3[A_2,B_2,B_4,C_1^{\pm 1}]\rightarrow \CC(\lambda_0,\dots,\lambda_3)[[q]]
\end{align}
given by assigning the above defined series $A_2(q)$, $B_2(q)$, $B_4(q)$ and $C_1(q)$ to the variables $A_2$, $B_2$, $B_4$ and $C_1$ respectively. Therefore we can consider elements of the rings \eqref{LOR333} either as free polynomials in the variables $A_2$, $B_2$, $B_4$ and $C_1$ or as series in $q$.

Let $F(q)\in \CC(\lambda_0,\dots,\lambda_3)[[q]]$ be a series in $q$. When we write
$$F(q)\in \mathds{G}_3[A_2,B_2,B_4,C_1^{\pm 1}]\,,$$
we mean there is a cononical lift $F\in\mathds{G}_3[A_2,B_2,B_4,C_1^{\pm 1}]$ for which
$$F\rightarrow F(q)\in\CC(\lambda_0,\dots,\lambda_2)[[q]]$$
under the map \eqref{LOM2333}. The symbol $F$ without the argument $q$ is the canonical lift. The notation
$$F(q)\in\mathds{G}_3[A_2,B_2,B_4,C_1^{\pm 1}]$$
is parallel.

Let $T$ be the standard coordinate mirror to $t=\text{log}(q)$,
$$T=I^{K\PP^3}_1(q)\,.$$
Then $Q(q)=\text{exp}(T)$ is the mirror map.
\begin{Conj}\label{ooo333}
 For the stable quotient invariants of $K\PP^3$,
 \begin{itemize}
  \item[(i)] $\mathcal{F}^{\text{SQ}}_{g,a+b}[a,b](q)\in \mathds{G}_3[A_2,B_2,B_4,C_1^{\pm 1}]$ for $g\ge2$,
  \item[(ii)] $\mathcal{F}^{\mathsf{SQ}}_g$ is of degree at most $2(3g-3)$ with respect to $A_2$,
  \item[(iii)] $\frac{\partial^k \mathcal{F}^{\mathsf{SQ}}}{\partial T^k}(q),\in \mathds{G}_3[A_2,B_2,B_4,C_1^{\pm 1}]$ for $g\ge 1$ and $k\ge 1$.
 \end{itemize}
\end{Conj}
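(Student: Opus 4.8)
The plan is to transport the localization strategy of \cite{LP} to the full equivariant stable quotient theory of $K\PP^3$, using the generalization of the \cite{ZaZi} $I$-function analysis to the full equivariant setting carried out in the Appendix. First I would compute each series $\mathcal{F}^{\mathsf{SQ}}_{g,a+b}[a,b](q)$ by Atiyah--Bott localization for the $\mathsf{T}_4$-action on $\overline{Q}_{g,m}(K\PP^3,d)$. The fixed loci are indexed by decorated graphs whose vertices are distributed among the four torus-fixed points $p_0,\dots,p_3$ of $\PP^3$ and whose edges record degrees along the invariant lines, so that the contribution of each graph factors into vertex, edge, and leg terms, with the Euler class $e(-R\pi_*\mathsf{S})$ of \eqref{SQ} distributed accordingly.

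The second step is to identify the building blocks. The vertex contributions reduce to Hodge-type integrals over $\overline{M}_{g,n}$, while the edge contributions produce the equivariant propagators. The essential input is the full equivariant $I$-function analysis: it provides the fundamental solution of the associated Picard--Fuchs system and shows that, after passing to the $\mathsf{T}_4$-fixed-point decomposition, the relevant operator is diagonalized by the roots $L_i(q)$ of the degree-$4$ polynomial, with the square roots $f_3(L_i)^{-1/2}$ appearing as the natural normalizations that generate $\mathds{G}_3$. I expect that $C_1$ together with $A_2,B_2,B_4$ are exactly the finitely many $q\frac{d}{dq}$-derivatives of these solution entries that do not already lie in $\mathds{G}_3$, so that the propagators and vertex data all close up inside $\mathds{G}_3[A_2,B_2,B_4,C_1^{\pm1}]$.

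With the building blocks in hand, part (i) follows from the finiteness of the graph sum together with this closure property: each localization contribution is manifestly a polynomial in $A_2,B_2,B_4$ with coefficients in $\mathds{G}_3[C_1^{\pm1}]$. Part (ii), the degree bound $2(3g-3)$ in $A_2$, is a dimension-counting argument in which one tracks the maximal number of $A_2$-factors produced at each vertex and edge and sums over the graph using the genus and valence constraints. For part (iii) I would use the divisor and string equations to rewrite $\frac{\partial^k}{\partial T^k}\mathcal{F}^{\mathsf{SQ}}$ in terms of invariants with extra $H$-insertions and re-run the same localization analysis; the powers of $C_1^{-1}$ enter through the mirror normalization $T=I_1^{K\PP^3}(q)$ via the chain rule $\frac{\partial}{\partial T}=C_1^{-1}\,q\frac{d}{dq}$.

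The hard part will be the full equivariant $I$-function analysis and the verification that the higher derivatives of the generators genuinely re-express inside the finitely generated ring. For $K\PP^2$ the single series $A_2$ over $\mathds{G}_2$ suffices, whereas for $K\PP^3$ the additional series $B_2$ and $B_4$ are forced by the extra root; proving that no further generators arise---equivalently, that $q\frac{d}{dq}$ preserves $\mathds{G}_3[A_2,B_2,B_4,C_1^{\pm1}]$ once the square-root extension is adjoined---is the key structural obstacle. This is precisely the point at which hypotheses on $\lambda_0,\dots,\lambda_3$ analogous to those of Theorems \ref{MT1} and \ref{MT2} are likely to be required, in order to keep the extension by $f_3(L_i)^{-1/2}$ consistent and the derivation closed.
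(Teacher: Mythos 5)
Your proposal follows essentially the same route as the paper: localization over decorated graphs with vertex/edge/leg factorization, asymptotic expansion of the restricted $I$-function producing the $R_{k,i}$ and the ring $\mathds{G}_3$, closure of $\mathds{G}_3[A_2,B_2,B_4,C_1^{\pm1}]$ under $q\frac{d}{dq}$ via the Riccati-type relations, the edge count $\le 3g-3$ for the $A_2$-degree bound, and the divisor equation plus the chain rule $\frac{\partial}{\partial T}=C_1^{-1}q\frac{d}{dq}$ for part (iii). You also correctly locate the genuine obstruction in the full equivariant $I$-function analysis (the statement $R_{k,i}\in\mathds{G}_3$, the paper's Conjecture \ref{RPolyaaa}), which is exactly why the paper only establishes the conjecture under the specializations of Theorem \ref{MT1333}.
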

\noindent Here, $\mathcal{F}^{\mathsf{SQ}}_g=\mathcal{F}^{\mathsf{SQ}}_{g,0}[0,0]$.

\begin{Conj}\label{HAE333}
The holomorphic anomaly equations for the stable quotient invariants of $K\PP^3$ hold for $g\ge 2$:
\begin{multline*}\frac{L^2}{4 C_1}\frac{\partial \mathcal{F}^{\mathsf{SQ}}_g}{\partial A_2}+\frac{-2s_1 L^4-C_1(3B_2 L^2-s_1L^6)}{4 C_1^2}\frac{\partial \mathcal{F}^{\mathsf{SQ}}_g}{\partial B_4}=\\\sum^{g-1}_{i=1} \mathcal{F}^{\mathsf{SQ}}_{g-i,1}[0,1]\mathcal{F}^{\mathsf{SQ}}_{i,1}[1,0]+\mathcal{F}^{\mathsf{SQ}}_{g-1,1}[1,1]\,,\end{multline*}
$$\frac{2L^4}{C_1^2(L^4-1-2A_2)}\frac{\partial \mathcal{F}^{\mathsf{SQ}}_g}{\partial B_2}=\sum^{g-1}_{i=1} \frac{\partial\mathcal{F}^{\mathsf{SQ}}_{g-i}}{\partial T}\frac{\partial\mathcal{F}^{\mathsf{SQ}}_{i}}{\partial T}+\frac{\partial^2\mathcal{F}^{\mathsf{SQ}}_{g-1}}{\partial T^2}\,.$$
\end{Conj}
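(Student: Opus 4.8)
The plan is to follow the strategy of \cite{LP} used for $K\PP^2$, now carried out for $K\PP^3$ in the full equivariant setting, and to reduce Conjecture \ref{HAE333} to an explicit differential identity for the $R$-matrix of the associated cohomological field theory. First I would pass entirely to the stable quotient side: by the Gromov--Witten/stable-quotient correspondence of \cite{CKg} it suffices to treat the series $\mathcal{F}^{\mathsf{SQ}}_{g,m}[a,b]$, which are governed by a $\mathsf{T}_4$-equivariant cohomological field theory $\Omega$ on $\overline{Q}_{g,m}(K\PP^3,d)$. Under the genericity hypothesis $\lambda_i\neq\lambda_j$ (together with a discriminant condition analogous to the one in Theorems \ref{MT1} and \ref{MT2}), $\Omega$ is generically semisimple, with canonical coordinates indexed by the four torus fixed points and tracked by the roots $L_0,\dots,L_3$ of the degree $4$ polynomial above. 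By the Givental--Teleman classification, $\mathcal{F}^{\mathsf{SQ}}_g$ is then a finite sum over stable graphs $\Gamma$ of genus $g$, in which each vertex carries a Hodge-type integral weighted by $R$, each edge carries a bilinear propagator $\mathcal{V}(z,w)$ built from $R$, and each leg carries the insertion attached to $H$ or $H^2$.

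The next step is to make every ingredient explicit, which simultaneously establishes Conjecture \ref{ooo333}. For this I would use the full equivariant generalization of the Zagier--Zinger analysis of the $I$-function developed in the Appendix, which removes the specialization $\lambda_i=\zeta_{n+1}^i$ of \cite{ZaZi} underlying \cite{LP}. From the asymptotic factorization of the equivariant $I$-function of $K\PP^3$ one extracts the entries of $R(z)$ as power series in $z$ whose coefficients are explicit elements of $\mathds{G}_3$, with $A_2$, $B_2$, $B_4$ and $C_1$ arising as their building blocks. Showing that all vertex, edge and leg contributions lie in $\mathds{G}_3[A_2,B_2,B_4,C_1^{\pm 1}]$ proves finite generation and, in particular, makes the derivatives $\partial/\partial A_2$, $\partial/\partial B_2$ and $\partial/\partial B_4$ in Conjecture \ref{HAE333} well defined.

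The heart of the argument is a pair of \emph{propagator equations} expressing the derivatives of $\mathcal{V}$ along the anomaly directions as products of half-edge vectors of the fundamental solution. Concretely, I expect that $\frac{2L^4}{C_1^2(L^4-1-2A_2)}\,\partial\mathcal{V}/\partial B_2$ equals the tensor square of the normalized $H$-direction half-edge vector (the one producing $\partial_T\mathcal{F}$), and that $\frac{L^2}{4C_1}\,\partial\mathcal{V}/\partial A_2+\frac{-2s_1L^4-C_1(3B_2L^2-s_1L^6)}{4C_1^2}\,\partial\mathcal{V}/\partial B_4$ equals the product of the normalized $H$- and $H^2$-direction half-edge vectors. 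Applying these to the stable-graph expansion of $\mathcal{F}^{\mathsf{SQ}}_g$, each differential operator cuts a single edge of every graph. A separating edge splits $\Gamma$ into a genus $g-i$ and a genus $i$ component, each inheriting one half-edge insertion, which reproduces $\sum_{i=1}^{g-1}\partial_T\mathcal{F}^{\mathsf{SQ}}_{g-i}\,\partial_T\mathcal{F}^{\mathsf{SQ}}_{i}$ in the first equation and $\sum_{i=1}^{g-1}\mathcal{F}^{\mathsf{SQ}}_{g-i,1}[0,1]\,\mathcal{F}^{\mathsf{SQ}}_{i,1}[1,0]$ in the second. A non-separating edge lowers the genus by one and leaves both insertions on a single curve, yielding $\partial_T^2\mathcal{F}^{\mathsf{SQ}}_{g-1}$ and $\mathcal{F}^{\mathsf{SQ}}_{g-1,1}[1,1]$ respectively. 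The scalar prefactors in Conjecture \ref{HAE333} are exactly those carried by the two propagator equations, so the normalized half-edge insertions coincide with the one-pointed series on the right-hand sides.

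The main obstacle is this propagator-derivative step in the full equivariant setting. In \cite{LP} the specialized theory has a single modular parameter and one anomaly direction, whereas here there are four distinct canonical coordinates $L_0,\dots,L_3$ and two independent anomaly directions --- $B_2$ governing the $H$-$H$ propagator and the $(A_2,B_4)$-combination governing the $H$-$H^2$ propagator --- which must be controlled simultaneously and matched against two separate equations. Verifying that the derivatives of $R(z)$ close up into exactly the stated quadratic form, with the precise coefficients involving $s_1$, $L$ and $C_1$, requires the explicit equivariant $I$-function of the Appendix and is the step most sensitive to the choice of $\lambda_i$; I expect it to go through cleanly only under a discriminant condition analogous to that of Theorems \ref{MT1} and \ref{MT2}, where the factors $f_3(L_i)^{1/2}$ degenerate and $\mathds{G}_3$ simplifies enough to carry out the matching.
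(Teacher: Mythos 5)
Your proposal follows essentially the same route as the paper: a localization/Givental-style stable-graph expansion with vertex, edge, and leg contributions made explicit via the equivariant $I$-function asymptotics, followed by the key step of showing that the anomaly-direction derivatives of the edge propagator factor into products of the normalized $H$- and $H^2$-half-edge vectors (the paper's equations for $\mathds{D}_1\,\mathrm{Cont}^{\mathsf{A}}_\Gamma(f)$ and $\mathds{D}_2\,\mathrm{Cont}^{\mathsf{A}}_\Gamma(f)$), so that edge-cutting produces the separating and non-separating terms on the right-hand sides. You also correctly identify that the argument hinges on the polynomiality of the $R_{k,i}$ and therefore only goes through under the discriminant specialization of Theorem \ref{MT2333}, which is exactly the restriction under which the paper proves the conjecture.
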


The derivative of $\mathcal{F}^{\mathsf{SQ}}_g$ with respect to $A_2$, $B_2$ and $B_4$ in the above equations is well-defined since
$$\mathcal{F}^{\mathsf{SQ}}_g\in \mathds{G}_3[A_2,B_2,B_4,C_1^{\pm 1}]$$
by part (i) of Conjecture \ref{HAE333}.

We will prove the following special cases of the conjectures in Section \ref{hafp}.

\begin{Thm}\label{MT1333}
Conjecture \ref{ooo333} holds for the choices of $\lambda_0,\dots,\lambda_3$ such that
\begin{align*}
    \lambda_i \ne \lambda_j \,\, \text{for}\,\,i \ne j\,,\\
    4s_2^2-s_1 s_3=0\,,\\
    2s_2^3-27 s_1^2s_4=0\,.
\end{align*}
\end{Thm}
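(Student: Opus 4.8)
The plan is to follow the $\mathsf{T}_4$-localization strategy of \cite{LP}, now carried out in the full equivariant setting for $K\PP^3$, and to reduce the three assertions of Conjecture \ref{ooo333} to a structural statement about the $R$-matrix of the associated cohomological field theory. First I would express each series $\mathcal{F}^{\mathsf{SQ}}_{g,a+b}[a,b]$ through localization on the stable quotient graph space and reorganize the fixed-point contributions into a Givental--Teleman graph sum over stable graphs of genus $g$: each graph contributes a finite product of vertex terms (topological descendent integrals over moduli spaces $\overline{M}_{h,n}$), edge terms built from a bilinear propagator, and leg terms, all assembled from the fundamental solution of the equivariant quantum differential equation. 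The condition $\lambda_i\neq\lambda_j$ guarantees that the four torus-fixed points are distinct, so the equivariant quantum cohomology of $\PP^3$ is semisimple and the Givental--Teleman classification applies. The essential analytic input is the asymptotic expansion of the $I$-function $I_1^{K\PP^3}$, which in the full equivariant setting is furnished by the generalization of \cite{ZaZi} established in the Appendix; this produces the scalar series $C_1$, $A_2$, $B_2$, $B_4$ together with the roots $L_i$ of the degree-$4$ polynomial $(1-4^4q)\mathcal{L}^4-s_1\mathcal{L}^3+s_2\mathcal{L}^2-s_3\mathcal{L}+s_4$.

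The heart of the argument is a ring-membership statement for the $R$-matrix. I would prove that, in the normalized idempotent basis attached to the four fixed points, every entry of $R(z)$ and of $R(z)^{-1}$, expanded as a power series in $q$, lies in $\mathds{G}_3[A_2,B_2,B_4,C_1^{\pm1}]$. This is established by induction on the order in the loop parameter $z$ from the first-order differential equation satisfied by $R$ (flatness of the quantum connection), together with the closure of $\mathds{G}_3[A_2,B_2,B_4,C_1^{\pm1}]$ under the derivation $q\,d/dq$. The closure is verified directly on the generators, by differentiating the defining relations of the $L_i$ and the definitions of $A_2$, $B_2$, $B_4$, $C_1$. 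It is precisely here that the two algebraic constraints $4s_2^2-s_1s_3=0$ and $2s_2^3-27s_1^2s_4=0$ on the weights enter: they force the discriminant data $f_3(L_i)^{-1/2}$ and the quantum structure constants to collapse into the prescribed generators, playing the role that the root-of-unity specialization $\lambda_i=\zeta^i$ plays in \cite{LP}, so that the recursion produces no denominators outside $\mathds{G}_3$.

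Granting the $R$-matrix membership, assertion (i) follows at once: for fixed $g$ the graph sum is a finite sum of products of entries of $R$, edge propagators and topological integrals, each lying in $\mathds{G}_3[A_2,B_2,B_4,C_1^{\pm1}]$. For assertion (ii) I would track the $A_2$-degree contributed by the $R$-matrix insertions along the vertices and edges of each graph; the maximal total over genus-$g$ stable graphs is bounded by $2(3g-3)$, the extremal term arising from the one-vertex graph. Assertion (iii) follows from the closure of the ring under $\partial/\partial T=C_1^{-1}q\,d/dq$: since this derivation sends each generator back into $\mathds{G}_3[A_2,B_2,B_4,C_1^{\pm1}]$ (introducing at most further powers of $C_1^{\pm1}$), applying it $k\ge1$ times to $\mathcal{F}^{\mathsf{SQ}}$ keeps the output inside the ring.

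The main obstacle is the second step: establishing the $R$-matrix membership without the root-of-unity specialization. In \cite{LP} the relevant series are classical quasi-modular objects with known closure properties, whereas here one must show, using the two weight constraints, that the full Birkhoff-factorization data of the equivariant quantum differential equation lies in $\mathds{G}_3[A_2,B_2,B_4,C_1^{\pm1}]$. Concretely, the decisive computation is to express $q\frac{d}{dq}L_i$, $q\frac{d}{dq}A_2$, $q\frac{d}{dq}B_2$, and $q\frac{d}{dq}B_4$ explicitly as elements of the ring; it is exactly at this point that $4s_2^2-s_1s_3=0$ and $2s_2^3-27s_1^2s_4=0$ are needed for the denominators to simplify.
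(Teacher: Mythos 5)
Your proposal follows essentially the same route as the paper: the localization graph sum over stable graphs with vertex/edge/leg contributions (Proposition \ref{VE} and Lemmas \ref{L1bbb}--\ref{L3bbb}), with the key input being the ring membership of the asymptotic-expansion coefficients $R_{k,i}$ of the $I$-function (the paper's Conjecture \ref{RPolyaaa}), proved by an induction in the $z$-order of the Picard--Fuchs recursion in the Appendix, where the two weight constraints enter precisely to make the recursion ``admissible'' (they collapse $f_3$ so that no denominators outside $\mathds{G}_3$ appear). Your degree count for (ii) via the $3g-3$ edge bound and your closure argument under $\partial/\partial T=C_1^{-1}\DD$ for (iii) also match the paper's proof.
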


\begin{Thm}\label{MT2333}
Conjecture \ref{HAE333} holds for the choices of $\lambda_0,\dots,\lambda_3$ such that
\begin{align*}
\lambda_i \ne \lambda_j \,\, \text{for}\,\,i \ne j\,,\\
    4s_2^2-s_1 s_3=0\,,\\
    2s_2^3-27 s_1^2s_4=0\,.
\end{align*}
\end{Thm}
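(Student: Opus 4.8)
The plan is to transport the stable-quotient/$I$-function argument of \cite{LP} for $K\PP^2$ to the $K\PP^3$ geometry, carrying out every computation inside the equivariant ring $\mathds{G}_3[A_2,B_2,B_4,C_1^{\pm1}]$ and specializing only at the end to the locus cut out by $4s_2^2-s_1s_3=0$ and $2s_2^3-27s_1^2s_4=0$. First I would invoke Theorem \ref{MT1333}, which guarantees that on this locus each $\mathcal{F}^{\mathsf{SQ}}_{g,a+b}[a,b]$ is a genuine element of the ring, so that the derivatives $\partial/\partial A_2$, $\partial/\partial B_2$, $\partial/\partial B_4$ in Conjecture \ref{HAE333} are well defined. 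The heart of the computation is the $\mathsf{T}_4$-localization graph sum for the stable-quotient potential on $\overline{Q}_g(K\PP^3,d)$: following \cite{LP}, I would organize $\mathcal{F}^{\mathsf{SQ}}_g$ as a sum over decorated graphs whose vertices carry Hodge-type integrals over $\overline{M}_{g_v,n_v}$ and whose edges carry propagators built from the fundamental solution of the equivariant quantum differential equation. The equivariant analogue of the Zagier--Zinger analysis developed in the Appendix supplies the closed-form asymptotics of the $I$-function, and hence expresses every edge factor as an explicit element of $\mathds{G}_3$ in terms of the roots $L_i$ of the degree-$4$ characteristic polynomial and the quantities $f_3(L_i)$.

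Next I would isolate the propagator content. The series $A_2$, $B_2$, $B_4$ arise precisely as the coefficients recording the regularized part of the two-point edge factor in the basis dual to $H$ and $H^2$, and the two equations of Conjecture \ref{HAE333} correspond to the two independent propagator directions: one coupling an $H$-insertion to an $H^2$-insertion (the first equation, whose source is $\sum_i\mathcal{F}^{\mathsf{SQ}}_{g-i,1}[0,1]\,\mathcal{F}^{\mathsf{SQ}}_{i,1}[1,0]+\mathcal{F}^{\mathsf{SQ}}_{g-1,1}[1,1]$) and one coupling two $H$-insertions (the second equation, whose source is $\sum_i\tfrac{\partial\mathcal{F}^{\mathsf{SQ}}_{g-i}}{\partial T}\tfrac{\partial\mathcal{F}^{\mathsf{SQ}}_{i}}{\partial T}+\tfrac{\partial^2\mathcal{F}^{\mathsf{SQ}}_{g-1}}{\partial T^2}$). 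The identity I must establish is that differentiating the graph sum with respect to each propagator variable equals cutting the corresponding edge: a cut along a separating edge disconnects the graph and yields the products of lower-genus potentials, while a cut along a non-separating edge lowers the genus by one and yields the second $T$-derivative. This is the standard degeneration combinatorics, but here it must be tracked in the equivariant ring, and the precise rational coefficients $\tfrac{L^2}{4C_1}$, $\tfrac{-2s_1L^4-C_1(3B_2L^2-s_1L^6)}{4C_1^2}$ and $\tfrac{2L^4}{C_1^2(L^4-1-2A_2)}$ must be read off from the explicit $A_2$-, $B_2$- and $B_4$-derivatives of the edge factors.

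The main obstacle I anticipate is exactly this last extraction: computing the propagator derivatives in closed form and verifying that, once the graph sum is reassembled, the coefficients collapse to the precise rational functions displayed in Conjecture \ref{HAE333}. This is where the two algebraic constraints on $\lambda_0,\dots,\lambda_3$ are essential, since they force the Wronskian-type combinations of the $L_i$ and $f_3(L_i)$ appearing in the propagator derivatives to close up inside $\mathds{G}_3$ with exactly these coefficients. The specialization $\lambda_i=\zeta_4^i$ achieves this automatically: there $s_1=s_2=s_3=0$ and the characteristic polynomial reduces to $(1-4^4q)\mathcal{L}^4+s_4$, so both constraints hold identically. Concretely, I would reduce the required propagator relations to polynomial identities in $s_1,\dots,s_4$ and check that each is a consequence of $4s_2^2-s_1s_3=0$ and $2s_2^3-27s_1^2s_4=0$; verifying that these two conditions are sufficient (and not merely necessary at the root-of-unity point) is the crux of the matching, and I expect it to be the most delicate bookkeeping in the proof.
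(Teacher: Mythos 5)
Your proposal follows essentially the same route as the paper's proof: the localization graph sum with vertex/edge/leg contributions, the identification of $A_2,B_2,B_4$ as propagator variables whose derivatives of the edge factor implement edge-cutting (separating edges yielding products of lower-genus one-pointed series, non-separating edges yielding the two-pointed genus-$(g-1)$ series), and the two equations arising from the $H$--$H^2$ and $H$--$H$ couplings via the operators $\mathds{D}_1$ and $\mathds{D}_2$. The one point of emphasis to adjust is that the two algebraic constraints on the $\lambda_i$ enter only through the Appendix's admissibility argument establishing $R_{k,i}\in\mathds{G}_3$ (Conjecture \ref{RPolyaaa}); the precise propagator coefficients come from the unconditional recursions of Lemma \ref{RRRaaa} together with the WDVV-derived relations, and the final identification of the right-hand side with $T$-derivatives uses wall-crossing and the divisor equation.
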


For Calabi-Yau 3-folds, holomorphic anomaly equations were first discovered in physics (\cite{BCOV}). Also there were many studies to understand holomorphic anomaly equations mathematically (\cite{GJY,LP,LP2,ObPix}). But less is known for higher dimensional Calabi-Yau manifolds in physics. It might be interesting question to find physical arguments for the holomorphic anomaly equation for $K\PP^3$ proposed in our paper.

\subsection{Holomorphic anomaly for equivariant formal quintic invariants}

A particular twisted theory on $\PP^4$ related to the quintic 3-fold was introduced in \cite{LP2}. Let the algebraic torus
$$\mathsf{T}=(\com^*)^5$$
 act with the standard linearization
 on $\PP^4$ with weights
 $\lambda_0,\ldots,\lambda_4$
 on the vector space  $H^0(\PP^4,\mathcal{O}_{\PP^4}(1))$.

Let
\begin{equation}\label{gred}
\mathsf{C} \rightarrow \overline{M}_g(\mathbb{P}^4,d)
\, , \ \ \ 
f:\mathsf{C}\rightarrow \PP^4
\, , \ \ \ 
\mathsf{S}=f^*\mathcal{O}_{\PP^4}(-1) \rightarrow \mathsf{C}\, 
\end{equation}
be the universal curve,
the universal map,
and the universal bundle over
the moduli space of stable maps
--- all
equipped with canonical $\T$-actions.
We define the {\em formal quintic invariants} by{\footnote{The negative
exponent denotes the dual:
$\mathsf{S}$ is a line bundle and $\mathsf{S}^{-5}=(\mathsf{S}^\star)^{\otimes 5}$.}}
\begin{equation}\label{fredfredfred}
\widetilde{N}_{g,d}^{\mathsf{GW}}= \int_{[\overline{M}_g(\mathbb{P}^4,d)]^{vir}} e(R\pi_*(\mathsf{S}^{-5}))\, , \   
\end{equation}
where $e(R\pi_*(\mathsf{S}^{-5}))$
is the equivariant Euler class defined
{\em after} localization. More precisely, on each $\T$-fixed locus of
$\overline{M}_g(\mathbb{P}^4,d)$, both
$$R^0\pi_*(\mathsf{S}^{-5}) \ \ \ \text{and} \ \ \
R^1\pi_*(\mathsf{S}^{-5})$$
are vector bundles with moving weights, so
$$e(R\pi_*(\mathsf{S}^{-5})) = \frac{c_{\text{top}}(R^0\pi_*(\mathsf{S}^{-5}))}
{c_{\text{top}}(R^1\pi_*(\mathsf{S}^{-5}))}$$
is well-defined.
The integral \eqref{fredfredfred} is
homogeneous of degree 0 in localized
equivariant cohomology
and defines a rational function in $\lambda_i$, 
 $$\widetilde{N}_{g,d}^{\mathsf{GW}}\in {\mathbb{C}}(\lambda_0,\dots,\lambda_4)\,.$$
Let $g\geq 2$.
The associated Gromov-Witten generating series is 
$$\widetilde{\mathcal{F}}^{\mathsf{GW}}_g(Q)\, =\, 
\sum_{d=0}^\infty \widetilde{N}_{g,d}^{\mathsf{GW}} Q^d\, 
\in \, \mathbb{C}[[Q]] \, .$$
Let 
$$I^{\mathsf{Q}}_0(q)=\sum_{d=0}^\infty q^d \frac{(5d)!}{(d!)^5}\, ,  \ \ \ 
I^{\mathsf{Q}}_1(q)= \log(q)I^{\mathsf{Q}}_0(q) + 5 \sum_{d=1}^\infty q^d \frac{(5d)!}{(d!)^5} 
\left( \sum_{r=d+1}^{5d} \frac{1}{r}\right)\, .
$$
We {\em define} the generating series
of stable quotient invariants for
formal quintic theory by the
wall-crossing formula 
for the true quintic theory which has been recently proven by Ciocan-Fontanine and Kim in \cite{CKw},
\begin{equation}\label{jjed}
\widetilde{\mathcal{F}}_{g}^{\mathsf{GW}}(Q(q)) =
I^{\mathsf{Q}}_0(q)^{2g-2} \cdot \widetilde{\mathcal{F}}_{g}^{\mathsf{SQ}}(q)
\end{equation}
with respect to the true quintic mirror map
$$Q(q) = \exp\left(\frac{I^{\mathsf{Q}}_1(q)}
{I^{\mathsf{Q}}_0(q)}\right)\, = \, 
q\cdot \exp\left( \frac{5 \sum_{d=1}^\infty q^d \frac{(5d)!}{(d!)^5} 
\left( \sum_{r=d+1}^{5d} \frac{1}{r}\right)}{\sum_{d=0}^\infty\frac{(5d)!}{(d!)^5}} \right)
\, .$$
Denote the B-model side of \eqref{jjed} by
$$\widetilde{F}^{\mathsf{B}}_g(q)= I^{\mathsf{Q}}_0(q)^{2g-2} \widetilde{\mathcal{F}}^{\mathsf{SQ}}_g(q)\, .$$

In order to state the holomorphic anomaly equations,
we require several series in $q$. First, let
$$L(q) \, =\,   (1-5^5 q)^{-\frac{1}{5}}\,  = \, 1+625q+117185 q^2 +\ldots\, .$$
Let $\mathsf{D}=q\frac{d}{dq}$, and let
$$C_0(q)= I^{\mathsf{Q}}_0\, , \ \ \ 
C_1(q)= \mathsf{D} \left( \frac{I^{\mathsf{Q}}_1}
{I^{\mathsf{Q}}_0}\right)\, .$$
We define
\begin{eqnarray*}
K_2(q)&=& -\frac{1}{L^5} \frac{\DD C_0}{C_0}\,  ,
\\
A_2(q)&=& \frac{1}{L^5}\left( -\frac{1}{5}\frac{\DD C_1}{C_1}-\frac{2}{5}\frac{\DD C_0}{C_0}-\frac{3}{25}\right)\, ,\\
A_4(q) &=& \frac{1}{L^{10}} \Bigg(-\frac{1}{25}\left(
\frac{\DD C_0}{C_0}\right)^2-\frac{1}{25}
\left(\frac{\DD C_0}{C_0}\right)\left(\frac{\DD C_1}{C_1}\right)
\, \\
& &
+\frac{1}{25}\DD\left(\frac{\DD C_0}{C_0}\right)+\frac{2}{25^2} \Bigg)\,  ,\\
A_6(q) &=&\frac{1}{31250L^{15} }\Bigg( 4+125 \DD\left(\frac{\DD C_0}{C_0}\right)
+50\left(\frac{\DD C_0}{C_0}\right)
\left(1+10 \DD \left(\frac{\DD C_0}{C_0}
\right)\right)\,    \\
& & -5L^5\Bigg(1+10\left(\frac{\DD C_0}{C_0}\right)+25
\left(\frac{\DD C_0}{C_0}\right)^2+25\DD
\left(\frac{q\frac{d}{dq}C_0}{C_0}\right)\Bigg)\,  \\
& &+125\DD^2\left(\frac{\DD C_0}{C_0}\right)-125\left(\frac{\DD C_0}{C_0}\right)^2\Bigg(\left(\frac{\DD C_1}{C_1}\right)-1\Bigg) \Bigg)\, .
\end{eqnarray*}
For the full equivariant formal quintic theory, we need extra series $B_1,B_2,B_3,B_4\in \CC[[q]]$ obtained from $I$-function of quintic. We will give the exact definitions of these extra series in forthcoming paper \cite{HL2}. These series are closely related to the extra generators in \cite{GJY}, where the formal quintic theory was studied in genus 2 case with connections to real quintic theory.

We also need new series $L_i(q)$ defined by roots of following degree $5$ polynomial in $\mathcal{L}$ for $i=0,\dots,4$:

$$(1-5^5q)\mathcal{L}^5-s_1\mathcal{L}^4+s_2\mathcal{L}^3-s_3\mathcal{L}^2+s_4\mathcal{L}-s_5\,,$$
with initial conditions,
$$L_i(0)=\lambda_i\,.$$
Here $s_k$ is $k$-th elementary symmetric function in $\lambda_0,\dots,\lambda_4$.
Let $f_4$ be the polynomial of degree $4$ in variable $x$ over $\CC(\lambda_0,\dots,\lambda_4)$ defined by
$$f_4(x):=s_1x^4-2s_2x^3+3s_3x^2-4s_3x+5s_4\,.$$
The ring $$\mathds{G}_{\mathsf{Q}}:=\CC(\lambda_0,\dots,\lambda_4)[L_0^{\pm 1},\dots,L_4^{\pm 1},f_4(L_0)^{-\frac{1}{2}},\dots,f_4(L_4)^{-\frac{1}{2}}]$$ will play a basic role in formal quintic theory.

Let $T$ be the standard coordinate mirror to $t=\log(q)$,
\begin{align*} T= \frac{I^{\mathsf{Q}}_1(q)}{I^{\mathsf{Q}}_0(q)}\, .
\end{align*}
Then $Q(q)=\exp(T)$ is the mirror map. Let $$\mathds{G}_{\mathsf{Q}}[A_2,A_4,A_6,B_1,B_2,B_3,B_4,C_0^{\pm 1},C_1^{-1},K_2]$$ 
be the free
polynomial ring over $\mathds{G}_{\mathsf{Q}}$.

\begin{Conj} \label{ooo5} For the series 
$\widetilde{\mathcal{F}}_g^{\mathsf{B}}$
associated to the formal quintic, 
\begin{enumerate}
\item[(i)]
$\widetilde{\mathcal{F}}_g^{\mathsf{B}} (q) \in \mathds{G}_{\mathsf{Q}}[A_2,A_4,A_6,B_1,\dots,B_4,C_0^{\pm 1},C_1^{-1},K_2]$
for $g\geq 2$, \vspace{5pt}

\item[(ii)]
$\frac{\partial^k \widetilde{\mathcal{F}}_g^{\mathsf{B}}}{\partial T^k}(q) \in \mathds{G}_{\mathsf{Q}}[A_2,A_4,A_6,B_1,\dots,B_4,C_0^{\pm 1},C_1^{-1},K_2]$ for  $g\geq 1$, $k\geq 1$,
\vspace{5pt}

\item[(iii)]
${\frac{\partial^k \widetilde{\mathcal{F}}_g^{\mathsf{B}}}{\partial T^k}}$ is homogeneous 
with respect  to $C_1^{-1}$ 
of degree $k$.
\end{enumerate}
\end{Conj}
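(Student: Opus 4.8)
The plan is to follow the $R$-matrix / Givental--Teleman strategy of \cite{LP,LP2}, now executed in the full equivariant setting made available by the Appendix (the generalization of \cite{ZaZi} away from the root-of-unity specialization $\lambda_i=\zeta^i$). The equivariant formal quintic theory is a semisimple twisted CohFT --- semisimplicity coming from the isolated $\mathsf{T}$-fixed points of $\PP^4$ --- so by the Givental--Teleman classification its higher-genus potential is reconstructed from genus-$0$ data by the action of an $R$-matrix. Concretely, passing through the wall-crossing \eqref{jjed} so that $\widetilde{\mathcal{F}}_g^{\mathsf{B}}=(I_0^{\mathsf{Q}})^{2g-2}\,\widetilde{\mathcal{F}}_g^{\mathsf{SQ}}=\widetilde{\mathcal{F}}_g^{\mathsf{GW}}(Q(q))$, I would write $\widetilde{\mathcal{F}}_g^{\mathsf{B}}$ as a finite sum over decorated stable graphs $\Gamma$ of genus $g$, each contribution assembled from vertex factors and edge (propagator) factors built from the $R$-matrix. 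All dependence on $q$ is concentrated in the fundamental solution $S(z)$ of the quantum differential equation and its asymptotic Birkhoff factorization $S(z)\sim R(z)\cdot(\text{exponential/diagonal part})$.

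The central step is to prove that, after normalizing by the appropriate powers of $L(q)=(1-5^5 q)^{-1/5}$, the entries of $R(z)$ and of the normalized fundamental solution lie in the ring $\mathds{G}_{\mathsf{Q}}[A_2,A_4,A_6,B_1,\dots,B_4,C_0^{\pm 1},C_1^{-1},K_2]$. Concretely, the coefficients produced by iterating the Picard--Fuchs operator of the quintic are generated over $\mathds{G}_{\mathsf{Q}}$ by $\mathsf{D} C_0/C_0$, $\mathsf{D} C_1/C_1$ and their iterated $\mathsf{D}$-derivatives; by the very definitions of $A_2,A_4,A_6,K_2$ these are $\mathds{G}_{\mathsf{Q}}$-combinations of those series, while the remaining purely equivariant corrections are absorbed into $B_1,\dots,B_4$, whose definitions from the equivariant $I$-function are supplied in \cite{HL2}. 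The degree-$5$ spectral polynomial with roots $L_i(q)$ and the square roots $f_4(L_i)^{-1/2}$ enter precisely because they diagonalize the equivariant quantum product at the $\mathsf{T}$-fixed points, which is why $\mathds{G}_{\mathsf{Q}}$ is the natural coefficient ring.

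Granting this, part (i) is immediate: for fixed $g$ only finitely many graphs contribute and each factor lies in the ring, hence so does $\widetilde{\mathcal{F}}_g^{\mathsf{B}}$. Parts (ii) and (iii) then follow from the identity $\partial_T=C_1^{-1}\mathsf{D}$, valid because $\mathsf{D} T=C_1$ by definition of $C_1$: each $T$-derivative introduces exactly one factor $C_1^{-1}$, giving the degree-$k$ homogeneity in (iii), provided the ring is closed under the derivation $\mathsf{D}$. This closure reduces to a finite list of derivation relations expressing $\mathsf{D} L$, $\mathsf{D} L_i$, $\mathsf{D} C_0$, $\mathsf{D} C_1$, $\mathsf{D} A_2$, $\mathsf{D} A_4$, $\mathsf{D} A_6$, $\mathsf{D} B_j$ and $\mathsf{D} K_2$ as elements of the ring; these are verified directly from the defining formulas (for instance $\mathsf{D} L=\tfrac{1}{5}L(L^5-1)$) together with the quantum differential equation satisfied by $I_0^{\mathsf{Q}}$.

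The main obstacle is the central step: proving that the $R$-matrix entries genuinely lie in $\mathds{G}_{\mathsf{Q}}[A_2,A_4,A_6,B_1,\dots,B_4,C_0^{\pm 1},C_1^{-1},K_2]$ in the full equivariant theory. Away from $\lambda_i=\zeta^i$ the clean recursion of \cite{ZaZi} no longer closes, and one must instead control the asymptotics of the equivariant $I$-function through the factorization dictated by the degree-$5$ spectral polynomial, which is exactly the content of the Appendix. Pinning down the equivariant generators $B_1,\dots,B_4$ and proving that they, together with $A_2,A_4,A_6,K_2$, close the ring under $\mathsf{D}$ is the crux, and is what the forthcoming analysis of \cite{HL2} is designed to provide. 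A secondary difficulty is the bookkeeping of $C_0$- and $C_1$-weights across the graph sum, so that the normalization $(I_0^{\mathsf{Q}})^{2g-2}$ renders all powers of $C_0$ integral and the only negative powers of $C_1$ are those introduced by $T$-differentiation.
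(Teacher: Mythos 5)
There is no complete proof to compare against: the statement you were asked to prove is stated in the paper as a \emph{conjecture} (Conjecture \ref{ooo5}), and the paper itself does not prove it. What the paper offers is Theorem \ref{tttt}, which asserts the conjecture only for special values of $\lambda_0,\dots,\lambda_4$ satisfying $2s_1s_3=s_2^2$, $8s_1^2s_4=s_2^3$, $80s_1^3s_5=s_2^4$, and even that proof is explicitly deferred to the forthcoming paper \cite{HL2}, together with the definitions of the series $B_1,\dots,B_4$. Your outline correctly reproduces the strategy the paper carries out in detail for $K\PP^2$ and $K\PP^3$ (Sections 4--7): a $\T$-fixed-point graph sum, vertex/edge/leg contributions controlled by the asymptotic expansion of the equivariant $I$-function and the operators $\overline{\mathds{S}}(H^j)$, wall-crossing to pass between $\mathsf{GW}$ and $\mathsf{SQ}$ normalizations, and closure of the generator ring under $\mathsf{D}$ to get statements (ii)--(iii). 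This is evidently the intended route for the quintic as well.

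The gap in your proposal is exactly the one the paper acknowledges, and it is genuine rather than a formality. First, the ``central step'' --- that the coefficients $R_{k,i}$ of the asymptotic expansion of $\bar{I}|_{H=\lambda_i}$ lie in $\mathds{G}_{\mathsf{Q}}$ --- is the full equivariant analogue of Zagier--Zinger polynomiality (Conjecture \ref{MC} of the Appendix for $n=4$). The Appendix proves this only for $n=1$ unconditionally and for $n\ge 2$ only under specializations making $f_n(x)$ a power of a linear polynomial; for generic $\lambda_i$ it is open, which is why the paper's own claim is restricted to the loci in Theorem \ref{tttt}. Your proof cannot assert Conjecture \ref{ooo5} for all $\lambda_i$ without this input. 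Second, the generators $B_1,\dots,B_4$ are nowhere defined in this paper, so the claim that the ``remaining purely equivariant corrections are absorbed into $B_1,\dots,B_4$'' and that the ring closes under $\mathsf{D}$ cannot be checked; this is precisely the content promised in \cite{HL2}. A smaller issue: your derivation of the degree-$k$ homogeneity in $C_1^{-1}$ from $\partial_T=C_1^{-1}\mathsf{D}$ implicitly needs $\widetilde{\mathcal{F}}_g^{\mathsf{B}}$ itself to be of $C_1^{-1}$-degree $0$ and $\mathsf{D}$ to preserve the $C_1$-grading of the other generators; since part (i) allows $C_1^{-1}$ in the ring, this requires a separate weight-bookkeeping argument (you gesture at it for $C_0$, but it must also be done for $C_1$). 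In short: your proposal is a faithful reconstruction of the intended strategy, honestly flags its own missing step, but is a proof sketch conditional on unproven inputs, matching the status of the statement in the paper itself.
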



\begin{Conj} \label{ttt5} The series $\widetilde{\mathcal{F}}^{\mathsf{B}}_g$
associated to the formal quintic satisfy some holomorphic anomaly equations which specialize to 
\begin{multline*}
\frac{1}{C_0^2C_1^2}\frac{\partial \widetilde{\mathcal{F}}_g^{\mathsf{B}}}{\partial{A_2}}-\frac{1}{5C_0^2C_1^2}\frac{\partial \widetilde{\mathcal{F}}_g^{\mathsf{B}}}{\partial{A_4}}K_2+\frac{1}{50C_0^2C_1^2}\frac{\partial \widetilde{\mathcal{F}}_g^{\mathsf{B}}}{\partial{A_6}}K_2^2
= \\
\frac{1}{2}\sum_{i=1}^{g-1} 
\frac{\partial \widetilde{\mathcal{F}}_{g-i}^{\mathsf{B}}}{\partial{T}}
\frac{\partial \widetilde{\mathcal{F}}_i^{\mathsf{B}}}{\partial{T}}
+
\frac{1}{2}\,
\frac{\partial^2 \widetilde{\mathcal{F}}_{g-1}^{\mathsf{B}}}{\partial{T}^2}\,,
\end{multline*}
\begin{align*}
\frac{\partial \widetilde{\mathcal{F}}_g^{\mathsf{B}}}{\partial K_2}=0\,,
\end{align*}
with the choice of $\lambda_i=\zeta_5^i$.
\end{Conj}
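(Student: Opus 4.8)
The plan is to follow the localization strategy of \cite{LP2}, replacing the specialized analysis of the $I$-function of \cite{ZaZi} by its full equivariant refinement developed in the Appendix. First I would express the $B$-model series $\widetilde{\mathcal{F}}^{\mathsf{B}}_g$ through the $\mathsf{T}$-fixed loci of the stable quotient moduli $\overline{Q}_g(\PP^4,d)$ and its $\PP^1$-graph version: these loci are indexed by decorated trees whose vertices carry a genus and one of the torus-fixed points $0,\dots,4$ of $\PP^4$ and whose edges record degrees. Virtual localization then writes $\widetilde{\mathcal{F}}^{\mathsf{B}}_g$ as a finite sum over such graphs of products of vertex contributions, edge contributions, and a global normalization by powers of $I^{\mathsf{Q}}_0$. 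Summing the legs against the $I$-function assembles the vertex contributions into the Givental--Teleman $R$-matrix action on the formal quintic cohomological field theory, so that the graph sum becomes the standard $R$-matrix reconstruction of the higher-genus potential.

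The second step is to establish the finite-generation statement of Conjecture \ref{ooo5}, which is the precondition making the left side of Conjecture \ref{ttt5} meaningful. Here the roots $L_0,\dots,L_4$ of the degree $5$ polynomial $(1-5^5q)\mathcal{L}^5-s_1\mathcal{L}^4+\cdots-s_5$ play the role of the full-equivariant replacements of the fifth roots of unity used in \cite{LP2}: they are precisely the values to which the equivariant fixed-point weights specialize, and the factors $f_4(L_i)^{-1/2}$ record the Hessian of the superpotential at each critical point, which is why the relevant ring is $\mathds{G}_{\mathsf{Q}}$. I would show that each vertex and edge contribution, after cancellation of the equivariant localization parameter, lies in $\mathds{G}_{\mathsf{Q}}[A_2,A_4,A_6,B_1,\dots,B_4,C_0^{\pm 1},C_1^{-1},K_2]$, and that the total degree in $C_1^{-1}$ equals the number of insertions, yielding parts (i)--(iii) of Conjecture \ref{ooo5} including the homogeneity claim.

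The heart of the argument is the anomaly equation itself. The generators $A_2,A_4,A_6$ (and in the full equivariant theory the series $B_1,\dots,B_4$) enter only through the propagator attached to the edges of the localization graph: one identifies $\mathsf{D}$ of the fundamental solution matrix with an explicit combination of these series, so that $\partial/\partial A_2$, $\partial/\partial A_4$, $\partial/\partial A_6$ together act as differentiation of the propagator. Differentiating the graph sum with respect to the propagator cuts a single edge; a separating edge splits the graph into subgraphs of genera $g-i$ and $i$, producing the convolution $\sum_{i=1}^{g-1}\partial_T\widetilde{\mathcal{F}}^{\mathsf{B}}_{g-i}\,\partial_T\widetilde{\mathcal{F}}^{\mathsf{B}}_{i}$, while a non-separating edge lowers the genus by one, producing $\partial_T^2\widetilde{\mathcal{F}}^{\mathsf{B}}_{g-1}$. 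The coefficients $\tfrac{1}{C_0^2C_1^2}$, $-\tfrac{1}{5C_0^2C_1^2}K_2$, $\tfrac{1}{50C_0^2C_1^2}K_2^2$ then emerge from expanding the propagator in the distinguished flat basis and collecting powers of $K_2$. The second equation $\partial\widetilde{\mathcal{F}}^{\mathsf{B}}_g/\partial K_2=0$ I would obtain by showing that $K_2$ does not survive once the vertex contributions are assembled, i.e.\ that the solution matrix's dependence on $K_2$ is a gauge artifact that drops out of the CohFT. Specializing $\lambda_i=\zeta_5^i$ collapses $L_i\to\zeta_5^i$ and degenerates $B_1,\dots,B_4$, reducing the full equivariant equation to the two displayed equations and recovering \cite{LP2}.

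The main obstacle will be the full equivariant control of the $I$-function. In \cite{LP2} the specialization $\lambda_i=\zeta_5^i$ made the asymptotic factorization of the fundamental solution and the precise propagator identities tractable; without it one must establish, in the full equivariant setting of the Appendix, both the polynomiality of the $R$-matrix entries in $\mathds{G}_{\mathsf{Q}}[\dots]$ and the exact propagator formula relating $\mathsf{D}$ of the solution matrix to $A_2,A_4,A_6$ and the new series $B_1,\dots,B_4$. Pinning down the $B_i$ (whose definitions are deferred to \cite{HL2}) and verifying that the localization poles in the equivariant parameter cancel to leave a genuine element of $\mathds{G}_{\mathsf{Q}}[\dots]$ is the delicate analytic input on which the whole argument rests.
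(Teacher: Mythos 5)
There is no proof of this statement in the paper for you to be measured against: Conjecture \ref{ttt5} is left as a conjecture, and the text explicitly defers both the precise form of the full equivariant holomorphic anomaly equations and the proof of Theorem \ref{tttt} to the forthcoming paper \cite{HL2}. The specialized displayed equations (with $\lambda_i=\zeta_5^i$) are, as the Remark following the conjecture notes, the main result of \cite{LP2}; what is genuinely open here is the existence of full equivariant equations that specialize to them. Your outline correctly identifies the strategy the paper uses in Sections \ref{hgi}--\ref{hafp333} for $K\PP^2$ and $K\PP^3$ (virtual localization over decorated graphs, vertex/edge/leg contributions, the generators entering only through the edge terms, and edge-cutting producing the separating convolution and the non-separating second derivative), and that is indeed the strategy one expects to apply to the formal quintic.

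However, as a proof the proposal is not complete, and the gaps are exactly the ones that force the paper to leave this as a conjecture. First, the series $B_1,\dots,B_4$ are nowhere defined in this paper, so the full equivariant equation whose existence is asserted cannot even be written down; your third paragraph presupposes an "exact propagator formula" relating $\mathsf{D}$ of the solution matrix to $A_2,A_4,A_6,B_1,\dots,B_4$ that is precisely the missing input. Second, the Appendix of this paper establishes the polynomiality $R_{k,i}\in\mathds{G}_n$ only for the local $\PP^n$ $I$-functions (and only under strong constraints on the $\lambda_i$); the formal quintic $I$-function $I^{\mathsf{Q}}$ has a different structure (a positive $\mathcal{O}(5)$-twist rather than $\mathcal{O}(-n-1)$), and no analogue of Conjecture \ref{RPoly} for it is proven here, even under the specializations of Theorem \ref{tttt}. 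Third, the claim that $\partial\widetilde{\mathcal{F}}^{\mathsf{B}}_g/\partial K_2=0$ because the $K_2$-dependence is a "gauge artifact" is an assertion, not an argument; in \cite{LP2} this requires a separate analysis of how $K_2$ enters the vertex terms. Your closing paragraph candidly acknowledges these obstacles, which is appropriate, but it means the proposal is a programme rather than a proof --- essentially the same programme the author announces and postpones to \cite{HL2}.
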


\noindent 
We expect holomorphic anomaly equations in Conjecture \ref{ttt5} to hold in the ring
\begin{align}\label{FQR}\mathds{G}_{\mathsf{Q}}[A_2,A_4,A_6,B_1,B_2,B_3,B_4,C_0^{\pm1},C_1^{-1},K_2]\, .\end{align}
\begin{Rmk} If we specialize $\lambda_i$ to (the power of) primitive fifth root of unity $\zeta_5$,
\begin{align}\label{spfq}\lambda_i=\zeta_5^i\,,\end{align}
the expected equations in Conjecture \ref{ttt5} exactly matches{\footnote{Our functions
$K_2$ and $A_{2k}$ 
 are normalized differently with respect to $C_0$ and $C_1$.
The dictionary to exactly match the notation of \cite[(2.52)]{ASYZ} is to 
multiply our $K_2$ by $(C_0C_1)^2$ and our $A_{2k}$ by $(C_0C_1)^{2k}$.}}
the
conjectural holomorphic anomaly equation
 \cite[(2.52)]{ASYZ} for
the true quintic theory and this was the main result in \cite{LP2}. Also the ring \eqref{FQR} can be reduced to the Yamaguchi-Yau ring introduced in \cite{YY} for the true quintic theory only with the choice of specialization \eqref{spfq}. This explains why the specialization \eqref{spfq} used in \cite{LP2} is the natural choice.

\end{Rmk}

\begin{Thm}\label{tttt}
Conjecture \ref{ooo5} holds for the choices of $\lambda_0,\dots,\lambda_4$ such that
\begin{align*}
      \lambda_i \ne \lambda_j\,\,\,\text{for}\,\,i \ne j\,,\\
    2 s_1 s_3=s_2^2\,,\\
    8 s_1^2 s_4=s_2^3\,,\\
    80 s_1^3 s_5=s_2^4\,.\,
\end{align*}
where $s_k$ is $k$-th elementary symmetric function in $\lambda_0,\dots,\lambda_4$.
\end{Thm}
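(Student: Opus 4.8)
The plan is to follow the argument of \cite{LP2}, promoting each step to the full equivariant theory by substituting the equivariant $I$-function asymptotics established in the Appendix (the equivariant generalization of \cite{ZaZi}) for the specialized expansions used there. The wall-crossing identity \eqref{jjed} already reduces the statement to the B-model series $\widetilde{\mathcal{F}}_g^{\mathsf{B}}$. I would first rewrite $\widetilde{\mathcal{F}}_g^{\mathsf{B}}$ through $\T$-localization on $\overline{M}_g(\PP^4,d)$, producing a finite sum over decorated stable graphs whose vertices are labelled by the torus-fixed points of $\PP^4$. Following the reconstruction of \cite{LP2}, each graph contributes a product of vertex terms ($\psi$-class integrals over $\overline{M}_{g(v),n(v)}$ dressed by the $R$-matrix), edge terms (the propagator built from $R$), and leg terms; the $R$-matrix is read off from the asymptotic expansion of the fundamental solution of the equivariant quantum differential equation. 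This reduces Conjecture \ref{ooo5} to two statements: (A) the ring $R:=\mathds{G}_{\mathsf{Q}}[A_2,A_4,A_6,B_1,\dots,B_4,C_0^{\pm 1},C_1^{-1},K_2]$ is closed under $\mathsf{D}=q\frac{d}{dq}$, and (B) every building block of the graph sum lies in $R$.

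For (A), the base ring $\mathds{G}_{\mathsf{Q}}$ is already stable under $\mathsf{D}$: differentiating $(1-5^5q)L_i^5-s_1L_i^4+\dots-s_5=0$ gives $\mathsf{D} L_i=5^5q\,L_i^{6}/f_4(L_i)$, and since $1-5^5q=s_5/\prod_j L_j\in\mathds{G}_{\mathsf{Q}}$ and $f_4(L_i)^{-1}=(f_4(L_i)^{-1/2})^2\in\mathds{G}_{\mathsf{Q}}$, we get $\mathsf{D} L_i\in\mathds{G}_{\mathsf{Q}}$ and likewise $\mathsf{D}\,f_4(L_i)^{-1/2}\in\mathds{G}_{\mathsf{Q}}$. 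All powers of the auxiliary series $L=(1-5^5q)^{-1/5}$ occurring in $A_2,A_4,A_6,K_2$ are fifth powers, hence also lie in $\mathds{G}_{\mathsf{Q}}$. The logarithmic derivatives then untangle from the defining formulas: $\mathsf{D} C_0/C_0=-L^5K_2$ and $\mathsf{D} C_1/C_1=-5L^5A_2+2L^5K_2-\frac 35$, both in $R$, and the formulas for $A_4$ and $A_6$ force the second logarithmic derivatives of $C_0$ to lie in $R$ as well. It remains to establish Ramanujan-type relations placing $\mathsf{D} A_2,\mathsf{D} A_4,\mathsf{D} A_6$ and $\mathsf{D} B_1,\dots,\mathsf{D} B_4$ in $R$, with every correcting term of $C_1^{-1}$-degree zero.

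The heart of the matter, and the step I expect to be the main obstacle, is to show that the top relations --- those for $\mathsf{D} A_6$ and for $\mathsf{D} B_1,\dots,\mathsf{D} B_4$ --- close up inside $R$ rather than generating new transcendental series, and that this closure is exactly what the hypotheses $2s_1s_3=s_2^2$, $8s_1^2s_4=s_2^3$, $80s_1^3s_5=s_2^4$ (with $\lambda_i\ne\lambda_j$) provide. The $B_i$ are assembled from the higher components of the equivariant $I$-function, and their $\mathsf{D}$-derivatives re-expand, through the equivariant version of the \cite{ZaZi} recursion, into symmetric functions of the $L_i$ together with $A_{2k}$ and $K_2$; the three constraints are precisely the conditions under which these symmetric combinations collapse onto $\mathds{G}_{\mathsf{Q}}$. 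This is the equivariant counterpart of the single relation $\mathsf{D} L=\frac 15(L^6-L)$ driving the specialized computation of \cite{LP2}, and establishing it requires control of the full equivariant expansion rather than the one-variable calculation available at $\lambda_i=\zeta_5^i$.

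Granting (A) and (B), I would finish by induction on $g$. Part (i) is then immediate, since the graph sum is a finite $\mathds{G}_{\mathsf{Q}}$-linear combination of products of building blocks, all in $R$, the $\overline{M}_{g(v),n(v)}$-integrals contributing only scalars. For (ii) and (iii) I would track the operator $\partial/\partial T=C_1^{-1}\mathsf{D}$: by (A), $\mathsf{D}$ carries $R$ into $R$ while preserving $C_1^{-1}$-degree (all correcting terms above have degree zero, and $\mathsf{D}(C_1^{-1})=-C_1^{-1}\cdot\frac{\mathsf{D} C_1}{C_1}$ keeps degree one), so each application of $\partial/\partial T$ lands in $R$ and raises the $C_1^{-1}$-degree by exactly one. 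As $\widetilde{\mathcal{F}}_g^{\mathsf{B}}$ carries no markings and hence has $C_1^{-1}$-degree zero, its $k$-th $T$-derivative is homogeneous of degree $k$ in $C_1^{-1}$, which gives (iii) and in particular (ii).
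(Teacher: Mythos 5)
There is a genuine gap here, though it is partly forced by the paper itself: the paper does not prove Theorem \ref{tttt} in this text at all --- immediately after the statement it says the proof ``will appear in \cite{HL2}'', so there is no in-paper argument to match your proposal against. Your outline is the right general strategy (it mirrors the $K\PP^2$ and $K\PP^3$ arguments of Sections \ref{hgs}--\ref{hafp333}: localization graph sum, vertex/edge/leg contributions controlled by the asymptotic $R$-series, wall-crossing, closure of a finitely generated ring under $\mathsf{D}$), but as written it is a roadmap rather than a proof, and you say so yourself: the step you call ``the heart of the matter'' --- that $\mathsf{D}A_6$ and $\mathsf{D}B_1,\dots,\mathsf{D}B_4$ close up inside the ring --- is asserted as an expectation, not established. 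This cannot be repaired within the present paper, because the series $B_1,\dots,B_4$ are never defined here (the paper explicitly defers their definition to \cite{HL2}), so membership of anything in $\mathds{G}_{\mathsf{Q}}[A_2,A_4,A_6,B_1,\dots,B_4,C_0^{\pm1},C_1^{-1},K_2]$ is not a checkable claim from the data you have.

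Two further points. First, you misplace where the three hypotheses actually enter. Note that $2s_1s_3=s_2^2$, $8s_1^2s_4=s_2^3$, $80s_1^3s_5=s_2^4$ are exactly the conditions under which $f_4(x)=s_1x^4-2s_2x^3+3s_3x^2-4s_4x+5s_5$ becomes $s_1\bigl(x-\tfrac{s_2}{2s_1}\bigr)^4$, a power of a linear polynomial; this is precisely the situation in which the Appendix's admissibility lemma for degree-one $f$ applies and yields the analogue of Conjecture \ref{MC}, i.e.\ $R_{k,i}\in\mathds{G}_{\mathsf{Q}}$. That is the role of the hypotheses --- unlocking the $R$-polynomiality input --- not, as you suggest, collapsing the $\mathsf{D}B_i$ relations. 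Second, the formal quintic is structurally different from local $\PP^n$ in ways your reduction glosses over: the class $e(R\pi_*(\mathsf{S}^{-5}))$ sits in the numerator and is defined only after localization, the wall-crossing \eqref{jjed} carries the extra factor $I_0^{\mathsf{Q}}(q)^{2g-2}$, and $C_0^{\pm1}$ enters the ring; so the vertex/edge/leg analysis and the degree bookkeeping in $C_0$ require their own treatment and cannot be imported verbatim from the $K\PP^2$ case.
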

\noindent Theorems \ref{tttt} will be proven and the precise form of holomorphic anomaly equations in Conjecture \ref{ttt5} will appear in \cite{HL2}.

\subsection{Acknowledgments}
I thank S. Guo, H. Iritani, F. Janda, B. Kim, A. Klemm, M. C.-C. Liu, G. Oberdieck, Y. Ruan and E. Scheidegger for disccusions over the years about the invariants of Calabi-Yau geometries and holomorphic anomaly equation.
I am especially grateful to  R. Pandharipande for the suggestion of this project and useful discussions. I thank F. Janda for correcting the statements of Conjecture \ref{ooo5} and Theorem \ref{tttt}. I was supported by the grant ERC-2012-AdG-320368-MCSK.

\section{Localization Graph}

\label{locq}
\subsection{Torus action}
Let $\mathsf{T}=(\com^*)^{n+1}$ act diagonally on the vector space $\mathbb{C}^{n+1}$
with weights
$$-\lambda_0, \ldots, -\lambda_n\, .$$
Denote the $\mathsf{T}$-fixed points of 
the induced $\mathsf{T}$-action on $\proj^n$ by
$$p_0, \ldots, p_{n}\, . $$
The weights of $\mathsf{T}$ on the tangent space $T_{p_j}(\proj^n)$ are
$$\lambda_j-\lambda_0, \ldots, \widehat{\lambda_j-\lambda_j}  ,\ldots, \lambda_j-\lambda_{n}\, .$$

There is an induced $\mathsf{T}$-action on 
the moduli space 
$\overline{Q}_{g,k}(\proj^n,d)$.
The localization formula of \cite{GP} applied to the  virtual fundamental class 
$[\overline{Q}_{g,k}(\proj^n,d)]^{vir}$ will play a fundamental role our paper.
The $\mathsf{T}$-fixed loci are represented in terms of dual graphs,
and the contributions of the $\mathsf{T}$-fixed loci are given by
tautological classes. The formulas here are
standard, see \cite{KL,MOP}.

\subsection{Graphs}\label{grgr}
Let the genus $g$ and the number of markings $k$ for the moduli
space be in
the stable range
\begin{equation}\label{dmdm}
2g-2+k>0\, .
\end{equation}
We can organize the $\mathsf{T}$-fixed loci 
of $\overline{Q}_{g,k}(\proj^n,d)$
according to decorated graphs.
A {\em decorated graph} $\Gamma \in \mathsf{G}_{g,k}(\proj^n)$ consists 
of the data $(\mathsf{V}, \mathsf{E}, 
\mathsf{N}, \mathsf{g}, \mathsf{p} )$ where
\begin{enumerate}
 \item[(i)] $\mathsf{V}$ is the vertex set, 
 \item[(ii)] $\mathsf{E}$ is the edge set (including
 possible self-edges),
 \item[(iii)] $\mathsf{N} : \{1,2,..., k\} \rightarrow \mathsf{V}$ is the marking
 assignment,
   \item[(iv)] $\mathsf{g}: \mathsf{V} \rightarrow \ZZ_{\geq 0}$ is a genus
 assignment satisfying
 $$g=\sum_{v \in V} \mathsf{g}(v)+h^1(\Gamma)\, $$
and for which $(\mathsf{V},\mathsf{E},\mathsf{N},\mathsf{g})$ is stable graph{\footnote
{Corresponding to a stratum of the moduli space
of stable curves $\overline{M}_{g,n}$.}}, 
 \item[(v)] $\mathsf{p} : \mathsf{V} \rightarrow ({\PP ^n})^{\mathsf{T}}$ is an assignment of a $\mathsf{T}$-fixed point $\mathsf{p} (v)$ to each vertex $v \in \mathsf{V}$.
\end{enumerate}
The markings $\mathsf{L}=\{1,\ldots,k\}$ are often called {\em legs}.

To each decorated graph $\Gamma\in \mathsf{G}_{g,k}(\proj^n)$, we associate the set of fixed loci of  
$$\sum_{d\geq 0} \left[\overline{Q}_{g, k} (\proj^n, d)\right]^{\vir} q^d$$
with elements described as follows:
\begin{enumerate}
 
 \item[(a)] If $\{v_{i_1},\ldots,v_{i_j}\}=\{v\, |\, \mathsf{p}(v)=p_i\}$, then $f^{-1}(p_i)$ is a disjoint union of connected stable curves of genera $\mathsf{g}(v_{i_1}),\ldots, \mathsf{g}(v_{i_j})$ and finitely many points.
 
 
 \item[(b)] There is a bijective
  correspondence between the connected components of $C \setminus D$ and the set of edges and legs of $\Gamma$ respecting  vertex incidence where $C$ is domain curve and $D$ is union of all subcurves of $C$ which appear in (a). 
  
\end{enumerate}
We write the localization formula as
$$\sum_{d\geq 0} \left[\overline{Q}_{g, k} (\PP^n, d)\right]^{\vir} q^d =
\sum_{\Gamma\in \mathsf{G}_{g,k}(\PP^n)} \text{Cont}_\Gamma\, .$$
While $\mathsf{G}_{g,k}(\proj^n)$ is a finite set,
each contribution $\text{Cont}_\Gamma$ is
a series in $q$ obtained from
an infinite sum over all edge possibilities (b).

\subsection{Unstable graphs}
The moduli spaces of stable quotients $$\overline{Q}_{0,2}(\proj^n,d) \ \ \
\text{and} \ \ \ \overline{Q}_{1,0}(\proj^n,d)$$
for $d>0$
are the only{\footnote{The moduli spaces
$\overline{Q}_{0,0}(\proj^n,d)$ and
$\overline{Q}_{0,1}(\proj^n,d)$
are empty by the definition of a stable
quotient.}}
cases where
the pair $(g,k)$ does 
{\em not} satisfy the Deligne-Mumford stability condition 
\eqref{dmdm}. 

An appropriate set of decorated graphs $\mathsf{G}_{0,2}(\PP^n)$
 is easily defined: The graphs
$\Gamma \in \mathsf{G}_{0,2}(\PP^n)$ all have 2 vertices
connected by a single edge. Each vertex carries a marking.
All  of the  conditions (i)-(v)
of Section \ref{grgr} are satisfied
except for the stability of $(\mathsf{V},\mathsf{E}, \mathsf{N},\gamma)$.
The  localization formula holds,
\begin{eqnarray}\label{ddgg}
\sum_{d\geq 1} \left[\overline{Q}_{0, 2} (\PP^n, d)\right]^{\vir} q^d &=&
\sum_{\Gamma\in \mathsf{G}_{0,2}(\proj^n)} \text{Cont}_\Gamma\,,
\end{eqnarray}
For $\overline{Q}_{1,0}(\proj^n,d)$, the matter
is more problematic --- usually a marking
is introduced to break the symmetry.

\section{Basic correlators}\label{bcbcbc}
\subsection{Overview}
We review here basic generating series in $q$ which 
arise in  the genus 0 theory of quasimap invariants. The series
will play a fundamental role in
the calculations of Sections \ref{hgi} - \ref{hafp}
related
to the holomorphic anomaly equation for $K\proj^2$.

We fix a torus action $\mathsf{T}=(\CC^*)^3$ on $\PP^2$ with
weights{\footnote{The associated weights on
$H^0(\PP^2,\mathcal{O}_{\PP^2}(1))$ are
$\lambda_0,\lambda_1,\lambda_2$
and so match the conventions of
Section \ref{twth}.}}
$$-\lambda_0, -\lambda_1, -\lambda_2$$
on the vector space $\mathbb{C}^3$.
The $\T$-weight on the fiber over
$p_i$ of the canonical
bundle 
\begin{equation}\label{pqq9}
\mathcal{O}_{\PP^2}(-3) \rightarrow \PP^2
\end{equation}
is $-3\lambda_i$.
The toric Calabi-Yau $K\PP^2$
is the total space of \eqref{pqq9}.

\subsection{First correlators}
We require several correlators defined 
via  the Euler class of the obstruction bundle,
$$e(\text{Obs})= e(R^1\pi_* \mathsf{S}^3)\, ,$$
associated to the $K\PP^2$ geometry
on the moduli space $\overline{Q}_{g,n}(\PP^2,d)$.
The first two are obtained from
standard stable quotient invariants.
For $\gamma_i \in H^*_{\T} (\PP^2)$, let
\begin{eqnarray*}
    \Big \langle \gamma_1\psi^{a_1}, ...,\gamma_n\psi^{a_n} \Big\rangle_{g,n,d}^{\mathsf{SQ}}&=& 
    \int_{[\overline{Q}_{g,n}(\PP^2,d)]^{\vir}} 
    e(\text{Obs})\cdot
    \prod_{i=1}^n \text{ev}_i^*(\gamma_i)\psi_i^{a_i},\\
      \Big\langle \Big\langle \gamma _1\psi  ^{a_1} , ..., \gamma _n\psi  ^{a_n} \Big\rangle \Big\rangle _{0, n}^{\mathsf{SQ}} 
    &=& \sum _{d\geq 0}\sum_{k\geq 0} \frac{q^{d}}{k!}
 \Big\lan    \gamma _1\psi  ^{a_1} , ..., \gamma _n\psi  ^{a_n} , t, ..., t  \Big\ran_{0, n+k, d}^{\mathsf{SQ}} , 
 \end{eqnarray*}
 where, in the second series,
 $t \in H_{\T}^* (\PP^2)$.
 We will systematically use the quasimap notation $0+$
for stable quotients,
\begin{eqnarray*}
    \Big \langle \gamma_1\psi^{a_1}, ...,\gamma_n\psi^{a_n} \Big\rangle_{g,n,d}^{0+}&=&
    \Big \langle \gamma_1\psi^{a_1}, ...,\gamma_n\psi^{a_n} \Big\rangle_{g,n,d}^{\mathsf{SQ}} \\
      \Big\langle \Big\langle \gamma _1\psi  ^{a_1} , ..., \gamma _n\psi  ^{a_n} \Big\rangle \Big\rangle _{0, n}^{0+} 
    &=& 
    \Big\langle \Big\langle \gamma _1\psi  ^{a_1} , ..., \gamma _n\psi  ^{a_n} \Big\rangle \Big\rangle _{0, n}^{\mathsf{SQ}}\, . 
 \end{eqnarray*}

\subsection{Light markings}\label{lightm}
Moduli of quasimaps can be considered with $n$ ordinary (weight 1) markings and $k$ light 
(weight $\epsilon$) markings{\footnote{See Sections 2 and 5 of \cite{BigI}.}},
$$\overline{Q}^{0+,0+}_{g,n|k}(\PP^2,d)\, .$$
Let $\gamma_i \in H^*_{\T} (\PP^2)$ be
equivariant cohomology classes, and
let
$$\delta _j \in H^*_{\T} ([\mathbb{C}^3/\com^* ])$$ 
be classes on the stack quotient. 
Following the notation of \cite{KL}, 
we define series for the $K\PP^2$
geometry,

\begin{multline*}
    \Big\lan \gamma _1\psi  ^{a_1} , \ldots, \gamma _n\psi  ^{a_n} ;  \delta _1, \ldots, \delta _k \Big\ran _{g, n|k, d}^{0+, 0+}  = \\
\int _{[\overline{Q}^{0+, 0+}_{g, n|k} (\PP^2, d)]^{\vir}} 
e(\text{Obs})\cdot
\prod _{i=1}^n \text{ev}_i^*(\gamma _i)\psi _i ^{a_i} 
\cdot \prod _{j=1}^k \widehat{\text{ev}}_j ^* (\delta _j)\, , 
\end{multline*}
\begin{multline*}
\Big \langle \Big\langle \gamma _1\psi  ^{a_1} , \ldots, \gamma _n\psi  ^{a_n} \Big\rangle\Big\rangle _{0, n}^{0+, 0+} 
= \\ \sum _{d\geq 0}\sum_{k\geq 0} \frac{q^{d}}{k!}
 \Big\lan    \gamma _1\psi  ^{a_1} , \ldots, \gamma _n\psi  ^{a_n} ; {t}, \ldots, {t}  
 \Big\ran_{0, n|k, d}^{0+, 0+} \, ,
 \end{multline*}
 where, in the second series,
 ${t} \in H_{\T}^* ([\mathbb{C}^3/\com^* ])$.
 
 For each $\T$-fixed point $p_i\in \PP^2$, let 
 $$e_i= e(T_{p_i}(\PP^2))\cdot(-3\lambda_i)$$
 be the equivariant Euler class of
 the tangent space of $K\PP^2$ at $p_i$. Let
 $$ \phi_i=\frac{-3\lambda_i \prod_{j \ne i}(H-\lambda_j)}{e_i}, \ \ \phi^i=e_i \phi_i\ \ \in H^*_{\T}(\PP^2)\, $$ be cycle classes. Crucial for us are the series
 \begin{align*}
\mathds{S}_i(\gamma) & = e_i \Big\langle \Big\langle  \frac{\phi _i}{z-\psi} , \gamma 
\Big\rangle \Big\rangle _{0, 2}^{0+,0+}\ \ ,  \\
\mathds{V}_{ij}  & =  
\Big\langle \Big\langle  \frac{\phi _i}{x- \psi } ,  \frac{\phi _j}{y - \psi } 
\Big\rangle \Big\rangle  _{0, 2}^{0+,0+}  \ \ . 
 \end{align*}
Unstable degree 0 terms are included by hand in the
above formulas. For $\mathds{S}_i(\gamma)$, the unstable degree 0 term is
$\gamma|_{p_i}$. For $\mathds{V}_{ij}$, the unstable degree 0 term is
$\frac{\delta_{ij}}{e_i(x+y)}$.

 We also write $$\mathds{S}(\gamma)=\sum_{i=0}^2 {\phi_i} \mathds{S}_i(\gamma)\, .$$ 
The series $\mathds{S}_i$ and $\mathds{V}_{ij}$
 satisfy the basic relation
\begin{equation}  \label{wdvv} e_i\mathds{V}_{ij} (x, y)e_j   = 
\frac{\sum _{k=0}^2 \mathds{S}_i (\phi_k)|_{z=x} \, \mathds{S}_j(\phi ^k )|_{z=y}}{x+ y}\,   \end{equation}
 proven{\footnote{In Gromov-Witten
 theory, a parallel relation is
 obtained immediately from the
 WDDV equation and the string equation.
 Since the map forgetting a point
 is not always well-defined for
 quasimaps, a different argument 
 is needed here \cite{CKg}}} in \cite{CKg}.
 
 Associated to each $\T$-fixed point $p_i\in \PP^2$,
 there is a special $\T$-fixed point locus, 
\begin{equation}\label{ppqq}
\overline{Q}^{0+, 0+}_{0, k|m} (\PP^2,d) ^{\T, p_i} \subset
\overline{Q}^{0+, 0+}_{0, k|m}(\PP^2, d)\, ,
\end{equation}
where all markings lie on a single connected
genus 0 domain component contracted to $p_i$.
Let $\text{Nor}$ denote the equivariant
normal bundle 
of $Q^{0+, 0+}_{0, n|k} (\PP^2,d) ^{\T, p_i}$
with respect to the embedding \eqref{ppqq}.
Define 
\begin{multline*}
\Big\lan \gamma _1\psi  ^{a_1} , \ldots, \gamma _n\psi  ^{a_n} ;  \delta _1, ..., \delta _k \Big\ran _{0, n|k, d}^{0+, 0+, p_i}  
=\\
\int _{[\overline{Q}^{0+, 0+}_{0, n|k} (\PP^2, d) ^{\T, p_i}]} 
\frac{e(\text{Obs})}{e(\text{Nor})}\cdot
\prod _{i=1}^n \text{ev}_i^*(\gamma _i)\psi _i ^{a_i} \cdot
\prod _{j=1}^k \widehat{\text{ev}}_j ^* (\delta _j) \, ,
\end{multline*}

\begin{multline*}
  \Big\langle \Big\langle
  \gamma _1\psi  ^{a_1} ,\ldots, \gamma _n\psi  ^{a_n} 
  \Big\rangle \Big\rangle  _{0, n}^{0+, 0+, p_i} =\\
 \sum _{d\geq 0}\sum_{k\geq 0} \frac{q^{d}}{k!}
 \Big\lan    \gamma _1\psi  ^{a_1} , \ldots, \gamma _n\psi  ^{a_n} ; {t}, \ldots, {t}  \Big \ran_{0, n|k, \beta}^{0+, 0+, p_i} \, .
\end{multline*}

\subsection{Graph spaces and I-functions}
\subsubsection{Graph spaces}
The {big I-function} is defined in \cite{BigI}
via the geometry of weighted quasimap graph spaces. 
We briefly summarize the constructions of \cite{BigI}
in the special case of 
%
%
$(0+,0+)$-stability. The more general weightings
discussed in \cite{BigI} will not be
needed here.

As in Section \ref{lightm}, we consider the quotient
$$\com^3/\com^*$$
associated to $\PP^2$.
Following \cite{BigI},
there is a $(0+,0+)$-{\em stable quasimap graph space}
 \begin{equation}\label{xmmx}
     \mathsf{QG}_{g, n|k, d }^{0+,0+} ([\com^3/\com^*] ) \, .
 \end{equation}
A $\CC$-point of the graph space is described by data 
$$((C, {\bf x}, {\bf y}), (f,\varphi):C\lra [\CC^3/\CC^*]\times [\CC^2/\CC^*]).$$ 
By the definition of stability, $\varphi$ is a regular map to $$\PP^1=\CC^2/\!\!/\CC^*\, $$ of class $1$.
Hence, the domain curve $C$ has a distinguished irreducible component $C_0$ canonically isomorphic to $\PP ^1$ via $\varphi$. 
The {\em standard} $\CC ^*$-action, 
\begin{equation}\label{tt44}
t\cdot [\xi _0, \xi _1] = [t\xi _0, \xi _1], \, \, \text{ for } t\in \CC ^*, \, [\xi _0, \xi _1]\in \PP ^1,
\end{equation}
induces  a $\CC ^*$-action on the graph space.

The $\CC^*$-equivariant cohomology of a point is
a free algebra with generator $z$,
$$H^*_{\CC ^*} (\Spec (\CC )) = \QQ [z]\, .$$
Our convention is to define
$z$ as the $\CC^*$-equivariant first Chern class of the tangent line $T_0\PP ^1$ at $0\in\PP^1$ with respect to the
action \eqref{tt44},
$$z=c_1(T_0\PP ^1)\, .$$

The $\T$-action on $\com^3$ lifts to a $\T$-action on
the graph space \eqref{xmmx} which commutes with the
$\CC^*$-action obtained from the distinguished domain component.
As a result, we have a $\T\times \CC^*$-action on the graph space
and
 $\T\times\CC ^*$-equivariant evaluation morphisms
\begin{align}
\nonumber     &\text{ev}_i: \mathsf{QG}_{g, n|k, \beta }^{0+, 0+} ([\CC^3/\CC^*] ) \ra       \PP^2 ,  & i=1,\dots,n\, ,\\
\nonumber &\widehat{\text{ev}}_j:  \mathsf{QG}_{g, n|k, \beta }^{0+,0+} 
([\CC^3/\CC^*] ) \ra       [\CC^3/\CC^*] , & j=1,\dots,k\, .
\end{align}
Since a morphism $$f: C \ra [\CC^3/\CC^*]$$ 
is equivalent to the data of a 
principal $\G$-bundle $P$ on $C$ and a section $u$ of $P\times _{\CC^*} 
\CC^3$, 
there is a natural morphism $$C\ra E\CC^* \times _{\CC^*} \CC^3$$ and hence a pull-back map
 \[ f^*:  H^*_{\CC^*}([\CC^3/\CC^*])  \ra H^*(C). \] 
 The above construction applied
 to the universal curve over the moduli space 
 and the universal morphism to $[\CC^3/\CC^*]$ is $\T$-equivariant.
 Hence,
 we obtain a pull-back map 
 \[ \widehat{\text{ev}}_j^*: H^*_{\T}(\CC^3, \QQ)\otimes_\QQ \QQ[z] \ra H^*_{\T\times \CC^*} (\mathsf{QG}_{g, n|k, \beta }^{0+, 0+} 
 ([\CC^3/\CC^*] ) , \QQ ) \]  
 associated to the evaluation map $\widehat{\text{ev}}_j$.
 


\subsubsection{{\em{I}}-functions}
The description of the fixed loci for the $\CC^*$-action 
on $$\mathsf{QG}_{g, 0|k, d }^{0+,0+} 
([\CC^3/\CC^*] )$$
is parallel to the description in \cite[\S4.1]{CKg0} for the unweighted case. 
In particular, there is a distinguished
subset $\F_{k,d}$ of the $\CC ^*$-fixed locus for which all the markings and the entire curve class $d$ lie  over $0 \in \PP ^1$. The locus
$\F_{k,d}$ comes with
a natural {\it proper} evaluation map $ev_{\bullet}$ obtained
from the generic point of $\PP ^1$:
\[ \text{ev}_\bullet:  \F_{k,d} \ra \CC^3/\!\!/\CC^* =\PP^2 .  \]

We can explicitly write
\begin{equation*}\F_{k,d}\cong \F_d\times 0^k\subset \F_d\times (\PP^1)^k,
\end{equation*}
where $\F_d$ is the $\CC^*$-fixed locus in $\mathsf{QG}^{0+}_{0,0,d}([\CC^3/\CC^*])$ for which the class $d$ is concentrated over $0\in\PP^1$.  The locus $\F_d$ parameterizes
quasimaps of class $d$,
$$f:\PP^1\lra [\CC^3/\CC^*]\, ,$$ with a base-point of 
length $d$ at $0\in\PP^1$. The restriction of $f$ to $\PP^1\setminus\{0\}$ is a constant map to $\PP^2$ defining the evaluation
map $\text{ev}_\bullet$.

As in \cite{CK, CKg0,CKM}, we define the big $\mathds{I}$-function as the generating function for
the push-forward via $ev_\bullet$ of localization residue contributions of $\F_{k,d}$.
For ${\bf t}\in  
H^*_{\T} ([\CC^3/\CC^*], \QQ )\ot _{\QQ} \QQ[z]$, let
 \begin{align*} \mathrm{Res}_{\F_{k,d}}({\bf t}^k) &=
 \prod_{j=1}^k \widehat{\text{ev}}_j^*({\bf t})\, \cap\, \mathrm{Res}_{\F_{k,d}}[
 \mathsf{QG}_{g, 0|k, d }^{0+,0+} 
([\CC^3/\CC^*])
 ]^{\mathrm{vir}} \\
 &=\frac{\prod_{j=1}^k \widehat{\text{ev}}_j^*({\bf t})
 \cap [\F_{k,d}]^{\mathrm{vir}}}
 {\mathrm{e}(\text{Nor}^{\mathrm{vir}}_{\F_{k,d}})},
 \end{align*}
 where 
$\text{Nor}^{\mathrm{vir}}_{\F_{k,d}}$ is the virtual normal bundle.

\begin{Def}\label{Je}
 The big $\mathds{I}$-function for the $(0+,0+)$-stability condition,
 as a formal function in $\bf t$,
 is
\begin{equation*}
\mathds{I}
(q,{\bf t}, z)=\sum_{d\geq 0}\sum_{k\geq 0} \frac{q^d}{k!}
\text{\em ev}_{\bullet\, *}\Big(\mathrm{Res}_{\F_{k,d}}({\bf t}^k)
\Big)\, .
\end{equation*} 
\end{Def}

\subsubsection{Evaluations}


Let $\widetilde{H}\in H^*_\T([\CC^3/\CC^*])$ and $H\in H^*_\T(\PP^2)$
denote the respective hyperplane classes. The $\mathds{I}$-function
of Definition \ref{Je} is evaluated in \cite{BigI}.

\begin{Prop} For ${\bf t}=t\widetilde{H} \in H^*_{\T} ([\CC^3/\CC^*], \QQ)$,
 \begin{align}\label{I_Hyper_P} 
\dsI({t}) = \sum _{d=0}^{\infty} q^d e^{t(H+dz)/z} \frac{ \prod _{k=0}^{3d-1}  (-3H - kz)}{\prod^2_{i=0}\prod _{k=1}^d (H-\lambda_i+kz)} . \end{align}

\end{Prop}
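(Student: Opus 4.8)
The Proposition evaluates the big $\mathds{I}$-function of Definition \ref{Je} for the one-variable input ${\bf t}=t\widetilde H$, asserting the closed hypergeometric form \eqref{I_Hyper_P}. This is a genus-$0$, $\CC^*$-fixed-point residue computation on the quasimap graph space for $[\CC^3/\CC^*]$ decorated with the $K\PP^2$ obstruction class $e(R^1\pi_*\mathsf{S}^3)$.

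**Overall approach.** The plan is to compute the series term by term in $d$ and $k$ by explicitly identifying the fixed locus $\F_{k,d}$, its virtual class, and its virtual normal bundle, and then pushing forward under $\mathrm{ev}_\bullet$. I would proceed as follows. First I would reduce to the $k$-dependence: since the $k$ light markings all lie over $0\in\PP^1$ and the evaluation $\widehat{\mathrm{ev}}_j$ pulls back $\widetilde H$, and since (as recorded after Definition \ref{Je}) $\F_{k,d}\cong \F_d\times 0^k$, the product $\prod_{j=1}^k\widehat{\mathrm{ev}}_j^*({\bf t})$ contributes a factor that, after the $\tfrac{q^d}{k!}\sum_k$ resummation, exponentiates. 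Tracking the $\CC^*$-weight $z$ carried by the marked points over $0$ together with the insertion $t\widetilde H$ is exactly what produces the prefactor $e^{t(H+dz)/z}$; I would verify that the $H$ and $dz$ pieces come respectively from the restriction of $\widetilde H$ to $p_i$-type data and from the degree-$d$ base-point concentrated at $0$.

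**The degree-$d$ residue.** The substantive step is computing $\mathrm{Res}_{\F_d}[\mathsf{QG}^{0+}_{0,0,d}]^{\mathrm{vir}}/\mathrm{e}(\mathrm{Nor}^{\mathrm{vir}})$. Here $\F_d$ parametrizes quasimaps $f:\PP^1\to[\CC^3/\CC^*]$ with a length-$d$ base point at $0$, so $f$ is essentially constant to $\PP^2$ away from $0$ and the residue is governed by the $\CC^*$-representation on the space of sections over the distinguished component $C_0\cong\PP^1$. The plan is to compute $\mathrm{e}(\mathrm{Nor}^{\mathrm{vir}})$ as a ratio of equivariant Euler classes of $H^0$ and $H^1$ of the relevant line bundles on $\PP^1$, graded by the $\CC^*$-action \eqref{tt44} so that the $k$-th section slot carries weight $kz$. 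The three coordinate directions of $\CC^3$ (with $\T$-weights $\lambda_i$) yield the denominator $\prod_{i=0}^2\prod_{k=1}^d(H-\lambda_i+kz)$, while the obstruction bundle $R^1\pi_*\mathsf{S}^3$ — i.e. the $\mathcal O(-3)$ direction with weight $-3H$ — yields the numerator $\prod_{k=0}^{3d-1}(-3H-kz)$. I would match each $k$-index range against the Euler sequence / base-point length bookkeeping on $\PP^1$, being careful that the $-3$ twist shifts the section count to $3d$ terms starting at $k=0$ (reflecting $h^0$ of the obstruction line bundle) while the positive directions give $d$ terms starting at $k=1$.

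**The main obstacle.** The delicate part will be the precise equivariant weight bookkeeping on the fixed locus: correctly assigning the $\CC^*$-weight $kz$ to each Fourier/section mode over $0\in\PP^1$, keeping the $\T$-weights $\lambda_i$ and the obstruction weight $-3H$ consistently oriented, and confirming that the unstable/base-point contributions and the tangent-line normalization $z=c_1(T_0\PP^1)$ combine to give exactly the stated index ranges rather than an off-by-one shift. Rather than recomputing this from scratch, I would lean on the parallel fixed-locus analysis of \cite[\S4.1]{CKg0} for the unweighted case cited just before the Proposition, adapting it to the $(0+,0+)$-weighting and the $K\PP^2$ obstruction twist; the evaluation is then the one carried out in \cite{BigI}, so the proof is a matter of specializing that general computation to ${\bf t}=t\widetilde H$ and assembling the above factors into \eqref{I_Hyper_P}.
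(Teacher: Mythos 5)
The paper gives no proof of this Proposition beyond the citation ``evaluated in \cite{BigI}'', and your outline is exactly the standard $\CC^*$-localization on the quasimap graph space that underlies that reference: the light markings resumming to the factor $e^{t(H+dz)/z}$, the three $\T$-weighted coordinate directions giving the denominator, and the $\mathcal{O}(-3)$ obstruction direction giving the numerator, with the final weight bookkeeping deferred to \cite{CKg0} and \cite{BigI}. This is correct and essentially the same approach as the paper.
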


Observe that the $\mathds{I}$-function has following expandsion after restriction $t=0$,

$$\mathds{I}|_{t=0}=1+\frac{I_{1}H}{z}+\frac{I_{2,0}H^2+I_{2,1}(\lambda_0+\lambda_1+\lambda_2)H}{z^2}+\mathcal{O}(\frac{1}{z^3})\,,$$
where
\begin{align*}
    I_1(q)&=\sum_{d=1}^{\infty} 3 \frac{(3d-1)!}{(d!)^3}(-q)^d\,,\\
    I_{2,0}(q)&=\sum_{d=1}^{\infty} 3 \frac{(3d-1)!}{(d!)^3}\Big(3\text{Har}[3d-1]-3\text{Har}[d]\Big)(-q)^d\,,\\
    I_{2,1}(q)&=\sum_{d=1}^{\infty} 3 \frac{(3d-1)!}{(d!)^3}\text{Har}[d](-q)^d\,.
\end{align*}
Here $\text{Har}[d]:=\sum_{k=1}^d\frac{1}{k}$.

We return now to the functions  $\mathds{S}_i(\gamma)$
defined in Section \ref{lightm}.
Using Birkhoff factorization, an evaluation of
the series $\mathds{S}(H^j)$ can be obtained from the $\dsI$-function, see \cite{KL}:
\begin{align}
\nonumber\mathds{S}({1}) & = \mathds{I} \, , \\
\label{S1}\mathds{S}(H) & = \frac{  z\frac{d}{dt} \mathds{S}({1})}{  z\frac{d}{dt} \mathds{S}({1})|_{t=0,H=1,z=\infty}} \, , \\
\nonumber \mathds{S}(H^2) & = \frac{ z\frac{d}{dt} \mathds{S}(H)-(\lambda_0+\lambda_1+\lambda_2)N_2\mathds{S}(H)}{ \Big(z\frac{d}{dt} \mathds{S}(H)-(\lambda_0+\lambda_1+\lambda_2)N_2\mathds{S}(H)\Big)|_{t=0,H=1,z=\infty}}\, .
\end{align}
For a  series $F\in \CC[[\frac{1}{z}]]$, the specialization
$F|_{z=\infty}$ denotes constant term of $F$ with respect to $\frac{1}{z}$. Here, $N_2$ is series in $q$ defined by
$$N_2(q):=d\frac{q}{dq}(\frac{q\frac{d}{dq}I_{2,1}}{1+q\frac{d}{dq}I_{1,0}})\,.$$

\subsubsection{Further calculations}\label{furcalc}
Define small $I$-function 
$$\overline{\mathds{I}}(q)
\in H^*_{\T}(\PP^2,\QQ)[[q]]$$ by the restriction
\begin{align*}
    \overline{\mathds{I}}(q)
    =\mathds{I}(q,{t})|_{t=0}\, .
\end{align*}
Define differential operators
$$\DD = q\frac{d}{dq}\, , \ \ \ M = H+ z \DD.$$
Applying $z\frac{d}{dt}$ to $\mathds{I}$ and then restricting
to $t=0$ has same effect as applying $M$ to 
$\overline{\mathds{I}}$
 \begin{align*}
     \left[\left(z\frac{d}{dt}\right)^k \mathds{I}\right]\Big|_{t=0} = M^k \, 
     \overline{\mathds{I}}\, .
 \end{align*}
The function 
$\overline{\mathds{I}}$
satisfies following Picard-Fuchs equation
\begin{align}
\label{PF}\Big((M-\lambda_0)(M-\lambda_1)(M-\lambda_2)+3qM(3M+z)(3M+2z)\Big) 
\overline{\mathds{I}}=0
\end{align}
implied by the Picard-Fuchs equation for $\mathds{I}$,
\begin{multline*}
    \left(\prod_{j=0}^2\left(z\frac{d}{dt}-\lambda_j\right)+3q\left(z\frac{d}{dt}\right)\left(3\left(z\frac{d}{dt}\right)+z\right)
    \left( 3\left( z\frac{d}{dt}\right)+2z\right)\right)\mathds{I}=0\, .
\end{multline*}

The restriction
$\overline{\mathds{I}}|_{H=\lambda_i}$
admits following asymptotic form
\begin{align}
 \label{assym}
 \overline{\mathds{I}}|_{H=\lambda_i}
 = e^{\mu_i/z}\left( R_{0,i}+R_{1,i} z+R_{2,i} z^2+\ldots\right)
\end{align}
with series 
$\mu_i,R_{k,i} \in \CC(\lambda_0,\lambda_1,\lambda_2)[[q]]$.

A derivation of \eqref{assym} is obtained in \cite{ZaZi} via  
the Picard-Fuchs equation \eqref{PF} for
$\overline{\mathds{I}}|_{H=\lambda_i}$.
The series
$\mu_i$ and 
$R_{k,i}$ are found by solving differential equations obtained from the coefficient of $z^k$. 
For example, 
\begin{eqnarray*}
    \lambda_i+ \DD\mu_i&=& L_i\, , \\
    R_{0,i}&=&\Big(\frac{\lambda_i\prod_{j \ne i}(\lambda_i-\lambda_j)}{f(L_i)}\Big)^{\frac{1}{2}}\, .
\end{eqnarray*}

Define the series $C_1$ and $C_2$ by the equations
\begin{align}
C_1 & = z\frac{d}{dt} \mathds{S}({1})|_{z=\infty,t=0,H=1}\, ,  \label{y999}\\
C_2 & =\Big( z\frac{d}{dt} \mathds{S}(H)-(\lambda_0+\lambda_1+\lambda_2)N_2\mathds{S}(H)\Big)|_{z=\infty,t=0,H=1}\, . \nonumber
\end{align}
The following relation was proven in \cite{ZaZi},
\begin{align}\label{c1c2l}
C_1^2 C_2 &= (1+27q)^{-1}\, .
\end{align}


From the equations \eqref{S1} and \eqref{assym}, we can show the series $$\overline{\mathds{S}}_i({1})=\overline{\mathds{S}}({1})|_{H=\lambda_i}\,, \ \ \overline{\mathds{S}}_i(H)=
\overline{\mathds{S}}(H)|_{H=\lambda_i}\, , \ \ \overline{\mathds{S}}_i(H^2)=\overline{\mathds{S}}(H^2)|_{H=\lambda_i}$$ 
have the following asymptotic expansions:
\begin{align}\nonumber
\overline{\mathds{S}}_i({1}) & = e^{\frac{\mu_i}{z}} \Big(R_{00,i}+R_{01,i}z+R_{02,i} z^2+\ldots\Big) \, ,\\  \label{VS}
\overline{\mathds{S}}_i(H) & = e^{\frac{\mu_i}{z}} \frac{L_i}{C_1} \Big(R_{10,i}+R_{11,i}z+R_{12} z^2+\ldots\Big)\, ,\\ \nonumber
\overline{\mathds{S}}_i(H^2) & = e^{\frac{\mu_i}{z}} \frac{L_i^2}{C_1 C_2} \Big(R_{20,i}+R_{21,i}z+R_{22,i} z^2+\ldots\Big)\, .
 \end{align}
We follow here the normalization of \cite{ZaZi}. Note 
\begin{align*}
    R_{0k,i}=R_{k,i}.
\end{align*}
As in \cite[Theorem 4]{ZaZi}, we expect the following  constraints.
 
\begin{Conj}\label{RPoly}
 For all $k\geq 0$, we have
     $$R_{k,i} \in \mathds{G}_2\,.$$
\end{Conj}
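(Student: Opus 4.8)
The plan is to read off a recursion for the coefficients $R_{k,i}$ from the Picard--Fuchs equation \eqref{PF} and to show that each step preserves membership in $\mathds{G}_2$. First I would restrict \eqref{PF} to the fixed point $p_i$. Since $H\mapsto\lambda_i$ is a ring homomorphism commuting with $\DD$, one has $(M^m\overline{\mathds{I}})|_{H=\lambda_i}=M_i^m\,\overline{\mathds{I}}_i$ with $M_i=\lambda_i+z\DD$ and $\overline{\mathds{I}}_i:=\overline{\mathds{I}}|_{H=\lambda_i}=e^{\mu_i/z}\sum_{k\ge0}R_{k,i}z^k$. Conjugating $M_i$ by $e^{\mu_i/z}$ and using $\lambda_i+\DD\mu_i=L_i$ turns $M_i$ into the operator $\widetilde M_i:=L_i+z\DD$ acting on $R_i:=\sum_k R_{k,i}z^k$, so that $R_i$ is annihilated by $\widetilde{\mathcal D}_i:=(\widetilde M_i-\lambda_0)(\widetilde M_i-\lambda_1)(\widetilde M_i-\lambda_2)+3q\widetilde M_i(3\widetilde M_i+z)(3\widetilde M_i+2z)$. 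Expanding $\widetilde{\mathcal D}_i=\sum_{m\ge0}z^mD_m$ in powers of $z$, the degree-zero operator is multiplication by $\prod_j(L_i-\lambda_j)+27qL_i^3$, which vanishes because $L_i$ is a root of $(1+27q)\mathcal L^3-s_1\mathcal L^2+s_2\mathcal L-s_3$; hence $D_0=0$. Therefore the coefficient of $z^M$ in $\widetilde{\mathcal D}_iR_i=0$ reads $\sum_{m=1}^M D_m R_{M-m,i}=0$, a first-order linear $\DD$-equation $D_1R_{M-1,i}=-\sum_{m\ge2}D_mR_{M-m,i}$ whose right-hand side involves only $R_{0,i},\dots,R_{M-2,i}$ and their $\DD$-derivatives.

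The first structural input is that $\mathds{G}_2$ is closed under $\DD$. Differentiating the defining cubic $G(L_i,q)=(1+27q)L_i^3-s_1L_i^2+s_2L_i-s_3=0$ gives $\partial_LG(L_i)\,\DD L_i+27qL_i^3=0$, and a short computation using $G(L_i,q)=0$ shows $L_i\,\partial_LG(L_i)=f_2(L_i)$, whence $\DD L_i=-27qL_i^4/f_2(L_i)$. Since $(1+27q)=(s_1L_i^2-s_2L_i+s_3)/L_i^3\in\mathds{G}_2$ and $f_2(L_i)^{-1}=(f_2(L_i)^{-1/2})^2\in\mathds{G}_2$, this shows $\DD L_i\in\mathds{G}_2$; the chain rule then gives $\DD(f_2(L_i)^{-1/2})\in\mathds{G}_2$, so $\DD$ maps $\mathds{G}_2$ to itself. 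The second input is the base case: the identity $f_2(\lambda_i)=\lambda_i\prod_{j\ne i}(\lambda_i-\lambda_j)$ shows that the given $R_{0,i}$, characterized by $R_{0,i}^2=\lambda_i\prod_{j\ne i}(\lambda_i-\lambda_j)/f_2(L_i)$, lies in $\mathds{G}_2$ and is a unit there, and that it solves the homogeneous equation $D_1R_{0,i}=0$.

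With these in hand I would run the induction on $k$. Writing $R_{k,i}=R_{0,i}\widetilde R_{k,i}$ and using $D_1R_{0,i}=0$ (so that $D_1=\beta_1\DD+\alpha_1$ collapses to $D_1(R_{0,i}\widetilde R_{k,i})=\beta_1R_{0,i}\,\DD\widetilde R_{k,i}$) reduces the recursion to $\DD\widetilde R_{k,i}=\Psi_k$, where $\Psi_k\in\mathds{G}_2$ by the inductive hypothesis, the closure of $\mathds{G}_2$ under $\DD$, and the fact that $\beta_1$ is a unit in $\mathds{G}_2$. Everything is algebraic except this last antidifferentiation, which is the crux: one must show the relevant primitive again lies in $\mathds{G}_2$. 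Converting to the variable $\mathcal L=L_i$ via $\DD=\dfrac{L\prod_j(L-\lambda_j)}{f_2(L)}\dfrac{d}{dL}$ (using $27q=-\prod_j(L-\lambda_j)/L^3$), the equation becomes $\widetilde R_{k,i}{}'(L)=f_2(L)\Psi_k(L)/\big(L\prod_j(L-\lambda_j)\big)$, and membership in $\mathds{G}_2$ is equivalent to the vanishing of all logarithmic (residue) contributions of the right-hand side at $L=0,\lambda_0,\lambda_1,\lambda_2,\infty$.

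The main obstacle is precisely this residue vanishing. The residue at $L=\lambda_i$ must vanish because $R_{k,i}$ is a genuine power series in $q$ (a coefficient of the $q$-expansion of $\mathds{I}$) and $L\mapsto\lambda_i$ corresponds to $q=0$, where a $\log(L-\lambda_i)\sim\log q$ term is forbidden. The remaining residues (at the other $\lambda_j$, at $0$, and at $\infty$) do not follow from regularity at $q=0$ alone; I would control them by strengthening the induction to track the pole orders of $\Psi_k$ in $L$, together with the relation \eqref{c1c2l}, $C_1^2C_2=(1+27q)^{-1}$, and the compatibility of the three branches $R_{k,0},R_{k,1},R_{k,2}$ as the values of a single algebraic function on the spectral cover of the cubic. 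Establishing this simultaneous residue vanishing for all $k$ is the step I expect to be genuinely delicate, and it is where the hypotheses singling out special $\lambda$ (as in Theorems \ref{MT1}--\ref{MT2}) are likely to enter.
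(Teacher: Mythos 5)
Your proposal correctly reproduces the standard setup: restricting the Picard--Fuchs equation \eqref{PF} to $H=\lambda_i$, conjugating by $e^{\mu_i/z}$, observing that the $z^0$-part of the conjugated operator vanishes because $L_i$ is a root of $(1+27q)\mathcal{L}^3-s_1\mathcal{L}^2+s_2\mathcal{L}-s_3$, and thereby obtaining a first-order recursion $D_1R_{M-1,i}=-\sum_{m\ge 2}D_mR_{M-m,i}$. Your verification that $\DD L_i=-27qL_i^4/f_2(L_i)$, hence that $\mathds{G}_2$ is closed under $\DD$, and your base case $R_{0,i}$ are all correct and agree with Section \ref{furcalcaaa} and the Appendix. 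But you should be aware that the statement you are trying to prove is a \emph{conjecture} which the paper itself does not establish for general weights: it is proved only under the specialization $s_2^2-3s_1s_3=0$ of Theorem \ref{MT1}, in Section \ref{PP2} of the Appendix. The step you flag as ``genuinely delicate'' --- showing that the antiderivative of $\Psi_k$ lands back in $\mathds{G}_2$, i.e.\ that no logarithms are produced --- is precisely the open obstruction, and your proposal does not close it.

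What the paper does under the specialization is worth comparing with your residue-vanishing plan. When $s_2^2=3s_1s_3$ one has $f_2(x)=(s_1x-s_2)^2/s_1$, so after the normalization $R_{k,i}=f_2(L_i)^{-1/2}\Phi_{k,i}$ and the change of variable $\mathsf{D}_i=(\mathsf{D}L_i)^{-1}\mathsf{D}=d/dL_i$, the recursion \eqref{DFlp2} lives entirely in the Laurent ring $\CC(\lambda_0,\lambda_1,\lambda_2)[L_i][(s_1L_i-s_2)^{-1}]$, and the explicit coefficients $A_{jl,i}$ are checked to satisfy the order bounds \eqref{D1C} with respect to the linear polynomial $s_1L_i-s_2$. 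The admissibility lemmas then guarantee, by the ``simple induction argument,'' that integrating the right-hand side never produces a $(s_1L_i-s_2)^{-1}$ term, hence no $\log$; this is exactly your residue condition, but verified by bookkeeping in a single linear factor rather than at several points. Two further cautions about your sketch: (a) your list of potential residue locations $L=0,\lambda_0,\lambda_1,\lambda_2,\infty$ omits the roots of $f_2$, where elements of $\mathds{G}_2$ genuinely have (half-integer order) poles and which for generic weights are distinct from $0$, $\lambda_j$, and $\infty$; (b) the regularity-at-$q=0$ argument you give kills only the residue at $L=\lambda_i$, and the appeal to ``compatibility of the three branches'' and to \eqref{c1c2l} is not a proof. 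So the proposal is an accurate map of the problem, with a real gap exactly where you say it is --- and that gap is why the paper states the result as Conjecture \ref{RPoly} and proves Theorems \ref{MT1}--\ref{MT2} only for the degenerate weights.
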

Conjecture \ref{RPoly} is the main obstruction for the proof of Conjecture \ref{ooo} and \ref{HAE}. By the same argument of Section \ref{hafp}, we obtain the following result.
\begin{Thm}
Conjecture \ref{RPoly} implies Conjecture \ref{ooo} and \ref{HAE}.
\end{Thm}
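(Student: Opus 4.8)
The plan is to run the $\T$-equivariant localization argument of Section~\ref{hafp} with Conjecture~\ref{RPoly} as its single non-formal input. All of the geometry enters through the coefficients $R_{k,i}$ of the asymptotic expansions \eqref{VS}, which are determined recursively by the Picard--Fuchs equation \eqref{PF}; Conjecture~\ref{RPoly} is exactly the assertion that these solutions lie in $\mathds{G}_2$, and once this is granted every remaining step is unconditional. I would first apply virtual localization to $\sum_d[\overline{Q}_{g,0}(\PP^2,d)]^{\vir}q^d$ and organize the $\T$-fixed loci by the decorated graphs $\Gamma\in\mathsf{G}_{g,0}(\PP^2)$ of Section~\ref{grgr}. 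Each vertex over $p_i$ contributes an integral twisted by $e(\mathrm{Obs})$ governed by $\overline{\mathds{S}}_i(H^j)$, and each edge contributes a propagator governed by $\mathds{V}_{ij}$. The key reduction is to eliminate the edge series using the WDVV relation \eqref{wdvv}, which rewrites $e_i\mathds{V}_{ij}(x,y)e_j$ as $\tfrac{1}{x+y}\sum_k\mathds{S}_i(\phi_k)\mathds{S}_j(\phi^k)$; after this, the whole graph sum is expressed through $\mu_i,L_i,C_1,C_2$ and the $R_{k,i}$, with no further reference to the moduli integrals.

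Next I would establish Conjecture~\ref{ooo}. Granting $R_{k,i}\in\mathds{G}_2$, the leading normalizations $1,\ L_i/C_1,\ L_i^2/(C_1C_2)$ of \eqref{VS}, together with $C_1^2C_2=(1+27q)^{-1}\in\mathds{G}_2$ from \eqref{c1c2l} and $\DD\mu_i=L_i-\lambda_i\in\mathds{G}_2$, show that each vertex factor lies in $\mathds{G}_2$ once the powers of $C_1$ are collected. The two extra generators enter only through the operator $\DD=q\,d/dq$: differentiating the cubic defining $L_i$ gives the closed form $\DD L_i=-27q\,L_i^4/f(L_i)\in\mathds{G}_2$, so $\DD$ preserves $\mathds{G}_2$, while the definition of $A_2$ gives $\DD C_1/C_1=\tfrac13(L^3A_2-1+\tfrac{L^3}{2})$, which is the only source of $A_2$. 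The net powers of $C_1$ cancel in the closed series $\mathcal{F}^{\mathsf{SQ}}_g$, so it is a polynomial in $A_2$ over $\mathds{G}_2$, with $A_2$ produced once per edge and hence of degree at most the number of edges, $3g-3$; this is parts~(i) and~(ii). For the $T$-derivatives one uses $\partial_T=C_1^{-1}\DD$ (since $\DD T=C_1$): each application raises the $C_1^{-1}$-degree by exactly one while $\DD$ preserves it, giving membership in $\mathds{G}_2[A_2,C_1^{-1}]$ and homogeneity of degree $k$, which are parts~(iii) and~(iv).

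For Conjecture~\ref{HAE} I would differentiate the graph sum with respect to $A_2$. Since $A_2$ enters only through the propagators, $\partial/\partial A_2$ acts by breaking a single edge of $\Gamma$; the two half-edges become two new insertions, and the WDVV form of the propagator identifies each with a $\partial_T$-insertion carrying a factor $C_1^{-1}$, which supplies the $1/C_1^2$ on the left-hand side. A separating edge disconnects $\Gamma$ into pieces of genera $g-i$ and $i$, reproducing $\tfrac12\sum_{i=1}^{g-1}\partial_T\mathcal{F}^{\mathsf{SQ}}_{g-i}\,\partial_T\mathcal{F}^{\mathsf{SQ}}_i$, while a non-separating edge lowers $h^1(\Gamma)$ by one and places two insertions on a connected genus $g-1$ component, reproducing $\tfrac12\partial_T^2\mathcal{F}^{\mathsf{SQ}}_{g-1}$. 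This matches the equation of Conjecture~\ref{HAE} term by term.

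The main obstacle is this last identification. One must verify that the propagator depends on $A_2$ linearly and that $\partial/\partial A_2$ applied to it factorizes, through \eqref{wdvv}, as the product of two $z^0$-normalized vertex insertions equal to $\partial_T$-derivatives with the correct $C_1^{-1}$ normalization, leaving no residual terms. Controlling the exponential factors $e^{\mu_i/z}$, which cancel over any closed graph but whose derivatives must be shown not to contribute spuriously, fixing the combinatorial factor $\tfrac12$ from the edge-breaking symmetry, and incorporating the unstable contributions of Section~\ref{grgr} (the loci $\overline{Q}_{0,2}$ and $\overline{Q}_{1,0}$) are the points demanding the most care.
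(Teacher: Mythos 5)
Your proposal is correct and follows essentially the same route as the paper: the theorem is proved there simply by observing that the arguments of Sections \ref{hgs}--\ref{hafp} (graph localization, the vertex/edge Lemmas \ref{L1}--\ref{L2}, and the edge-breaking derivation of \eqref{greww}) use Conjecture \ref{RPoly} as their only non-formal input, exactly as you describe. The one ingredient you leave implicit is the closure of $\mathds{G}_2[A_2]=\mathds{G}_2[X]$ under $\DD$, i.e.\ the relation \eqref{drule} from \cite{LP}, which is what actually lets you iterate $\partial_T=C_1^{-1}\DD$ in parts (iii)--(iv); beyond that, your listed ``points demanding care'' (linearity of the propagator in $A_2$, the factor $\tfrac12$, the unstable loci) are precisely the content of Lemma \ref{L2}, the relation \eqref{NR}, and Section \ref{prttt}.
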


By applying asymptotic expansions \eqref{VS} to \eqref{S1}, we obtain the following results.

\begin{Lemma}\label{RRR} We have
  \begin{align*}
      R_{1\,p+1,i}&=R_{0\,p+1,i}+\frac{\mathsf{D}R_{0\,p,i}}{L_i}\,,\\
      R_{2\,p+1,i}&=R_{1\,p+1,i}+\frac{\mathsf{D}R_{1\,p,i}}{L_i}+\Big(\frac{\mathsf{D}L_i}{L_i^2}-\frac{X}{L_i}\Big)-(\lambda_0+\lambda_1+\lambda_2)N_2\frac{R_{1\,k,i}}{L_i}\,,
  \end{align*}
with $X=\frac{\DD C_1}{C_1}$.  
\end{Lemma}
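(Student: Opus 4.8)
The plan is to derive the two families of recursions in Lemma~\ref{RRR} directly from the Birkhoff-factorization formulas \eqref{S1}, by substituting the asymptotic expansions \eqref{VS} and matching powers of $z$. The key observation is that the operator $z\frac{d}{dt}$, after restriction to $t=0$, becomes the operator $M=H+z\DD$ acting on $\overline{\mathds{I}}$, and that after further restriction $H=\lambda_i$ the series $\overline{\mathds{S}}_i(\gamma)$ all carry the common exponential prefactor $e^{\mu_i/z}$. Since $\DD(e^{\mu_i/z})=e^{\mu_i/z}\frac{\DD\mu_i}{z}$ and $\lambda_i+\DD\mu_i=L_i$ by the asymptotic analysis recalled before \eqref{assym}, conjugating $M$ by this prefactor produces the shift
$$e^{-\mu_i/z}\,M\,e^{\mu_i/z}=\lambda_i+\DD\mu_i+z\DD=L_i+z\DD\,.$$
This is the engine of the whole computation: applying $z\frac{d}{dt}$ to one of the series in \eqref{VS} and stripping the exponential turns into applying $L_i+z\DD$ to the bracketed power series in $z$.

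First I would treat the relation for $\mathds{S}(H)$. Write $\overline{\mathds{S}}_i(1)=e^{\mu_i/z}\sum_p R_{0p,i}z^p$ and apply $z\frac{d}{dt}\mapsto z(L_i+z\DD)/z$—more precisely, using $e^{-\mu_i/z}(z\frac{d}{dt})e^{\mu_i/z}=L_i+z\DD$, I get
$$e^{-\mu_i/z}\,z\tfrac{d}{dt}\,\overline{\mathds{S}}_i(1)=\sum_p\big(L_iR_{0p,i}+\DD R_{0\,p-1,i}\big)z^p\,.$$
Dividing by the normalizing constant $C_1$ from \eqref{y999} (which is exactly the $z=\infty$, $H=1$ evaluation appearing in the denominator of \eqref{S1}) and comparing with the expansion $\overline{\mathds{S}}_i(H)=e^{\mu_i/z}\frac{L_i}{C_1}\sum_p R_{1p,i}z^p$, I match the coefficient of $z^{p+1}$. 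The factor $L_i/C_1$ cancels against the $L_i$ multiplying $R_{0\,p+1,i}$, leaving precisely
$$R_{1\,p+1,i}=R_{0\,p+1,i}+\frac{\DD R_{0\,p,i}}{L_i}\,,$$
which is the first claimed identity. The only subtlety is bookkeeping the index shift coming from the extra power of $z$ carried by $\DD$.

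The second identity, for $\mathds{S}(H^2)$, follows the same template but from the more elaborate formula in \eqref{S1}: I apply $z\frac{d}{dt}$ to $\overline{\mathds{S}}_i(H)$, subtract the correction term $(\lambda_0+\lambda_1+\lambda_2)N_2\,\overline{\mathds{S}}_i(H)$, and normalize by $C_2$ from \eqref{y999}. Now $\overline{\mathds{S}}_i(H)$ carries the prefactor $e^{\mu_i/z}L_i/C_1$, so when I conjugate and differentiate I must also account for $\DD$ hitting the $q$-dependent factor $L_i/C_1$; this is the source of the extra term $\big(\frac{\DD L_i}{L_i^2}-\frac{X}{L_i}\big)$, where $X=\frac{\DD C_1}{C_1}$ is exactly the logarithmic derivative picked up from differentiating $1/C_1$. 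Matching the coefficient of $z^{p+1}$ against $\overline{\mathds{S}}_i(H^2)=e^{\mu_i/z}\frac{L_i^2}{C_1C_2}\sum_p R_{2p,i}z^p$ and cancelling the ratio $L_i/C_2$ between the two normalizations yields the stated formula, with the $N_2$-term surviving as $-(\lambda_0+\lambda_1+\lambda_2)N_2\frac{R_{1k,i}}{L_i}$.

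I expect the main obstacle to be the careful tracking of the $q$-dependent normalizing factors $C_1,C_2$ and the prefactor $L_i/C_1$ under $\DD$: every such factor contributes a logarithmic-derivative term, and I must verify that the relation \eqref{c1c2l}, namely $C_1^2C_2=(1+27q)^{-1}$, together with the definition of $N_2$, is exactly what makes the spurious terms collapse into the compact coefficient $\frac{\DD L_i}{L_i^2}-\frac{X}{L_i}$ rather than a messier expression. In other words, the routine part is the $z$-power matching; the genuine content is checking that the $C_i$-normalizations chosen in \eqref{S1} are precisely those for which the recursion closes within the ring $\mathds{G}_2$, consistent with Conjecture~\ref{RPoly}.
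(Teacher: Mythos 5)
Your proposal is correct and follows exactly the paper's (very terse) argument: substitute the asymptotic expansions \eqref{VS} into the Birkhoff factorization \eqref{S1}, conjugate $M=H+z\DD$ by $e^{\mu_i/z}$ to get $L_i+z\DD$, and match coefficients of $z^{p+1}$, with the $\big(\frac{\DD L_i}{L_i^2}-\frac{X}{L_i}\big)$ term arising from $\DD$ hitting the prefactor $L_i/C_1$. One small remark: the relation \eqref{c1c2l} is not actually needed here --- the recursion closes by pure $z$-power matching, since $C_2$ only enters through the overall prefactor of $\overline{\mathds{S}}_i(H^2)$ and cancels directly.
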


From Lemma \ref{RRR}, we obtain results for $\overline{\mathds{S}}({H})|_{H=\lambda_i}$
and $\overline{\mathds{S}}({H^2})|_{H=\lambda_i}$.
\begin{Lemma}\label{RPoly2} Suppose Conjecture \ref{RPoly} is true. Then for all $k\geq 0$, we have for all $k\geq 0$, 
 \begin{align*}
     &R_{1\,k,i} \in \mathds{G}_2\, ,\\
     &R_{2\,k,i} = Q_{2\,k,i} - \frac{R_{1\, k-1,i}}{L} X-(\lambda_0+\lambda_1+\lambda_2)N_2\frac{R_{1\,k,i}}{L_i}\, ,
 \end{align*}
 with $Q_{2\,k,i}\in \mathds{G}_2$.
\end{Lemma}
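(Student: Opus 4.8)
The plan is to derive Lemma \ref{RPoly2} directly from the explicit recursions of Lemma \ref{RRR}, using Conjecture \ref{RPoly} as the hypothesis. The strategy is a clean induction on the index $k$, treating the two asserted statements—membership of $R_{1\,k,i}$ in $\mathds{G}_2$ and the structural formula for $R_{2\,k,i}$—in sequence, since the formula for $R_{2\,k,i}$ consumes the $R_{1\,k,i}$ and $R_{1\,k-1,i}$ quantities that the first statement controls.

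First I would establish that $R_{1\,k,i}\in\mathds{G}_2$ for all $k\geq 0$. The base case $R_{1\,0,i}$ follows because, from the first equation of Lemma \ref{RRR} at $p=-1$ (or directly from the normalization $R_{0k,i}=R_{k,i}$ together with the evaluation \eqref{S1} of $\mathds{S}(H)$), the leading coefficient $R_{1\,0,i}$ is expressible through $R_{0\,0,i}=R_{0,i}$, which lies in $\mathds{G}_2$ by Conjecture \ref{RPoly}. For the inductive step I would read off from the recursion
$$R_{1\,p+1,i}=R_{0\,p+1,i}+\frac{\mathsf{D}R_{0\,p,i}}{L_i}$$
that $R_{1\,p+1,i}$ is built from $R_{0\,p+1,i}=R_{p+1,i}\in\mathds{G}_2$ (again by Conjecture \ref{RPoly}) together with $\mathsf{D}R_{0\,p,i}/L_i$. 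Here the key point is that $\mathds{G}_2$ is closed under the operator $\mathsf{D}=q\frac{d}{dq}$ and under division by $L_i$: the generators $L_i^{\pm1}$ and $f(L_i)^{-1/2}$ have $q$-derivatives expressible back in $\mathds{G}_2$ via the defining degree-$3$ polynomial relation $(1+27q)L_i^3-\dots=0$ (which, after applying $\mathsf{D}$, expresses $\mathsf{D}L_i$ in terms of $L_i$ and the symmetric functions of the $\lambda$'s), and $L_i^{-1}\in\mathds{G}_2$ by construction. This closure under $\mathsf{D}$ is the technical heart of the first half.

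For the second statement, I would simply substitute the already-established facts into the second recursion of Lemma \ref{RRR},
$$R_{2\,p+1,i}=R_{1\,p+1,i}+\frac{\mathsf{D}R_{1\,p,i}}{L_i}+\Big(\frac{\mathsf{D}L_i}{L_i^2}-\frac{X}{L_i}\Big)-(\lambda_0+\lambda_1+\lambda_2)N_2\frac{R_{1\,k,i}}{L_i}\,,$$
and collect terms. Since $R_{1\,k,i}\in\mathds{G}_2$ for all $k$, and since $\mathsf{D}R_{1\,p,i}/L_i$, $R_{1\,p+1,i}$, and $\mathsf{D}L_i/L_i^2$ all lie in $\mathds{G}_2$ by the closure properties just used, every contribution is in $\mathds{G}_2$ \emph{except} the two terms carrying $X=\DD C_1/C_1$ and $N_2$, neither of which is known to lie in $\mathds{G}_2$. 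I would therefore define
$$Q_{2\,k,i}:=R_{1\,k,i}+\frac{\mathsf{D}R_{1\,k-1,i}}{L_i}+\frac{\mathsf{D}L_i}{L_i^2}\,,$$
absorbing into $Q_{2\,k,i}\in\mathds{G}_2$ precisely the genuinely polynomial part, and isolate the remaining $X$- and $N_2$-proportional terms to match the stated form, where the coefficient of $X$ is $-R_{1\,k-1,i}/L_i$ and the $N_2$ term is kept explicit.

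The main obstacle is bookkeeping rather than conceptual: one must verify carefully that $\mathds{G}_2$ is stable under the derivation $\mathsf{D}$, which requires differentiating the generators $f(L_i)^{-1/2}$ and re-expressing $\mathsf{D}f(L_i)$ and $\mathsf{D}L_i$ inside $\mathds{G}_2$ using the degree-$3$ defining relation for the $L_i$; I expect the slightly delicate points to be tracking the index shift between $k$ and $p+1$ in the two recursions (so that $R_{1\,k-1,i}$ appears as the coefficient of $X$) and confirming that the $X$ and $N_2$ terms are exactly the non-$\mathds{G}_2$ remainder, so that $Q_{2\,k,i}$ is genuinely in $\mathds{G}_2$. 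Because the $X$ and $N_2$ dependence is pulled out explicitly in the statement, Conjecture \ref{RPoly} is used only through its consequence $R_{k,i}=R_{0\,k,i}\in\mathds{G}_2$, and no further input is needed.
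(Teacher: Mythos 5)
Your proposal is correct and follows essentially the same route as the paper, which derives Lemma \ref{RPoly2} directly from the recursions of Lemma \ref{RRR} together with the closure of $\mathds{G}_2$ under $\mathsf{D}=q\frac{d}{dq}$ (via the degree-$3$ relation for $L_i$, which gives $\mathsf{D}L_i=L_i(L_i^3-s_1L_i^2+s_2L_i-s_3)/f_2(L_i)\in\mathds{G}_2$) and Conjecture \ref{RPoly}. One small internal inconsistency: your definition $Q_{2\,k,i}=R_{1\,k,i}+\mathsf{D}R_{1\,k-1,i}/L_i+\mathsf{D}L_i/L_i^2$ takes the second recursion of Lemma \ref{RRR} literally, which would give $X$-coefficient $-1/L_i$ rather than the claimed $-R_{1\,k-1,i}/L_i$; the recursion as printed has a typo (the bracket $\bigl(\mathsf{D}L_i/L_i^2-X/L_i\bigr)$ should be multiplied by $R_{1\,p,i}$, as one sees by deriving it from \eqref{S1} and \eqref{VS} or by comparison with Lemma \ref{RRRaaa}), so the correct $Q_{2\,k,i}$ includes the term $\frac{\mathsf{D}L_i}{L_i^2}R_{1\,k-1,i}$, and with that correction your argument goes through verbatim.
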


\subsection{Determining $\DD X$ and $N_2$}
The following relation was proven in \cite{LP}.
\begin{equation}\label{drule}
X^2-(L^3-1)X+\DD X-\frac{2}{9}(L^3-1)=0\, .
\end{equation}
 By the above result, the differential ring 
 \begin{equation}\label{ddd333}
 \mathds{G}_2[X,\DD X,\DD\DD X,\ldots]
 \end{equation}
 is just the polynomial ring $\mathds{G}_2[X]$.
Denote by $\text{Coeff}(x^iy^j)$ the coefficient of $x^iy^j$ in
$$\sum _{k=0}^2 e^{-\frac{\mu_i}{x}-\frac{\mu_i}{y}}\mathds{S}_i (\phi_k)|_{z=x} \, \mathds{S}_i(\phi ^k )|_{z=y}\,.$$
From \eqref{wdvv} and \eqref{VS}, we obtain the following equation.
$$\text{Coeff}(x^2)+\text{Coeff}(y^2)-\text{Coeff}(xy)=0\,.$$
Above equation immediately yields the following relation.
\begin{align}\label{NR}
    N_2=-\frac{1}{2}C_2+\frac{1}{2}L^3\,.
\end{align}

\section{Higher genus series on $\overline{M}_{g,n}$}\label{hgi}

 \subsection{Intersection theory on $\overline{M}_{g,n}$} \label{intmg}
 We review here the now standard method used by Givental \cite{Elliptic,SS,Book} to 
 express genus $g$ descendent correlators in terms of genus 0 data.
 
 Let $t_0,t_1,t_2, \ldots$ be formal variables. The series
 $$T(c)=t_0+t_1 c+t_2 c^2+\ldots$$   in the
additional variable $c$ plays a basic role. The variable $c$
will later be  replaced by the first Chern class $\psi_i$ of
 a cotangent line  over $\overline{M}_{g,n}$, 
 $$T(\psi_i)= t_0 + t_1\psi_i+ t_2\psi_i^2 +\ldots\, ,$$
 with the index $i$
 depending on the position of the series $T$ in the correlator.

Let $2g-2+n>0$.
For $a_i\in \mathbb{Z}_{\geq 0}$ and  $\gamma \in H^*(\overline{M}_{g,n})$, define the correlator 
\begin{multline*}
    \lann \psi^{a_1},\ldots,\psi^{a_n}\, | \, \gamma\,  \rann_{g,n}=
    \sum_{k\geq 0} \frac{1}{k!}\int_{\overline{M}_{g,n+k}}
    \gamma \, \psi_1^{a_1}\cdots 
     \psi_n^{a_n}  \prod_{i=1}^k T(\psi_{n+i})\, . 
\end{multline*}
In the above summation,
the $k=0$ term is $$\int_{\overline{M}_{g,n}}\gamma\, \psi_1^{a_1}\cdots\psi_n^{a_n}\,.$$
We also need the following correlator defined for the unstable case,

$$\lan\lan 1,1 \ran\ran_{0,2}=\sum_{k > 0}\frac{1}{k!}\int_{\overline{M}_{0,2+k}}\prod_{i=1}^k T(\psi_{2+i})\,.$$

For formal variables $x_1,\ldots,x_n$, we also define the correlator
\begin{align}\label{derf}
\lannn \frac{1}{x_1-\psi},\ldots,\frac{1}{x_n-\psi}\, \Big| \, \gamma \, \rannn_{g,n}
\end{align}
in the standard way by expanding $\frac{1}{x_i-\psi}$ as a geometric series.

Denote by $\mathds{L}$ the differential operator 
\begin{align*}
        \mathds{L}\, =\, 
        \frac{\partial}{\partial t_0}-\sum_{i=1}^\infty t_i\frac{\partial}{\partial t_{i-1}}
        \, =\, \frac{\partial}{\partial t_0}-t_1\frac{\partial}{\partial t_0}-t_2\frac{\partial}{\partial t_1}-\ldots
        \, .
\end{align*}
 The string equation yields the following result.
 
\begin{Lemma} \label{stst} For $2g-2+n>0$, we have
$\mathds{L}\lann 1,\ldots,1\, | \, \gamma\, \rann_{g,n}=0$ and 
\begin{multline*}
\mathds{L} \lannn \frac{1}{x_1-\psi},\ldots,\frac{1}{x_n-\psi}\, \Big| \,\gamma \, 
\rannn_{g,n}= \\
 \left(\frac{1}{x_1}+\ldots +\frac{1}{x_n}\right)
 \lannn\frac{1}{x_1-\psi},\ldots \frac{1}{x_n-\psi}\, \Big| \, \gamma \, \rannn_{g,n}\, .
 \end{multline*}
\end{Lemma}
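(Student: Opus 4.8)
The plan is to deduce both identities from Witten's string equation for the forgetful morphism $\pi:\overline{M}_{g,n+1}\to\overline{M}_{g,n}$, by recognizing $\mathds{L}$ as the algebraic shadow of the geometric operation ``adjoin a marking carrying $\psi^0=1$ and push forward.'' The key starting observation is that $\frac{\partial}{\partial t_0}T(c)=1$, so differentiating a correlator in $t_0$ converts, by the Leibniz rule and the symmetry among the summation markings, one of the $T$-insertions into the trivial insertion $1$. Concretely, first I would show
$$\frac{\partial}{\partial t_0}\lann \psi^{a_1},\ldots,\psi^{a_n}\,|\,\gamma\rann_{g,n}=\sum_{k\geq 0}\frac{1}{k!}\int_{\overline{M}_{g,n+k+1}}\gamma\,\psi_1^{a_1}\cdots\psi_n^{a_n}\cdot 1_{n+k+1}\prod_{i=1}^{k}T(\psi_{n+i})\,,$$
where the extra marking $n+k+1$ carries the class $1$; this is pure bookkeeping with the factor $\frac{1}{k!}\cdot k=\frac{1}{(k-1)!}$ followed by a shift of the index $k$.

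The next step is to apply the string equation to the integral over $\overline{M}_{g,n+k+1}$ by forgetting the marking $n+k+1$. Writing $\widetilde{\psi}_i=\pi^*\psi_i+D_{i,n+k+1}$, the push-forward $\pi_*$ of the integrand (with a bare $\psi^0$ at the forgotten point) equals the sum, over all remaining markings, of the same integrand with the $\psi$-exponent at that marking lowered by one. I would then split this sum into the contribution of the $n$ distinguished markings and the contribution of the $k$ summation ($T$-)markings. For a $T$-marking the lowering sends $T(\psi)=\sum_m t_m\psi^m$ to $\sum_{m\geq 1}t_m\psi^{m-1}$, and summing this over the $k$ markings and over $k$ with the weights $\frac{1}{k!}$ reproduces exactly the action of $\sum_{i\geq 1}t_i\frac{\partial}{\partial t_{i-1}}$ on the correlator. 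Hence this piece cancels precisely against the second term of $\mathds{L}$, leaving only the contribution of the $n$ distinguished markings.

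It then remains to evaluate that contribution in the two cases. For $\lann 1,\ldots,1\,|\,\gamma\rann_{g,n}$ every distinguished marking carries $\psi^0$, whose $\psi$-exponent cannot be lowered, so the contribution vanishes and $\mathds{L}\lann 1,\ldots,1\,|\,\gamma\rann_{g,n}=0$. For the second correlator I would expand $\frac{1}{x_j-\psi}=\sum_{a\geq 0}\psi^a/x_j^{a+1}$; lowering the exponent at marking $j$ yields $\sum_{a\geq 1}\psi^{a-1}/x_j^{a+1}=\frac{1}{x_j}\cdot\frac{1}{x_j-\psi}$, and summing over $j=1,\ldots,n$ produces the factor $\frac{1}{x_1}+\cdots+\frac{1}{x_n}$ in front of the unchanged correlator, which is the claimed formula. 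The main obstacle is not conceptual but organizational: one must track the factorial/combinatorial weights carefully through the reindexing $k\mapsto k+1$ and verify that the $\psi$-class corrections $\widetilde{\psi}_i=\pi^*\psi_i+D_{i,n+k+1}$ assemble into precisely the string-equation lowering rule, so that the algebraic operator $\sum_{i\geq 1}t_i\partial_{t_{i-1}}$ matches the geometric sum over $T$-markings term by term. The stability hypothesis $2g-2+n>0$ guarantees that $\pi$ is a well-defined forgetful morphism throughout, so the classical string equation applies at each stage.
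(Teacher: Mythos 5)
Your proof is correct and is exactly the argument the paper intends: the paper gives no details beyond the sentence "The string equation yields the following result," and your computation — identifying $\partial_{t_0}$ with adjoining a trivial insertion, cancelling the $T$-marking contributions of the string equation against $\sum_i t_i\partial_{t_{i-1}}$, and reading off the residual lowering at the distinguished markings — is the standard derivation being invoked. Nothing further is needed.
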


After the restriction $t_0=0$ and application of the dilaton equation,
the correlators are expressed in terms of finitely many integrals (by the
dimension constraint). For example,
\begin{eqnarray*}
    \lann 1,1,1\rann_{0,3}\, |_{t_0=0} &= &\frac{1}{1-t_1}\, ,\\
    \lann 1,1,1,1\rann_{0,4}\, |_{t_0=0}& =&\frac{t_2}{(1-t_1)^3}\, ,\\
    \lann 1,1,1,1,1\rann_{0,5}\, |_{t_0=0}&=&\frac{t_3}{(1-t_1)^4}+\frac{3 t_2^2}{(1-t_1)^5}\, ,\\
    \lann 1,1,1,1,1,1\rann_{0,6}\, |_{t_0=0}&=&\frac{t_4}{(1-t_1)^5}+\frac{10 t_2 t_3}{(1-t_1)^6}+\frac{15 t^3_2}{(1-t_1)^7}\, .
\end{eqnarray*}\\

We consider 
$\CC(t_1)[t_2,t_3,...]$
as $\ZZ$-graded ring over $\CC(t_1)$ with 
$$\text{deg}(t_i)=i-1\ \ \text{for $i\geq 2$ .}$$
Define a subspace of homogeneous elements by
$$\CC\left[\frac{1}{1-t_1}\right][t_2,t_3,\ldots]_{\text{Hom}} \subset 
\CC(t_1)[t_2,t_3,...]\, .
$$
We easily see 
$$\lann \psi^{a_1},\ldots,\psi^{a_n}\, | \, \gamma \, \rann_{g,n}\, |_{t_0=0}\ \in\
\CC\left[\frac{1}{1-t_1}\right][t_2,t_3,\ldots]_{\text{Hom}}\, .$$
Using the leading terms (of lowest degree in $\frac{1}{(1-t_1)}$), we obtain the
following result.

\begin{Lemma}\label{basis}
The set of genus 0 correlators
 $$
 \Big\{ \, \lann 1,\ldots,1\rann_{0,n}\, |_{t_0=0} \, \Big\}_{n\geq  4} $$ 
freely generate the ring
 $\CC(t_1)[t_2,t_3,...]$ over $\CC(t_1)$.
\end{Lemma}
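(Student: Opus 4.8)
The plan is to exploit the explicit leading term of each correlator. Write $g_n=\lann 1,\ldots,1\rann_{0,n}\,|_{t_0=0}$ for the correlator with $n$ entries, viewed inside $\CC(t_1)[t_2,t_3,\ldots]$ with the grading $\deg t_i=i-1$. First I would record three structural facts. (a) By the dimension constraint on $\overline{M}_{0,n+k}$, every surviving monomial in $g_n$ has total $\psi$-degree $n+k-3$, so $g_n$ is \emph{homogeneous of degree $n-3$}. (b) The coefficient of the single variable $t_{n-2}$ in $g_n$ equals $(1-t_1)^{-(n-1)}$, a unit in $\CC(t_1)$. (c) Every other monomial of $g_n$ is a product of at least two of the $t_i$, and by the degree count in (a) each such factor has index $<n-2$; hence the non-leading part of $g_n$ lies in $\CC(t_1)[t_2,\ldots,t_{n-3}]$.

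Fact (b) is the computational heart. To prove it I would isolate, in $\sum_k\frac{1}{k!}\int_{\overline{M}_{0,n+k}}\prod_i T(\psi_{n+i})|_{t_0=0}$, exactly those terms built from one insertion $t_{n-2}\psi^{n-2}$ together with $\ell$ dilaton insertions $t_1\psi$. Choosing which of the $k=1+\ell$ extra points carries $\psi^{n-2}$ contributes a factor $\binom{1+\ell}{1}$, which cancels against $1/k!$ to leave $1/\ell!$; applying the dilaton equation $\ell$ times produces $\frac{(n+\ell-2)!}{(n-2)!}$ and reduces the integral to $\int_{\overline{M}_{0,n+1}}\psi^{n-2}=1$. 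Summing the resulting binomial series $\sum_{\ell\ge 0}\binom{n+\ell-2}{\ell}t_1^\ell=(1-t_1)^{-(n-1)}$ gives (b). The same bookkeeping shows that an $r$-variable monomial carries the power $(1-t_1)^{-(n+r-2)}$, so $t_{n-2}$ is indeed the term of lowest order in $1/(1-t_1)$, matching the hint preceding the statement.

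Next I would prove generation. Let $\Phi\colon\CC(t_1)[X_4,X_5,\ldots]\to R:=\CC(t_1)[t_2,t_3,\ldots]$ be the $\CC(t_1)$-algebra map with $\Phi(X_n)=g_n$. By (b) and (c),
\[
g_{m+2}=\frac{t_m}{(1-t_1)^{m+1}}+P_m(t_2,\ldots,t_{m-1}),\qquad P_m\in\CC(t_1)[t_2,\ldots,t_{m-1}].
\]
Solving for $t_m$ and inducting on $m$ shows $t_m\in\CC(t_1)[g_4,\ldots,g_{m+2}]$ for every $m\ge 2$, with base case $t_2=(1-t_1)^3 g_4$. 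Hence every $t_m$, and therefore all of $R$, lies in the image of $\Phi$, so $\Phi$ is surjective.

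Finally I would upgrade surjectivity to an isomorphism by a graded dimension count. Grade the source by $\deg X_n=n-3$; then by (a) the map $\Phi$ is degree-preserving. Both $\CC(t_1)[X_4,X_5,\ldots]$ and $R$ are polynomial rings over $\CC(t_1)$ with exactly one generator in each positive degree, so their degree-$d$ homogeneous components have equal, finite $\CC(t_1)$-dimension. A surjective degree-preserving $\CC(t_1)$-linear map between finite-dimensional spaces of equal dimension is injective, so $\Phi$ is injective in each degree and hence an isomorphism; this is precisely the assertion that the $g_n$ freely generate $R$ over $\CC(t_1)$. The only genuinely nontrivial input is Fact (b); the surjectivity induction and the graded count are formal, and I expect the dilaton resummation of (b) to be the main point requiring care.
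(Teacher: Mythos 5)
Your proof is correct and follows essentially the same route as the paper, which also argues via the leading term $t_{n-2}/(1-t_1)^{n-1}$ (the term of lowest order in $1/(1-t_1)$) and the resulting triangularity of the correlators with respect to the variables $t_2,t_3,\ldots$. The paper states this in one line; your dilaton-equation computation of the unit coefficient, the inductive solve for $t_m$, and the graded dimension count supply exactly the details it omits.
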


By  Lemma \ref{basis}, we can find a unique representation of $\lann \psi^{a_1},\ldots,\psi^{a_n}\rann_{g,n}|_{t_0=0}$
in the  variables
\begin{equation}\label{k3k3}
\Big\{\, \lann 1,\ldots,1\rann_{0,n}|_{t_0=0}\, \Big\}_{n\geq 3}\, .
\end{equation}
The $n=3$ correlator is included in the set \eqref{k3k3} to
capture the variable $t_1$.
For example, in $g=1$,
\begin{eqnarray*}
    \lann 1,1\rann_{1,2}|_{t_0=0}&=&\frac{1}{24}
    \left(\frac{\lann 1,1,1,1,1\rann_{0,5}|_{t_0=0}}{\lan 1,1,1\rann_{0,3}|_{t_0=0}}-\frac{\lann 1,1,1,1\rann^2_{0,4}|_{t_0=0}}{\lann 1,1,1\rann^2_{0,3}|_{t_0=0}}\right)\, ,\\
    \lann 1\rann_{1,1}|_{t_0=0}&=&\frac{1}{24}\frac{\lann 1,1,1,1\rann_{0,4}|_{t_0=0}}{\lann 1,1,1\rann_{0,3}|_{t_0=0}}
    \end{eqnarray*}
A more complicated example in $g=2$ is    
\begin{eqnarray*}
\lann \ \rann_{2,0}|_{t_0=0}&=& \ \ \frac{1}{1152}\frac{\lann 1,1,1,1,1,1\rann_{0,6}|_{t_0=0}}{\lann 1,1,1\rann_{0,3}|_{t_0=0}^2}\\
    & & -\frac{7}{1920}\frac{\lann 1,1,1,1,1\rann_{0,5}|_{t_0=0}\lann 1,1,1,1\rann_{0,4}|_{t_0=0}}{\lann 1,1,1\rann_{0,3}|_{t_0=0}^3}\\& &+\frac{1}{360}\frac{\lann 1,1,1,1\rann_{0,4}|_{t_0=0}^3}{\lann 1,1,1\rann_{0,3}
    |_{t_0=0}^4}\, .
\end{eqnarray*}

\begin{Def} 
For $\gamma \in H^*(\overline{M}_{g,k})$, let $$\pP^{a_1,\ldots,a_n,\gamma}_{g,n}(s_0,s_1,s_2,...)\in \QQ(s_0, s_1,..)$$ be 
the unique rational function satisfying the condition
$$\lann \psi^{a_1},\ldots,\psi^{a_n}\, |\, \gamma\, \rann_{g,n}|_{t_0=0}
=\pP^{a_1,a_2,...,a_n,\gamma}_{g,n}|_{s_i=\lann 1,\ldots,1\rann_{0,i+3}|_{t_0=0}}\, . $$
\end{Def}
 
\begin{Prop}\label{GR1} For $2g-2+n>0$,
we have
 $$\lann 1,\ldots,1\,|\, \gamma\, \rann_{g,n}
=\pP^{0,\ldots,0,\gamma}_{g,n}|_{s_i=\lann 1,\ldots,1\rann_{0,i+3}}\, . $$
\end{Prop}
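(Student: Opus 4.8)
The plan is to treat both sides of the identity as formal power series in the variables $t_0,t_1,t_2,\ldots$, to show that each is annihilated by the string operator $\mathds{L}$, and to check that they agree after the specialization $t_0=0$. A uniqueness principle for $\mathds{L}$-closed series will then force the two sides to coincide for all $t_0$, which is exactly the assertion (the defining property of $\pP$ only supplies the $t_0=0$ case). That the left-hand side is $\mathds{L}$-closed is immediate from Lemma \ref{stst}, the $\gamma$-twisted string equation in the stable range $2g-2+n>0$:
$$\mathds{L}\,\lann 1,\ldots,1\,|\,\gamma\,\rann_{g,n}=0.$$

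For the right-hand side I would argue as follows. The rational function $\pP^{0,\ldots,0,\gamma}_{g,n}\in\QQ(s_0,s_1,\ldots)$ involves only finitely many of the $s_j$ (by the dimension constraint), and substituting $s_j=\lann 1,\ldots,1\rann_{0,j+3}$ produces a series in the $t_i$. Since $\mathds{L}$ is a derivation, the chain rule gives
$$\mathds{L}\Big(\pP^{0,\ldots,0,\gamma}_{g,n}\big|_{s_j=\lann 1,\ldots,1\rann_{0,j+3}}\Big)=\sum_j \frac{\partial \pP^{0,\ldots,0,\gamma}_{g,n}}{\partial s_j}\cdot \mathds{L}\,\lann 1,\ldots,1\rann_{0,j+3}.$$
Each genus-zero building block $\lann 1,\ldots,1\rann_{0,j+3}$ again lies in the stable range, since $2\cdot 0-2+(j+3)=j+1>0$, so Lemma \ref{stst} yields $\mathds{L}\,\lann 1,\ldots,1\rann_{0,j+3}=0$ for all $j\geq 0$ and the whole sum vanishes. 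Thus the right-hand side is $\mathds{L}$-closed as well, and it agrees with the left-hand side at $t_0=0$ by the very definition of $\pP^{0,\ldots,0,\gamma}_{g,n}$ (specialized to all insertions equal to $1$).

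It remains to establish the uniqueness principle, the only step that needs a genuine argument. Writing $\mathds{L}=(1-t_1)\partial_{t_0}-\sum_{i\geq 2}t_i\,\partial_{t_{i-1}}$, the equation $\mathds{L}F=0$ can be solved for the $t_0$-derivative,
$$\frac{\partial F}{\partial t_0}=\frac{1}{1-t_1}\sum_{i\geq 2}t_i\,\frac{\partial F}{\partial t_{i-1}},$$
whose right-hand side involves only derivatives in $t_1,t_2,\ldots$. Expanding $F=\sum_{m\geq 0}F_m(t_1,t_2,\ldots)\,t_0^m/m!$ and matching powers of $t_0$ gives the recursion
$$F_{m+1}=\frac{1}{1-t_1}\sum_{i\geq 2}t_i\,\frac{\partial F_m}{\partial t_{i-1}},$$
so every coefficient $F_m$ with $m\geq 1$ is determined by $F_0=F|_{t_0=0}$. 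Applying this to the difference of the two sides—which is $\mathds{L}$-closed and vanishes at $t_0=0$—shows that all its $t_0$-coefficients vanish, hence the difference is identically zero and the Proposition follows.

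I do not expect a serious obstacle here: the whole argument is a formal consequence of the string equation together with the observation that the genus-zero generators $\lann 1,\ldots,1\rann_{0,j+3}$ are themselves $\mathds{L}$-closed. The one point to handle with care is the legitimacy of the chain-rule computation for the right-hand side, namely that $\pP^{0,\ldots,0,\gamma}_{g,n}$ depends on only finitely many $s_j$ and that passing to the substitution commutes with the derivation $\mathds{L}$; both are clear once one notes that $\mathds{L}$ acts as a first-order operator with polynomial coefficients in the $t_i$, and that the relevant denominators (powers of $\lann 1,1,1\rann_{0,3}$) are invertible as formal series.
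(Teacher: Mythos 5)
Your proof is correct and follows exactly the paper's own (much terser) argument: both sides are annihilated by $\mathds{L}$ and agree at $t_0=0$, hence coincide. You have merely filled in the details the paper leaves implicit — the chain-rule computation for the right-hand side and the recursion showing that an $\mathds{L}$-closed series is determined by its restriction to $t_0=0$ — and these details are all accurate.
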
 

\begin{proof}
 Both sides of the equation satisfy the differential equation
 \begin{align*}
     \mathds{L}=0.
 \end{align*}
 By definition, both sides have the same initial conditions at $t_0=0$.
\end{proof}

\begin{Prop}\label{GR2} For $2g-2+n>0$,
 \begin{multline*}
     \lannn \frac{1}{x_1-\psi_1}, \ldots, \frac{1}{x_n-\psi_n}\, \Big| \, \gamma \, \rannn_{g,n}= \\
     e^{\lann 1,1\rann_{0,2}(\sum_i\frac{1}{x_i})}\sum_{a_1,\ldots,a_n}\frac{\pP^{a_1,\ldots,a_n,\gamma}_{g,n}|_{s_i=\lann 1,\ldots,1\rann_{0,i+3}}
     }{x_1^{a_1+1} \cdots x_n^{a_n+1}}.
 \end{multline*}
\end{Prop}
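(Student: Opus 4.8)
The plan is to show that both sides of the identity satisfy the same first-order equation under $\mathds{L}$ and agree after the restriction $t_0=0$; as in the proof of Proposition \ref{GR1}, the pair consisting of such an $\mathds{L}$-equation together with its initial value at $t_0=0$ determines a series uniquely (the coefficient of $\frac{\partial}{\partial t_0}$ in $\mathds{L}$ is $1-t_1$, so one may solve order by order in $t_0$). Denote the left-hand side by $\Phi$, and write the right-hand side as $\Psi=e^{A}\,G$ with $A=\lann 1,1\rann_{0,2}\big(\sum_i\tfrac{1}{x_i}\big)$ and $G=\sum_{a_1,\dots,a_n}\pP^{a_1,\dots,a_n,\gamma}_{g,n}|_{s_i=\lann 1,\dots,1\rann_{0,i+3}}\big/\prod_i x_i^{a_i+1}$.

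By the second part of Lemma \ref{stst}, $\Phi$ satisfies $\mathds{L}\Phi=\big(\sum_i\tfrac{1}{x_i}\big)\Phi$, so I must verify the same equation for $\Psi$. Since $\mathds{L}$ is a derivation, $\mathds{L}\Psi=(\mathds{L}\lann 1,1\rann_{0,2})\big(\sum_i\tfrac{1}{x_i}\big)\Psi+e^{A}\,\mathds{L}G$. The factor $G$ depends on the $t_j$ only through the substitutions $s_i=\lann 1,\dots,1\rann_{0,i+3}$; applying the chain rule for the vector field $\mathds{L}$ and the first part of Lemma \ref{stst} (each $\lann 1,\dots,1\rann_{0,i+3}$ is a stable all-ones correlator, hence $\mathds{L}$-closed) gives $\mathds{L}G=0$. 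Thus the required equation for $\Psi$ collapses to the single scalar identity $\mathds{L}\lann 1,1\rann_{0,2}=1$. For the initial condition I would check $\lann 1,1\rann_{0,2}|_{t_0=0}=0$: after setting $t_0=0$ each $T(\psi)$ has no constant term, so every summand over $\overline{M}_{0,2+k}$ carries total $\psi$-degree at least $k>\dim\overline{M}_{0,2+k}=k-1$ and vanishes. Hence $\Psi|_{t_0=0}=G|_{t_0=0}$, which agrees termwise with $\Phi|_{t_0=0}$ by the defining property of $\pP^{a_1,\dots,a_n,\gamma}_{g,n}$, so the two initial conditions match and $\Phi=\Psi$ follows.

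The step I expect to be the main obstacle is the scalar identity $\mathds{L}\lann 1,1\rann_{0,2}=1$, the string equation for the unstable two-pointed correlator. Here $\frac{\partial}{\partial t_0}$ adds a $\psi^0$-marking while the lowering part $-\sum_i t_i\frac{\partial}{\partial t_{i-1}}$ implements the string-equation shift on the stable spaces $\overline{M}_{0,2+k}$; I would verify by direct dimension and string-equation bookkeeping that the $k=1$ term over $\overline{M}_{0,3}$ contributes the anomalous constant $1$ and that all higher contributions cancel. The point is that the exponential prefactor $e^{A}$ in the statement is engineered precisely so that its $\mathds{L}$-derivative reproduces the eigenvalue $\sum_i\frac{1}{x_i}$ coming from Lemma \ref{stst}, and fixing the normalization of the unstable $(0,2)$ term to be exactly $1$ is what makes the two evolution equations coincide.
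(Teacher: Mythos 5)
Your proposal is correct and follows essentially the same route as the paper: both sides are shown to satisfy $\bigl(\mathds{L}-\sum_i\frac{1}{x_i}\bigr)(\cdot)=0$ via Lemma \ref{stst} and the derivation property of $\mathds{L}$, with the identities $\mathds{L}\lann 1,1\rann_{0,2}=1$ and $\lann 1,1\rann_{0,2}|_{t_0=0}=0$ supplying the unstable $(0,2)$ input, and the initial conditions at $t_0=0$ match by the definition of $\pP^{a_1,\ldots,a_n,\gamma}_{g,n}$. You simply spell out the chain-rule and dimension-count details that the paper leaves implicit.
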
 
 
 \begin{proof}
  Both sides of the equation satisfy differential equation
  \begin{align*}
      \mathds{L}-\sum_i\frac{1}{x_i}=0.
  \end{align*}
 Both sides have the same initial conditions at $t_0=0$.
 We use here
     $$\mathds{L} \lann 1,1\rann_{0,2} =1\,, \ \ \ \  \lann 1,1\rann_{0,2}|_{t_0=0}=0\, .$$
 There is no conflict here with Lemma
 \ref{stst} since $(g,n)=(0,2)$ is not
 in the stable range.
 \end{proof}

\subsection{The unstable case $(0,2)$}
The definition given in \eqref{derf}
of the correlator is valid
in the stable range $$2g-2+n>0\, .$$
The unstable case $(g,n)=(0,2)$ plays a
special role. We define
$$\lannn \frac{1}{x_1-\psi_1}, \frac{1}{x_2-\psi_2}\rannn_{0,2}$$
by 
adding the
degenerate term
$$\frac{1}{x_1+x_2}$$
to the terms obtained
by the 
 expansion of $\frac{1}{x_i-\psi_i}$ as 
 a geometric series.
 The degenerate term is associated
to the (unstable) moduli space
of genus 0 with 2 markings.

\begin{Prop}\label{GR22} We have
 \begin{equation*}
     \lannn \frac{1}{x_1-\psi_1}, \frac{1}{x_2-\psi_2} \rannn_{0,2}= 
     e^{\lann 1,1\rann_{0,2}\left(\frac{1}{x_1}+
     \frac{1}{x_2}\right)}\left(\frac{1}{x_1+x_2}\right)\, .
 \end{equation*}
\end{Prop}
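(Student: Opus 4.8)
The plan is to run the same scheme used for Propositions \ref{GR1} and \ref{GR2}: exhibit a first-order linear differential equation in the operator $\mathds{L}$ satisfied by both sides, and then check that the two sides agree after the restriction $t_0=0$. Concretely, I claim both the left-hand side $F$ and the right-hand side $G$ of the asserted identity satisfy
$$\mathds{L}\, F \;=\; \Big(\tfrac{1}{x_1}+\tfrac{1}{x_2}\Big)\, F\,, \qquad \mathds{L}\, G \;=\; \Big(\tfrac{1}{x_1}+\tfrac{1}{x_2}\Big)\, G\,,$$
and that $F|_{t_0=0}=G|_{t_0=0}=\frac{1}{x_1+x_2}$; uniqueness of the solution of this equation with prescribed value at $t_0=0$ then forces $F=G$, exactly as in the proofs of the stable cases.

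The right-hand side is immediate. Since $\mathds{L}$ is a derivation and $\frac{1}{x_1+x_2}$ is independent of the variables $t_i$, we get $\mathds{L} G = \big(\frac{1}{x_1}+\frac{1}{x_2}\big)\,\big(\mathds{L}\lann 1,1\rann_{0,2}\big)\,G$, and the facts already recorded in the proof of Proposition \ref{GR2}, namely $\mathds{L}\lann 1,1\rann_{0,2}=1$ and $\lann 1,1\rann_{0,2}|_{t_0=0}=0$, give both the differential equation and the value $G|_{t_0=0}=\frac{1}{x_1+x_2}$.

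The substance is on the left. Writing out the definition,
$$F = \frac{1}{x_1+x_2} + \sum_{a_1,a_2\geq 0}\frac{\lann\psi^{a_1},\psi^{a_2}\rann_{0,2}}{x_1^{a_1+1}x_2^{a_2+1}}\,,$$
I first establish the unstable-corrected string equation
$$\mathds{L}\lann\psi^{a_1},\psi^{a_2}\rann_{0,2} = \lann\psi^{a_1-1},\psi^{a_2}\rann_{0,2} + \lann\psi^{a_1},\psi^{a_2-1}\rann_{0,2} + \delta_{a_1,0}\,\delta_{a_2,0}\,,$$
with the convention that correlators carrying a negative $\psi$-power vanish. As in Lemma \ref{stst}, applying $\frac{\partial}{\partial t_0}$ inserts a $\psi^0$-marking and the term $-\sum_i t_i\frac{\partial}{\partial t_{i-1}}$ cancels the string-equation lowering on the $T$-markings, leaving only the lowering on the two distinguished markings. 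The one new feature is the $k=0$ summand, an integral over $\overline{M}_{0,3}$: there forgetting the inserted point would land in the unstable $\overline{M}_{0,2}$, so the string equation does not apply and this term survives as $\int_{\overline{M}_{0,3}}\psi_1^{a_1}\psi_2^{a_2}=\delta_{a_1,0}\delta_{a_2,0}$ by the dimension of $\overline{M}_{0,3}$. Feeding this into the series and reindexing turns the first two terms into $\big(\frac{1}{x_1}+\frac{1}{x_2}\big)\big(F-\frac{1}{x_1+x_2}\big)$, while the correction contributes $\frac{1}{x_1 x_2}$; since $\big(\frac{1}{x_1}+\frac{1}{x_2}\big)\frac{1}{x_1+x_2}=\frac{1}{x_1x_2}$, the degenerate term is engineered precisely to cancel the unstable correction, giving $\mathds{L} F=\big(\frac{1}{x_1}+\frac{1}{x_2}\big)F$.

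Finally, the initial condition $F|_{t_0=0}=\frac{1}{x_1+x_2}$ follows by a dimension count: at $t_0=0$ each factor $T(\psi_{2+i})$ contributes a power $\psi^{\geq 1}$, so any $k>0$ integrand on $\overline{M}_{0,2+k}$ has degree $\geq a_1+a_2+k>k-1=\dim\overline{M}_{0,2+k}$ and hence vanishes, leaving only the degenerate term. The main obstacle is the careful derivation of the corrected string equation above — tracking exactly the boundary/unstable contribution from the $\overline{M}_{0,3}\to\overline{M}_{0,2}$ forgetful map; once that single correction term is pinned down, the cancellation against the degenerate term and the uniqueness argument are routine.
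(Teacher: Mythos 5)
Your proposal is correct and follows exactly the paper's approach: the paper's own proof is the one-line observation that both sides satisfy $\mathds{L}-\sum_{i=1}^2\frac{1}{x_i}=0$ with the same value at $t_0=0$, and your argument simply fills in the details (the unstable-corrected string equation with the $\delta_{a_1,0}\delta_{a_2,0}$ term from $\overline{M}_{0,3}$, its cancellation against the degenerate term $\frac{1}{x_1+x_2}$, and the dimension count for the initial condition), all of which check out.
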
 
 
 \begin{proof}
  Both sides of the equation satisfy differential equation
  \begin{align*}
      \mathds{L}-\sum_{i=1}^2\frac{1}{x_i}=0.
  \end{align*}
 Both sides have the same initial conditions at $t_0=0$.
 \end{proof}

\subsection{Local invariants and wall crossing} 
The torus $\T$ acts on the moduli spaces
$\overline{M}_{g,n}(\PP^2,d)$  and
$\overline{Q}_{g,n}(\PP^2,d)$.
We consider here special localization contributions 
associated to the fixed points ${p}_i\in \PP^2$.


Consider first the moduli of stable maps.
Let
$$\overline{M}_{g,n}(\PP^2,d)^{\T,p_i}
\subset \overline{M}_{g,n}(\PP^2,d) $$
be the union of
 $\T$-fixed loci which parameterize stable maps
obtained by attaching $\T$-fixed rational tails to a genus $g$, $n$-pointed
Deligne-Mumford stable curve contracted
to the point $p_i\in\PP^2$.
Similarly, let 
$$\overline{Q}_{g,n}(\PP^2,d)^{\T,p_i}\subset
\overline{Q}_{g,n}(\PP^2,d)
$$
be the parallel $\T$-fixed locus
parameterizing stable quotients obtained
by attaching base points
to  a genus $g$, $n$-pointed
Deligne-Mumford stable curve contracted
to the point $p_i\in\PP^2$.

Let $\Lambda_i$ denote the localization of the ring
$$\CC[\lambda^{\pm 1}_0,\lambda^{\pm 1}_1,\lambda^{\pm 1}_2]$$ at 
the three tangent weights at $p_i\in \PP^2$.
Using the virtual
localization formula \cite{GP}, 
there exist unique series
$$S_{p_i}\in\Lambda_i[\psi][[Q]]$$ 
for which the localization contribution 
of the $\T$-fixed locus
$\overline{M}_{g,n}(\PP^2,d)^{\T,p_i}$
to the equivariant Gromov-Witten
invariants of $K\PP^2$
can be written as
\begin{multline*}
    \sum_{d=0}^\infty Q^d \int_{[\overline{M}_{g,n}(K\PP^2,d)^{\T,p_i}]^{\vir}}
    \psi_1^{a_1}\cdots\psi_n^{a_n}=\\
    \sum_{k=0}^\infty \frac{1}{k!} \int_{\overline{M}_{g,n+k}}
    {\mathsf{H}}_{g}^{p_i}\, \psi_1^{a_1}\cdots\psi_n^{a_n}\, \prod_{j=1}^k S_{p_i}(\psi_{n+j})\, .
\end{multline*}
Here, $\mathsf{H}_{g}^{p_i}$ is the standard vertex class, 
\begin{equation}\label{hhbb}
\frac{e(\mathbb{E}_g^*\otimes T_{p_i}(\PP^2))}{e(T_{p_i}(\PP^2))} \cdot \frac{e(\mathbb{E}_g^* \otimes(-3\lambda_i))}{(-3\lambda_i)}\, ,
\end{equation}
obtained the  Hodge bundle $\mathbb{E}_g\rightarrow \overline{M}_{g,n+k}$.

Similarly, the application of the
virtual localization formula to the moduli of stable
quotients yields classes
$$F_{p_i,k}\in H^*(\overline{M}_{g,n|k})\otimes_\CC\Lambda_i$$ 
for which the contribution of $\overline{Q}_{g,n}(\PP^2,d)^{T,p_i}$ is given by
\begin{multline*}
    \sum_{d=0}^\infty q^d \int_{[\overline{Q}_{g,n}(K\PP^2,d)^{\T,p_i}]^{\vir}}\psi_1^{a_1}\cdots
    \psi_n^{a_n}=\\
    \sum_{k=0}^\infty \frac{q^k}{k!} \int_{\overline{M}_{g,n|k}} \mathsf{H}_{g}^{p_i}\, \psi_1^{a_1}\cdots \psi_n^{a_n}\, F_{p_i,k}.
\end{multline*}
Here $\overline{M}_{g,n|k}$ is the moduli space of genus $g$ curves with markings
$$\{p_1,\cdots,p_n\}\cup\{\hat{p}_1\cdots\hat{p}_k \}\in C^{\text{ns}}\subset C$$
satisfying the conditions
\begin{itemize}
 \item[(i)] the points $p_i$ are distinct,
 \item[(ii)] the points $\hat{p}_j$ are distinct from the points $p_i$,
\end{itemize}
with stability given by the ampleness of 
$$\omega_C(\sum_{i=1}^m p_i+\epsilon\sum_{j=1}^k \hat{p}_j)$$
for every strictly positive $\epsilon \in \QQ$.

The Hodge class $\mathsf{H}_{g}^{p_i}$ is given again by
formula \eqref{hhbb} using the Hodge bundle $$\mathbb{E}_g\rightarrow \overline{M}_{g,n|k}\, .$$

\begin{Def}
 For $\gamma\in H^*(\overline{M}_{g,n})$, let
 \begin{eqnarray*}
     \lann \psi_1^{a_1},\ldots,\psi_n^{a_n}\, |\, \gamma\, \rann_{g,n}^{p_i,\infty}
     &=&
     \sum_{k=0}^\infty \frac{1}{k!}
     \int_{\overline{M}_{g,n+k}} \gamma \, \psi_1^{a_1}\cdots \psi_n^{a_n}\prod_{j=1}^k S_{p_i}(\psi_{n+j})\, ,\\
    \lann \psi_1^{a_1},\ldots,\psi_n^{a_n}\, |\, \gamma\, \rann_{g,n}^{p_i,0+}&=&
    \sum_{k=0}^\infty \frac{q^k}{k!} \int_{\overline{M}_{g,n|k}} \gamma \, \psi_1^{a_1}\cdots \psi_n^{a_n}\, F_{p_i,k}\, .
 \end{eqnarray*}
\end{Def}

\
\begin{Prop} [Ciocan-Fontanine, Kim \cite{CKg}] \label{WC} 
For $2g-2+n>0$,
we have the wall crossing relation
$$\lann \psi_1^{a_1},\ldots,\psi_n^{a_n}\, |\, \gamma\, \rann_{g,n}^{p_i,\infty}(Q(q))= \lann \psi_1^{a_1},\ldots,\psi_n^{a_n}\, |\, \gamma\rann_{g,n}^{p_i,0+}(q)$$
 where 
 $Q(q)$ is the mirror map
 $$Q(q)=\exp(I_1^{K\PP^2}(q))\, .$$
\end{Prop}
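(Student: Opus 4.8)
The plan is to reduce the genus-$g$ identity to a genus-$0$ statement via the Givental-style reconstruction of Section \ref{intmg}, and then to recognize that genus-$0$ statement as the mirror theorem encoded by $Q(q)=\exp(I_1^{K\PP^2}(q))$. First I would observe that the $\infty$-side is, by definition, an instance of the abstract correlator of Section \ref{intmg}: writing the leg series as $S_{p_i}(\psi)=\sum_j t_j\psi^j$, we have
$$\lann \psi_1^{a_1},\ldots,\psi_n^{a_n}\,|\,\gamma\,\rann_{g,n}^{p_i,\infty}=\lann \psi^{a_1},\ldots,\psi^{a_n}\,|\,\gamma\,\rann_{g,n}$$
evaluated at the background $T(c)=S_{p_i}(c)$. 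The $0+$-side is not literally of this shape, since $F_{p_i,k}$ is a class on the light-marking space $\overline{M}_{g,n|k}$ rather than a product of $\psi$-insertions. So the first genuine step is to analyze the geometry of $\overline{M}_{g,n|k}$ together with the base-point classes $F_{p_i,k}$ and to show that, after integration, the $0+$-contribution is again an abstract correlator of the same form, now with a leg series $S^{0+}_{p_i}(c)$ extracted from the $F_{p_i,k}$.

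Once both sides are in abstract form, Propositions \ref{GR1} and \ref{GR2} apply verbatim: each correlator equals a fixed universal rational function $\pP^{a_1,\ldots,a_n,\gamma}_{g,n}$ of the purely genus-$0$ quantities $\lann 1,\ldots,1\rann_{0,m}$, together with the prefactor governed by $\lann 1,1\rann_{0,2}$. This universal function is dictated by intersection theory on $\overline{M}_{g,n}$ and is therefore identical for the two theories. Consequently the genus-$g$ identity follows once the genus-$0$ data agree after the mirror map, i.e.
$$\lann 1,\ldots,1\rann_{0,m}^{p_i,\infty}(Q(q))=\lann 1,\ldots,1\rann_{0,m}^{p_i,0+}(q)\qquad(m\ge 3),$$
together with the matching of the unstable $(0,2)$ term $\lann 1,1\rann_{0,2}$; the latter is exactly what controls the exponential prefactor in Proposition \ref{GR2}, which is why it must be tracked separately as in Proposition \ref{GR22}.

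To finish, I would identify the genus-$0$ matching with the equality of the two leg series under $Q=Q(q)$, namely $S^{\infty}_{p_i}(Q(q))=S^{0+}_{p_i}(q)$. The $0+$ leg series is the restriction to $p_i$ of the fundamental solution $\mathds{S}$ of the stable quotient theory, which by the Birkhoff factorization \eqref{S1} is reconstructed from the $\mathds{I}$-function; its evaluation \eqref{I_Hyper_P} exhibits $I_1^{K\PP^2}$ as the coefficient of $H/z$. On the other side, $S^{\infty}_{p_i}$ is the corresponding Gromov-Witten fundamental solution. The mirror map $Q(q)=\exp(I_1^{K\PP^2}(q))$ is precisely the change of variables normalizing the $\mathds{I}$-function to the Gromov-Witten $\mathds{J}$-function, so the genus-$0$ wall-crossing becomes the standard $\mathds{S}$/$\mathds{I}$ comparison recorded in \eqref{S1} and the asymptotic expansions \eqref{VS}.

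I expect the main obstacle to be the second step of the first paragraph: rewriting the $0+$-contribution, which lives on $\overline{M}_{g,n|k}$ with the base-point classes $F_{p_i,k}$, as an abstract correlator on $\overline{M}_{g,n+k}$ with a genuine leg series. This requires a careful comparison of the light-marking moduli with the ordinary Deligne-Mumford moduli and a proof that the base-point contributions localize at the markings and assemble into a single series $S^{0+}_{p_i}(c)$; the unstable loci $\overline{Q}_{0,2}$ and $\overline{Q}_{1,0}$ of Section \ref{grgr} must also be inserted by hand. Everything downstream is then formal, driven by the string equation through $\mathds{L}$ and by the genus-$0$ mirror identity.
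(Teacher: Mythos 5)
The paper does not actually prove this proposition: it is attributed to Ciocan-Fontanine and Kim, and the only justification offered after the statement is that it ``is a consequence of \cite[Lemma 5.5.1]{CKg}'', with the mirror map identified as the one of Section \ref{holp2}. Your proposal instead sketches an internal proof via the Givental reconstruction of Section \ref{intmg}, and the formal skeleton is fine as far as it goes: if both sides were abstract correlators in the sense of Section \ref{intmg}, with leg series $S^{\infty}_{p_i}$ and some $S^{0+}_{p_i}$, then Lemma \ref{basis} together with Propositions \ref{GR1} and \ref{GR2} would reduce the genus-$g$ identity to the matching of the genus-$0$ data $\lann 1,\ldots,1\rann_{0,m}$ and of $\lann 1,1\rann_{0,2}$ under $Q=Q(q)$.

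The genuine gap sits exactly where you flag the ``main obstacle,'' and it cannot be treated as a preparatory reduction. The $0+$ side is defined through the classes $F_{p_i,k}\in H^*(\overline{M}_{g,n|k})\otimes_{\CC}\Lambda_i$, which do not a priori factor as a product over the light points of a single-variable series in cotangent classes; moreover $\overline{M}_{g,n|k}$ is not $\overline{M}_{g,n+k}$ (light points may collide), so even the shape of the desired rewriting is nontrivial. The assertion that these base-point contributions ``assemble into a single series $S^{0+}_{p_i}(c)$'' is precisely the substance of the higher-genus wall-crossing theorem of \cite{CKg}, not a technical lemma one can defer. Likewise, the genus-$0$ matching $\lann 1,\ldots,1\rann_{0,m}^{p_i,\infty}(Q(q))=\lann 1,\ldots,1\rann_{0,m}^{p_i,0+}(q)$ invoked at the end is itself the genus-$0$ wall-crossing/mirror theorem of \cite{CKg0}, a second external input. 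As written, the proposal therefore reduces Proposition \ref{WC} to two unproven statements whose conjunction is essentially the proposition itself; it is a plausible reconstruction of the strategy behind \cite[Lemma 5.5.1]{CKg}, but not a proof, whereas the paper simply cites that result.
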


Proposition \ref{WC} is a consequence
of \cite[Lemma 5.5.1]{CKg}. The mirror
map here is the mirror map for
$K\PP^2$ discussed in Section \ref{holp2}.
 Propositions \ref{GR1} and \ref{WC} together yield 
 \begin{eqnarray*}
 \lann 1,\ldots,1\,  |\, \gamma \, \rann_{g,n}^{p_i,\infty}& =& \pP^{0,\ldots,0,\gamma}_{g,n}\big(\lann 1,1,1\rann_{0,3}^{p_i,\infty},\lann 1,1,1,1\rann _{0,4}^{p_i,\infty},\ldots\big)\, ,\\
 \lann 1,\ldots,1\,  |\, \gamma \, \rann_{g,n}^{p_i,0+}&=&\pP^{0,\ldots,0,\gamma}_{g,n}\big(\lann
 1,1,1\rann_{0,3}^{p_i,0+},\lann 1,1,1,1\rann _{0,4}^{p_i,0+},\ldots\big)\, .
 \end{eqnarray*}
Similarly, using Propositions \ref{GR2} and \ref{WC}, we obtain
\begin{multline*}
\lannn \frac{1}{x_1-\psi}, \ldots, \frac{1}{x_n-\psi}\, \Big| \, \gamma \, \rannn_{g,n}^{p_i,\infty}= \\
     e^{\lann 1,1\rann^{p_i,\infty}_{0,2}\left(\sum_i\frac{1}{x_i}\right)}\sum_{a_1,\ldots,a_n}\frac{\pP^{a_1,\ldots,a_n,\gamma}_{g,n}\big(\lann 1,1,1\rann_{0,3}^{p_i,\infty},\lann 1,1,1,1\rann_{0,4}^{p_i,\infty},\ldots \big)}{x_1^{a_1+1}\cdots x_n^{a_n+1}}\, ,
\end{multline*}
\begin{multline}\label{ppqqpp}
\lannn \frac{1}{x_1-\psi}, \ldots, \frac{1}{x_n-\psi}\, \Big| \, \gamma \, \rannn_{g,n}^{p_i,0+}= \\
     e^{\lann 1,1\rann^{p_i,0+}_{0,2}\left(\sum_i\frac{1}{x_i}\right)}\sum_{a_1,\ldots,a_n}\frac{\pP^{a_1,\ldots,a_n,\gamma}_{g,n}\big(\lann 1,1,1\rann_{0,3}^{p_i,0+},\lann 1,1,1,1\rann_{0,4}^{p_i,0+},\ldots \big)}{x_1^{a_1+1}\cdots x_n^{a_n+1}}\, .
\end{multline}

\section{Higher genus series on $K\PP^2$}\label{hgs}
\subsection{Overview}
We apply the localization strategy introduced first by Givental  \cite{Elliptic,SS,Book} for Gromov-Witten theory to the stable quotient invariants of local $\PP^2$. 
The contribution $\text{Cont}_\Gamma(q)$ 
discussed in Section \ref{locq} 
of a graph $\Gamma \in \mathsf{G}_{g}(\PP^2)$ 
can be separated into vertex and edge contributions.
We express the vertex and edge contributions in terms of
the series $\mathds{S}_i$ and $\mathds{V}_{ij}$ of Section \ref{lightm}.

\subsection{Edge terms}
Recall the definition{\footnote{We use
the variables $x_1$ and $x_2$ here instead
of $x$ and $y$.}}of $\mathds{V}_{ij}$
given in Section \ref{lightm},
\begin{equation}\label{dfdf6}
\mathds{V}_{ij}  =  
\Big\langle \Big\langle  \frac{\phi _i}{x- \psi } ,  \frac{\phi _j}{y - \psi } 
\Big\rangle \Big\rangle  _{0, 2}^{0+,0+}  \, .
\end{equation}
Let $\overline{\mathds{V}}_{ij}$ denote
the restriction of $\mathds{V}_{ij}$
to $t=0$.
Via formula \eqref{ddgg},
$\overline{\mathds{V}}_{ij}$ is a summation of contributions of fixed loci indexed by
a graph $\Gamma$ consisting of two vertices 
connected by a unique edge. 
Let $w_1$ and $w_2$ be 
$\T$-weights. Denote by $$\overline{{\mathds{V}}}_{ij}^{w_1,w_2}$$ the summation of contributions of $\T$-fixed loci with
tangent weights precisely $w_1$
and $w_2$ on the first rational components
which exit the vertex components over
$p_i$ and $p_j$.

The series $\overline{{\mathds{V}}}_{ij}^{w_1,w_2}$
includes {\em both} vertex and edge
contributions.
By definition \eqref{dfdf6} and the virtual localization formula, we find the
following relationship between
$\overline{\mathds{V}}_{ij}^{w_1,w_2}$
and the corresponding
pure edge contribution $\mathsf{E}_{ij}^{w_1,w_2}$,

\begin{eqnarray*}
    e_i\overline{\mathds{V}}_{ij}^{w_1,w_2}e_j
    &=& \lannn \frac{1}{w_1-\psi},\frac{1}{x_1-\psi}\rannn^{p_i,0+}_{0,2}\mathsf{E}_{ij}^{w_1,w_2}
    \lannn \frac{1}{w_2-\psi},\frac{1}{x_2-\psi}\rannn^{p_j,0+}_{0,2}\\
    &=&\frac{e^{\frac{\lann 1,1\rann^{p_i,0+}_{0,2}}{w_1}+\frac{\lann 1,1\rann^{p_j,0+}_{0,2}}{x_1}}}{w_1+x_1}
    \, \mathsf{E}^{w_1,w_2}_{ij}\, \frac{e^{\frac{\lann 1,1\rann^{p_i,0+}_{0,2}}{w_2}+\frac{\lann 1,1\rann^{p_j,0+}_{0,2}}{x_2}}}{w_2+x_2}
     \end{eqnarray*}
    
\begin{align*}        
    =\sum_{a_1,a_2}e^{\frac{\lann 1,1\rann^{p_i,0+}_{0,2}}{x_1}+\frac{\lann 1,1\rann^{p_i,0+}_{0,2}}{w_1}}e^{\frac{\lann 1,1\rann^{p_j,0+}_{0,2}}{x_2}+\frac{\lann 1,1\rann^{p_j,0+}_{0,2}}{w_2}}(-1)^{a_1+a_2} \frac{
    \mathsf{E}^{w_1,w_2}_{ij}}{w_1^{a_1}w_2^{a_2}}x_1^{a_1-1}x_2^{a_2-1}\, .
\end{align*}
After summing over all possible weights, we obtain
$$
    e_i\left(\overline{\mathds{V}}_{ij}-\frac{\delta_{ij}}{e_i(x+y)}\right)e_j=\sum_{w_1,w_2} e_i\overline{\mathds{V}}_{ij}^{w_1,w_2}e_j\, .$$
The above calculations immediately yield
the following result.
    

\begin{Lemma}\label{Edge} We have
 \begin{multline*}
 \left[e^{-\frac{\lann1,1\rann^{p_i,0+}_{0,2}}{x_1}}
       e^{-\frac{\lann1,1\rann^{p_j,0+}_{0,2}}{x_2}}e_i\left(\overline{\mathds{V}}_{ij}-\frac{\delta_{ij}}{e_i(x+y)}\right)e_j\right]_{x_1^{a_1-1}x_2^{a_2-1}}=\\
       \sum_{w_1,w_2}
       e^{\frac{\lann1,1\rann^{p_i,0+}_{0,2}}{w_1}}e^{\frac{\lann1,1\rann^{p_j,0+}_{0,2}}{w_2}}(-1)^{a_1+a_2}\frac{\mathsf{E}_{ij}^{w_1,w_2}}{w_1^{a_1}w_2^{a_2}}\, .
 \end{multline*}
\end{Lemma}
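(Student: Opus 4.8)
The plan is to assemble the lemma directly from the single-weight factorization established just above the statement, by summing over all tangent weights and then reading off a coefficient. Write $A_i=\lann 1,1\rann^{p_i,0+}_{0,2}$ and $A_j=\lann 1,1\rann^{p_j,0+}_{0,2}$ for brevity. The geometric input, which I take as given from the virtual localization computation preceding the lemma, is the factorization
$$e_i\overline{\mathds{V}}_{ij}^{w_1,w_2}e_j = \lannn \tfrac{1}{w_1-\psi},\tfrac{1}{x_1-\psi}\rannn^{p_i,0+}_{0,2}\,\mathsf{E}_{ij}^{w_1,w_2}\,\lannn \tfrac{1}{w_2-\psi},\tfrac{1}{x_2-\psi}\rannn^{p_j,0+}_{0,2}\,,$$
in which the two outer factors are the vertex contributions over $p_i$ and $p_j$ (each an unstable $(0,2)$ correlator whose two markings carry the edge node $\psi$ and the original $\mathds{V}$-variable), while $\mathsf{E}_{ij}^{w_1,w_2}$ is the pure edge factor with the two indicated first tangent weights.

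First I would evaluate the two vertex correlators using Proposition \ref{GR22}, which gives $\lannn \tfrac{1}{w-\psi},\tfrac{1}{x-\psi}\rannn^{p,0+}_{0,2}=e^{A(1/w+1/x)}/(w+x)$ at the relevant point $p$. Substituting both evaluations turns the single-weight contribution into
$$e_i\overline{\mathds{V}}_{ij}^{w_1,w_2}e_j=\frac{e^{A_i(1/w_1+1/x_1)}}{w_1+x_1}\,\mathsf{E}_{ij}^{w_1,w_2}\,\frac{e^{A_j(1/w_2+1/x_2)}}{w_2+x_2}\,,$$
exactly the expression displayed before the lemma. Next I would sum over all admissible weight pairs $(w_1,w_2)$; by the weight decomposition recorded just above, $\sum_{w_1,w_2}e_i\overline{\mathds{V}}_{ij}^{w_1,w_2}e_j=e_i(\overline{\mathds{V}}_{ij}-\delta_{ij}/(e_i(x_1+x_2)))e_j$, so the left-hand factor appearing in the lemma is accounted for once the degenerate degree-$0$ term is removed.

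The remaining step is purely formal. Multiplying through by $e^{-A_i/x_1}e^{-A_j/x_2}$ cancels the $x_1,x_2$-exponentials in each summand, leaving $\sum_{w_1,w_2}e^{A_i/w_1}e^{A_j/w_2}\,\mathsf{E}_{ij}^{w_1,w_2}/((w_1+x_1)(w_2+x_2))$. I would then extract the coefficient of $x_1^{a_1-1}x_2^{a_2-1}$ by expanding each $1/(w+x)$ as a geometric series in the corresponding $x$: since $1/(w_1+x_1)=\sum_{a_1\ge 1}(-1)^{a_1-1}x_1^{a_1-1}/w_1^{a_1}$, and likewise for $w_2,x_2$, the coefficient picks up $(-1)^{a_1-1}(-1)^{a_2-1}=(-1)^{a_1+a_2}$ together with $1/(w_1^{a_1}w_2^{a_2})$. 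This yields precisely the right-hand side of the lemma.

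The computation carries no serious obstacle: the only genuine content, the vertex/edge factorization, is supplied by the localization analysis preceding the statement, and the rest is bookkeeping. The points needing care are (i) expanding $1/(w+x)$ in the correct variable ($x$, not $w$), so that the coefficient of $x^{a-1}$ is $(-1)^{a-1}/w^{a}$, and (ii) tracking the sign, where the product of the two expansions supplies $(-1)^{a_1+a_2-2}=(-1)^{a_1+a_2}$; a wrong expansion direction or an off-by-one in the exponents is the most likely source of error.
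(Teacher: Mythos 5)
Your proposal is correct and follows essentially the same route as the paper: the factorization of $e_i\overline{\mathds{V}}_{ij}^{w_1,w_2}e_j$ into two unstable $(0,2)$ vertex correlators times $\mathsf{E}_{ij}^{w_1,w_2}$, evaluation of those correlators via Proposition \ref{GR22}, summation over weights with the degenerate $\delta_{ij}$ term removed, cancellation of the $x_1,x_2$-exponentials, and extraction of the coefficient of $x_1^{a_1-1}x_2^{a_2-1}$ from the geometric expansion of $\frac{1}{w+x}$ in $x$, with the sign $(-1)^{a_1+a_2-2}=(-1)^{a_1+a_2}$. The points of care you flag (expansion variable and the sign bookkeeping) are exactly the content of the paper's displayed computation preceding the lemma.
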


\noindent The notation $[\ldots]_{x_1^{a_1-1}x_2^{a_2-1}}$
in Lemma \ref{Edge} denotes the coefficient of
 $x_1^{a_1-1}x_2^{a_2-1}$ in the series expansion
 of the argument.

\subsection{A simple graph}\label{simgr}
Before treating the general case, we present
the localization formula for a simple graph{\footnote{We
follow here the notation of Section \ref{locq}.}.
Let
$\Gamma\in \mathsf{G}_{g}(\PP^2)$ 
consist of two vertices   and one edge,
$$v_1,v_2\in \Gamma(V)\, , \ \ \ \ 
e\in \Gamma(E)\, $$
with genus and $\T$-fixed point assignments
$$\mathsf{g}(v_i)=g_i\, , \ \ \ \ \mathsf{p}(v_i)=p_i\, .$$

Let $w_1$ and $w_2$ be tangent
weights at the vertices $p_1$ and $p_2$
respectively. Denote by $\text{Cont}_{\Gamma,w_1,w_2}$
the summation of contributions to
\begin{equation}\label{zlzl}
\sum_{d>0} q^d\, \left[\overline{Q}_{g}(K\PP^2,d)\right]^{\vir}
\end{equation}
of $\T$-fixed loci with
tangent weights precisely $w_1$
and $w_2$ on the first rational components
which exit the vertex components over
$p_1$ and $p_2$.
We can express the localization formula for 
\eqref{zlzl} as
$$
\lannn \frac{1}{w_1-\psi}\, \Big|\, \mathsf{H}_{g_1}^{p_1}
\rannn_{g_1,1}^{p_1,0+}
\mathsf{E}^{w_1,w_2}_{12} \lannn\frac{1}{w_2-\psi}\, \Big|\, \mathsf{H}_{g_2}^{p_2}
\rannn_{g_2,1}^{p_2,0+} $$
which equals
$$\sum_{a_1,a_2} e^{\frac{\lann1,1\rann^{p_1,0+}_{0,2}}{w_1}}\frac{\ppl
{\psi^{a_1-1}} \, \Big|\, \mathsf{H}_{g_1}^{p_1}     \ppr_{g_1,1}^{p_1,0+}} {w_1^{a_1}} \mathsf{E}^{w_1,w_2}_{12} e^{\frac{\lann 1,1\rann^{p_2,0+}_{0,2}}{w_2}}\frac{\ppl {\psi^{a_2-1}} \, \Big|\, \mathsf{H}_{g_2}^{p_2}\ppr_{g_2,1}^{p_2,0+}}{w_2^{a_2}}
$$
where $\mathsf{H}_{g_i}^{p_i}$ is
the Hodge class \eqref{hhbb}. We have used here
the notation
\begin{multline*}
\ppl
\psi^{k_1}_1, \ldots,\psi^{k_n}_n \, \Big|\, \mathsf{H}_{h}^{p_i}     \ppr_{h,n}^{p_i,0+} 
=\\
\pP^{k_1,\ldots,k_n,\mathsf{H}_{h}^{p_i}  }_{h,1}\big(\lann 1,1,1\rann_{0,3}^{p_i,0+},\lann 1,1,1,1\rann_{0,4}^{p_i,0+},\ldots \big)
\,
\end{multline*}
and applied \eqref{ppqqpp}.

After summing over all possible weights $w_1,w_2$ and
applying 
Lemma \ref{Edge}, we obtain the following result for the full contribution $$\text{Cont}_\Gamma = \sum_{w_1,w_2} \text{Cont}_{\Gamma,w_1,w_2}$$
of $\Gamma$ to $\sum_{d\geq 0} q^d \left[ \overline{Q}_{g}(K\PP^2,d)\right]^{\vir}$.

\begin{Prop} We have \label{propsim}
 \begin{multline*}
     \text{\em Cont}_{\Gamma}=
     \sum_{a_1,a_2>0}
     \ppl
{\psi^{a_1-1}}  \, \Big|\, \mathsf{H}_{g_1}^{p_i}\,     \ppr_{g_1,1}^{p_i,0+}
\ppl
{\psi^{a_2-1}}  \, \Big|\, \mathsf{H}_{g_2}^{p_j}\,     \ppr_{g_2,1}^{p_j,0+}\ \ \ \ \ \ \ \ \ \ \ \\
\ \ \ \ \ \ \ \ \ \ \cdot
     (-1)^{a_1+a_2}\left[e^{-\frac{\lann1,1\rann^{p_i,0+}_{0,2}}{x_1}}
       e^{-\frac{\lann1,1\rann^{p_j,0+}_{0,2}}{x_2}}e_i\left(\overline{\mathds{V}}_{ij}-\frac{\delta_{ij}}{e_i(x_1+x_2)}\right)e_j\right]_{x_1^{a_1-1}x_2^{a_2-1}}\, .
 \end{multline*}
\end{Prop}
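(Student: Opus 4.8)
\emph{The plan is to} assemble $\text{Cont}_\Gamma$ from the two ingredients already in place: the per-weight virtual localization formula displayed just above, and the vertex/edge separation recorded in Lemma \ref{Edge}. I would begin from the tautology
$$\text{Cont}_\Gamma = \sum_{w_1,w_2} \text{Cont}_{\Gamma,w_1,w_2}\, ,$$
where $\text{Cont}_{\Gamma,w_1,w_2}$ is the weight-fixed contribution whose localization expression was written above as the product of two vertex correlators $\lannn \frac{1}{w_i-\psi}\,|\,\mathsf{H}_{g_i}^{p_i}\rannn_{g_i,1}^{p_i,0+}$ with the pure edge term $\mathsf{E}_{12}^{w_1,w_2}$. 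The virtual localization theorem of \cite{GP} is what justifies splitting each $\T$-fixed contribution into vertex, edge, and vertex pieces, with the tangent weights $w_1,w_2$ recording the smoothing parameters at the two nodes where the edge meets the vertex curves.

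Next I would expand each vertex correlator via \eqref{ppqqpp} (itself obtained from Givental's reconstruction, Proposition \ref{GR2}, combined with the wall-crossing of Proposition \ref{WC}), rewriting $\lannn \frac{1}{w_i-\psi}\,|\,\mathsf{H}_{g_i}^{p_i}\rannn$ as $\sum_{a_i>0} e^{\lann 1,1\rann^{p_i,0+}_{0,2}/w_i}\,\ppl \psi^{a_i-1}\,|\,\mathsf{H}_{g_i}^{p_i}\ppr_{g_i,1}^{p_i,0+}\, w_i^{-a_i}$, the range $a_i>0$ being forced by the geometric-series expansion of $1/(w_i-\psi)$. Since the insertion factors $\ppl \psi^{a_i-1}\,|\,\mathsf{H}_{g_i}^{p_i}\ppr$ carry no dependence on the weights, I would interchange the $(w_1,w_2)$-summation with the $(a_1,a_2)$-summation and pull these factors outside. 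The residual weight sum is exactly
$$\sum_{w_1,w_2} e^{\lann 1,1\rann^{p_i,0+}_{0,2}/w_1}\, e^{\lann 1,1\rann^{p_j,0+}_{0,2}/w_2}\, \frac{\mathsf{E}_{ij}^{w_1,w_2}}{w_1^{a_1}w_2^{a_2}}\, ,$$
and Lemma \ref{Edge} identifies this, up to the weight-independent sign $(-1)^{a_1+a_2}$ that comes out front, with the coefficient of $x_1^{a_1-1}x_2^{a_2-1}$ in $e^{-\lann1,1\rann^{p_i,0+}_{0,2}/x_1}e^{-\lann1,1\rann^{p_j,0+}_{0,2}/x_2}e_i(\overline{\mathds{V}}_{ij}-\delta_{ij}/e_i(x_1+x_2))e_j$. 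Substituting this back produces precisely the claimed formula.

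The real content of the argument lives in Lemma \ref{Edge} and in the reconstruction \eqref{ppqqpp}, so the proposition itself is essentially bookkeeping. The one point I would be careful about is the matching of the exponential prefactors $e^{\lann1,1\rann_{0,2}/w_i}$: on the localization side these are produced by the degenerate $(0,2)$ term of the vertex correlator, whereas on the right-hand side of Lemma \ref{Edge} the very same factors arise intrinsically from the two genus-$0$ stubs built into $\overline{\mathds{V}}_{ij}$ (evaluated via the $(0,2)$ formula of Proposition \ref{GR22}). Confirming that these coincide, and that the $x$-exponentials stripped in Lemma \ref{Edge} do not collide with the $w$-exponentials kept, is what guarantees there is no double counting when the true genus-$g_i$ vertices are glued onto the edge. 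All sums are handled as formal power series in $q$ graded by the edge degree, so the interchange of summation orders is unproblematic.
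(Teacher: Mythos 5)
Your proposal is correct and follows essentially the same route as the paper: the per-weight localization expression is expanded via \eqref{ppqqpp} into the $\ppl\cdots\ppr$ polynomials with the exponential prefactors, the weight and $(a_1,a_2)$ sums are interchanged, and Lemma \ref{Edge} converts the residual weight sum into the coefficient extraction from $\overline{\mathds{V}}_{ij}$. Your closing remark about matching the $e^{\lann 1,1\rann_{0,2}/w_i}$ factors against those built into $\overline{\mathds{V}}_{ij}$ is exactly the bookkeeping the paper performs implicitly in deriving Lemma \ref{Edge}.
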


\subsection{A general graph} We apply the argument of Section \ref{simgr}
to obtain a contribution formula for a general graph $\Gamma$.

Let $\Gamma\in \mathsf{G}_{g,0}(\PP^2)$ be a decorated graph as defined in Section \ref{locq}. The {\em flags} of $\Gamma$ are the 
half-edges{\footnote{Flags are either half-edges or markings.}}. Let $\mathsf{F}$ be the set of flags. 
Let
$$\mathsf{w}: \mathsf{F} \rightarrow \text{Hom}(\T, \com^*)\otimes_{\mathbb{Z}}{\mathbb{Q}}$$
be a fixed assignment of $\T$-weights to each flag.

We first consider the contribution $\text{Cont}_{\Gamma,\mathsf{w}}$ to 
$$\sum_{d\geq 0} q^d \left[\overline{Q}_g(K\PP^2,d)\right]^{\text{vir}}$$
of the $\T$-fixed loci associated $\Gamma$ satisfying
the following property:
the tangent weight on
the first rational component corresponding
to each $f\in \mathsf{F}$ is
exactly given by $\mathsf{w}(f)$.
We have 
\begin{equation}
    \label{s234}
    \text{Cont}_{\Gamma,\mathsf{w}} = \frac{1}{|\text{Aut}(\Gamma)|}
    \sum_{\mathsf{A} \in \ZZ_{> 0}^{\mathsf{F}}} \prod_{v\in \mathsf{V}} \text{Cont}^{\mathsf{A}}_{\Gamma,\mathsf{w}} (v)\prod _{e\in \mathsf{E}} {\text{Cont}}_{\Gamma,\mathsf{w}}(e)\, .
\end{equation}
 The terms on the  right side of \eqref{s234} 
require definition:
\begin{enumerate}
\item[$\bullet$] The sum on the right is over 
the set $\ZZ_{> 0}^{\mathsf{F}}$ of
all maps 
$$\mathsf{A}: \mathsf{F} \rightarrow \ZZ_{> 0}$$
corresponding to the sum over $a_1,a_2$ in
Proposition \ref{propsim}.
\item[$\bullet$]
For $v\in \mathsf{V}$ with 
$n$ incident
flags with $\mathsf{w}$-values $(w_1,\ldots,w_n)$ and
$\mathsf{A}$-values
$(a_1,a_2,...,a_n)$, 
\begin{align*}
    \text{Cont}^{\mathsf{A}}_{\Gamma,{\mathsf{w}}}(v)=
    \frac{\ppl
\psi_1^{a_1-1}, \ldots,
\psi_n^{a_n-1}
\, \Big|\, \mathsf{H}_{\mathsf{g}(v)}^{\mathsf{p}(v)}\,     \ppr_{\mathsf{g}(v),n}^{\mathsf{p}(v),0+}}
{w_1^{a_1} \cdots w_n^{a_n}}\, .
\end{align*}
\item[$\bullet$]
For $e\in \mathsf{E}$ with 
assignments $(\mathsf{p}(v_1), \mathsf{p}(v_2))$
for the two associated vertices{\footnote{In case $e$
is self-edge, $v_1=v_2$.}} and 
$\mathsf{w}$-values $(w_1,w_2)$ for the two associated flags,
    $$    
    \text{Cont}_{\Gamma,\mathsf{w}}(e)=
    e^{\frac{\lann1,1\rann^{\mathsf{p}(v_1),0+}_{0,2}}{w_1}}
    e^{\frac{\lann1,1\rann^{\mathsf{p}(v_2),0+}_{0,2}}{w_2}}
    \mathsf{E}^{w_1,w_2}_{\mathsf{p}(v_1),\mathsf{p}(v_2)}\, .$$
\end{enumerate}
The localization formula then yields \eqref{s234}
just as in the simple case of Section \ref{simgr}.

By summing the contribution \eqref{s234} of $\Gamma$ over
all the weight functions $\mathsf{w}$
and applying Lemma \ref{Edge}, we obtain
the following result which generalizes 
Proposition \ref{propsim}.

\begin{Prop}\label{VE} We have
 $$
 \text{\em Cont}_\Gamma
     =\frac{1}{|\text{\em Aut}(\Gamma)|}
     \sum_{\mathsf{A} \in \ZZ_{> 0}^{\mathsf{F}}} \prod_{v\in \mathsf{V}} 
     \text{\em Cont}^{\mathsf{A}}_\Gamma (v)
     \prod_{e\in \mathsf{E}} \text{\em Cont}^{\mathsf{A}}_\Gamma(e)\, ,
 $$
 where the vertex and edge contributions 
 with incident flag $\mathsf{A}$-values $(a_1,\ldots,a_n)$
 and $(b_1,b_2)$ respectively are
 \begin{eqnarray*}
    \text{\em Cont}^{\mathsf{A}}_\Gamma (v)&=&
    \ppl
\psi_1^{a_1-1}, \ldots,
\psi_n^{a_n-1}
\, \Big|\, \mathsf{H}_{\mathsf{g}(v)}^{\mathsf{p}(v)}\,
  \ppr_{\mathsf{g}(v),n}^{\mathsf{p}(v),0+}\,  ,\\
    \text{\em Cont}^{\mathsf{A}}_\Gamma(e)
    &=&
    (-1)^{b_1+b_2}\left[e^{-\frac{\lann1,1\rann^{\mathsf{p}(v_1),0+}_{0,2}}{x_1}}
       e^{-\frac{\lann1,1\rann^{\mathsf{p}(v_2),0+}_{0,2}}{x_2}}e_i\left(\overline{\mathds{V}}_{ij}-\frac{1}{e_i(x+y)}\right)e_j\right]_{x_1^{b_1-1}x_2^{b_2-1}}\, ,
 \end{eqnarray*}
where $\mathsf{p}(v_1)=p_i$ and $\mathsf{p}(v_2)=p_j$ in the second equation. 
\end{Prop}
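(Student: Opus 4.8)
The plan is to deduce the general-graph formula from Lemma~\ref{Edge} by an edge-by-edge summation, exactly parallel to the passage from the fixed-weight localization formula to Proposition~\ref{propsim} in the two-vertex case of Section~\ref{simgr}. I would start from the decomposition $\text{Cont}_\Gamma=\sum_{\mathsf{w}}\text{Cont}_{\Gamma,\mathsf{w}}$, where $\mathsf{w}$ ranges over all $\T$-weight assignments $\mathsf{F}\to\Hom(\T,\com^*)\otimes_\ZZ\QQ$ to the flags, and substitute the fixed-weight formula~\eqref{s234}. After this substitution the claim becomes a purely formal reorganization of the double sum over $\mathsf{w}$ and over $\mathsf{A}\in\ZZ_{>0}^{\mathsf{F}}$, with the prefactor $1/|\text{Aut}(\Gamma)|$ carried along unchanged.

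Interchanging the two sums, I fix $\mathsf{A}$ and sum $\prod_{v\in\mathsf{V}}\text{Cont}^{\mathsf{A}}_{\Gamma,\mathsf{w}}(v)\prod_{e\in\mathsf{E}}\text{Cont}_{\Gamma,\mathsf{w}}(e)$ over all $\mathsf{w}$. The structural point is that each flag $f\in\mathsf{F}$ is a half-edge of exactly one edge and is incident to exactly one vertex, so its weight $\mathsf{w}(f)$ appears in only two places: as a denominator factor $w_f^{-a_f}$ in the vertex contribution at the incident vertex, and inside the contribution of its edge through the exponential $e^{\lann 1,1\rann_{0,2}/w_f}$ and the factor $\mathsf{E}^{w_1,w_2}$. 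The weight-independent numerators $\ppl\psi_1^{a_1-1},\ldots,\psi_n^{a_n-1}\,\big|\,\mathsf{H}_{\mathsf{g}(v)}^{\mathsf{p}(v)}\ppr_{\mathsf{g}(v),n}^{\mathsf{p}(v),0+}$ therefore factor out of the weight sum and are already the claimed $\text{Cont}^{\mathsf{A}}_\Gamma(v)$, while the sum over $\mathsf{w}$ factorizes into a product over the edges. For a single edge $e$ with half-edges $f_1,f_2$ incident to $v_1,v_2$ and $\mathsf{A}$-values $b_1,b_2$, the remaining weight sum is
$$\sum_{w_1,w_2} e^{\frac{\lann 1,1\rann^{\mathsf{p}(v_1),0+}_{0,2}}{w_1}} e^{\frac{\lann 1,1\rann^{\mathsf{p}(v_2),0+}_{0,2}}{w_2}} \frac{\mathsf{E}^{w_1,w_2}_{\mathsf{p}(v_1),\mathsf{p}(v_2)}}{w_1^{b_1}w_2^{b_2}}\,,$$
which is precisely the weight sum evaluated in Lemma~\ref{Edge}; that lemma rewrites it as $\text{Cont}^{\mathsf{A}}_\Gamma(e)$. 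Collecting the vertex and edge factors then yields the stated formula.

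The main obstacle is combinatorial bookkeeping rather than any new geometric input. I must check that the weight sum genuinely factorizes over edges, namely that no weight is shared between two distinct edges and that the self-edge case --- where $v_1=v_2$ but the two half-edges remain distinct flags with independent weights --- is treated correctly. The one point beyond the two-vertex computation of Section~\ref{simgr} is confirming that the half-edge exponentials $e^{\lann 1,1\rann_{0,2}/w}$ built into $\text{Cont}_{\Gamma,\mathsf{w}}(e)$ are exactly those matched by Lemma~\ref{Edge} upon coefficient extraction, so that after the summation no spurious exponential or weight factors remain attached to the vertices. Once this matching is verified, the proof is a direct edge-by-edge substitution.
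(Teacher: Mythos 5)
Your proposal is correct and follows exactly the paper's route: the paper obtains Proposition \ref{VE} by summing the fixed-weight formula \eqref{s234} over all weight assignments $\mathsf{w}$ and applying Lemma \ref{Edge} edge by edge, which is precisely your factorization argument. The points you flag for checking (factorization of the weight sum over edges, the self-edge case, and the matching of the half-edge exponentials) are the right ones and are handled implicitly in the paper's one-line derivation.
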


\subsection{Legs} 
Let $\Gamma \in \mathsf{G}_{g,n}(\PP^2)$ be a decorated graph
with markings. While no markings are needed to define the
stable quotient invariants of $K\PP^2$, the contributions
of decorated graphs with markings will appear in the
proof of the holomorphic anomaly equation.
The formula for the contribution $\text{Cont}_\Gamma(H,\ldots,H)$
of $\Gamma$ to 
\begin{align*}
    \sum_{d\ge 0}q^d \prod_{j=0}^n \text{ev}^*(H)\cap\left[ \overline{Q}_{g,n}(K\PP^2,d)\right]^{\vir} 
\end{align*}
is given by the following result.
\begin{Prop}\label{VEL} We have
 \begin{multline*}
 \text{\em Cont}_\Gamma(H,\ldots,H)
     =\\\frac{1}{|\text{\em Aut}(\Gamma)|}
     \sum_{\mathsf{A} \in \ZZ_{>0}^{\mathsf{F}}} \prod_{v\in \mathsf{V}} 
     \text{\em Cont}^{\mathsf{A}}_\Gamma (v)
     \prod_{e\in \mathsf{E}} \text{\em Cont}^{\mathsf{A}}_\Gamma(e)
     \prod_{l\in \mathsf{L}} \text{\em Cont}^{\mathsf{A}}_\Gamma(l)\, ,
 \end{multline*}
 where the leg contribution 
 is 
 \begin{eqnarray*}
     \text{\em Cont}^{\mathsf{A}}_\Gamma(l)
    &=&
    (-1)^{\mathsf{A}(l)-1}\left[e^{-\frac{\lann1,1\rann^{\mathsf{p}(l),0+}_{0,2}}{z}}
       \overline{\mathds{S}}_{\mathsf{p}(l)}(H)\right]_{z^{\mathsf{A}(l)-1}}\, .
 \end{eqnarray*}
The vertex and edge contributions are same as before.
\end{Prop}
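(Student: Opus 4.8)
\emph{The plan} is to repeat verbatim the localization strategy of Propositions \ref{propsim} and \ref{VE}: the vertex and edge contributions are untouched by the presence of markings, so the only new content is the leg factor $\text{Cont}^{\mathsf{A}}_\Gamma(l)$. First I would write the virtual localization formula \cite{GP} for the class $\sum_{d\ge 0} q^d\, \prod_{j=1}^n \text{ev}_j^*(H) \cap [\overline{Q}_{g,n}(K\PP^2,d)]^{\vir}$, organized as in Section \ref{grgr} over the fixed loci indexed by $\Gamma \in \mathsf{G}_{g,n}(\PP^2)$ together with a weight assignment $\mathsf{w}\colon \mathsf{F}\to \text{Hom}(\T,\com^*)\otimes_{\ZZ}\QQ$. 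The flag set $\mathsf{F}$ now includes the legs $\mathsf{L}$, and each fixed locus factors into vertex loci over the $p_i$, edge loci, and leg loci, exactly as in the decomposition \eqref{s234}.

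For a leg $l$ incident to the vertex $v$ with $\mathsf{p}(v)=p_i$, the attached marking carries the insertion $\text{ev}_l^*(H)$, computed at $p_i$ through the $\mathds{S}$-function formalism. The key step is to identify the localization contribution of the rational tail emanating from $p_i$ and terminating at the marking bearing the $H$-insertion. Recalling the definition $\mathds{S}_i(H) = e_i \langle\langle \frac{\phi_i}{z-\psi}, H\rangle\rangle_{0,2}^{0+,0+}$ and its evaluation \eqref{S1}, this tail contributes precisely $\overline{\mathds{S}}_{\mathsf{p}(l)}(H)$ --- the genus-$0$ two-pointed series with one internal flag $\frac{\phi_i}{z-\psi}$ and one external marking carrying $H$. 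As in the edge analysis, the flag records a power of $\psi$ at the vertex through its $\mathsf{A}$-value $\mathsf{A}(l)$; expanding $\frac{1}{z-\psi}$ as a geometric series and extracting the coefficient of $z^{\mathsf{A}(l)-1}$ produces both the requisite $\psi$-power and the sign $(-1)^{\mathsf{A}(l)-1}$, by the same geometric-series bookkeeping that yields the factor $(-1)^{b_1+b_2}$ in $\text{Cont}^{\mathsf{A}}_\Gamma(e)$.

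The remaining point is the correction factor $e^{-\lann 1,1\rann^{\mathsf{p}(l),0+}_{0,2}/z}$. Through \eqref{ppqqpp}, the vertex contribution $\text{Cont}^{\mathsf{A}}_\Gamma(v)$ already carries an exponential $e^{\lann 1,1\rann^{p_i,0+}_{0,2}/w}$ for each incident flag, arising from the unstable $(0,2)$ data. Since $\overline{\mathds{S}}_{\mathsf{p}(l)}(H)$ is itself built from the same two-pointed $(0+,0+)$ correlator, it contains this exponential, and I must strip it off to avoid double counting --- exactly the manipulation carried out for edges in Lemma \ref{Edge}. Multiplying $\overline{\mathds{S}}_{\mathsf{p}(l)}(H)$ by $e^{-\lann 1,1\rann^{\mathsf{p}(l),0+}_{0,2}/z}$ before extracting the $z^{\mathsf{A}(l)-1}$-coefficient accomplishes this and yields the stated formula for $\text{Cont}^{\mathsf{A}}_\Gamma(l)$. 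Summing over all weight functions $\mathsf{w}$ then reproduces the factorized form, with vertex and edge contributions identical to Proposition \ref{VE}.

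\emph{The main obstacle} is the precise identification in the second step: verifying that the localization residue of the open tail terminating in a marking with an $H$-insertion is governed by $\overline{\mathds{S}}_{\mathsf{p}(l)}(H)$ rather than by some other specialization of the $\mathds{S}$-series, and that the $e^{-\lann 1,1\rann/z}$ factor cancels the corresponding vertex exponential cleanly. Once this matching is secured, the argument is a routine repetition of the weight-summation of Proposition \ref{VE}, and no further new ideas are required.
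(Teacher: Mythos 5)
Your proposal is correct and follows exactly the route the paper intends: the paper itself gives no detailed argument for Proposition \ref{VEL}, stating only that it ``follows the vertex and edge analysis'' and leaving the details as an exercise, and your elaboration (leg contribution from $\overline{\mathds{S}}_{\mathsf{p}(l)}(H)$, the sign from the geometric-series expansion of $\frac{1}{z-\psi}$, and the $e^{-\lann 1,1\rann^{\mathsf{p}(l),0+}_{0,2}/z}$ factor stripping the exponential already absorbed into the vertex term via \eqref{ppqqpp}) is precisely that analysis carried out for legs.
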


The proof of Proposition \ref{VEL} 
follows the vertex and edge analysis. We leave the
details as an exercise for the reader.
The parallel statement for Gromov-Witten theory
can be found in \cite{Elliptic, SS,Book}.

\section{Vertices, edges, and legs} \label{svel}
\subsection{Overview}
Using the results of Givental \cite{Elliptic,SS,Book} combined with wall-crossing  \cite{CKg}, we calculate here 
the vertex and edge contributions
 in terms of the function $R_k$ of Section \ref{furcalc}.

\subsection{Calculations in genus 0}
We follow the notation introduced in Section \ref{intmg}. Recall
the series
$$T(c)=t_0 +t_1 c+t_2 c^2+\ldots\, .$$ 

\begin{Prop} {\em (Givental \cite{Elliptic,SS,Book})} For $n\geq 3$, we have
\begin{multline*}
\lann
 1,\ldots,1\rann_{0,n}^{p_i,\infty} = \\
 (\sqrt{\Delta_i})^{2g-2+n}\left(\sum_{k\geq 0}\frac{1}{k!}\int_{\overline{M}_{0,n+k}}T(\psi_{n+1})\cdots T(\psi_{n+k})\right)\Big|_{t_0=0,t_1=0,t_{j\ge 2}=(-1)^j Q_{j-1,i}}
 \end{multline*}
 where the functions $\sqrt{\Delta_i},\,Q_{l,i}$ are defined by 
 \begin{align*}
     \overline{\mathds{S}}^{\infty}_i(1) =  e_i \Big\langle \Big\langle  \frac{\phi _i}{z-\psi} , 1 
\Big\rangle \Big\rangle _{0, 2}^{p_i,\infty}=\frac{e^{\frac{\lann1,1\rann^{p_i,\infty}_{0,2}}{z}}}{\sqrt{\Delta_i}}
\left( 1+\sum_{l=1}^\infty Q_{l,i} z^{l}\right)\, .
 \end{align*}
\end{Prop}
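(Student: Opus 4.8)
The statement is the genus-zero instance of Givental's reconstruction of the localization contribution at a semisimple fixed point from the asymptotics of its $S$-function, so the plan is to unwind the definition of the left-hand side and match it, term by term in the number of rational tails, with the dressed descendent potential on the right. First I would unwind the left-hand side: by the Definition preceding Proposition \ref{WC}, taken with $\gamma=1$ and $g=0$ (so that the Hodge vertex class is trivial), one has
\[
\lann 1,\ldots,1\rann_{0,n}^{p_i,\infty}=\sum_{k\ge 0}\frac{1}{k!}\int_{\overline{M}_{0,n+k}}\prod_{j=1}^{k}S_{p_i}(\psi_{n+j})\, .
\]
The problem thus reduces to identifying the propagator $S_{p_i}(\psi)$ with the series $T(\psi)$ under the prescribed substitution, up to the overall factor $(\sqrt{\Delta_i})^{2g-2+n}$. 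Because we are in genus $0$ there is no graph sum and no Hodge contribution; the reconstruction collapses to a single change of variables together with a normalization attached to the vertex over $p_i$.

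The key step is the dictionary between $S_{p_i}$ and the stated expansion $\overline{\mathds{S}}^{\infty}_i(1)=e^{\lann 1,1\rann^{p_i,\infty}_{0,2}/z}(\sqrt{\Delta_i})^{-1}\big(1+\sum_{l\ge 1}Q_{l,i}z^l\big)$. I would peel off its three structural factors one at a time. The translation factor $e^{\lann 1,1\rann^{p_i,\infty}_{0,2}/z}$ is absorbed by the string equation of Lemma \ref{stst}: since $\mathds{L}\lann 1,1\rann_{0,2}=1$ and $\lann 1,1\rann_{0,2}|_{t_0=0}=0$ (as in the proof of Proposition \ref{GR2}), multiplication by this exponential is exactly the effect of evaluating the descendent potential at $t_0=0$, which is why the right-hand side carries the specialization $t_0=0$. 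The residual series $1+\sum_{l\ge 1}Q_{l,i}z^l$ becomes, after the dilaton shift and the reindexing $z\mapsto -\psi$ dictated by the geometric-series conventions used throughout (note $\sum_{j\ge 2}(-1)^jQ_{j-1,i}\psi^j=-\psi\sum_{l\ge 1}Q_{l,i}(-\psi)^l$), precisely the substitution $t_{j\ge 2}=(-1)^jQ_{j-1,i}$ together with the normalization $t_1=0$; the alternating signs are the same ones already visible in the leg formula of Proposition \ref{VEL}. The leading factor $(\sqrt{\Delta_i})^{-1}$ is the passage from the flat frame to the normalized idempotent frame at $p_i$, and it supplies the semisimple normalization $(\sqrt{\Delta_i})^{2g-2+n}$, one $\sqrt{\Delta_i}$ per unit of Euler characteristic $2g-2+n=n-2$.

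The main obstacle is the identification of the propagator $S_{p_i}(\psi)$, which is an honest power series in $\psi$ produced by the virtual localization formula, with the asymptotic expansion of $\overline{\mathds{S}}^{\infty}_i(1)$ appearing in the statement: these are two expansions of the same solution of the quantum differential equation, one regular at $z=\infty$ and one the oscillatory asymptotic basis at the semisimple point, and the transition between them is exactly the $R$-matrix encoded by the coefficients $Q_{l,i}$. Making this transition precise, and in particular checking by the dilaton equation that the exponent of $\sqrt{\Delta_i}$ is $2g-2+n$ and that the signs emerge as $(-1)^j$, is the delicate bookkeeping. I would carry it out by invoking the genus-$0$ case of Givental's reconstruction \cite{Elliptic,SS,Book}, verifying that the $R$-matrix determined by $1+\sum_{l\ge 1}Q_{l,i}z^l$ and the idempotent datum $\Delta_i$ reproduce the expansion stated in the Proposition.
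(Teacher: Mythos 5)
The paper offers no proof of this Proposition --- it is stated as a quotation of Givental's results \cite{Elliptic,SS,Book} --- and your sketch (unwinding the left side via the definition of the $p_i,\infty$ correlators, matching the exponential prefactor with the $t_0=0$ specialization via the string equation, the series $1+\sum_{l\ge 1}Q_{l,i}z^{l}$ with the dilaton-shifted substitution $t_{j\ge 2}=(-1)^{j}Q_{j-1,i}$, and the $\sqrt{\Delta_i}$ normalization with exponent $2g-2+n$) is a faithful outline of that result which, like the paper, ultimately defers to the same references. The only phrase to treat with care is the early reduction to ``identifying $S_{p_i}(\psi)$ with $T(\psi)$ up to the overall factor $(\sqrt{\Delta_i})^{2g-2+n}$'': a per-insertion identification would produce a factor $(\sqrt{\Delta_i})^{k}$ depending on the number $k$ of tails rather than $(\sqrt{\Delta_i})^{n-2}$, so the normalization must be attached to the vertex in canonical coordinates (the $R$-matrix mechanism you correctly invoke at the end of the paragraph), not distributed over the $T$-insertions.
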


The existence of the above asymptotic expansion of $\overline{\mathds{S}}^\infty_i(1)$ can also be proven by the argument of  \cite[Theorem 5.4.1]{CKg0}.
Similarly, we have an asymptotic expansion of $\overline{\mathds{S}}_i(1)$,
\begin{align*}
    \overline{\mathds{S}}_i(1) =e^{\frac{\lann1,1\rann^{p_i,0+}_{0,2}}{z}}
\left( \sum_{l=0}^\infty R_{l,i} z^{l}\right)\, .
\end{align*}
By \eqref{VS}, we have
\begin{align*}
    \lann1,1\rann^{p_i,0+}_{0,2} = \mu_i.
\end{align*}

After applying the wall-crossing result of Proposition \ref{WC}, we \
obtain
\begin{eqnarray*}
    \lann
 1,\ldots,1\rann_{0,n}^{p_i,\infty}(Q(q)) &=&\lann
 1,\ldots,1\rann_{0,n}^{p_i,0+}(q), \\
 \overline{\mathds{S}}^{\infty}_i(1)(Q(q))&=&\overline{\mathds{S}}_i(1)(q),
\end{eqnarray*}
where $Q(q)$ is mirror map for $K\PP^2$ as before. By comparing asymptotic expansions of $\overline{\mathds{S}}^{\infty}_i(1)$ and $\overline{\mathds{S}}_i(1)$, we get
a wall-crossing relation between $Q_{l,i}$ and $R_{l,i}$,
\begin{align*}
 \sqrt{\Delta_i}(Q(q))&=\frac{1}{R_{0,i}(q)}\,,\\
 Q_{l,i}(Q(q))&=\frac{R_{l,i}(q)}{R_{0,i}(q)}\,\,\, \text{for}\,\,l\ge 1.
\end{align*}
We have proven the following result.

\begin{Prop}\label{q2q2} For $n\geq 3$, we have \label{zaa3}
\begin{multline*}
\lann
 1,\ldots,1\rann_{0,n}^{p_i,0+} = \\
 R_{0,i}^{2g-2+n}\left(\sum_{k\geq 0}\frac{1}{k!}\int_{\overline{M}_{0,n+k}}T(\psi_{n+1})\cdots T(\psi_{n+k})\right)\Big|_{t_0=0,t_1=0,t_{j\ge 2}=(-1)^j\frac{R_{j-1,i}}{R_{0,i}}}\, .
 \end{multline*}
\end{Prop}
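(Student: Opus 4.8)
The plan is to obtain the $0+$ formula by transporting the Givental formula for the $\infty$-correlators established in the preceding Proposition across the wall-crossing isomorphism of Proposition~\ref{WC}. The only genuinely new ingredient is a dictionary translating the normalizing data $(\sqrt{\Delta_i},Q_{l,i})$ of the $\infty$-theory into the asymptotic data $(R_{0,i},R_{l,i})$ of the $0+$-theory; once this dictionary is in hand, the statement follows by a direct substitution $Q\mapsto Q(q)$.

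First I would record the two relevant asymptotic expansions of the $S$-series. On the $\infty$ side the preceding Proposition defines $\sqrt{\Delta_i}$ and the $Q_{l,i}$ through
$$\overline{\mathds{S}}^{\infty}_i(1)=\frac{e^{\lann 1,1\rann^{p_i,\infty}_{0,2}/z}}{\sqrt{\Delta_i}}\Big(1+\sum_{l\geq 1}Q_{l,i}z^{l}\Big),$$
while on the $0+$ side one has the parallel expansion
$$\overline{\mathds{S}}_i(1)=e^{\mu_i/z}\Big(\sum_{l\geq 0}R_{l,i}z^{l}\Big),$$
whose existence can be established exactly as in \cite[Theorem 5.4.1]{CKg0} and whose exponential uses $\lann 1,1\rann^{p_i,0+}_{0,2}=\mu_i$ by \eqref{VS}.

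Next I would apply Proposition~\ref{WC} in two registers: to the correlators themselves, giving $\lann 1,\ldots,1\rann_{0,n}^{p_i,\infty}(Q(q))=\lann 1,\ldots,1\rann_{0,n}^{p_i,0+}(q)$, and to the $S$-series, giving $\overline{\mathds{S}}^{\infty}_i(1)(Q(q))=\overline{\mathds{S}}_i(1)(q)$. The key point is that the $(0,2)$ case of Proposition~\ref{WC} yields $\lann 1,1\rann^{p_i,\infty}_{0,2}(Q(q))=\mu_i(q)$, so the two exponential prefactors agree after the substitution $Q\mapsto Q(q)$; dividing them out, I can legitimately compare the remaining power series in $z$ coefficient by coefficient. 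Matching the coefficient of $z^{0}$ gives $\sqrt{\Delta_i}(Q(q))=R_{0,i}(q)^{-1}$, and matching the coefficient of $z^{l}$ for $l\geq 1$ then gives $Q_{l,i}(Q(q))=R_{l,i}(q)/R_{0,i}(q)$.

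Finally I would substitute $Q\mapsto Q(q)$ into the Givental formula of the preceding Proposition. The left-hand side becomes $\lann 1,\ldots,1\rann_{0,n}^{p_i,0+}(q)$ by the first register of wall-crossing; on the right-hand side the prefactor $(\sqrt{\Delta_i})^{2g-2+n}$ is converted, via $\sqrt{\Delta_i}(Q(q))=R_{0,i}^{-1}$, into the corresponding power of $R_{0,i}$, and the dilaton-shift specialization $t_{j\geq 2}=(-1)^jQ_{j-1,i}$ is converted, via $Q_{l,i}(Q(q))=R_{l,i}/R_{0,i}$, into $t_{j\geq 2}=(-1)^jR_{j-1,i}/R_{0,i}$, which is the formula of the statement. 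The step I expect to require the most care is the coefficient comparison in the previous paragraph: it depends on first securing the existence (and uniqueness) of the $0+$ asymptotic expansion of $\overline{\mathds{S}}_i(1)$ and on verifying that the exponential factors genuinely cancel after the mirror substitution, which is precisely where the $(0,2)$ instance of the wall-crossing theorem is indispensable.
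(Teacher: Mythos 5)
Your proposal is correct and follows essentially the same route as the paper: establish the $0+$ asymptotic expansion of $\overline{\mathds{S}}_i(1)$ with exponential prefactor $\mu_i$, apply the wall-crossing of Proposition~\ref{WC} both to the correlators and to the $\mathds{S}$-series, compare coefficients to obtain $\sqrt{\Delta_i}(Q(q))=R_{0,i}(q)^{-1}$ and $Q_{l,i}(Q(q))=R_{l,i}(q)/R_{0,i}(q)$, and substitute into the Givental formula. The extra care you take in noting that the exponential prefactors cancel (via $\lann 1,1\rann_{0,2}^{p_i,0+}=\mu_i$) before comparing coefficients is exactly the point the paper handles implicitly.
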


 Proposition \ref{q2q2} immediately implies the evaluation
\begin{equation} \label{fxxf}
\lann
 1,1,1\rann_{0,3}^{p_i,0+}=\frac{1}{R_{0,i}}\, .
 \end{equation}
Another simple consequence of Proposition \ref{zaa3} is the following 
 basic property.
\begin{Cor}\label{Poly} For $n\geq 3$, we have
 $$
 \lann
 1,\ldots,1\rann_{0,n}^{p_i,0+} \in \CC[R_{0,i}^{\pm 1},R_{1,i},R_{2,i},...]
 \,.$$
\end{Cor}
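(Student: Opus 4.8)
The plan is to read the claim off directly from Proposition \ref{q2q2}, which already expresses $\lann 1,\ldots,1\rann_{0,n}^{p_i,0+}$ as $R_{0,i}^{2g-2+n}$ times a universal bracket of $\psi$-integrals on $\overline{M}_{0,n+k}$, evaluated at $t_0=t_1=0$ and $t_{j}=(-1)^j R_{j-1,i}/R_{0,i}$ for $j\ge 2$. Since the left-hand side is a genus $0$ correlator, the prefactor is $R_{0,i}^{n-2}$, and because $n\ge 3$ this is a nonnegative power of $R_{0,i}$.

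First I would analyze the bracket itself. After the restriction $t_0=t_1=0$ the series $T(c)=\sum_{j\ge 2}t_j c^j$ begins in degree $2$, so every factor $T(\psi_{n+l})$ contributes a monomial $\psi_{n+l}^{a_l}$ with $a_l\ge 2$. The moduli space $\overline{M}_{0,n+k}$ has dimension $n+k-3$, so $\int_{\overline{M}_{0,n+k}}\psi_{n+1}^{a_1}\cdots\psi_{n+k}^{a_k}$ is nonzero only when $\sum_l a_l=n+k-3$. Together with $a_l\ge 2$ this forces $2k\le n+k-3$, i.e. $k\le n-3$, so the sum over $k$ is finite and only finitely many of the variables $t_2,t_3,\ldots$ actually occur.

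Next I would observe that each of these finitely many integrals is a polynomial with rational coefficients in the $t_j$. Substituting $t_j=(-1)^j R_{j-1,i}/R_{0,i}$ then produces a polynomial in the ratios $R_{1,i}/R_{0,i},R_{2,i}/R_{0,i},\ldots$ with coefficients in $\CC$; each such ratio lies in $\CC[R_{0,i}^{-1},R_{1,i},R_{2,i},\ldots]$, and hence so does the entire bracket. Multiplying by the prefactor $R_{0,i}^{n-2}$ keeps us inside $\CC[R_{0,i}^{\pm 1},R_{1,i},R_{2,i},\ldots]$, which is exactly the assertion of Corollary \ref{Poly}.

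There is essentially no hard step here: the statement is a formal consequence of Proposition \ref{q2q2}. The only point demanding care is the bookkeeping that makes both the sum over $k$ and each individual integral finite, namely the vanishing $T|_{t_0=t_1=0}=O(c^2)$ combined with the dimension constraint on $\overline{M}_{0,n+k}$. This guarantees that the bracket is a genuine polynomial in the ratios $R_{j-1,i}/R_{0,i}$ rather than an infinite series, so that only finitely many negative powers of $R_{0,i}$ appear and the final multiplication by $R_{0,i}^{n-2}$ lands cleanly in the stated ring.
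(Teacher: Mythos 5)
Your proof is correct and follows exactly the route the paper intends: the paper states Corollary \ref{Poly} as an immediate consequence of Proposition \ref{q2q2} without further detail, and your argument (the prefactor $R_{0,i}^{n-2}$ plus the observation that $T|_{t_0=t_1=0}=O(c^2)$ together with the dimension constraint on $\overline{M}_{0,n+k}$ makes the bracket a genuine polynomial in the ratios $R_{j-1,i}/R_{0,i}$) is precisely the bookkeeping being left to the reader.
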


\subsection{Vertex and edge analysis}\label{VerEdge}
By Proposition \ref{VE}, we have decomposition of the
contribution to $\Gamma\in \mathsf{G}_{g}(\PP^2)$ to
the stable quotient theory of 
$K\PP^2$ 
into vertex terms and edge terms
$$
 \text{Cont}_\Gamma
     =\frac{1}{|\text{Aut}(\Gamma)|}
     \sum_{\mathsf{A} \in \ZZ_{> 0}^{\mathsf{F}}} \prod_{v\in \mathsf{V}} 
     \text{Cont}^{\mathsf{A}}_\Gamma (v)
     \prod_{e\in \mathsf{E}} \text{Cont}^{\mathsf{A}}_\Gamma(e)\, .
 $$


\begin{Lemma}\label{L1} Suppose Conjecture \ref{RPoly} is true. Then we have
    $$\text{\em Cont}^{\mathsf{A}}_\Gamma (v)\in \mathds{G}_2\,.$$ 
\end{Lemma}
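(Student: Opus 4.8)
The plan is to unwind the vertex contribution to its definition and then reduce the entire statement to Conjecture \ref{RPoly} together with Corollary \ref{Poly}. Writing $p_i=\mathsf{p}(v)$ and $h=\mathsf{g}(v)$, Proposition \ref{VE} and the definition of the bracket $\ppl\,\cdot\,\ppr^{p_i,0+}$ give
\[
\text{Cont}^{\mathsf{A}}_\Gamma(v)=\pP^{a_1-1,\ldots,a_n-1,\mathsf{H}_{h}^{p_i}}_{h,n}\big(s_0,s_1,s_2,\ldots\big)\Big|_{s_j=\lann 1,\ldots,1\rann_{0,j+3}^{p_i,0+}}\,,
\]
so the task is to show that this rational expression evaluates inside $\mathds{G}_2$. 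The first step is to record the shape of $\pP$. By the discussion of Section \ref{intmg}, each correlator $\lann\psi^{a_1},\ldots,\psi^{a_n}\,|\,\gamma\rann_{h,n}|_{t_0=0}$ lies in $\CC[\frac{1}{1-t_1}][t_2,t_3,\ldots]_{\mathrm{Hom}}$, and the change of variables to the generators $s_j=\lann 1,\ldots,1\rann_{0,j+3}|_{t_0=0}$ of Lemma \ref{basis} is triangular with $s_0=\frac{1}{1-t_1}$; inverting it writes each $t_j$ as a Laurent polynomial in $s_0$ and an ordinary polynomial in $s_1,s_2,\ldots$. Hence the only denominators occurring in $\pP$ are powers of $s_0$. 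Moreover, taking $\gamma=\mathsf{H}_h^{p_i}$ to be the class \eqref{hhbb}, the coefficients of $\pP$ are assembled from the equivariant tangent weights $\lambda_i-\lambda_j$ and from $-3\lambda_i$ (including in denominators), so they lie in the localized ring $\Lambda_i$. Since $\Lambda_i\subseteq \CC(\lambda_0,\lambda_1,\lambda_2)\subseteq \mathds{G}_2$, we obtain
\[
\pP^{a_1-1,\ldots,a_n-1,\mathsf{H}_{h}^{p_i}}_{h,n}\in \mathds{G}_2[s_0^{\pm 1},s_1,s_2,\ldots]\,.
\]

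The second step is to check that each substituted argument lies in $\mathds{G}_2$ and that $s_0$ is sent to a unit. By Corollary \ref{Poly}, for all $j\ge 0$,
\[
s_j=\lann 1,\ldots,1\rann_{0,j+3}^{p_i,0+}\in \CC[R_{0,i}^{\pm 1},R_{1,i},R_{2,i},\ldots]\,,
\]
and Conjecture \ref{RPoly} gives $R_{k,i}\in\mathds{G}_2$ for all $k\ge 0$. It remains to see that $R_{0,i}$ is a \emph{unit} of $\mathds{G}_2$: from the leading coefficient computed in Section \ref{furcalc},
\[
R_{0,i}^2=\frac{\lambda_i\prod_{j\ne i}(\lambda_i-\lambda_j)}{f(L_i)}\,,
\]
whose numerator is a nonzero element of $\CC(\lambda_0,\lambda_1,\lambda_2)$ and whose denominator $f(L_i)$ is a unit, since $f(L_i)^{-1}=(f(L_i)^{-1/2})^2\in\mathds{G}_2$. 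Thus $R_{0,i}^2$ is a unit, and as $R_{0,i}\in\mathds{G}_2$ the element $R_{0,i}$ is itself a unit. Consequently each $s_j\in\mathds{G}_2$, and by \eqref{fxxf} the substitution sends $s_0\mapsto R_{0,i}^{-1}$, a unit, so the negative powers of $s_0$ present in $\pP$ cause no difficulty.

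Combining the two steps, substituting units and elements of $\mathds{G}_2$ into an element of $\mathds{G}_2[s_0^{\pm 1},s_1,s_2,\ldots]$ returns an element of $\mathds{G}_2$, which is exactly Lemma \ref{L1}. I expect the main obstacle to be the structural bookkeeping of the first step: one must confirm that passing from the Hodge-twisted correlator to the polynomial $\pP$ introduces only denominators that are powers of $s_0$ and only coefficients in $\Lambda_i$. This is precisely where the explicit form \eqref{hhbb} of the vertex class and the triangular nature of the change of variables underlying Lemma \ref{basis} are used; granted this, the rest is a direct application of Conjecture \ref{RPoly} and Corollary \ref{Poly}.
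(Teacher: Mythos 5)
Your proposal is correct and follows essentially the same route as the paper: express $\text{Cont}^{\mathsf{A}}_\Gamma(v)$ via Proposition \ref{VE} as a polynomial in $\lann 1,1,1\rann_{0,3}^{-1}$ and the higher genus $0$ correlators with coefficients in $\CC(\lambda_0,\lambda_1,\lambda_2)$, then conclude from \eqref{fxxf}, Corollary \ref{Poly}, and Conjecture \ref{RPoly}. The only addition is your explicit verification that $R_{0,i}$ is a unit of $\mathds{G}_2$ (so that the $R_{0,i}^{\pm 1}$ appearing in Corollary \ref{Poly} cause no trouble), a point the paper leaves implicit but which your argument via $f(L_i)^{-1/2}\in\mathds{G}_2$ settles correctly.
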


\begin{proof} By Proposition \ref{VE}, 
$$\text{Cont}^{\mathsf{A}}_\Gamma (v) = 
    \ppl
\psi_1^{a_1-1}, \ldots,
\psi_n^{a_n-1}
\, \Big|\, \mathsf{H}_{\mathsf{g}(v)}^{\mathsf{p}(v)}\,
  \ppr_{\mathsf{g}(v),n}^{\mathsf{p}(v),0+}\, .$$
 The right side of the above
  formula is a polynomial 
  in the variables
$$\frac{1}{\lann 1,1,1\rann^{\mathsf{p}(v),0+}_{0,3}}\ \ \ 
\text{and} \ \ \
 \Big\{ \, \lann 1,\ldots,1\rann^{\mathsf{p}(v),0+}_{0,n}\, |_{t_0=0} \, \Big\}_{n\geq  4}\, $$
 with coefficients in $\mathbb{C}(\lambda_0,\lambda_1,\lambda_2)$.
The Lemma then follows from 
the evaluation \eqref{fxxf}, Corollary \ref{Poly},
and 
Conjecture \ref{RPoly}. 
\end{proof}

Let $e\in \mathsf{E}$ be an edge connecting the $\T$-fixed points $p_i, p_j \in \PP^2$. Let
the $\mathsf{A}$-values of the respective
half-edges be $(k,l)$.

\begin{Lemma}\label{L2} Suppose Conjecture \ref{RPoly} is true. Then we have
 $\text{\em Cont}^{\mathsf{A}}_\Gamma(e) \in \mathds{G}[X]$ and
 \begin{enumerate}
 \item[$\bullet$]
 the degree of $\text{\em Cont}^{\mathsf{A}}_\Gamma(e)$ with respect to $X$ is $1$,
 \item[$\bullet$]
 the coefficient of $X$ in 
 $\text{\em Cont}^{\mathsf{A}}_\Gamma(e)$
 is 
 $$(-1)^{k+l}\frac{3 L_i L_jR_{1\,k-1,i} R_{1\,l-1,j}}{L^3}\, .$$ 
 \end{enumerate}
\end{Lemma}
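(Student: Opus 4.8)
The plan is to begin from the edge formula of Proposition \ref{VE}, insert the WDVV-type relation \eqref{wdvv}, and then track the $X$-dependence through the asymptotic expansions \eqref{VS}. Concretely, I would write
\[
e_i\left(\overline{\mathds{V}}_{ij}-\frac{\delta_{ij}}{e_i(x+y)}\right)e_j=\frac{\sum_{m=0}^2\overline{\mathds{S}}_i(\phi_m)|_{z=x}\,\overline{\mathds{S}}_j(\phi^m)|_{z=y}-\delta_{ij}e_i}{x+y}\,,
\]
and cancel the prefactors $e^{-\mu_i/x}$, $e^{-\mu_j/y}$ of Proposition \ref{VE} against the $e^{\mu_\bullet/z}$ in \eqref{VS}, so that each factor $\overline{\mathds{S}}_\bullet(\phi_m)$ becomes an honest power series in the expansion variable. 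Expanding $\phi_m=\frac{-3\lambda_m}{e_m}(H^2-\sigma_1^{(m)}H+\sigma_2^{(m)})$ and $\phi^m=e_m\phi_m$ in the basis $\{1,H,H^2\}$ reduces everything to the three series $\overline{\mathds{S}}_\bullet(1),\overline{\mathds{S}}_\bullet(H),\overline{\mathds{S}}_\bullet(H^2)$, whose $z$-coefficients are $R_{0k,\bullet},R_{1k,\bullet},R_{2k,\bullet}$. The crucial structural point is that, granting Conjecture \ref{RPoly} and Lemma \ref{RPoly2}, the variable $X$ enters \emph{only} through $\overline{\mathds{S}}_\bullet(H^2)$: the $X$-part of $R_{2k,\bullet}$ is $-R_{1\,k-1,\bullet}/L$, so the $X$-linear part of $\overline{\mathds{S}}_i(H^2)$ is proportional to $z\,\overline{\mathds{S}}_i(H)$, while the $N_2$-contribution is $X$-free by \eqref{NR} and lands in the constant term. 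Hence the bracketed numerator is a polynomial of degree $\le 2$ in $X$.

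I would then compute the $X^2$-coefficient. Since each factor $\phi_m,\phi^m$ carries a prefactor $\mp 3\lambda_m$ (with an extra $1/e_m$ from $\phi_m$), the $X^2$-term factors through the scalar $\sum_{m=0}^2\lambda_m^2/e_m$. Using $e_m=-3\lambda_m\prod_{k\ne m}(\lambda_m-\lambda_k)$ this equals $-\tfrac13\sum_m\lambda_m/\prod_{k\ne m}(\lambda_m-\lambda_k)$, which vanishes by the Lagrange/Newton identity for three points, $\sum_m\lambda_m^{\,r}/\prod_{k\ne m}(\lambda_m-\lambda_k)=h_{r-2}$, at $r=1$. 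This kills the $X^2$-term and forces the degree in $X$ to be exactly $1$. For the $X^1$-coefficient I would run the same collapse over $m$ with the two remaining sums $\sum_m\lambda_m^2\sigma_1^{(m)}/e_m=\tfrac13$ and $\sum_m\lambda_m^2\sigma_2^{(m)}/e_m=0$ (again immediate from $h_0=1$, $h_{-1}=0$); these reduce the $X$-linear part of the numerator to a multiple of $\overline{\mathds{S}}_i(H)|_{z=x}\,\overline{\mathds{S}}_j(H)|_{z=y}$. Extracting the coefficient of $x^{k-1}y^{l-1}$ through the factor $\tfrac1{x+y}$ — the $\delta_{ij}$-subtraction being $X$-free does not enter here — and inserting $\overline{\mathds{S}}_\bullet(H)=\tfrac{L_\bullet}{C_1}\sum_p R_{1p,\bullet}z^p$, the powers of $C_1$ cancel and leave the asserted value $(-1)^{k+l}\,\frac{3L_iL_jR_{1\,k-1,i}R_{1\,l-1,j}}{L^3}$.

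Finally, membership in $\mathds{G}_2[X]$ follows because every surviving coefficient is assembled from $R_{0k,\bullet},R_{1k,\bullet}\in\mathds{G}_2$ (Conjecture \ref{RPoly} and Lemma \ref{RPoly2}), from $L_\bullet$ and $L^{\pm 3}=\big(\prod_\bullet L_\bullet\big)^{\pm1}/s_3^{\pm1}\in\mathds{G}_2$, and from the $X$-free $N_2,C_2$ data, with all net powers of $C_1$ cancelling. I expect the main obstacle to be exactly this last bookkeeping: carrying out the $\tfrac1{x+y}$ coefficient extraction cleanly and, in particular, checking that in the $X^0$-term the $C_1^{-2}$ coming from $N_2$ conspires with the $C_1^{+1}$ from $\overline{\mathds{S}}_\bullet(H^2)$ and the $C_1^{-1}$ from $\overline{\mathds{S}}_\bullet(H)$ to produce a genuinely $C_1$-free element of $\mathds{G}_2$, rather than only an element of $\mathds{G}_2[X,C_1^{\pm1}]$. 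The three symmetric-function identities above are the engine driving both the $X^2$-vanishing and the factorization of the $X^1$-coefficient; the remaining labor is the tracking of these $C_1$-powers.
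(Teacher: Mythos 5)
Your route coincides with the paper's: Proposition \ref{VE}, the relation \eqref{wdvv}, and then the asymptotic expansions \eqref{VS} fed through Conjecture \ref{RPoly}, Lemma \ref{RPoly2} and \eqref{NR}. Since the paper's proof is only a few lines and suppresses all bookkeeping, the details you supply are valuable and mostly correct: expanding $\sum_r\phi_r\otimes\phi^r$ in the $\{1,H,H^2\}$ basis and invoking the Lagrange identities $\sum_r\lambda_r^2/e_r=0$, $\sum_r\lambda_r^2\sigma_1^{(r)}/e_r=\tfrac13$, $\sum_r\lambda_r^2\sigma_2^{(r)}/e_r=0$ is exactly what kills the $X^2$-term and reduces the $X$-linear part of the numerator to the two cross terms $\overline{\mathds{S}}(H^2)\otimes\overline{\mathds{S}}(H)$.

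There is, however, a gap at the decisive step, the computation of the $X^1$-coefficient. If the $X$-linear part of the numerator were a multiple of $\overline{\mathds{S}}_i(H)|_{z=x}\,\overline{\mathds{S}}_j(H)|_{z=y}$ by a function of $q$ alone, then extracting the coefficient of $x^{k-1}y^{l-1}$ through the factor $\tfrac{1}{x+y}$ would produce the alternating convolution $\sum_{m\ge0}(-1)^mR_{1\,k-1+m,i}R_{1\,l-1-m,j}$, not the single product $R_{1\,k-1,i}R_{1\,l-1,j}$ claimed in the Lemma. What actually happens is that the $X$-coefficient of $R_{2\,k,i}$ is $-R_{1\,k-1,i}/L_i$ (the $L$ in the printed Lemma \ref{RPoly2} is a typo for $L_i$; this is what the recursion of Lemma \ref{RRR}, derived from \eqref{S1}, gives), so that
$$\left[\overline{\mathds{S}}_i(H^2)\right]_{X}=-\frac{z}{C_2}\,\overline{\mathds{S}}_i(H)$$
with an $i$-\emph{independent} prefactor. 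The two cross terms then carry prefactors $-x/C_2$ and $-y/C_2$, which assemble into $-(x+y)/C_2$ and cancel the denominator of \eqref{wdvv} exactly; only after this cancellation does the coefficient extraction degenerate to a plain product, and $C_1^2C_2=L^3$ from \eqref{c1c2l} then yields the $L^{-3}$. With the normalization $-R_{1\,k-1,i}/L$ that you quote, the prefactors are $-xL_i/(LC_2)$ and $-yL_j/(LC_2)$, which do not combine to a multiple of $(x+y)$ for $i\ne j$, and the computation does not close. This $(x+y)$-cancellation, rather than the $C_1$-power bookkeeping you single out, is the key to the second bullet; your concern about the $X^0$-part landing in $\mathds{G}_2$ rather than merely in $\mathds{G}_2[C_1^{\pm1}]$ is legitimate and is resolved by the same collapse over $r$ together with \eqref{NR} and \eqref{c1c2l}.
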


\begin{proof}
 By Proposition \ref{VE}, 
$$\text{Cont}^{\mathsf{A}}_\Gamma (e) = 
    (-1)^{k+l}\left[e^{-\frac{\mu \lambda_i}{x}-\frac{\mu \lambda_j}{y}}e_i\left(\overline{\mathds{V}}_{ij}-\frac{\delta_{ij}}{e_i(x+y)}\right)e_j
    \right]_{x^{k-1} y^{l-1}}\, .$$
 Using also the equation
 \begin{align*}
     e_i \overline{\mathds{V}}_{ij} (x, y) e_j  = 
\frac{\sum _{r=0}^2 \overline{\mathds{S}}_i (\phi_r)|_{z=x} \, \overline{\mathds{S}}_j (\phi ^r )|_{z=y}}{x+ y}\, ,
 \end{align*}
we write $\text{Cont}^{\mathsf{A}}_\Gamma (e)$
as
 \begin{align*}
\left[(-1)^{k+l} e^{-\frac{\mu \lambda_i}{x}-\frac{\mu \lambda_j}{y}}\sum_{r=0}^2\overline{\mathds{S}}_i(\phi_r)|_{z=x}\, \overline{\mathds{S}}_j(\phi^r)|_{z=y}
\right]_{x^{k}y^{l-1}-x^{k+1}y^{l-2}+
\ldots +(-1)^{k-1} x^{k+l-1}}
\end{align*}
where the subscript signifies a (signed) sum
of the respective coefficients.
If we substitute the asymptotic expansions \eqref{VS} for
$$\overline{\mathds{S}}_i(1)\, , \ \
\overline{\mathds{S}}_i(H)\, , \ \
\overline{\mathds{S}}_i(H^2)
$$ in the above expression, the Lemma follows from Conjecture \ref{RPoly}, Lemma \ref{RPoly2} and \eqref{NR}.
\end{proof}

\subsection{Legs}
Using the contribution formula of Proposition \ref{VEL},
\begin{eqnarray*}
     \text{Cont}^{\mathsf{A}}_\Gamma(l)
    &=&
    (-1)^{\mathsf{A}(l)-1}\left[e^{-\frac{\lann1,1\rann^{\mathsf{p}(l),0+}_{0,2}}{z}}
       \overline{\mathds{S}}_{\mathsf{p}(l)}(H)\right]_{z^{\mathsf{A}(l)-1}}\, ,
 \end{eqnarray*}
 we easily conclude under the assumption of Conjecture \ref{RPoly}
 
 $$C_1 \cdot \text{Cont}^{\mathsf{A}}_\Gamma(l)\in
\mathds{G}_2\, .$$

\section{Holomorphic anomaly for $K\PP^2$}
\label{hafp}

\subsection{Proof of Theorem \ref{MT1}}

By definition, we have
\begin{equation}\label{ffww}
A_2(q)= \frac{1}{L^3}\left(3X
+1 -\frac{L^3}{2}\right)\, .
\end{equation}
Conjecture \ref{RPoly} was proven in Appendix for the choices of $\lambda_0,\lambda_1,\lambda_2$ such that 
$$(\lambda_0\lambda_1+\lambda_1\lambda_2+\lambda_2\lambda_0)^2-3\lambda_0\lambda_1\lambda_2(\lambda_0+\lambda_1+\lambda_2)=0\,.$$
Hence, statement (i),
$$\mathcal{F}_g^{\mathsf{SQ}} (q) \in \mathds{G}_2[A_2]\, ,$$
follows from Proposition \ref{VE}
and  Lemmas \ref{L1} - \ref{L2}.
Statement (ii),
$\mathcal{F}_g^{\mathsf{SQ}}$ has at most degree $3g-3$ with respect to $A_2$, holds since a stable graph of genus $g$ has at most $3g-3$ edges.
Since 
$$\frac{\partial}{\partial T} = \frac{q}{C_1}\frac{ \partial}{\partial q}\,, $$
statement (iii),
\begin{equation}\label{vvtt}
\frac{\partial^k \mathcal{F}_g^{\mathsf{SQ}}}{\partial T^k}(q) \in \mathds{G}_2[A_2][C_1^{-1}]\, ,
\end{equation}
follows since the ring
$$\mathds{G}_2[A_2]=\mathds{G}_2[X]$$
is closed under the action of the differential operator $$\DD=q\frac{\partial}{\partial q}\, $$
by \eqref{drule}.
The degree of $C_1^{-1}$ in \eqref{vvtt}
is $1$ which yields statement (iv).
\qed

\subsection{Proof of Theorem \ref{HAE}}
\label{prttt}

Let $\Gamma \in \mathsf{G}_{g}(\PP^2)$ be a decorated graph. Let us fix an edge $f\in\mathsf{E}(\Gamma)$:
\begin{enumerate}
\item[$\bullet$] if $\Gamma$ is connected after 
deleting $f$, denote the resulting graph by $$\Gamma^0_f\in \mathsf{G}_{g-1,2}(\PP^2)\, ,$$
\item[$\bullet \bullet$] if $\Gamma$ is disconnected after deleting $f$, denote the resulting two graphs by $$\Gamma^1_f\in \mathsf{G}_{g_1,1}(\PP^2) \ \ \ 
\text{and}\ \ \  \Gamma^2_f\in \mathsf{G}_{g_2,1}(\PP^2)$$
where $g=g_1+g_2$.
\end{enumerate}
There is no canonical
order for the 2 new markings. 
We will always sum over the 2 labellings. So more precisely, the graph
$\Gamma^0_f$ in case $\bullet$
should be viewed as sum
of 2 graphs
$$\Gamma^0_{f,(1,2)} +
\Gamma^0_{f,(2,1)}\, .$$
Similarly, in case $\bullet\bullet$,
we will sum over the ordering of $g_1$ and $g_2$. As usual, the summation
will be later compensated by a factor of
$\frac{1}{2}$ in the formulas.

By Proposition \ref{VE}, we have
the following formula for the contribution 
of the graph $\Gamma$ to the stable quotient
theory of $K\PP^2$,
 $$
 \text{Cont}_\Gamma
     =\frac{1}{|\text{Aut}(\Gamma)|}
     \sum_{\mathsf{A} \in \ZZ_{\ge 0}^{\mathsf{F}}} \prod_{v\in \mathsf{V}} 
     \text{Cont}^{\mathsf{A}}_\Gamma (v)
     \prod_{e\in \mathsf{E}} \text{Cont}^{\mathsf{A}}_\Gamma(e)\, .
 $$


Let $f$ connect the $\T$-fixed points $p_i, p_j \in \PP^2$. Let
the $\mathsf{A}$-values of the respective
half-edges be $(k,l)$. By Lemma \ref{L2}, we have
\begin{equation}\label{Coeff}
\frac{\partial \text{Cont}^{\mathsf{A}}_\Gamma(f)}{\partial X} = (-1)^{k+l}\frac{3 L_i L_jR_{1\,k-1,i} R_{1\,l-1,j}}{L^3}\, .
\end{equation}

\noindent $\bullet$ If $\Gamma$ is connected after deleting $f$, we have
\begin{multline*}
\frac{1}{|\text{Aut}(\Gamma)|}
     \sum_{\mathsf{A} \in \ZZ_{\ge 0}^{\mathsf{F}}} 
     \left(\frac{L^3}{3C^2_1}\right)
     \frac{\partial {\text{Cont}}^{\mathsf{A}}_\Gamma(f)}{\partial X} 
     \prod_{v\in \mathsf{V}} 
     \text{Cont}^{\mathsf{A}}_\Gamma (v)
     \prod_{e\in \mathsf{E},\, e\neq f} \text{Cont}^{\mathsf{A}}_\Gamma(e) \\=
\frac{1}{2} \,
\text{Cont}_{\Gamma^0_f}(H,H) \, .
\end{multline*}
The derivation is simply by using \eqref{Coeff} on the left
and Proposition \ref{VEL} on the right.

\vspace{5pt}
\noindent $\bullet\bullet$
If $\Gamma$ is disconnected after deleting $f$, we obtain
\begin{multline*}
\frac{1}{|\text{Aut}(\Gamma)|}
     \sum_{\mathsf{A} \in \ZZ_{\ge 0}^{\mathsf{F}}} 
     \left(\frac{L^3}{3C^2_1}\right)
     \frac{\partial {\text{Cont}}^{\mathsf{A}}_\Gamma(f)}{\partial X} 
     \prod_{v\in \mathsf{V}} 
     \text{Cont}^{\mathsf{A}}_\Gamma (v)
     \prod_{e\in \mathsf{E},\, e\neq f} \text{Cont}^{\mathsf{A}}_\Gamma(e)\\
=\frac{1}{2}\,
\text{Cont}_{\Gamma^1_f}(H) \,
\text{Cont}_{\Gamma^2_f}(H)\, 
\end{multline*}
by the same method. 

By combining the above two equations for all 
the edges of all the graphs $\Gamma\in \mathsf{G}_g(\PP^2)$
and using the vanishing
\begin{align*}
\frac{\partial {\text{Cont}}^{\mathsf{A}}_\Gamma(v)}{\partial X}=0
\end{align*}
of Lemma \ref{L1}, we obtain
\begin{multline}\label{greww}
\left(\frac{L^3} {3C^2_1}\right) \frac{\partial}{\partial X} 
 \lan  \ran^{\mathsf{SQ}}_{g,0}= \frac{1}{2}\sum_{i=1}^{g-1} \lan H\ran^{\mathsf{SQ}}_{g-i,1}
\lan H \ran^{\mathsf{SQ}}_{i,1} + \frac{1}{2} \lan H,H\ran^{\mathsf{SQ}}_{g-1,2}\, .
\end{multline}
We have followed here the notation of Section \ref{holp2}.
The equality \eqref{greww} holds in the ring $\mathds{G}_2[A_2,C_1^{-1}]$.

Since $A_2=\frac{1}{L^3}(3X+1-\frac{L^3}{2})$ and 
$\lan \, \ran^{\mathsf{SQ}}_{g,0}=\mathcal{F}_g^{\mathsf{SQ}}$,
the left side of \eqref{greww} is, by the chain rule,
$$\frac{1}{C_1^2} \frac{\partial \mathcal{F}_g^{\mathsf{SQ}}}{\partial A_2} \in \mathds{G}_2[A_2,C_1^{-1}]\, .$$ 
On the right side of \eqref{greww}, we have 
$$ \lan H  \ran^{\mathsf{SQ}}_{g-i,1}\, =\, \mathcal{F}_{g-i,1}^{\mathsf{SQ}}(q)\, =\,  \mathcal{F}^{\mathsf{GW}}_{g-i,1}(Q(q))\, ,$$
where the first equality is by definition and the second is by
wall-crossing \eqref{3456}. Then,
$$\mathcal{F}^{\mathsf{GW}}_{g-i,1}(Q(q))\ = \ \frac{\partial\mathcal{F}^{\mathsf{GW}}_{g-i}}{\partial T}(Q(q)) \ =\ 
\frac{\partial\mathcal{F}^{\mathsf{SQ}}_{g-i}}{\partial T}(q) 
$$
where the first equality is by the divisor equation in
Gromov-Witten theory and the second is again by wall-crossing
\eqref{3456}, so we conclude
$$ \lan H  \ran^{\mathsf{SQ}}_{g-i,1} =\frac{\partial\mathcal{F}^{\mathsf{SQ}}_{g-i}}{\partial T}(q)\, \in \mathbb{C}(\lambda_0,\lambda_1,\lambda_2)[[q]]\, .$$
Similarly, we obtain
\begin{eqnarray*}
 \lan H  \ran^{\mathsf{SQ}}_{i,1} &=&\frac{\partial\mathcal{F}^{\mathsf{SQ}}_{i}}{\partial T}(q)\, 
\, \in \mathbb{C}(\lambda_0,\lambda_1,\lambda_2)[[q]]\, ,
\\
 \lan H,H  \ran^{\mathsf{SQ}}_{g-1,2} &=&\frac{\partial^2\mathcal{F}^{\mathsf{SQ}}_{g-1}}{\partial T^2}(q)\,
\, \in \mathbb{C}(\lambda_0,\lambda_1,\lambda_2)[[q]]\, .
\end{eqnarray*}
Together, the above equations transform \eqref{greww} into 
exactly the holomorphic anomaly equation of Theorem \ref{MT2},
$$\frac{1}{C_1^2}\frac{\partial \mathcal{F}_g^{\mathsf{SQ}}}{\partial{A_2}}(q)
= \frac{1}{2}\sum_{i=1}^{g-1} 
\frac{\partial \mathcal{F}_{g-i}^{\mathsf{SQ}}}{\partial{T}}(q)
\frac{\partial \mathcal{F}_i^{\mathsf{SQ}}}{\partial{T}}(q)
+
\frac{1}{2}
\frac{\partial^2 \mathcal{F}_{g-1}^{\mathsf{SQ}}}{\partial{T}^2}(q)\,
$$
as an equality in $\mathbb{C}(\lambda_0,\lambda_1,\lambda_2)[[q]]$.

The series $L$ and $A_2$ are expected to be algebraically independent. Since
we do not have a proof
of the independence, to lift holomorphic anomaly equation to the equality
$$\frac{1}{C_1^2}\frac{\partial \mathcal{F}_g^{\mathsf{SQ}}}{\partial{A_2}}
= \frac{1}{2}\sum_{i=1}^{g-1} 
\frac{\partial \mathcal{F}_{g-i}^{\mathsf{SQ}}}{\partial{T}}
\frac{\partial \mathcal{F}_i^{\mathsf{SQ}}}{\partial{T}}
+
\frac{1}{2}
\frac{\partial^2 \mathcal{F}_{g-1}^{\mathsf{SQ}}}{\partial{T}^2}\,
$$
in the ring $\mathds{G}_2[A_2,C_1^{-1}]$, we must
prove
the equalities 
\begin{equation}\label{pp33p}
 \lan H  \ran^{\mathsf{SQ}}_{g-i,1} =\frac{\partial\mathcal{F}^{\mathsf{SQ}}_{g-i}}{\partial T}\,,  \ \ \ \ 
 \lan H  \ran^{\mathsf{SQ}}_{i,1} = \frac{\partial\mathcal{F}^{\mathsf{SQ}}_{i}}{\partial T}\, ,
\end{equation}
$$ \lan H,H  \ran^{\mathsf{SQ}}_{g-1,2}= \frac{\partial^2\mathcal{F}^{\mathsf{SQ}}_{g-1}}{\partial T^2}\,
$$
in the ring $\mathds{G}_2[A_2,C_1^{-1}]$.
The lifting follow from the argument in Section 7.3 in \cite{LP}.

We do not study the genus 1 unpointed series $\mathcal{F}^{\mathsf{SQ}}_1(q)$ in the paper, so we take
\begin{eqnarray*}
 \lan H  \ran^{\mathsf{SQ}}_{1,1} &=&\frac{\partial\mathcal{F}^{\mathsf{SQ}}_{1}}{\partial T}\, ,\\
 \lan H,H  \ran^{\mathsf{SQ}}_{1,2} &=&\frac{\partial^2\mathcal{F}^{\mathsf{SQ}}_{1}}{\partial T^2}\, .
\end{eqnarray*}
as definitions of the right side in the genus 1 case.
There is no difficulty in calculating these series explicitly
using Proposition \ref{VEL}.

\section{Holomorphic anomaly for $K\PP^3$}
\label{hafp333}

\subsection{Overview}
We fix a torus action $\mathsf{T}=(\CC^*)^4$ on $\PP^3$ with
weights{\footnote{The associated weights on
$H^0(\PP^3,\mathcal{O}_{\PP^3}(1))$ are
$\lambda_0,\dots,\lambda_3$
and so match the conventions of
Section \ref{twth}.}}
$$-\lambda_0, \dots, -\lambda_3$$
on the vector space $\mathbb{C}^4$.
The $\T$-weight on the fiber over
$p_i$ of the canonical
bundle 
\begin{equation}\label{pqq9333}
\mathcal{O}_{\PP^3}(-4) \rightarrow \PP^3
\end{equation}
is $-4\lambda_i$.
The toric Calabi-Yau $K\PP^3$
is the total space of \eqref{pqq9333}.
The basic generating series and other essential objects defined in Section \ref{bcbcbc} -- Section \ref{svel} can be defined\footnote{In fact the contents of Section \ref{bcbcbc} -- \ref{svel} can be stated universally for all $K\PP^n$.} similarly for $K\PP^3$. We will not repeat the definitions of these objects unless necessary.

\subsection{I-functions}
\subsubsection{Evaluations}


Let $\widetilde{H}\in H^*_\T([\CC^4/\CC^*])$ and $H\in H^*_\T(\PP^3)$
denote the respective hyperplane classes. The $\mathds{I}$-function
of Definition \ref{Je} for $K\PP^3$ is evaluated in \cite{BigI}.

\begin{Prop} For ${\bf t}=t\widetilde{H} \in H^*_{\T} ([\CC^4/\CC^*], \QQ)$,
 \begin{align}\label{I_Hyper_Paaa} 
\dsI({t}) = \sum _{d=0}^{\infty} q^d e^{t(H+dz)/z} \frac{ \prod _{k=0}^{4d-1}  (-4H - kz)}{\prod^3_{i=0}\prod _{k=1}^d (H-\lambda_i+kz)} . \end{align}

\end{Prop}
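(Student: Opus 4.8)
The plan is to evaluate the localization residues of Definition \ref{Je} directly, running the computation that produced the $K\PP^2$ function \eqref{I_Hyper_P} with the single replacement of the twist $\mathcal{O}_{\PP^2}(-3)$ by $\mathcal{O}_{\PP^3}(-4)$ and of the three $\T$-fixed points of $\PP^2$ by the four $\T$-fixed points of $\PP^3$. Since the big $\dsI$-function is assembled from the pushforwards $\text{ev}_{\bullet\,*}\big(\mathrm{Res}_{\F_{k,d}}({\bf t}^k)\big)$, the entire content is the explicit evaluation of these residues on the $\CC^*$-fixed loci $\F_{k,d}\cong\F_d\times 0^k$, and this is exactly the $n=3$ specialization of the general toric evaluation of \cite{BigI}.

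First I would recall the geometry of $\F_d$: its points are quasimaps $f\colon\PP^1\to[\CC^4/\CC^*]$ of class $d$ whose entire degree is absorbed into a length-$d$ base point at $0\in\PP^1$, with $f$ constant on $\PP^1\setminus\{0\}$; the residual datum is the image point in $\PP^3$, recorded by $\text{ev}_\bullet$. Tracking the two hyperplane classes $\widetilde H, H$ together with the weight $z=c_1(T_0\PP^1)$ of the distinguished tangent direction is the bookkeeping core of the argument. Next I would compute the two factors of $\mathrm{Res}_{\F_{k,d}}({\bf t}^k)=\big(\prod_j\widehat{\text{ev}}_j^*({\bf t})\cap[\F_{k,d}]^{\vir}\big)/e(\mathrm{Nor}^{\vir}_{\F_{k,d}})$. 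The virtual normal bundle of $\F_d$ is built from the smoothing deformations at the base point and the deformations of the map along $\PP^1$; its equivariant Euler class assembles into the denominator $\prod_{i=0}^{3}\prod_{k=1}^{d}(H-\lambda_i+kz)$, with four linear factors now indexed by the four fixed points of $\PP^3$. The $K\PP^3$ twisting class $e(R\pi_*\mathsf{S}^4)$, with $\mathsf{S}^4=f^*\mathcal{O}_{\PP^3}(-4)$, contributes over the length-$d$ base point the equivariant Euler class $\prod_{k=0}^{4d-1}(-4H-kz)$, which supplies the numerator. Finally, the factor $e^{t(H+dz)/z}$ emerges by inserting ${\bf t}=t\widetilde{H}$ at each of the $k$ light markings over $0$, summing over $k$ against the $\tfrac{1}{k!}$ in Definition \ref{Je} so that the light markings exponentiate, and substituting the $\text{ev}_\bullet$-value $\widetilde{H}\mapsto H+dz$ of the hyperplane class at the degree-$d$ point.

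The main obstacle is the precise determination of the $\T\times\CC^*$-weights in the two Euler classes. On the numerator side one must check that the moving weights of $R\pi_*\mathsf{S}^4$ over the base point yield exactly $\prod_{k=0}^{4d-1}(-4H-kz)$ with the correct overall sign; on the denominator side one must verify that the base-point smoothing and the map deformations produce precisely the fourfold product over $i=0,\dots,3$ without extra or missing factors. Once these weight computations are carried out, the statement follows by comparison with the general formula of \cite{BigI}; in practice I would simply invoke that reference and confirm that the exponent $(n+1)d=4d$ in the numerator and the range $i=0,\dots,3$ in the denominator are the correct $\PP^3$ specializations, exactly mirroring the passage to \eqref{I_Hyper_P} in the $\PP^2$ case.
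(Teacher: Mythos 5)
Your proposal is correct and ultimately rests on the same justification as the paper, which offers no proof of this Proposition beyond the citation to \cite{BigI}; your sketch of the residue computation (virtual normal bundle giving the denominator $\prod_{i=0}^{3}\prod_{k=1}^{d}(H-\lambda_i+kz)$, the $\mathcal{O}_{\PP^3}(-4)$ twist giving the numerator, and the exponentiation of the light markings giving $e^{t(H+dz)/z}$) is exactly the standard content of that reference specialized to $n=3$. No discrepancy with the paper's treatment.
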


We define the series $I_{i,j}$ by following expansion of the $\mathds{I}$-function after restriction $t=0$,

$$\mathds{I}|_{t=0}=1+\frac{I_{10}H}{z}+\frac{I_{20}H^2+I_{21}H}{z^2}+\frac{I_{30}H^3+I_{31}H^2+I_{32}H}{z^3}+\mathcal{O}(\frac{1}{z^4})\,.$$
For example,
\begin{align*}
    I_{10}(q)&=\sum_{d=1}^{\infty} 4 \frac{(4d-1)!}{(d!)^4}q^d\,\,\in \CC[[q]]\,,\\
    I_{20}(q)&=\sum_{d=1}^{\infty} 4 \frac{(4d-1)!}{(d!)^4}\Big(4\text{Har}[4d-1]-4\text{Har}[d]\Big)q^d\,\,\in\CC[[q]]\,,\\
    I_{21}(q)&=\sum_{d=1}^{\infty} 4s_1 \frac{(4d-1)!}{(d!)^4}\text{Har}[d]q^d\,\,\in\CC(\lambda_0,\dots,\lambda_3)[[q]]\,.
\end{align*}
Here $\text{Har}[d]:=\sum_{k=1}^d\frac{1}{k}$.

We return now to the functions  $\mathds{S}_i(\gamma)$
defined in Section \ref{lightm}. We define the following additional series in $q$:
\begin{align*}
    &C_1=1+\mathsf{D}I_{10}\,,\,\,
    J_{10}=\frac{I_{10}+\mathsf{D}I_{20}}{C_1}\,,\,\,J_{11}=\frac{\mathsf{D}I_{21}}{C_1}\,,\\
    &J_{20}=\frac{I_{20}+\mathsf{D}I_{30}}{C_1}\,,\,\,J_{21}=\frac{I_{21}+\mathsf{D}I_{31}}{C_1}\,,\,\,J_{22}=\frac{I_{22}+\mathsf{D}I_{32}}{C_1}\,,\\
    &C_2=1+\mathsf{D}J_{10}\,,\,\,K_{10}=\frac{J_{10}+\mathsf{D}J_{20}}{C_2}\,,\\
    &K_{11}=\frac{J_{11}+\mathsf{D}J_{21}-(\mathsf{D}J_{11})J_{10}}{C_2}\,,\,\,K_{12}=\frac{\mathsf{D}J_{22}-(\mathsf{D}J_{11})J_{11}}{C_2}\,,\\
    &C_3=1+\mathsf{D}K_{10}\,.
\end{align*}
Here, $\mathsf{D}=q\frac{d}{dq}$.
The following relations were proven in \cite{ZaZi},
\begin{align}\label{c1c2laaa}
C_2&=C_3\,,\\
\nonumber C_1^2 C_2^2 &= (1-4^4q)^{-1}\, .
\end{align}
Using Birkhoff factorization, an evaluation of
the series $\mathds{S}(H^j)$ can be obtained from the $\dsI$-function, see \cite{KL}:
\begin{align}
\nonumber\mathds{S}({1}) & = \mathds{I} \, , \\
\label{S1aaa}\mathds{S}(H) & = \frac{  z\frac{d}{dt} \mathds{S}({1})}{C_1} \, , \\
\nonumber \mathds{S}(H^2) & = \frac{ z\frac{d}{dt} \mathds{S}(H)-(\mathsf{D}J_{11})\mathds{S}(H)}{C_2}\, ,\\
\nonumber \mathds{S}(H^3) & =\frac{z\frac{d}{dt} \mathds{S}(H^2)-(\mathsf{D}K_{11})\mathds{S}(H^2)-(\mathsf{D}K_{12})\mathds{S}(H)}{C_3}\, .
\end{align}

\subsubsection{Further calculations}\label{furcalcaaa}
Define small $I$-function 
$$\overline{\mathds{I}}(q)
\in H^*_{\T}(\PP^3,\QQ)[[q]]$$ by the restriction
\begin{align*}
    \overline{\mathds{I}}(q)
    =\mathds{I}(q,{t})|_{t=0}\, .
\end{align*}
Define differential operators
$$\DD = q\frac{d}{dq}\, , \ \ \ M = H+ z \DD.$$
Applying $z\frac{d}{dt}$ to $\mathds{I}$ and then restricting
to $t=0$ has same effect as applying $M$ to 
$\overline{\mathds{I}}$
 \begin{align*}
     \left[\left(z\frac{d}{dt}\right)^k \mathds{I}\right]\Big|_{t=0} = M^k \, 
     \overline{\mathds{I}}\, .
 \end{align*}
The function 
$\overline{\mathds{I}}$
satisfies following Picard-Fuchs equation
\begin{align}
\label{PFaaa}\Big(\prod_{j=0}^3(M-\lambda_j)-4qM(4M+z)(4M+2z)(4M+3z)\Big) 
\overline{\mathds{I}}=0
\end{align}
implied by the Picard-Fuchs equation for $\mathds{I}$,
\begin{align*}
    \left(\prod_{j=0}^3\left(z\frac{d}{dt}-\lambda_j\right)-q\prod_{k=0}^3\left(4 z\frac{d}{dt}+kz\right)\right)\mathds{I}=0\, .
\end{align*}

The restriction
$\overline{\mathds{I}}|_{H=\lambda_i}$
admits following asymptotic form
\begin{align}
 \label{assymaaa}
 \overline{\mathds{I}}|_{H=\lambda_i}
 = e^{\mu_i/z}\left( R_{0,i}+R_{1,i} z+R_{2,i} z^2+\ldots\right)
\end{align}
with series 
$\mu_i,R_{k,i} \in \CC(\lambda_0,\dots,\lambda_d)[[q]]$.

A derivation of \eqref{assymaaa} is obtained in \cite{ZaZi} via  
the Picard-Fuchs equation \eqref{PFaaa} for
$\overline{\mathds{I}}|_{H=\lambda_i}$.
The series
$\mu_i$ and 
$R_{k,i}$ are found by solving differential equations obtained from the coefficient of $z^k$. 
For example, 
\begin{eqnarray*}
    \lambda_i+ \DD\mu_i&=& L_i\, , \\
    R_{0,i}&=&\Big(\frac{\lambda_i\prod_{j \ne i}(\lambda_i-\lambda_j)}{f(L_i)}\Big)^{\frac{1}{2}}\, .
\end{eqnarray*}

From the equations \eqref{S1aaa} and \eqref{assymaaa}, we can show the series $$\overline{\mathds{S}}_i({1})=\overline{\mathds{S}}({1})|_{H=\lambda_i}\,, \ \ \overline{\mathds{S}}_i(H)=
\overline{\mathds{S}}(H)|_{H=\lambda_i}\, , \ \ \overline{\mathds{S}}_i(H^2)=\overline{\mathds{S}}(H^2)|_{H=\lambda_i}\, , \ \ \overline{\mathds{S}}_i(H^3)=\overline{\mathds{S}}(H^3)|_{H=\lambda_i}$$ 
have the following asymptotic expansions:
\begin{align}\nonumber
\overline{\mathds{S}}_i({1}) & = e^{\frac{\mu_i}{z}} \Big(R_{00,i}+R_{01,i}z+R_{02,i} z^2+\ldots\Big) \, ,\\  \label{VSaaa}
\overline{\mathds{S}}_i(H) & = e^{\frac{\mu_i}{z}} \frac{L_i}{C_1} \Big(R_{10,i}+R_{11,i}z+R_{12} z^2+\ldots\Big)\, ,\\ \nonumber
\overline{\mathds{S}}_i(H^2) & = e^{\frac{\mu_i}{z}} \frac{L_i^2}{C_1 C_2} \Big(R_{20,i}+R_{21,i}z+R_{22,i} z^2+\ldots\Big)\, ,\\ \nonumber
\overline{\mathds{S}}_i(H^3) & = e^{\frac{\mu_i}{z}} \frac{L_i^3}{C_1 C_2 C_3} \Big(R_{30,i}+R_{31,i}z+R_{32,i} z^2+\ldots\Big)\, .
 \end{align}
We follow here the normalization of \cite{ZaZi}. Note 
\begin{align*}
    R_{0k,i}=R_{k,i}.
\end{align*}
As in \cite[Theorem 4]{ZaZi}, we expect the following  constraints.
 
\begin{Conj}\label{RPolyaaa}
 For all $k\geq 0$, we have
     $$R_{k,i} \in \mathds{G}_3\,.$$
\end{Conj}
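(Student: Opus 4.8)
The plan is to adapt the Picard--Fuchs analysis of \cite{ZaZi} --- the same one used for $K\PP^2$ in the proof of Conjecture \ref{RPoly} --- to the degree-$4$ operator \eqref{PFaaa} governing $\overline{\mathds{I}}|_{H=\lambda_i}$. First I would conjugate the Picard--Fuchs equation by the exponential factor in \eqref{assymaaa}. Writing $M=H+z\DD$ and restricting to $H=\lambda_i$, the operator $M$ acts on $\overline{\mathds{I}}|_{H=\lambda_i}$ as $\lambda_i+z\DD$, and since $\lambda_i+\DD\mu_i=L_i$ one checks that $e^{-\mu_i/z}(\lambda_i+z\DD)e^{\mu_i/z}=L_i+z\DD$. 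Substituting into \eqref{PFaaa} and expanding the conjugated operator as $\hat P=\sum_{m=0}^4 z^m O_m(\DD)$ in powers of $z$ turns $\hat P\big(\sum_k R_{k,i}z^k\big)=0$ into the system $\sum_{m=0}^4 O_m(R_{N-m,i})=0$ for each $N\ge 0$, with $R_{k,i}=0$ for $k<0$.

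The order-$z^0$ operator is $O_0=\prod_{j}(L_i-\lambda_j)-4^4 q L_i^4$, which vanishes identically because this is exactly the defining relation of $L_i$; hence $\prod_j(L_i-\lambda_j)=4^4qL_i^4$. Consequently the first nontrivial equation, at order $z^1$, reads $O_1(R_{0,i})=0$, where $O_1=\alpha_1\DD+\beta_1$ is first order; its normalized solution is the square root $R_{0,i}=(\lambda_i\prod_{j\neq i}(\lambda_i-\lambda_j)/f_3(L_i))^{1/2}$ recorded after \eqref{assymaaa}, and this is precisely the origin of the generators $f_3(L_i)^{-1/2}$ of $\mathds{G}_3$, so $R_{0,i}\in\mathds{G}_3$. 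For $N\ge 2$ the system becomes $O_1(R_{N-1,i})=-\sum_{m=2}^4 O_m(R_{N-m,i})$, a first-order ODE for $R_{N-1,i}$ with the same homogeneous operator $O_1$ and a right-hand side built from the previously determined $R_{<N-1,i}$.

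Before running the induction I would verify that $\mathds{G}_3$ is a differential ring for $\DD=q\frac{d}{dq}$. Differentiating the defining quartic gives $\DD L_i=4^4qL_i^5/f_3(L_i)=L_i\prod_j(L_i-\lambda_j)/f_3(L_i)$, and since $f_3(L_i)^{-1}=(f_3(L_i)^{-1/2})^2\in\mathds{G}_3$ this lies in $\mathds{G}_3$; then $\DD(f_3(L_i)^{-1/2})=-\frac{1}{2} f_3(L_i)^{-3/2}f_3'(L_i)\DD L_i\in\mathds{G}_3$ as well. Thus $O_2,O_3,O_4$ map $\mathds{G}_3$ into $\mathds{G}_3$, so the right-hand side of the order-$z^N$ equation lies in $\mathds{G}_3$ whenever the lower $R_{<N-1,i}$ do. Since $R_{0,i}$ solves the homogeneous equation $O_1 R_{0,i}=0$, substituting $R_{N-1,i}=R_{0,i}\,g_{N-1,i}$ collapses the ODE to $\DD g_{N-1,i}=(\text{RHS})/(\alpha_1 R_{0,i})$ with a right side in $\mathds{G}_3$, and the constant of integration is pinned down by the value at $q=0$ coming from the hypergeometric expansion of $\overline{\mathds{I}}$.

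The hard part will be this integration step: to conclude $g_{N-1,i}\in\mathds{G}_3$ one must show that $(\text{RHS})/(\alpha_1 R_{0,i})$ is an exact $\DD$-derivative of an element of $\mathds{G}_3$, i.e.\ that the relevant classes in the quotient $\mathds{G}_3/\DD\mathds{G}_3$ vanish. This is not automatic for general weights, and it is exactly here that I expect the constraints of Theorem \ref{MT1333} --- $\lambda_i\ne\lambda_j$, $4s_2^2-s_1s_3=0$ and $2s_2^3-27s_1^2s_4=0$ --- to be required: as for $K\PP^2$ (where the analogue is the relation \eqref{drule} closing the smaller differential ring $\mathds{G}_2[X]$), these conditions should force the roots $L_i$ and the discriminant-type quantities $f_3(L_i)$ into enough algebraic coincidence that $\DD$ becomes surjective onto the classes produced by the recursion, allowing each integration to be carried out inside $\mathds{G}_3$. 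Establishing this exactness --- equivalently, full control of the differential ring generated by the $L_i$ --- is the single step that confines the argument to the special loci and leaves the unrestricted Conjecture \ref{RPolyaaa} open; everything else is the formal bookkeeping of the $z$-expansion together with the induction outlined above.
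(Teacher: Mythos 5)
Your outline matches the paper's overall strategy — conjugate the Picard--Fuchs equation \eqref{PFaaa} by $e^{\mu_i/z}$, use $\prod_j(L_i-\lambda_j)=4^4qL_i^4$ to kill the $z^0$ term, solve the first--order equation for $R_{0,i}$ (whence the generators $f_3(L_i)^{-1/2}$), check that $\mathds{G}_3$ is a differential ring, and induct by integrating a first--order ODE at each order in $z$. But the proposal stops exactly at the step that constitutes the actual content of the proof: you reduce everything to showing that the right--hand side of each integration is an exact $\DD$--derivative in $\mathds{G}_3$, and then you only say that you ``expect'' the constraints $4s_2^2-s_1s_3=0$, $2s_2^3-27s_1^2s_4=0$ to make this work. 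That expectation is not a proof, and no mechanism is offered for why exactness should hold; as you yourself note, it fails for general weights. This is a genuine gap.

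The paper closes this gap in the Appendix with a concrete device you do not supply. One normalizes $R_{k,i}=f(L_i)^{-1/2}\Phi_{k,i}$ and replaces $\DD$ by $\mathsf{D}_i=(\DD L_i)^{-1}\DD$, which acts as $d/dL_i$ on rational functions of $L_i$; the recursion then becomes $\mathsf{D}_i\Phi_{p,i}=\sum_{l,j}A_{jl,i}\mathsf{D}_i^{\,l}\Phi_{p-1-j,i}$ with \emph{explicitly computed} coefficients $A_{jl,i}\in\CC(\lambda)[L_i,f(L_i)^{-1}]$. The notion of \emph{admissibility} (Definition \ref{admissible}) and the two accompanying lemmas give sufficient conditions, stated as bounds on the order of each $A_{jl,i}$ in powers of $f$, under which every antiderivative arising in the induction contains no $f^{-1}$ term and hence lies again in $\CC(\lambda)[L_i,f(L_i)^{-1}]$ — this is precisely the statement that the classes in $\mathds{G}_3/\DD\mathds{G}_3$ you worry about vanish. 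The role of the two specialization conditions is then completely concrete: they force $f_3(x)$ to degenerate to a power of a linear polynomial, so that the degree--one admissibility lemma (condition \eqref{D1C}) applies after checking the order bounds on the explicit $A_{jl,i}$. Without introducing the order filtration on $\CC(\lambda)[L_i,f^{-1}]$ and verifying these bounds, your induction cannot be completed, so the proposal as written establishes only the framework and not the result.
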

Conjecture \ref{RPolyaaa} is the main obstruction for the proof of Conjecture \ref{ooo333} and \ref{HAE333}. By the same argument of Section \ref{hafp333}, we obtain the following result.
\begin{Thm}
Conjecture \ref{RPolyaaa} implies Conjecture \ref{ooo333} and \ref{HAE333}.
\end{Thm}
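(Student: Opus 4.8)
The plan is to reproduce, essentially line for line, the localization argument of Section~\ref{hafp} that established Theorems~\ref{MT1} and~\ref{MT2} for $K\PP^2$, with the base ring $\mathds{G}_2$ replaced by $\mathds{G}_3$ and the single generator $X$ replaced by the three generators $A_2,B_2,B_4$ together with $C_1^{\pm1}$. The starting point is the $K\PP^3$ analog of Proposition~\ref{VE}, which decomposes the contribution of a decorated graph $\Gamma\in\mathsf{G}_g(\PP^3)$ to the stable quotient theory into a product of vertex, edge, and leg factors; this decomposition is formal and carries over without change. Everything then reduces to controlling these three types of factors under the hypothesis of Conjecture~\ref{RPolyaaa}.

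First I would establish the structural lemmas. From the Birkhoff formulas~\eqref{S1aaa}, the asymptotic expansions~\eqref{VSaaa}, and the relations~\eqref{c1c2laaa}, one derives the $K\PP^3$ versions of Lemma~\ref{RRR} and Lemma~\ref{RPoly2}, expressing each $R_{jk,i}$ for $j=1,2,3$ in terms of the $R_{1\,\ast,i}$, $L_i$, $C_1$, $C_2$, and the auxiliary series. Granting Conjecture~\ref{RPolyaaa}, these place the vertex contributions in $\mathds{G}_3[A_2,B_2,B_4,C_1^{\pm1}]$ (the analog of Lemma~\ref{L1}) and exhibit each edge contribution as a polynomial in $A_2,B_2,B_4$ of controlled degree (the analog of Lemma~\ref{L2}). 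A prerequisite is the closure of $\mathds{G}_3[A_2,B_2,B_4,C_1^{\pm1}]$ under $\DD=q\frac{d}{dq}$, the $K\PP^3$ counterpart of relation~\eqref{drule}: one must verify that $\DD A_2$, $\DD B_2$, $\DD B_4$ all lie in the ring, so that $T$-differentiation preserves it, since $\frac{\partial}{\partial T}=\frac{q}{C_1}\frac{\partial}{\partial q}$.

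Granting these lemmas, Conjecture~\ref{ooo333} follows exactly as Theorem~\ref{MT1} did: assembling the vertex and leg factors (which lie in the ring) with the edge factors over all $\Gamma$ yields the membership statement (i); the degree bound $2(3g-3)$ in $A_2$ of statement (ii) comes from the bound $3g-3$ on the number of edges of a genus $g$ graph together with the degree at most $2$ in $A_2$ of each edge factor; and statement (iii) follows from the differential closure above. For the holomorphic anomaly equations of Conjecture~\ref{HAE333} I would run the edge-cutting argument of Section~\ref{prttt} twice. Differentiating $\mathcal{F}_g^{\mathsf{SQ}}$ in the generator attached to the \emph{leading} part of the edge (the $B_2$-direction) reconstructs, via the analog of~\eqref{Coeff} and Proposition~\ref{VEL}, the sum $\sum_{i=1}^{g-1}\frac{\partial\mathcal{F}^{\mathsf{SQ}}_{g-i}}{\partial T}\frac{\partial\mathcal{F}^{\mathsf{SQ}}_{i}}{\partial T}+\frac{\partial^2\mathcal{F}^{\mathsf{SQ}}_{g-1}}{\partial T^2}$ with an $H$-insertion at each of the two new markings, giving the second equation. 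The combination of $\partial_{A_2}$ and $\partial_{B_4}$ appearing on the left of the first equation picks out the edge terms whose cut produces \emph{mixed} $H$ and $H^2$ insertions, giving $\sum_i\mathcal{F}^{\mathsf{SQ}}_{g-i,1}[0,1]\mathcal{F}^{\mathsf{SQ}}_{i,1}[1,0]+\mathcal{F}^{\mathsf{SQ}}_{g-1,1}[1,1]$; the identifications of $\langle H\rangle^{\mathsf{SQ}}$ and $\langle H^2\rangle^{\mathsf{SQ}}$ with $T$-derivatives of $\mathcal{F}^{\mathsf{SQ}}$ and with the $[a,b]$-series follow from wall-crossing~\eqref{34567} and the divisor equation exactly as in~\eqref{pp33p}.

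The main obstacle is the edge analysis itself, the $K\PP^3$ version of Lemma~\ref{L2}. Because $\PP^3$ carries classes up to $H^3$, the factorization $e_i\overline{\mathds{V}}_{ij}e_j=(x+y)^{-1}\sum_{r=0}^3\overline{\mathds{S}}_i(\phi_r)\overline{\mathds{S}}_j(\phi^r)$ now has four terms, and the resulting edge contribution is a genuine polynomial in the three independent generators $A_2,B_2,B_4$ rather than a degree-one expression in a single $X$. The delicate point is to identify \emph{which} monomials in $A_2,B_2,B_4$ are produced by cutting an edge with $(H,H)$ insertions versus with $(H^2,H)$ insertions, and to match the precise prefactors $\frac{L^2}{4C_1}$, $\frac{2L^4}{C_1^2(L^4-1-2A_2)}$, and the $\partial_{B_4}$-coefficient of Conjecture~\ref{HAE333}. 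This bookkeeping replaces the single clean identity~\eqref{Coeff} of the $K\PP^2$ case and rests on the explicit $K\PP^3$ relations among $L,C_1,C_2,A_2,B_2,B_4$ (the analogs of~\eqref{drule} and~\eqref{NR}), whose verification is the technical heart of the argument.
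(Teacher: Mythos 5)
Your proposal follows essentially the same route as the paper: its Section on $K\PP^3$ carries out exactly this program, deriving the $K\PP^3$ analogs of Lemmas \ref{RRR}, \ref{RPoly2}, \ref{L1} and \ref{L2} from the Birkhoff factorization \eqref{S1aaa} and the relations \eqref{druleaaa}, \eqref{drule2aaa}, \eqref{NRaaa}, and then running the edge-cutting argument twice with the operators $\mathds{D}_1$ and $\mathds{D}_2$ to obtain the two anomaly equations. The bookkeeping you correctly flag as the technical heart is precisely what the paper's explicit edge computations \eqref{Coeffbbb} and \eqref{Coefftttbbb} accomplish.
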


By applying asymptotic expansions \eqref{VSaaa} to \eqref{S1aaa}, we obtain the following results.

\begin{Lemma}\label{RRRaaa} We have
  \begin{align*}
      R_{1\,p+1,i}&=R_{0\,p+1,i}+\frac{\mathsf{D}R_{0\,p,i}}{L_i}\,,\\
      R_{2\,p+1,i}&=R_{1\,p+1,i}-E_{11,i}R_{1\,k,i}+\frac{\mathsf{D}R_{1\,p,i}}{L_i}+\Big(\frac{\mathsf{D}L_i}{L_i^2}-\frac{A_2}{L_i}\Big)R_{1\,p,i}\,,\\
      R_{3\,p+1,i}&=R_{2\,p+1,i}-E_{21,i}R_{2\,k,i}-E_{22,i}R_{1\,k,i}+\frac{\mathsf{D}R_{2\,p,i}}{L_i}+\Big(2\frac{\mathsf{D}L_i}{L_i^2}-\frac{A_2}{L_i}-\frac{\frac{\mathsf{D}C_2}{C_2}}{L_i}\Big)R_{1\,p,i}
  \end{align*}
with 
\begin{align*}
    E_{11,i}=\frac{\mathsf{D}J_{11}}{L_i}\,,\,\,E_{21,i}=\frac{\mathsf{D}K_{11}}{L_i}\,,\,\,E_{22,i}=\frac{C_2}{L_i^2}\mathsf{D}K_{12}\,.
\end{align*}
\end{Lemma}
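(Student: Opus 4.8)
The plan is to derive all three recursions uniformly by substituting the asymptotic expansions \eqref{VSaaa} into the Birkhoff relations \eqref{S1aaa} and comparing coefficients of $z^k$. The essential reduction is the operator identity recorded in Section \ref{furcalcaaa}, namely $[(z\tfrac{d}{dt})^m\mathds{I}]|_{t=0} = M^m\overline{\mathds{I}}$ with $M = H + z\DD$. Since each normalization factor $C_j$ and each correction coefficient $\DD J_{11}, \DD K_{11}, \DD K_{12}$ is a series in $q$ alone, it commutes with $z\tfrac{d}{dt}$; hence each $\mathds{S}(H^m)$ can be written as a differential operator in $z\tfrac{d}{dt}$ with $q$-series coefficients applied to $\mathds{I}$, and restricting to $t=0$ replaces every $z\tfrac{d}{dt}$ by $M$. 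Restricting further to $H=\lambda_i$ turns $M$ into $\widetilde{M}_i := \lambda_i + z\DD$ acting on $\overline{\mathds{S}}_i(1)$, since the specialization $H\mapsto\lambda_i$ is a ring homomorphism commuting with $\DD$.

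First I would record the conjugation rule that does all the work: using $L_i = \lambda_i + \DD\mu_i$ from \eqref{assymaaa}, one has $\widetilde{M}_i(e^{\mu_i/z}g) = e^{\mu_i/z}(L_i + z\DD)g$ for any series $g$ in $z$ and $q$. Applying this to $\overline{\mathds{S}}_i(1) = e^{\mu_i/z}\sum_k R_{0k,i}z^k$ and matching against \eqref{VSaaa} for $\overline{\mathds{S}}_i(H) = \tfrac{1}{C_1}\widetilde{M}_i\overline{\mathds{S}}_i(1)$ gives the level-zero identity $(L_i + z\DD)\sum_k R_{0k,i}z^k = L_i\sum_k R_{1k,i}z^k$, which is exactly the first recursion. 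For the remaining two I would expand $\mathds{S}(H^2)$ and $\mathds{S}(H^3)$ all the way back to $\mathds{I}$ before restricting, so that
\[
\overline{\mathds{S}}_i(H^2) = \frac{1}{C_1 C_2}\bigl(\widetilde{M}_i^2 - (\DD J_{11})\widetilde{M}_i\bigr)\overline{\mathds{S}}_i(1),
\]
\[
\overline{\mathds{S}}_i(H^3) = \frac{1}{C_1C_2C_3}(\widetilde{M}_i - \DD K_{11})\bigl(\widetilde{M}_i^2 - (\DD J_{11})\widetilde{M}_i\bigr)\overline{\mathds{S}}_i(1) - \frac{\DD K_{12}}{C_1 C_3}\widetilde{M}_i\overline{\mathds{S}}_i(1).
\]
Substituting the conjugation rule, dividing by the prefactors $L_i^2/(C_1C_2)$ and $L_i^3/(C_1C_2C_3)$ read off from \eqref{VSaaa}, and extracting the coefficient of $z^k$ then yields the recursions, with $E_{11,i}=\DD J_{11}/L_i$, $E_{21,i}=\DD K_{11}/L_i$ and $E_{22,i}=\tfrac{C_2}{L_i^2}\DD K_{12}$ emerging precisely from the three correction terms; the relation $C_2=C_3$ of \eqref{c1c2laaa} is what lets the $\DD C_3/C_3$ produced by the prefactor be written as $\DD C_2/C_2$.

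The computation is routine, so the hard part will be purely the bookkeeping. The operator $\widetilde{M}_i$ does not commute with multiplication by the $q$-series prefactors $L_i^m/(C_1\cdots C_m)$ in \eqref{VSaaa}, so each time $z\DD$ meets a prefactor it produces a logarithmic-derivative factor $m\tfrac{\DD L_i}{L_i}-\sum_{j\le m}\tfrac{\DD C_j}{C_j}$. The terms $\DD L_i/L_i^2$, $A_2=\DD C_1/C_1$ and $\DD C_2/C_2$ in the three formulas all originate from these prefactor derivatives, and assembling their exact combination against the correction terms is the one step that must be carried out with care; it is considerably streamlined by using the level-zero identity above to simplify $\widetilde{M}_i\overline{\mathds{S}}_i(1)$ before pushing through the higher relations, after which the identifications of the surviving coefficients are forced. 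I expect no conceptual obstruction beyond this tracking of the non-commuting prefactors.
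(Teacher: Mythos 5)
Your proposal is correct and is exactly the computation the paper intends: the paper's proof consists of the single line ``by applying asymptotic expansions \eqref{VSaaa} to \eqref{S1aaa}'', and your conjugation rule $\widetilde{M}_i(e^{\mu_i/z}g)=e^{\mu_i/z}(L_i+z\DD)g$ together with the logarithmic derivatives of the prefactors $L_i^m/(C_1\cdots C_m)$ and the relation $C_2=C_3$ is precisely how that substitution produces the stated recursions and the coefficients $E_{11,i}$, $E_{21,i}$, $E_{22,i}$. Carrying it out as you describe in fact reproduces the lemma up to what are evidently index typos in the printed statement (the floating $k$'s should be $p+1$, and the last term of the third recursion should involve $R_{2\,p,i}$ rather than $R_{1\,p,i}$).
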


\subsection{Determining $\DD A_2$ and new series.}
The following relation was proven in \cite{LP}.
\begin{equation}\label{druleaaa}
A_2^2+(L^4-1)A_2+2 \DD A_2-\frac{3}{16}(L^4-1)=0\, .
\end{equation}
 By the above result, the differential ring 
 \begin{equation}\label{ddd333aaa}
 \mathds{G}_3[A_2,\DD A_2,\DD\DD A_2,\ldots]
 \end{equation}
 is just the polynomial ring $\mathds{G}[A_2]$.
The second equation in \eqref{c1c2laaa} yields the following relation.
\begin{align}\label{drule2aaa}
    2 A_2+ 2\frac{\mathsf{D}C_2}{C_2}=L^4-1\,.
\end{align}
Denote by $\text{Coeff}(x^iy^j)$ the coefficient of $x^iy^j$ in
$$\sum _{k=0}^3 e^{-\frac{\mu_i}{x}-\frac{\mu_i}{y}}\mathds{S}_i (\phi_k)|_{z=x} \, \mathds{S}_i(\phi ^k )|_{z=y}\,.$$
From \eqref{wdvv} and \eqref{VSaaa}, we obtain the following equations.
\begin{align*}
&\text{Coeff}(x^2)-\frac{1}{2}\text{Coeff}(xy)=0\,,\\
&\text{Coeff}(x^4)-\text{Coeff}(x^3  y)+\frac{1}{2}\text{Coeff}(x^2y^2)=0\,.
\end{align*}
Above equations immediately yields the following relations.
\begin{align}\label{NRaaa}
    \nonumber E_{11,i}=&\frac{E_{21,i}}{2}-\frac{s_1 L^2}{2C_1 L_i}+\frac{s_1 L^4}{2L_i}\,,\\
     E_{22,i}=&\frac{L^4(s_1^2(-3+2 C_1 L^2+C_1^2 L^4)-4s_2(-1+C_1^2))}{8C_1^2 L_i^2}\,\\
    \nonumber&\frac{s_1(-3L^2+C_1 L^4)}{4C_1 L_i}E_{21,i}-\frac{3}{8}E_{21}^2\,.
\end{align}

We define the series $B_2$ and $B_4$ which appeared in the introduction by
\begin{align}\label{SB}
    B_2&=L_i E_{21,i}\,,\\
    \nonumber B_4&=\mathsf{D} B_2\,.
\end{align}
Note that $B_2(q), \,B_4(q)\in\CC[[q]]$.

From Lemma \ref{RRRaaa} with the relations \eqref{druleaaa}, \eqref{drule2aaa} and \eqref{NRaaa}, we obtain results for $\overline{\mathds{S}}({H})|_{H=\lambda_i}$, $\overline{\mathds{S}}({H^2})|_{H=\lambda_i}$
and $\overline{\mathds{S}}({H^3})|_{H=\lambda_i}$.
\begin{Lemma}\label{RPoly2aaa} Suppose Conjecture \ref{RPolyaaa} is true. Then for all $k\geq 0$, we have for all $k\geq 0$, 
 \begin{align*}
     R_{1\,k,i}, \,R_{2\,k,i}\,,R_{3\,k,i} \in \mathds{G}_3[A_2,B_2,B_4,C_1^{\pm 1}]\,.
 \end{align*}
\end{Lemma}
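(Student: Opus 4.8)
The plan is to prove all three membership statements simultaneously by a cascade on the first index, passing from level $R_{a-1}$ to level $R_a$ via the recursions of Lemma \ref{RRRaaa}. The base level is $R_{0\,k,i}=R_{k,i}$, which lies in $\mathds{G}_3$ by Conjecture \ref{RPolyaaa}. Since each recursion expresses $R_{a\,p+1,i}$ purely in terms of $R_{a-1\,p+1,i}$, $R_{a-1\,p,i}$, $\mathsf{D}R_{a-1\,p,i}$ and certain structure series, once an entire level is known to lie in the target ring the next level follows immediately; there is no nested recursion within a fixed first index. The only real content is therefore (i) that $\mathds{G}_3[A_2,B_2,B_4,C_1^{\pm1}]$ is stable under $\mathsf{D}=q\frac{d}{dq}$, and (ii) that the structure series $E_{11,i},E_{21,i},E_{22,i}$ together with the scalar factors of Lemma \ref{RRRaaa} already lie in this ring.

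First I would treat $R_{1\,k,i}$: the recursion $R_{1\,p+1,i}=R_{0\,p+1,i}+\mathsf{D}R_{0\,p,i}/L_i$ together with $\mathsf{D}(\mathds{G}_3)\subseteq\mathds{G}_3$ (justified below) shows $R_{1\,k,i}\in\mathds{G}_3$ for all $k$, which is in fact stronger than claimed. For $R_{2\,k,i}$ the recursion introduces $E_{11,i}$ and $A_2$. Using \eqref{SB} to write $E_{21,i}=B_2/L_i$ and \eqref{NRaaa} to express $E_{11,i}$ through $E_{21,i}$, $C_1$, powers of $L$ and the $s_k$, one gets $E_{11,i}\in\mathds{G}_3[B_2,C_1^{\pm1}]$; since $R_{1\,k,i}\in\mathds{G}_3$ and $\mathsf{D}L_i/L_i^2\in\mathds{G}_3$, this places $R_{2\,k,i}$ in $\mathds{G}_3[A_2,B_2,C_1^{\pm1}]$. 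Finally, for $R_{3\,k,i}$ the recursion requires $E_{21,i}$, $E_{22,i}$, the logarithmic derivative $\mathsf{D}C_2/C_2$, and $\mathsf{D}R_{2\,p,i}$. The first two lie in $\mathds{G}_3[A_2,B_2,C_1^{\pm1}]$ by \eqref{SB}--\eqref{NRaaa}, while \eqref{drule2aaa} gives $\mathsf{D}C_2/C_2=\tfrac12(L^4-1)-A_2\in\mathds{G}_3[A_2]$; the term $\mathsf{D}R_{2\,p,i}$ is exactly where $B_4=\mathsf{D}B_2$ is produced, and it is the only place $B_4$ occurs. Hence $R_{3\,k,i}\in\mathds{G}_3[A_2,B_2,B_4,C_1^{\pm1}]$.

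The $\mathsf{D}$-stability facts used above are all explicit. Differentiating the defining quartic for $L_i$ and using the identity $P'(L_i)=f_3(L_i)/L_i$ gives $\mathsf{D}L_i=(1-L^{-4})L_i^5/f_3(L_i)\in\mathds{G}_3$, where $L^4=\prod_j L_j/s_4\in\mathds{G}_3$ and $f_3(L_i)^{-1}=\big(f_3(L_i)^{-1/2}\big)^2\in\mathds{G}_3$ from the generators of $\mathds{G}_3$; this yields $\mathsf{D}(\mathds{G}_3)\subseteq\mathds{G}_3$. For the formal variables, the defining relation $A_2=\mathsf{D}C_1/C_1$ of Section \ref{holp22} gives $\mathsf{D}C_1^{\pm1}=\pm A_2C_1^{\pm1}$, relation \eqref{druleaaa} gives $\mathsf{D}A_2\in\mathds{G}_3[A_2]$, and $\mathsf{D}B_2=B_4$ by \eqref{SB}. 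Since $R_3$ is never differentiated in the recursions, $\mathsf{D}B_4$ is never needed, so the cascade closes inside $\mathds{G}_3[A_2,B_2,B_4,C_1^{\pm1}]$.

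The main obstacle is not the induction, which is mechanical, but the bookkeeping in step (ii): confirming that every scalar factor arising from the normalizations \eqref{VSaaa} and the Birkhoff relations \eqref{S1aaa} --- in particular the odd-looking powers of $L$ such as $L^2$ that appear in \eqref{NRaaa} --- genuinely lies in $\mathds{G}_3[A_2,B_2,B_4,C_1^{\pm1}]$. The crucial identity is $L^2=C_1C_2$, which follows from \eqref{c1c2laaa} together with the normalizations $C_1(0)=C_2(0)=L(0)=1$; combined with $C_2=L^2C_1^{-1}$ and $L^4\in\mathds{G}_3$ it is what allows the $L$-powers in \eqref{NRaaa} and in the edge contributions to be rewritten inside the ring. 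Verifying that these substitutions are consistent at every level --- so that no factor of $C_2$ (equivalently $L^2$) ever survives unpaired with a compensating $C_1^{-1}$ --- is the delicate point on which the whole argument rests.
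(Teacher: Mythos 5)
Your overall route is the paper's route: the paper offers no more proof of this lemma than the single sentence ``From Lemma \ref{RRRaaa} with the relations \eqref{druleaaa}, \eqref{drule2aaa} and \eqref{NRaaa}, we obtain\dots'', and your cascade on the first index, together with the $\mathsf{D}$-closure facts, is exactly the intended argument spelled out. Your verifications of closure are correct and are genuinely more than the paper records: $\mathsf{D}L_i=(1-L^{-4})L_i^5/f_3(L_i)\in\mathds{G}_3$ (via $P'(L_i)=f_3(L_i)/L_i$ and $L^{-4}=s_4/\prod_jL_j$), $\mathsf{D}C_1^{\pm1}=\pm A_2C_1^{\pm1}$, $\mathsf{D}A_2\in\mathds{G}_3[A_2]$ by \eqref{druleaaa}, $\mathsf{D}C_2/C_2=\tfrac12(L^4-1)-A_2$ by \eqref{drule2aaa}, $\mathsf{D}B_2=B_4$, and the observation that $R_{3}$ is never differentiated so $\mathsf{D}B_4$ is never needed. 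The identification of $R_{1\,k,i}\in\mathds{G}_3$ is also correct and consistent with the paper.

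The one genuine gap is precisely the point you flag at the end but do not close, and your proposed mechanism for closing it does not work as stated. The relations \eqref{NRaaa} express $E_{11,i}$ and $E_{22,i}$ using \emph{odd} powers of $L^2$ (the terms $\tfrac{s_1L^2}{2C_1L_i}$, $2C_1L^2$, $-3L^2$). Only $L^4=\prod_jL_j/s_4$ is visibly in $\mathds{G}_3$; $L^2$ is a square root of it and is not among the listed generators of $\mathds{G}_3[A_2,B_2,B_4,C_1^{\pm1}]$. Writing $L^2=C_1C_2$ and ``pairing $C_2$ with a compensating $C_1^{-1}$'' yields $L^2C_1^{-2}$, which is still not in the ring; what would be needed is either that $C_2$ (equivalently $L^2$) only ever occurs in even powers after all cancellations, or that $L^{\pm1}$ (equivalently $C_2^{\pm1}$) be adjoined to the ring, or a proof that $L^2$ actually lies in $\mathds{G}_3[A_2,B_2,B_4,C_1^{\pm1}]$. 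Since $R_{2\,0,i}=(1-E_{11,i})R_{1\,0,i}$ already contains a bare $\tfrac{s_1L^2}{2C_1L_i}R_{1\,0,i}$, no cancellation is visible at the first step, so this is not mere bookkeeping. To be fair, the paper is silent on exactly this point (it freely writes $L^2$ and $L^6$ in the $K\PP^3$ anomaly equation while asserting membership in $\mathds{G}_3[A_2,B_2,B_4,C_1^{\pm1}]$), so you have located a real imprecision rather than introduced one; but as a proof your write-up leaves this step open, and the sentence claiming $E_{11,i}\in\mathds{G}_3[B_2,C_1^{\pm1}]$ is exactly where it is assumed rather than proved.
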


\subsection{Vertex, edge, and leg analysis}
By parallel argument as in Section \ref{hgs}, we have decomposition of the
contribution to $\Gamma\in \mathsf{G}_{g,k}(\PP^3)$ to
the stable quotient theory of 
$K\PP^3$ 
into vertex terms, edge terms and leg terms
$$
 \text{Cont}_\Gamma
     =\frac{1}{|\text{Aut}(\Gamma)|}
     \sum_{\mathsf{A} \in \ZZ_{> 0}^{\mathsf{F}}} \prod_{v\in \mathsf{V}} 
     \text{Cont}^{\mathsf{A}}_\Gamma (v)
     \prod_{e\in \mathsf{E}} \text{Cont}^{\mathsf{A}}_\Gamma(e)\,\prod_{e\in \mathsf{L}} \text{Cont}^{\mathsf{A}}_\Gamma(l).
 $$


The following lemmas follow from the argument in Section \ref{svel}.
\begin{Lemma}\label{L1bbb} Suppose Conjecture \ref{RPolyaaa} is true. Then we have
    $$\text{\em Cont}^{\mathsf{A}}_\Gamma (v)\in \mathds{G}_3\,.$$ 
\end{Lemma}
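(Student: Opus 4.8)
The plan is to follow the proof of Lemma \ref{L1} essentially verbatim, since the vertex contribution for $K\PP^3$ has the same structure as for $K\PP^2$ and never sees the ``derived'' series $A_2,B_2,B_4,C_1$ that enter only through the edges. First I would invoke the $K\PP^3$ analog of Proposition \ref{VE} (available by the opening remark of Section \ref{hafp333}, which asserts that Sections \ref{bcbcbc}--\ref{svel} transcribe to $K\PP^3$) to identify
$$\text{Cont}^{\mathsf{A}}_\Gamma (v) = \ppl \psi_1^{a_1-1}, \ldots, \psi_n^{a_n-1} \, \Big|\, \mathsf{H}_{\mathsf{g}(v)}^{\mathsf{p}(v)}\, \ppr_{\mathsf{g}(v),n}^{\mathsf{p}(v),0+}\,,$$
where now $\mathsf{H}_{\mathsf{g}(v)}^{\mathsf{p}(v)}$ is the Hodge class built from $T_{p_i}(\PP^3)$ and the canonical weight $-4\lambda_i$; in particular all of its equivariant coefficients lie in $\CC(\lambda_0,\dots,\lambda_3)\subset\mathds{G}_3$.

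Next I would unwind the definition of the bracket. By Givental's genus-$0$ reconstruction together with the wall-crossing of Proposition \ref{WC} (both stated for $K\PP^n$ in Sections \ref{hgi}--\ref{svel}), the right-hand side is a universal polynomial $\pP$ in the genus-$0$ correlators
$$\frac{1}{\lann 1,1,1\rann^{\mathsf{p}(v),0+}_{0,3}}\quad\text{and}\quad\Big\{\lann 1,\ldots,1\rann^{\mathsf{p}(v),0+}_{0,n}\big|_{t_0=0}\Big\}_{n\ge 4}$$
with coefficients in $\CC(\lambda_0,\dots,\lambda_3)$. The $K\PP^3$ versions of the evaluation \eqref{fxxf} and of Corollary \ref{Poly} then give $\lann 1,1,1\rann^{p_i,0+}_{0,3}=1/R_{0,i}$ and place every such genus-$0$ bracket in $\CC[R_{0,i}^{\pm 1},R_{1,i},R_{2,i},\dots]$. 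Finally, under Conjecture \ref{RPolyaaa} every $R_{k,i}$ lies in $\mathds{G}_3$, so the entire polynomial expression lies in $\mathds{G}_3$, which is the claim.

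I do not expect a genuine obstacle: the content is entirely parallel to the $K\PP^2$ computation, and the only thing needing verification is that the genus-$0$ reconstruction and wall-crossing of Sections \ref{hgi}--\ref{svel} transcribe to $\PP^3$ without change, which is exactly what Section \ref{hafp333} asserts. The one point worth flagging is that the vertex contribution involves only the unshifted series $R_{k,i}=R_{0k,i}$ of \eqref{VSaaa} (coming from $\overline{\mathds{S}}_i(1)$), and not the shifted series $R_{1k,i},R_{2k,i},R_{3k,i}$ that appear in the edges; consequently the weak membership $R_{k,i}\in\mathds{G}_3$ of Conjecture \ref{RPolyaaa} already suffices here, and the stronger Lemma \ref{RPoly2aaa} is not needed for the vertex estimate.
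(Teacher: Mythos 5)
Your proof is correct and follows exactly the paper's route: the paper itself disposes of Lemma \ref{L1bbb} by citing the argument of Section \ref{svel}, i.e. the proof of Lemma \ref{L1}, which is precisely the chain (Proposition \ref{VE} analog) $\to$ polynomiality in the genus-$0$ brackets $\to$ evaluation \eqref{fxxf} and Corollary \ref{Poly} $\to$ Conjecture \ref{RPolyaaa} that you reproduce. Your closing observation that only the unshifted $R_{k,i}=R_{0k,i}$ enter the vertex terms, so Lemma \ref{RPoly2aaa} is not needed here, is also accurate.
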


Let $e\in \mathsf{E}$ be an edge connecting the $\T$-fixed points $p_i, p_j \in \PP^3$. Let
the $\mathsf{A}$-values of the respective
half-edges be $(k,l)$.

\begin{Lemma}\label{L2bbb} Suppose Conjecture \ref{RPolyaaa} is true. Then we have
 $$\text{\em Cont}^{\mathsf{A}}_\Gamma(e) \in \mathds{G}_3[A_2,B_2,B_4,C_1^{\pm 1}]\,.$$
\end{Lemma}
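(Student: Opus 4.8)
The plan is to transcribe the proof of Lemma \ref{L2} to the $K\PP^3$ setting, the only genuinely new work being the control of the richer normalization data in \eqref{VSaaa}. First I would invoke the $K\PP^3$ analogue of Proposition \ref{VE} (the vertex--edge--leg decomposition recorded just above this lemma) to write the edge contribution as a coefficient extraction
$$\text{Cont}^{\mathsf{A}}_\Gamma(e) = (-1)^{k+l}\left[e^{-\frac{\mu_i}{x}-\frac{\mu_j}{y}}\, e_i\left(\overline{\mathds{V}}_{ij}-\frac{\delta_{ij}}{e_i(x+y)}\right)e_j\right]_{x^{k-1}y^{l-1}},$$
where $\mu_i=\lann 1,1\rann^{p_i,0+}_{0,2}$. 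Applying the WDVV-type identity \eqref{wdvv}, now in its four-term form $\sum_{r=0}^3$, converts $e_i\overline{\mathds{V}}_{ij}e_j$ into $\frac{1}{x+y}\sum_{r=0}^{3}\overline{\mathds{S}}_i(\phi_r)|_{z=x}\,\overline{\mathds{S}}_j(\phi^r)|_{z=y}$.

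Next I would expand each $\phi_r$ and $\phi^r$ in the basis $1,H,H^2,H^3$; since these classes are polynomials in $H$ with coefficients in $\CC(\lambda_0,\dots,\lambda_3)$, each $\overline{\mathds{S}}_i(\phi_r)$ becomes a $\CC(\lambda)$-linear combination of the four series $\overline{\mathds{S}}_i(H^a)$, $0\le a\le 3$. Substituting the asymptotic forms \eqref{VSaaa}, the prefactor $e^{-\mu_i/x-\mu_j/y}$ cancels the exponentials $e^{\mu_i/z}$ (at $z=x$ and $z=y$), so that the bracketed quantity becomes $\frac{1}{x+y}$ times an honest power series in $x,y$. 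Its coefficients are products of the basis-change scalars, the $q$-series $R_{ab,i}, R_{ab,j}$, and the normalization factors $1,\frac{L_i}{C_1},\frac{L_i^2}{C_1C_2},\frac{L_i^3}{C_1C_2C_3}$. Extracting the coefficient of $x^{k-1}y^{l-1}$ (after expanding $\frac{1}{x+y}$ geometrically) is then a ring operation, so it suffices to check that every factor that can appear lies in $\mathds{G}_3[A_2,B_2,B_4,C_1^{\pm1}]$.

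For the $R$-coefficients this is exactly the content of Lemma \ref{RPoly2aaa} (together with Conjecture \ref{RPolyaaa} for $R_{0k,i}=R_{k,i}$), which has already absorbed the relations \eqref{druleaaa}, \eqref{drule2aaa} and \eqref{NRaaa}. The remaining point is the normalization factors. Using \eqref{c1c2laaa}, namely $C_2=C_3$ and $C_1^2C_2^2=(1-4^4q)^{-1}=L^4$, I would rewrite $\frac{1}{C_1C_2}=\frac{1}{L^2}$ and $\frac{1}{C_1C_2C_3}=\frac{1}{C_1C_2^2}=\frac{C_1}{L^4}$; the Vieta identity $\prod_i L_i=s_4 L^4$ for the degree-$4$ polynomial defining the $L_i$ shows $L^{\pm4}\in\mathds{G}_3$, so the $a=3$ factor lands in $\mathds{G}_3[C_1^{\pm1}]$.

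I expect the main obstacle to be precisely the $a=2$ factor $\frac{L_i^2}{C_1C_2}=\frac{L_i^2}{L^2}$, which forces one to control the half-power $L^2$: while $L^4\in\mathds{G}_3$ is clear, membership of $L^{\pm2}$ in $\mathds{G}_3$ is not immediate and must either be extracted from the square-root generators $f_3(L_i)^{-1/2}$ (via a resultant computation for $\prod_i f_3(L_i)$, which is a power of $L^4$ times a $\CC(\lambda)[q]$-factor) or else shown to cancel after the summation over $r$ against the two-sided product structure of the edge --- exactly the phenomenon seen in the $K\PP^2$ case, where the $C_1$-factors disappear entirely so that $\text{Cont}^{\mathsf{A}}_\Gamma(e)\in\mathds{G}_2[X]$ carries no $C_1$ at all. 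Once this half-power bookkeeping is settled, the membership $\text{Cont}^{\mathsf{A}}_\Gamma(e)\in\mathds{G}_3[A_2,B_2,B_4,C_1^{\pm1}]$ follows, and the explicit coefficients needed later for the holomorphic anomaly equation of Conjecture \ref{HAE333} can be read off from the same expansion as in Lemma \ref{L2}.
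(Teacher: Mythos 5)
Your strategy is the right one, and it is what the paper itself intends: the paper gives no proof of this lemma beyond the sentence that it ``follows from the argument in Section \ref{svel}'', so the only honest way to prove it is to redo the proof of Lemma \ref{L2} with the four-term form of \eqref{wdvv}, the asymptotics \eqref{VSaaa}, and Lemma \ref{RPoly2aaa}, exactly as you set up. However, the point you flag and then defer --- the ``half-power'' $L^{\pm 2}$ --- is not a side issue of bookkeeping: it is the \emph{only} nontrivial step of the lemma, and your proof stops exactly there. Moreover, of your two candidate resolutions, the first (extracting $L^{\pm 2}$ from the generators $f_3(L_i)^{-1/2}$ via a resultant) does not work as stated: since $f_3(L_i)=L_iP'(L_i)$ for the defining quartic $P$, one finds $\prod_i f_3(L_i)=s_4\prod_{i<j}(L_i-L_j)^2\,L^{-12}$, so the square roots only produce $L^{6}$ up to a factor $\sqrt{s_4}\notin\CC(\lambda_0,\dots,\lambda_3)$, and $L^{2}$ itself is not visibly in $\mathds{G}_3$.

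The correct mechanism is your second alternative, cancellation, and it deserves to be made explicit because it is also how the paper's own proof of Lemma \ref{L2} works. From \eqref{c1c2laaa} one has $C_3=C_2$ and $C_1C_2=L^2$, so the prefactors in \eqref{VSaaa} are $1$, $L_iC_1^{-1}$, $L_i^2C_1C_2L^{-4}$ and $L_i^3C_1L^{-4}$: everything lies in $\mathds{G}_3[C_1^{\pm1}]$ except for a single leftover factor of $C_2$ attached to each $\overline{\mathds{S}}(H^2)$. In addition, $R_{2k,i}$ and $R_{3k,i}$ themselves carry $C_2$-linear parts, because the first identity of \eqref{NRaaa} gives $E_{11,i}=\tfrac{1}{2}E_{21,i}-\tfrac{s_1C_2}{2L_i}+\tfrac{s_1L^4}{2L_i}$ and $E_{11,i}$ enters the recursion of Lemma \ref{RRRaaa} for $R_{2\,p+1,i}$ (note this means Lemma \ref{RPoly2aaa}, as literally stated, already presupposes the same parity bookkeeping). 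Since $C_2^2=L^4C_1^{-2}$ \emph{is} in the ring, one must grade everything by parity in $C_2$ and verify that, after expanding the diagonal $\sum_{r}\phi_r\otimes\phi^r$ in the basis $H^a\otimes H^b$ (where in particular the mixed terms $H\otimes H^2+H^2\otimes H$ occur with coefficient $4s_1\neq 0$) and inserting \eqref{VSaaa}, the total $C_2$-odd part of the edge expression vanishes. This is precisely the analogue of the step in Lemma \ref{L2} where the $C_2$ arising from $C_1^{-2}=C_2L^{-3}$ in the $H\otimes H$ term cancels against the $C_2$ hidden in $R_{2k,j}$ through $N_2=\tfrac{1}{2}(L^3-C_2)$ of \eqref{NR}; there the cancellation is a two-line check, here it involves all five diagonal terms and both identities of \eqref{NRaaa}. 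Until this parity cancellation is actually carried out, the membership $\text{Cont}^{\mathsf{A}}_\Gamma(e)\in\mathds{G}_3[A_2,B_2,B_4,C_1^{\pm1}]$ is not established.
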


\begin{Lemma}\label{L3bbb} Suppose Conjecture \ref{RPolyaaa} is true. Then we have
 $$\text{Cont}^{\mathsf{A}}_\Gamma(l)\in
\mathds{G}_3[A_2,B_2,B_4,C_1^{\pm 1}]\, .$$
\end{Lemma}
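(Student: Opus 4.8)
The plan is to transcribe the leg analysis at the end of Section~\ref{svel} to the $K\PP^3$ geometry, the vertex and edge cases (Lemmas~\ref{L1bbb} and~\ref{L2bbb}) being handled by the same bookkeeping. Fix a leg $l$ of $\Gamma$ attached at $\mathsf{p}(l)=p_i$ and carrying an insertion $H^{c}$ with $c\in\{1,2\}$, these being the only classes appearing in the series $\mathcal{F}^{\mathsf{SQ}}_{g,m}[a,b]$. The $K\PP^3$ analogue of Proposition~\ref{VEL} expresses the leg contribution as a coefficient extraction
\[
\text{Cont}^{\mathsf{A}}_\Gamma(l)=(-1)^{\mathsf{A}(l)-1}\Big[e^{-\frac{\lann 1,1\rann^{p_i,0+}_{0,2}}{z}}\,\overline{\mathds{S}}_{i}(H^{c})\Big]_{z^{\mathsf{A}(l)-1}}\, .
\]
First I would record, exactly as in Section~\ref{svel}, the identity $\lann 1,1\rann^{p_i,0+}_{0,2}=\mu_i$, so that the factor $e^{-\mu_i/z}$ cancels the exponential $e^{\mu_i/z}$ in the asymptotic expansions~\eqref{VSaaa}. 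After this cancellation the bracket is just the coefficient of $z^{\mathsf{A}(l)-1}$ in the power-series part of $\overline{\mathds{S}}_i(H^{c})$.

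By~\eqref{VSaaa} this coefficient equals $\frac{L_i}{C_1}R_{1\,\mathsf{A}(l)-1,i}$ for $c=1$ and $\frac{L_i^2}{C_1C_2}R_{2\,\mathsf{A}(l)-1,i}$ for $c=2$. Granting Conjecture~\ref{RPolyaaa}, Lemma~\ref{RPoly2aaa} puts $R_{1\,k,i},R_{2\,k,i}\in\mathds{G}_3[A_2,B_2,B_4,C_1^{\pm1}]$, so everything reduces to showing that the two scalar prefactors lie in this ring. The case $c=1$ is immediate: $L_i\in\mathds{G}_3$ and $C_1^{-1}$ is a generator, so $\frac{L_i}{C_1}\in\mathds{G}_3[C_1^{\pm1}]$ and the product with $R_{1\,\mathsf{A}(l)-1,i}$ stays in the ring. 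This settles all legs of type $H$.

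The case $c=2$ is where I expect the main obstacle. In the $K\PP^2$ argument the relevant relation~\eqref{c1c2l} is $C_1^2C_2=(1+27q)^{-1}=L^3$, an odd power, so one cancels a single factor $C_1$ and lands on the Laurent monomial $L^3=\prod_jL_j/(\lambda_0\lambda_1\lambda_2)\in\mathds{G}_2$. For $K\PP^3$ the relation~\eqref{c1c2laaa} instead reads $C_1^2C_2^2=(1-4^4q)^{-1}=L^4$, hence $C_1C_2=L^2$ and
\[
\frac{L_i^2}{C_1C_2}=\frac{L_i^2}{L^2}\, .
\]
Thus the crux is to prove $L^{-2}\in\mathds{G}_3$, i.e. that the square root $L^2=(L^4)^{1/2}$ of the Laurent monomial $L^4=\prod_jL_j/s_4\in\mathds{G}_3$ already lies in $\mathds{G}_3$. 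This half-integer power of $L^4$, forced by the even dimension of $\PP^3$, has no counterpart in the $K\PP^2$ computation, and producing it is exactly the role of the square-root generators $f(L_0)^{-1/2},\dots,f(L_3)^{-1/2}$ that also place $R_{0,i}=\big(\lambda_i\prod_{j\neq i}(\lambda_i-\lambda_j)/f(L_i)\big)^{1/2}$ in $\mathds{G}_3$. I would fix the correct branch using $(L/L_i)^2|_{q=0}=\lambda_i^{-2}$ and verify $L^{-2}\in\mathds{G}_3$ directly; once this is in hand, $\frac{L_i^2}{C_1C_2}=L_i^2L^{-2}\in\mathds{G}_3$, the $H^2$-legs satisfy the claim, and the lemma follows. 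The delicate bookkeeping of these half-powers of $L^4$ is the one genuinely new ingredient beyond Section~\ref{svel}.
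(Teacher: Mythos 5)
Your proposal follows essentially the route the paper intends: the paper's own ``proof'' of Lemma~\ref{L3bbb} is a one-line deferral to the leg analysis of Section~\ref{svel}, i.e.\ exactly the coefficient extraction from Proposition~\ref{VEL}, the identity $\lann 1,1\rann^{p_i,0+}_{0,2}=\mu_i$, the asymptotics \eqref{VSaaa}, and Lemma~\ref{RPoly2aaa}, which is what you carry out. You in fact go further than the text by isolating the one genuinely new point for $K\PP^3$: the $H^2$-legs contribute the prefactor $L_i^2/(C_1C_2)=L_i^2/L^2$, and since $\PP^3$ has even dimension this forces the half-integral power $L^2=(L^4)^{1/2}$, which has no analogue in the $K\PP^2$ computation where $C_1^2C_2=L^3$ is already a Laurent monomial in the $L_j$. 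The only loose end is that you assert rather than verify $L^{\pm 2}\in\mathds{G}_3$. This does hold, and can be closed as follows: writing $P(x)=(1-4^4q)x^4-s_1x^3+s_2x^2-s_3x+s_4$ one checks $f_3(x)=xP'(x)-4P(x)$, so $f_3(L_i)=L_iP'(L_i)$ and hence
$$\prod_{i=0}^{3}f_3(L_i)=s_4\,(1-4^4q)^{3}\prod_{i<j}(L_i-L_j)^2=s_4\,L^{-12}\prod_{i<j}(L_i-L_j)^2\,,$$
so that $L^{6}=\pm\, s_4^{1/2}\prod_{i<j}(L_i-L_j)\prod_i f_3(L_i)^{-1/2}$ lies in $\mathds{G}_3$ (up to the constant $s_4^{1/2}$, which should be adjoined to the coefficient field or absorbed into the normalization), and then $L^{2}=L^{6}/L^{4}$ with $L^4=\prod_iL_i/s_4\in\mathds{G}_3$; the branch is fixed by $L^2|_{q=0}=1$ as you indicate. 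With that identity supplied, your argument is complete and consistent with the paper.
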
 

\subsection{Proof of Theorem \ref{MT1333}}

Conjecture \ref{RPolyaaa} can be proven for the choices of $\lambda_0,\dots,\lambda_3$ such that 
\begin{align*}
    \lambda_i \ne \lambda_j \,\, \text{for}\,\,i \ne j\,,\\
    4s_2^2-s_1 s_3=0\,,\\
    2s_2^3-27 s_1^2s_4=0\,.
\end{align*}
by the argument in Appendix. 
Hence, statement (i),
$$\mathcal{F}_{g,a+b}^{\mathsf{SQ}}[a,b] (q) \in \mathds{G}_3[A_2,B_2,B_4,C_1^{\pm 1}]\, ,$$
follows from the arguments in Proposition \ref{VE}
and  Lemmas \ref{L1bbb} - \ref{L3bbb}.
Statement (ii),
$\mathcal{F}_g^{\mathsf{SQ}}$ has at most degree $2(3g-3)$ with respect to $A_2$, holds since a stable graph of genus $g$ has at most $3g-3$ edges.
Since 
$$\frac{\partial}{\partial T} = \frac{q}{C_1}\frac{ \partial}{\partial q}\,, $$
statement (iii),
\begin{equation}\label{vvttbbb}
\frac{\partial^k \mathcal{F}_g^{\mathsf{SQ}}}{\partial T^k}(q) \in \mathds{G}[A_2,B_2,B_4,C_1^{\pm 1}]\, ,
\end{equation}
follows from divisor equation in stable quotient theory and statement (i).
\qed

\subsection{Proof of Theorem \ref{HAE333}: first equation.}
\label{prtttbbb}

Let $\Gamma \in \mathsf{G}_{g}(\PP^3)$ be a decorated graph. Let us fix an edge $f\in\mathsf{E}(\Gamma)$:
\begin{enumerate}
\item[$\bullet$] if $\Gamma$ is connected after 
deleting $f$, denote the resulting graph by $$\Gamma^0_f\in \mathsf{G}_{g-1,2}(\PP^3)\, ,$$
\item[$\bullet \bullet$] if $\Gamma$ is disconnected after deleting $f$, denote the resulting two graphs by $$\Gamma^1_f\in \mathsf{G}_{g_1,1}(\PP^3) \ \ \ 
\text{and}\ \ \  \Gamma^2_f\in \mathsf{G}_{g_2,1}(\PP^3)$$
where $g=g_1+g_2$.
\end{enumerate}
There is no canonical
order for the 2 new markings. 
We will always sum over the 2 labellings. So more precisely, the graph
$\Gamma^0_f$ in case $\bullet$
should be viewed as sum
of 2 graphs
$$\Gamma^0_{f,(1,2)} +
\Gamma^0_{f,(2,1)}\, .$$
Similarly, in case $\bullet\bullet$,
we will sum over the ordering of $g_1$ and $g_2$. As usual, the summation
will be later compensated by a factor of
$\frac{1}{2}$ in the formulas.

By the argument in Section \ref{VerEdge}, we have
the following formula for the contribution 
of the graph $\Gamma$ to the stable quotient
theory of $K\PP^3$,
 $$
 \text{Cont}_\Gamma
     =\frac{1}{|\text{Aut}(\Gamma)|}
     \sum_{\mathsf{A} \in \ZZ_{\ge 0}^{\mathsf{F}}} \prod_{v\in \mathsf{V}} 
     \text{Cont}^{\mathsf{A}}_\Gamma (v)
     \prod_{e\in \mathsf{E}} \text{Cont}^{\mathsf{A}}_\Gamma(e)\, .
 $$


Let $f$ connect the $\T$-fixed points $p_i, p_j \in \PP^3$. Let
the $\mathsf{A}$-values of the respective
half-edges be $(k,l)$. Denote by $\mathds{D}_1$ the differentail operator
$$\frac{L^2}{4C_1}\frac{\partial}{\partial A_2}+\frac{-2s_1L^4-C_1(3B_2L^2-s_1L^6)}{4C_1^2}\frac{\partial }{\partial B_4}\,.$$
By Lemma \ref{RRRaaa} and the explicit formula for $\text{Cont}^{\mathsf{A}}_\Gamma (f)$ in Lemma \ref{L2}\footnote{Lemma \ref{L2} is stated for $K\PP^2$, but parallel statement holds for $K\PP^3$.}, we have
\begin{multline}\label{Coeffbbb}
\mathds{D}_1\,\text{Cont}^{\mathsf{A}}_\Gamma(f) =(-1)^{k+l}\Big(\frac{ L_i^2 L_jR_{2\,k-1,i} R_{1\,l-1,j}}{C_1^2 C_2}+\frac{ L_i L_j^2 R_{1\,k-1,i} R_{2\,l-1,j}}{C_1^2 C_2}\Big)\, .
\end{multline}

\noindent $\bullet$ If $\Gamma$ is connected after deleting $f$, we have
\begin{multline*}
\frac{1}{|\text{Aut}(\Gamma)|}
     \sum_{\mathsf{A} \in \ZZ_{\ge 0}^{\mathsf{F}}} 
     \mathds{D}_1\,\text{Cont}^{\mathsf{A}}_\Gamma(f)
     \prod_{v\in \mathsf{V}} 
     \text{Cont}^{\mathsf{A}}_\Gamma (v)
     \prod_{e\in \mathsf{E},\, e\neq f} \text{Cont}^{\mathsf{A}}_\Gamma(e) \\=
\text{Cont}_{\Gamma^0_f}(H,H^2)+\text{Cont}_{\Gamma^0_f}(H^2,H) \, .
\end{multline*}
The derivation is simply by using \eqref{Coeffbbb} on the left
and the argument in Proposition \ref{VEL} on the right.

\vspace{5pt}
\noindent $\bullet\bullet$
If $\Gamma$ is disconnected after deleting $f$, we obtain
\begin{multline*}
\frac{1}{|\text{Aut}(\Gamma)|}
     \sum_{\mathsf{A} \in \ZZ_{\ge 0}^{\mathsf{F}}} 
     \mathds{D}_1\,\text{Cont}^{\mathsf{A}}_\Gamma(f) 
     \prod_{v\in \mathsf{V}} 
     \text{Cont}^{\mathsf{A}}_\Gamma (v)
     \prod_{e\in \mathsf{E},\, e\neq f} \text{Cont}^{\mathsf{A}}_\Gamma(e)\\
=\text{Cont}_{\Gamma^1_f}(H) \,
\text{Cont}_{\Gamma^2_f}(H^2)+\text{Cont}_{\Gamma^1_f}(H^2) \,
\text{Cont}_{\Gamma^2_f}(H)\,
\end{multline*}
by the same method.

By combining the above two equations for all 
the edges of all the graphs $\Gamma\in \mathsf{G}_g(\PP^3)$
and using the vanishing
\begin{align*}
\frac{\partial {\text{Cont}}^{\mathsf{A}}_\Gamma(v)}{\partial A_2}=0\,,\,\,\frac{\partial {\text{Cont}}^{\mathsf{A}}_\Gamma(v)}{\partial B_4}=0
\end{align*}
of Lemma \ref{L1bbb}, we obtain
\begin{equation}\label{grewwbbb}
\mathds{D}_1\, 
 \lan  \ran^{\mathsf{SQ}}_{g,0}= \sum_{i=1}^{g-1} \lan H\ran^{\mathsf{SQ}}_{g-i,1}
\lan H^2 \ran^{\mathsf{SQ}}_{i,1} + \lan H,H^2\ran^{\mathsf{SQ}}_{g-1,2}\, .
\end{equation}
We have followed here the notation of Section \ref{holp22}.
The equality \eqref{grewwbbb} holds in the ring $\mathds{G}_3[A_2,B_2,B_4,C_1^{\pm 1}]$.

\subsection{Proof of Theorem \ref{HAE333}: second equation.}

By the argument in Section \ref{VerEdge}, we have
the following formula for the contribution 
of the graph $\Gamma$ to the stable quotient
theory of $K\PP^3$,
 $$
 \text{Cont}_\Gamma
     =\frac{1}{|\text{Aut}(\Gamma)|}
     \sum_{\mathsf{A} \in \ZZ_{\ge 0}^{\mathsf{F}}} \prod_{v\in \mathsf{V}} 
     \text{Cont}^{\mathsf{A}}_\Gamma (v)
     \prod_{e\in \mathsf{E}} \text{Cont}^{\mathsf{A}}_\Gamma(e)\, .
 $$

Let $f$ connect the $\T$-fixed points $p_i, p_j \in \PP^3$. Let
the $\mathsf{A}$-values of the respective
half-edges be $(k,l)$. Denote by $\mathds{D}_2$ the differential operator
$$\frac{2L^2}{C_1(L^4-1-2A_2)}\frac{\partial }{\partial B_2}\,.$$
By Lemma \ref{RRRaaa} and the explicit formula for $\text{Cont}^{\mathsf{A}}_\Gamma (f)$ in Lemma \ref{L2}\footnote{Lemma \ref{L2} is stated for $K\PP^2$, but parallel statement holds for $K\PP^3$.}, we have
\begin{equation}\label{Coefftttbbb}
\mathds{D}_2\,\text{Cont}^{\mathsf{A}}_\Gamma(f) = (-1)^{k+l}\frac{2L_i L_jR_{1\,k-1,i} R_{1\,l-1,j}}{C_1^2}\, .
\end{equation}

\noindent $\bullet$ If $\Gamma$ is connected after deleting $f$, we have
\begin{multline*}
\frac{1}{|\text{Aut}(\Gamma)|}
     \sum_{\mathsf{A} \in \ZZ_{\ge 0}^{\mathsf{F}}} 
     \mathds{D}_2\,\text{Cont}^{\mathsf{A}}_\Gamma(f) 
     \prod_{v\in \mathsf{V}} 
     \text{Cont}^{\mathsf{A}}_\Gamma (v)
     \prod_{e\in \mathsf{E},\, e\neq f} \text{Cont}^{\mathsf{A}}_\Gamma(e) \\=
\text{Cont}_{\Gamma^0_f}(H,H) \, .
\end{multline*}
The derivation is simply by using \eqref{Coefftttbbb} on the left
and the arguments in Proposition \ref{VEL} on the right.

\vspace{5pt}
\noindent $\bullet\bullet$
If $\Gamma$ is disconnected after deleting $f$, we obtain
\begin{multline*}
\frac{1}{|\text{Aut}(\Gamma)|}
     \sum_{\mathsf{A} \in \ZZ_{\ge 0}^{\mathsf{F}}} 
     \mathds{D}_2\,\text{Cont}^{\mathsf{A}}_\Gamma(f) \prod_{v\in \mathsf{V}} 
     \text{Cont}^{\mathsf{A}}_\Gamma (v)
     \prod_{e\in \mathsf{E},\, e\neq f} \text{Cont}^{\mathsf{A}}_\Gamma(e)\\
=\text{Cont}_{\Gamma^1_f}(H) \,
\text{Cont}_{\Gamma^2_f}(H)\, 
\end{multline*}
by the same method. 

By combining the above two equations for all 
the edges of all the graphs $\Gamma\in \mathsf{G}_g(\PP^3)$
and using the vanishing
\begin{align*}
\frac{\partial {\text{Cont}}^{\mathsf{A}}_\Gamma(v)}{\partial B_2}=0
\end{align*}
of Lemma \ref{L1bbb}, we obtain
\begin{equation}\label{grewwtttbbb}
\mathds{D}_2
 \lan  \ran^{\mathsf{SQ}}_{g,0}=\sum_{i=1}^{g-1} \lan H\ran^{\mathsf{SQ}}_{g-i,1}
\lan H \ran^{\mathsf{SQ}}_{i,1} +  \lan H,H\ran^{\mathsf{SQ}}_{g-1,2}\, .
\end{equation}
We have followed here the notation of Section \ref{holp22}.
The equality \eqref{grewwtttbbb} holds in the ring $\mathds{G}_3[A_2,B_2,B_4,C_1^{\pm 1}]$.

On the right side of \eqref{grewwtttbbb}, we have 
$$ \lan H  \ran^{\mathsf{SQ}}_{g-i,1}\, =\, \mathcal{F}_{g-i,1}^{\mathsf{SQ}}[1,0](q)\, =\,  \mathcal{F}^{\mathsf{GW}}_{g-i,1}[1,0](Q(q))\, ,$$
where the first equality is by definition and the second is by
wall-crossing \eqref{34567}. Then,
$$\mathcal{F}^{\mathsf{GW}}_{g-i,1}[1,0](Q(q))\ = \ \frac{\partial\mathcal{F}^{\mathsf{GW}}_{g-i}}{\partial T}(Q(q)) \ =\ 
\frac{\partial\mathcal{F}^{\mathsf{SQ}}_{g-i}}{\partial T}(q) 
$$
where the first equality is by the divisor equation in
Gromov-Witten theory and the second is again by wall-crossing
\eqref{34567}, so we conclude
$$ \lan H  \ran^{\mathsf{SQ}}_{g-i,1} =\frac{\partial\mathcal{F}^{\mathsf{SQ}}_{g-i}}{\partial T}(q)\, \in \mathbb{C}(\lambda_0,\dots,\lambda_3)[[q]]\, .$$
Similarly, we obtain
\begin{eqnarray*}
 \lan H  \ran^{\mathsf{SQ}}_{i,1} &=&\frac{\partial\mathcal{F}^{\mathsf{SQ}}_{i}}{\partial T}(q)\, 
\, \in \mathbb{C}(\lambda_0,\dots,\lambda_3)[[q]]\, ,
\\
 \lan H,H  \ran^{\mathsf{SQ}}_{g-1,2} &=&\frac{\partial^2\mathcal{F}^{\mathsf{SQ}}_{g-1}}{\partial T^2}(q)\,
\, \in \mathbb{C}(\lambda_0,\dots,\lambda_3)[[q]]\, .
\end{eqnarray*}
Together, the above equations transform \eqref{grewwtttbbb} into 
exactly the second holomorphic anomaly equation of Theorem \ref{MT2333},
$$\frac{2L^4}{C_1^2(L^4-1-2A_2)}\frac{\partial \mathcal{F}^{\mathsf{SQ}}_g}{\partial B_2}=\sum^{g-1}_{i=1} \frac{\partial\mathcal{F}^{\mathsf{SQ}}_{g-i}}{\partial T}\frac{\partial\mathcal{F}^{\mathsf{SQ}}_{i}}{\partial T}+\frac{\partial^2\mathcal{F}^{\mathsf{SQ}}_{g-1}}{\partial T^2}\,.$$
as an equality in $\mathbb{C}(\lambda_0,\dots,\lambda_3)[[q]]$. To lift holomorphic anomaly equation to the equality
$$\frac{2L^4}{C_1^2(L^4-1-2A_2)}\frac{\partial \mathcal{F}^{\mathsf{SQ}}_g}{\partial B_2}=\sum^{g-1}_{i=1} \frac{\partial\mathcal{F}^{\mathsf{SQ}}_{g-i}}{\partial T}\frac{\partial\mathcal{F}^{\mathsf{SQ}}_{i}}{\partial T}+\frac{\partial^2\mathcal{F}^{\mathsf{SQ}}_{g-1}}{\partial T^2}\,$$
in the ring $\mathds{G}_3[A_2,B_2,B_4,C_1^{\pm 1}]$, we must
prove
the equalities 
\begin{equation}\label{pp33pbbb}
 \lan H  \ran^{\mathsf{SQ}}_{g-i,1} =\frac{\partial\mathcal{F}^{\mathsf{SQ}}_{g-i}}{\partial T}\,,  \ \ \ \ 
 \lan H  \ran^{\mathsf{SQ}}_{i,1} = \frac{\partial\mathcal{F}^{\mathsf{SQ}}_{i}}{\partial T}\, ,
\end{equation}
$$ \lan H,H  \ran^{\mathsf{SQ}}_{g-1,2}= \frac{\partial^2\mathcal{F}^{\mathsf{SQ}}_{g-1}}{\partial T^2}\,
$$
in the ring $\mathds{G}_3[A_2,B_2,B_4,C_1^{\pm 1}]$.
The lifting follow from the argument in Section 7.3 in \cite{LP}.

We do not study the genus 1 unpointed series $\mathcal{F}^{\mathsf{SQ}}_1(q)$ in the paper, so we take
\begin{eqnarray*}
 \lan H  \ran^{\mathsf{SQ}}_{1,1} &=&\frac{\partial\mathcal{F}^{\mathsf{SQ}}_{1}}{\partial T}\, ,\\
 \lan H,H  \ran^{\mathsf{SQ}}_{1,2} &=&\frac{\partial^2\mathcal{F}^{\mathsf{SQ}}_{1}}{\partial T^2}\, .
\end{eqnarray*}
as definitions of the right side in the genus 1 case.
There is no difficulty in calculating these series explicitly
using the argument in Proposition \ref{VEL}.

\section{Appendix}
\subsection{Overviews.}\label{LPPN}In section \ref{twth} the equivariant Gromov-Witten invariants of the local $\PP^n$ were defined,
\begin{align*}
    N_{g,d}^{\mathsf{GW}}=\int_{[\overline{M}_g(\PP^n,d)]^{\text{vir}}}e\Big(-R\pi_*f^* \mathcal{O}_{\PP^n}(-n-1)\Big)\,.
\end{align*}
We associate Gromov-Witten generating series by
$$\mathcal{F}^{\mathsf{GW},n}_g(Q)\, =\, 
\sum_{d=0}^\infty \widetilde{N}_{g,d}^{\mathsf{GW}} Q^d\, 
\in \, \CC(\lambda_0,\dots,\lambda_n)[[Q]] \, .$$
Motivated by mirror symmetry (\cite{ASYZ,BCOV,ObPix}), we can make the following predictions about the genus $g$ generating series $\mathcal{F}^{\mathsf{GW},n}_g$. 
\begin{itemize}
    \item[(A)] There exist a finitely generated subring 
    $$\mathbf{G}\in\CC(\lambda_0,\dots,\lambda_n)[[Q]]$$
    which contains $\mathcal{F}_g^{\mathsf{GW},n}$ for all $g$.
    \item[(B)] The series $\mathcal{F}_g^{\mathsf{GW},n}$ satisfy {\em holomorphic anomaly equations}, i.e. recursive formulas for the derivative of $\mathcal{F}_g^{\mathsf{GW},n}$ with respect to some generators in $\mathbf{G}$.
\end{itemize}

\subsubsection{$I$-function}
$I$-fucntion defined by
$$I_n=\sum_{d=0}^{\infty}\frac{\prod_{k=1}^{(n+1)d-1}(-(n+1)H-kz)}{\prod_{i=0}^n \prod_{k}^d(H+kz-\lambda_i)}q^d\in H_\mathsf{T}^*(\PP^n,\CC)[[q]]\,,$$
is the central object in the study of Gromov-Witten invariants of local $\PP^n$ geometry. See \cite{LP}, \cite{LP2} for the arguments. Several important properties of the function $I_n$ was studied in \cite{ZaZi} after the specialization
\begin{align}\label{sp}
    \lambda_i=\zeta_{n+1}^i\,
\end{align}
where $\zeta_{n+1}$ is primitive $(n+1)$-th root of unity. For the study of full equivariant Gromov-Witten theories, we extend the result of \cite{ZaZi} without the specialization \eqref{sp}.

\subsubsection{Picard-Fuchs equation and Birkhoff factorization}
Define differential operators
\begin{align*}
    \mathsf{D}=q \frac{d}{dq}\,,\,\,\,M=H+z\mathsf{D}\,.
\end{align*}
The function $I_n$ satisfies following Picard-Fuchs equation
\begin{align*}
    \Big(\prod_{i=0}^n\Big(M-\lambda_i\Big)-q\prod_{k=0}^{n}\Big(-(n+1)M-kz\Big)\Big)I_n=0\,.
\end{align*}

The restriction $I_n|_{H=\lambda_i}$ admits following asymptotic form
\begin{align}\label{asymp}
    I_n|_{H=\lambda_i}=e^{\frac{\mu}{z}}\Big( R_{0,i}+R_{1,i}z+R_{2,i}z^2+\dots \Big)
\end{align}
with series $\mu_i\,,\,R_{k,i}\in \CC(\lambda_0,\dots,\lambda_n)[[q]]$.

 A derivation of \eqref{asymp} is obtained from \cite[Theorem 5.4.1]{CKg0} and the uniqueness lemma in \cite[Section 7.7]{CKg0}. The series $\mu_i$ and $R_{k,i}$ are found by solving defferential equations obtained from the coefficient of $z^k$. For example,
 \begin{align*}\lambda_i+\mathsf{D}\mu_i= L_i\,,
 \end{align*}
where $L_i(q)$ is the series in $q$ defined by the root of following degree $(n+1)$ polynomial in $\mathcal{L}$
\begin{align*}
    \prod_{i=0}^{n}(\mathcal{L}-\lambda_i)-(-1)^{n+1}q \mathcal{L}^{n+1}\,.\,
\end{align*}
with initial conditions,
$$\mathcal{L}_i(0)=\lambda_i\,.$$

Let $f_n$ be the polynomial of degree $n$ in variable $x$ over $\CC(\lambda_0,\dots,\lambda_n)$ defined by
\begin{align*}
    f_n(x):=\sum_{k=0}^{n} (-1)^{k}k s_{k+1} x^{n-k}\,,
\end{align*}
where $s_k$ is $k$-th elementary symmetric function in $\lambda_0,\dots,\lambda_n$. The ring
\begin{align*}
    \mathds{G}_n:=\CC(\lambda_0,\dots,\lambda_n)[L_0^{\pm 1},\dots,L_n^{\pm 1},f_n(L_0)^{-\frac{1}{2}},\dots,f_n(L_n)^{-\frac{1}{2}}]
\end{align*}
will play a basic role. 

The following Conjecture was proven under the specializaiton \eqref{sp} in \cite[Theorem 4]{ZaZi}.

\begin{Conj}\label{MC}
 For all $k\ge 0$, we have
$$R_{k,i}\in \mathds{G}_n\,.$$
\end{Conj}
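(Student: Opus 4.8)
The plan is to prove $R_{k,i}\in\mathds{G}_n$ by induction on $k$, driven by the Picard--Fuchs recursion and using that $\mathds{G}_n$ is a \emph{differential} subring of $\CC(\lambda_0,\dots,\lambda_n)[[q]]$. First I would fix $i$, restrict the Picard--Fuchs operator to $H=\lambda_i$, and conjugate by $e^{\mu_i/z}$. Since $\DD(e^{\mu_i/z})=e^{\mu_i/z}\,(\DD\mu_i)/z$ and $\lambda_i+\DD\mu_i=L_i$, the operator $M=H+z\DD$ is carried to $\widetilde M=L_i+z\DD$, so the Picard--Fuchs equation becomes one relation $\mathcal P\,F=0$ for $F=\sum_k R_{k,i}z^k$, where $\mathcal P=\sum_p z^p\mathcal P_p$ with each $\mathcal P_p$ a differential operator in $\DD$ having coefficients that are polynomials in $L_i$. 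Reading off the coefficient of $z^m$ yields the recursion $\sum_{p\ge 1}\mathcal P_p(R_{m-p,i})=0$; the order-$z^0$ operator $\mathcal P_0$ vanishes identically, since it is exactly the defining degree-$(n+1)$ polynomial of $L_i$ evaluated at $L_i$.

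The second step is to show $\mathds{G}_n$ is closed under $\DD$. The key computation is $\DD L_i\in\mathds{G}_n$: differentiating the defining relation $P(L_i)=0$ and solving gives $\DD L_i\in\CC(\lambda_0,\dots,\lambda_n)[L_i,\,P'(L_i)^{-1}]$, and reducing $L_i\,P'(L_i)$ modulo $P$ identifies it with $f_n(L_i)$, which is a unit in $\mathds{G}_n$ because $f_n(L_i)^{-1}=(f_n(L_i)^{-1/2})^2$. Hence $\DD L_i\in\mathds{G}_n$, and the chain rule then gives $\DD(f_n(L_i)^{-1/2})\in\mathds{G}_n$, so $\mathds{G}_n$ is a differential ring. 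Combined with the base case --- a direct verification that the explicit $R_{0,i}=\big(\lambda_i\prod_{j\ne i}(\lambda_i-\lambda_j)/f_n(L_i)\big)^{1/2}$ solves the order-$z^1$ equation --- this guarantees that every term $\mathcal P_p(R_{m-p,i})$ with $p\ge 2$ already lies in $\mathds{G}_n$ by the inductive hypothesis.

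The inductive step then reduces to the order-$z^m$ relation
\[
\alpha\,\DD R_{m-1,i}+\beta\,R_{m-1,i}=-g_m,\qquad \alpha,\beta,g_m\in\mathds{G}_n,\ \ \alpha=f_n(L_i)/L_i\ \text{a unit},
\]
a first-order linear ODE for $R_{m-1,i}$ whose homogeneous solution is $R_{0,i}$. Substituting $R_{m-1,i}=R_{0,i}\,S_{m-1,i}$ collapses it to $\DD S_{m-1,i}=-g_m/(\alpha R_{0,i})\in\mathds{G}_n$, so the entire question becomes whether this right-hand side is a $\DD$-derivative of an element of $\mathds{G}_n$.

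I expect the hardest part to be exactly this last closure-under-integration step: producing the antiderivative inside $\mathds{G}_n$ and controlling the integration constant so that no transcendental period is introduced. This is where the unspecialized equivariant case genuinely departs from \cite{ZaZi}, and where the hypotheses of the main theorems intervene: the stated relations among the $\lambda_i$ (for instance $(\lambda_0\lambda_1+\lambda_1\lambda_2+\lambda_2\lambda_0)^2-3\lambda_0\lambda_1\lambda_2(\lambda_0+\lambda_1+\lambda_2)=0$ in the $\PP^2$ case) are precisely the vanishing of the discriminant of $f_n$, forcing $f_n$ to be a perfect square and thereby rationalizing each generator $f_n(L_i)^{-1/2}$. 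Under that rationalization $\mathds{G}_n$ collapses to a localization of a polynomial ring in $L_0,\dots,L_n$, a square-root-free algebraic differential ring in which the integration step can be carried out just as in the root-of-unity specialization; in full generality, by contrast, the persistence of genuine square-root extensions under $\DD$ is the remaining obstruction.
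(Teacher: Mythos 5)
Your strategy coincides with the paper's: conjugate the Picard--Fuchs equation by $e^{\mu_i/z}$, observe that the order-$z^0$ part vanishes by the defining polynomial of $L_i$, prove that $\mathds{G}_n$ is a differential ring via $L_iP'(L_i)\equiv f_n(L_i)$ modulo the defining relation, normalize by the homogeneous solution $R_{0,i}$ (the paper writes $R_{k,i}=f_n(L_i)^{-1/2}\Phi_{k,i}$, which is the same substitution up to a constant), and reduce the induction to integrating a first-order equation inside the ring. You also correctly locate where the hypotheses of the main theorems enter: they force $f_n$ to become a power of a linear form, so the square roots rationalize and $\mathds{G}_n$ collapses to a localization of a polynomial ring in the $L_j$; and you correctly note that the fully general conjecture is not reached this way (the paper, too, proves only $n=1$ unconditionally and $n\ge 2$ under these specializations).

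The gap is that the step you defer --- ``the integration step can be carried out just as in the root-of-unity specialization'' --- is precisely the mathematical content of the paper's appendix, and it is not automatic. Even after rationalization, knowing that the inhomogeneous term $-g_m/(\alpha R_{0,i})$ lies in $\mathsf{R}[x]_f$ (with $f$ the relevant linear factor) does not make it a derivative of an element of that ring: since $\frac{d}{dx}f^{j}=jf'f^{j-1}$ with $f'$ constant, the image of the derivation misses exactly the $f^{-1}$-component, so a logarithm would appear unless the coefficient of $f^{-1}$ in the $f$-adic expansion of the right-hand side vanishes at every stage of the induction. The paper handles this through its ``admissibility'' lemmas --- explicit order bounds such as $\text{Ord}(A_{l0})\le -2$, $\text{Ord}(A_{l1})\le 0$, $\text{Ord}(A_{lp})\le p+1$ on the recursion coefficients, chosen so that the absence of a residue propagates through the induction --- and then verifies these bounds against the explicitly computed coefficients $A_{jl,i}$ extracted from the Picard--Fuchs operator. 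Your proposal names this obstruction but supplies neither the order bookkeeping nor the verification, so the induction does not close as written.
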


Conjecture \ref{MC} for the case $n=1$ will be proven in Section \ref{PP1}. Conjecture \ref{MC} for the case $n=2$ will be proven in Section \ref{PP2} under the specialization \eqref{spl2}.
In fact, the argument in Section \ref{PP2} proves Conjecture \ref{MC} for all $n$ under the specialization which makes $f_n(x)$ into power of a linear polynomial.

\subsection{Admissibility of differential equations}
Let $\mathsf{R}$ be a commutative ring. Fix a polynomial $f(x)\in \mathsf{R}[x]$. We consider a differential operator of {\em level} $n$ with following forms.

\begin{align}
    \mathcal{P}(A_{lp},f)[X_0,\dots,X_{n+1}]=\mathsf{D}X_{n+1}-\sum_{n\ge l\ge 0,\,p\ge 0}A_{lp} \mathsf{D}^p X_{n-l}\,,
\end{align}
where $\mathsf{D}:=\frac{d}{dx}$ and $A_{lp}\in\mathsf{R}[x]_f:= \mathsf{R}[x][f^{-1}]$. We assume that only finitely many $A_{lp}$ are not zero.

\begin{Def}\label{admissible}
Let $R_i$ be the solutions of the equations for $k\ge 0$,
\begin{align}\label{DO}
    \mathcal{P}(A_{lp},f)[X_{k+1},\dots,X_{k+n}]=0\,,
\end{align}
with $R_0=1$. We use the conventions $X_{i}=0$ for $i<0$.
We say differential equations \eqref{DO} is {\em admissible} if the solutions $R_k$ satisfies for $k \ge 0$,
$$R_k\in \mathsf{R}[x]_f\,.$$
\end{Def}

\begin{Rmk}
Note that the admissibility of $\mathcal{P}(A_{lp},f)$ in Definition \ref{admissible} do not depend on the choice of the solutions $R_k$.  
\end{Rmk}

\begin{Lemma}
Let $f$ be a degree one polynomial in $x$. Each $A\in\mathsf{R}[x]_f$ can be written uniquely as
$$A=\sum_{i\in\ZZ}a_i f^i\,$$
with finitely many non-zero $a_i\in\mathsf{R}$. We define {\em the order} $\text{Ord}(A)$ of $A$ with respect to $f$ by smallest $i$ such that $a_i$ is not zero. Then

\begin{align*}
    \mathcal{P}(A_{lp},f)[X_0,\dots,X_{n+1}]:=\mathsf{D}X_{n+1}-\sum_{n\ge l\ge 0,\,p\ge 0}A_{lp} \mathsf{D}^p X_{n-l}=0\,,
\end{align*}
is admissible if following condition holds:
\begin{align}\label{D1C}
   \nonumber \text{Ord}(A_{l0})&\le -2\,,\\
    \text{Ord}(A_{l1})&\le 0\,,\\
   \nonumber \text{Ord}(A_{lp})&\le p+1\,\,\,\,\text{for}\,\,p\ge2\,.
\end{align}
\end{Lemma}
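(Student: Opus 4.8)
The plan is to read the recursion \eqref{DO} as a sequence of integration problems inside the ring of Laurent polynomials in $f$, and to show that the hypotheses \eqref{D1C} force the integrability obstruction (a residue) to vanish at every stage.

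First I would normalize the situation. Since $f$ has degree one I may multiply by a unit of $\mathsf{R}$ and assume $f=x-c$ is monic, so that $\mathsf{R}[x]_f=\mathsf{R}[f,f^{-1}]$ and $\mathsf{D}f=1$. Then $\mathsf{D}$ acts on the Laurent ring by $\mathsf{D}(f^{i})=i\,f^{i-1}$, lowering the $f$-degree by exactly one and annihilating $f^{0}$. The elementary fact I would isolate is the residue criterion for antidifferentiation: for $G=\sum_i g_i f^i\in\mathsf{R}[f,f^{-1}]$ there exists $H\in\mathsf{R}[f,f^{-1}]$ with $\mathsf{D}H=G$ if and only if the residue $g_{-1}$ vanishes, in which case $H=\sum_{i\neq -1}\tfrac{g_i}{i+1}f^{i+1}$ is determined up to an additive constant in $\mathsf{R}$ (here one uses $\QQ\subset\mathsf{R}$, so that the integers $i+1$ are invertible). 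This observation also explains the Remark following Definition \ref{admissible}: the only ambiguity in the $R_k$ is the choice of these integration constants, and the solvability condition below is insensitive to them.

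Next I would induct on the level. Writing \eqref{DO} in the form
\[
 \mathsf{D}R_{m}=G_m:=\sum_{l=0}^{n}\sum_{p\ge 0}A_{lp}\,\mathsf{D}^{p}R_{m-1-l},\qquad R_0=1,\ \ R_{j}=0 \ (j<0),
\]
the right-hand side $G_m$ lies in $\mathsf{R}[x]_f$ by the inductive hypothesis, so by the residue criterion it suffices to prove $\mathrm{Res}(G_m)=0$; granting this, one may take $R_m=\int G_m\in\mathsf{R}[x]_f$ and the induction continues. Thus admissibility is equivalent to the vanishing of $\mathrm{Res}(G_m)$ for every $m$.

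The heart of the argument, and the step I expect to be the main obstacle, is to deduce $\mathrm{Res}(G_m)=0$ from \eqref{D1C}. I would strengthen the inductive hypothesis to carry a sharp bound on the $f$-support (equivalently on $\text{Ord}$) of each $R_m$, propagated through the recursion, so that the $f$-degrees occurring in $\mathsf{D}^{p}R_{m-1-l}$ are known. The three inequalities of \eqref{D1C}, indexed by $p=0$, $p=1$, and $p\ge 2$, are then precisely the conditions under which each product $A_{lp}\,\mathsf{D}^{p}R_{m-1-l}$ cannot contribute to the $f^{-1}$-coefficient of $G_m$, once the $p$-fold degree drop of $\mathsf{D}^{p}$ is combined with the support bound on $R_{m-1-l}$; summing over $l$ and $p$ then yields $\mathrm{Res}(G_m)=0$ and permits the antiderivative to be taken within $\mathsf{R}[x]_f$. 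The genuinely delicate part is to identify the sharp auxiliary support bound on the $R_m$ and to verify that it interlocks simultaneously with all three families of inequalities: this exponent bookkeeping, rather than any conceptual difficulty, is where the real work lies, and it is also what makes the particular numerology $(-2,\,0,\,p+1)$ in \eqref{D1C} appear.
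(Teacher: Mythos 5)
Your skeleton is surely the intended ``simple induction argument'' (the paper itself records nothing more than that one sentence): after normalizing $f$ to be monic, $\mathsf{D}$ maps $\mathsf{R}[f,f^{-1}]$ onto the span of the $f^i$ with $i\neq -1$, so each step of the recursion amounts to antidifferentiating $G_m=\sum_{l,p}A_{lp}\mathsf{D}^pR_{m-1-l}$, and the only obstruction is the $f^{-1}$-coefficient of $G_m$. That reduction is correct and is the right first move. But your proposal stops exactly where the content of the lemma lies: you state that you \emph{expect} the inequalities \eqref{D1C}, combined with an unspecified ``sharp support bound'' on the $R_m$, to force each residue to vanish, and you defer the verification as bookkeeping. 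That implication \emph{is} the lemma, and it is not routine under the hypotheses as written.

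Concretely, the invariant that propagates is that each $R_m$ lies in $\mathsf{R}[f^{-1}]$, i.e.\ is supported in $f$-degrees $\le 0$; then $\mathsf{D}^pR_{m-1-l}$ is supported in degrees $\le -1-p$ for $p\ge 1$ (and $\le 0$ for $p=0$), and for every summand $A_{lp}\,\mathsf{D}^pR_{m-1-l}$ to land in degrees $\le -2$ --- hence have zero residue and integrate back into $\mathsf{R}[f^{-1}]$ --- one needs an upper bound on the \emph{largest} $f$-exponent of $A_{lp}$: $\le -2$ for $p=0$ and $\le p-1$ for $p\ge 1$. The conditions \eqref{D1C} instead bound $\mathrm{Ord}$, defined as the \emph{smallest} exponent, and such bounds do not suffice: $A_{00}=f^{-2}+f^{-1}$ satisfies $\mathrm{Ord}(A_{00})\le -2$, yet already $R_1=\int A_{00}$ fails to exist in $\mathsf{R}[x]_f$. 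Even reading $\mathrm{Ord}$ as the top exponent, the bound $\le p+1$ for $p\ge 2$ is too weak, since a term $f^{p}$ in $A_{lp}$ paired with the $f^{-1-p}$ term of $\mathsf{D}^pR$ contributes to the residue; the bound actually needed is $\le p-1$, and one checks that all the explicit $A_{jl,i}$ displayed in the appendix satisfy it. So the ``exponent bookkeeping'' you postpone is not mere bookkeeping: it is the step at which the argument either succeeds (with the corrected top-degree reading of \eqref{D1C}) or fails (with the literal one), and a complete proof must state the inductive support bound on the $R_m$ explicitly and carry out that degree arithmetic term by term.
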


\begin{proof}
The proof follows from simple induction argument.
\end{proof}

\begin{Lemma}
Let $f$ be a degree two polynomial in $x$. Denote by
$$\mathsf{R}_f$$
the subspace of $\mathsf{R}[x]_f$ generated by $f^{i}$ for $i\in \ZZ$. Each $A\in\mathsf{R}_f$ can be written uniquely as
$$A=\sum_{i\in\ZZ}a_i f^i\,$$
with finitely many non-zero $a_i\in\mathsf{R}$. We define {\em the order} $\text{Ord}(A)$ of $A\in\mathsf{R}_f$ with respect to $f$ by smallest $i$ such that $a_i$ is not zero. Then
\begin{align*}
    \mathcal{P}(A_{lp},f)[X_0,\dots,X_{n+1}]:=\mathsf{D}X_{n+1}-\sum_{n\ge l\ge 0,\,p\ge 0}A_{lp} \mathsf{D}^p X_{n-l}=0\,,
\end{align*}
is admissible if following condition holds:

\begin{align*}
    A_{lp}&=B_{lp}\,\,\,\,\,\,\,\,\,\,\,\,\,\,\,\,\,\,\text{if p is odd}\,,\\
    A_{lp}&=\frac{df}{dx}\cdot B_{lp}\,\,\,\,\,\text{if p is even}\,,
\end{align*}
where $B_{lp}$ are elements of $\mathsf{R}_f$ with
\begin{align*}
    \text{Ord}(B_{l0})&\le -2\,,\\
    \text{Ord}(B_{lp})&\le \Big[\frac{p-1}{2}\Big]\,\,\,\,\text{for}\,\,p\ge1\,.
\end{align*}

\end{Lemma}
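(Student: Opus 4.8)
The plan is to run an induction on $k$ in close parallel to the degree-one case, the single new ingredient being the quadratic relation between $f$ and its derivative. Writing $f=ax^2+bx+c$, so that $f'=2ax+b$ has degree one, one has the identity $(f')^2=4af+\Delta$ with $\Delta=b^2-4ac\in\mathsf{R}$ a constant. Using this relation together with division by $f$, every element of $\mathsf{R}[x]_f$ splits uniquely as $u+f'v$ with $u,v\in\mathsf{R}_f$; equivalently, grading $\mathsf{R}[x]_f$ by the parity of the degree in $x$, the even part is exactly $\mathsf{R}_f$ and the odd part is $f'\mathsf{R}_f$. The key structural facts I would record are that $\mathsf{D}=d/dx$ is \emph{odd} for this grading, since $\mathsf{D}(f^i)=if'f^{i-1}$ and $\mathsf{D}(f'f^i)=2a(1+2i)f^i+i\Delta f^{i-1}$ interchange the two parts, and that the only obstruction to integrating back inside $\mathsf{R}[x]_f$ is a single residue: an element admits an antiderivative in $\mathsf{R}[x]_f$ if and only if the coefficient of $f'f^{-1}=f'/f$ in its odd part vanishes (this is the analogue of $1/x$ failing to be a derivative, and here the distinct-root hypothesis $\Delta$ invertible is what guarantees both the splitting and the surjectivity of $\mathsf{D}$ onto all graded pieces but this one).

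With this in place I would reduce admissibility to the vanishing of that residue along the recursion. The hypotheses assign a definite parity to each coefficient: for $p$ odd, $A_{lp}=B_{lp}$ lies in the even part $\mathsf{R}_f$, and for $p$ even, $A_{lp}=f'B_{lp}$ lies in the odd part. Granting inductively that $R_0,\dots,R_k\in\mathsf{R}_f$, each summand $A_{lp}\,\mathsf{D}^pR_{k-l}$ in the defining relation $\mathsf{D}R_{k+1}=\sum_{n\ge l\ge 0,\,p\ge 0}A_{lp}\,\mathsf{D}^pR_{k-l}$ carries parity $(p+1)+p\equiv 1$, so the right-hand side is purely odd; integrating an odd element produces an even one, so the induction on membership in $\mathsf{R}_f$ closes \emph{provided} the $f'/f$-residue of this right-hand side is zero. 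Thus the whole statement is reduced to a residue computation at each step, and the parity conditions are precisely what make the right-hand side homogeneous enough for that reduction to be clean.

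The residue vanishing is the heart of the argument and the step I expect to be the main obstacle. I would attack it by tracking $f$-degrees through the recursion: because $(f')^2=4af+\Delta$ is only filtered rather than graded when $\Delta\neq 0$, each application of $\mathsf{D}$ lowers the top $f$-degree of a homogeneous element by exactly $1/2$ while the $\Delta$-term generates lower tails, so $\mathsf{D}^p$ spreads a monomial $f^i$ across the whole range of $f$-degrees from $i-p$ up to $i-\lceil p/2\rceil$. The bounds $\mathrm{Ord}(B_{l0})\le -2$ and $\mathrm{Ord}(B_{lp})\le\big[\tfrac{p-1}{2}\big]$ are calibrated exactly so that, after the $p$-fold differentiation and multiplication by $B_{lp}$, none of the resulting terms can land on the forbidden $f$-degree carrying the $f'/f$ residue; this is the direct analogue of the degree-one bounds $\mathrm{Ord}(A_{l0})\le -2$, $\mathrm{Ord}(A_{l1})\le 0$, shifted by the half-degree carried by $f'$. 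The delicate point, which I would verify by an auxiliary induction simultaneously tracking the top and bottom $f$-degrees of $R_k$, is that the residue really cancels rather than merely being pushed out by positivity of orders (explicit low-genus checks show a genuine coefficient identity among the terms of $R_k$ rather than a crude degree exclusion); the identity $(f')^2=4af+\Delta$ and integration by parts via $\mathrm{Res}\circ\mathsf{D}=0$ are the tools I expect to make this cancellation transparent.
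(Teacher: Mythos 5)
Your setup is the right one and matches the (very terse) argument the paper intends: the decomposition $\mathsf{R}[x]_f=\mathsf{R}_f\oplus f'\mathsf{R}_f$, the fact that $\mathsf{D}$ is odd for this grading because $f''$ and $(f')^2=4af+\Delta$ lie in $\mathsf{R}_f$, the observation that the parity hypotheses on the $A_{lp}$ force the right-hand side of the recursion to be purely odd, and the identification of the single obstruction to integrating an odd element as the coefficient of $f'f^{-1}$. (One caution: your claim that this coefficient is the obstruction for an arbitrary element of $\mathsf{R}[x]_f$ is false --- $f^{-2}$ has no antiderivative in the ring --- but it is correct for odd elements, which is all you use.)

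The gap is at the decisive step. You leave the vanishing of the $f'/f$-coefficient as an expected ``residue cancellation'' and even assert that low-order checks show a genuine coefficient identity rather than a degree exclusion; neither the identity nor the cancellation is proved, so the argument is not closed. In fact no cancellation is needed: the statement follows by pure degree exclusion once you set up the bookkeeping you yourself sketch. Give $f^i$ degree $i$ and $f'f^i$ degree $i+\tfrac12$; then products add degrees (using $(f')^2=4af+\Delta$ for odd times odd) and $\mathsf{D}$ lowers degree by at least $\tfrac12$. Take as induction hypothesis that $R_j\in\mathsf{R}_f$ with top degree at most $-1$ for $j\ge 1$ (and $R_0=1$ of degree $0$, which only enters through $p=0$ since $\mathsf{D}^pR_0=0$ for $p\ge1$). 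Each term $A_{lp}\mathsf{D}^pR_{j}$ then has top degree at most $\mathrm{Ord}(B_{lp})+\tfrac12\cdot[p\ \mathrm{even}]-1-\tfrac{p}{2}$ for $j\ge1$, and at most $\mathrm{Ord}(B_{l0})+\tfrac12$ for $j=0$; the stated bounds $\mathrm{Ord}(B_{l0})\le-2$ and $\mathrm{Ord}(B_{lp})\le[\tfrac{p-1}{2}]$ make every term have degree at most $-\tfrac32$, so the right-hand side is $f'\sum_{i\le-2}v_if^i$, the $f'f^{-1}$ term is absent for trivial reasons, and integration returns an element of $\mathsf{R}_f$ of top degree at most $-1$, closing the induction. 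Note that for this to work you must read ``$\mathrm{Ord}(B)\le c$'' as bounding the \emph{largest} exponent occurring in $B$ (equivalently $b_i=0$ for $i>c$), not the smallest as literally defined in the statement: with the literal reading the lemma is false (take $B_{00}=f^{-2}+f^{-1}$, which satisfies $\mathrm{Ord}\le-2$ but makes $\mathsf{D}R_1=f'f^{-2}+f'f^{-1}$ non-integrable). The same remark applies to the degree-one lemma, whose condition $\mathrm{Ord}(A_{lp})\le p+1$ for $p\ge2$ should be $p-1$, as the explicit coefficients in the appendix confirm.
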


\begin{proof}
Since $f$ is degree two polynomial in $x$, we have
$$\frac{d^2f}{dx^2}\,,\,(\frac{df}{dx})^2\in \mathsf{R}_f\,.$$
Then the proof of Lemma follows from simple induction argument.
\end{proof}

\subsection{Local $\PP^1$}\label{PP1}
\subsubsection{Overview}In this section, we prove Conjecture \ref{MC} for the case $n=1$.
Recall the $I$-function for $K\PP^1$,
 \begin{align} 
I_1(q) = \sum _{d=0}^{\infty}  \frac{ \prod _{k=0}^{2d-1}  (-2H - kz)}{\prod^1_{i=0}\prod _{k=1}^d (H-\lambda_i+kz)}q^d . \end{align}
The function $I_1$ satisfies following Picard-Fuchs equation
\begin{align}
\label{PFlp1}\Big((M-\lambda_0)(M-\lambda_1)-2qM(2M+z)\Big) 
I_1=0\,.
\end{align}
Recall the notation used in above equation,
\begin{align*}
    \mathsf{D}=q \frac{d}{dq}\,,\,\,\,M=H+z\mathsf{D}\,.
\end{align*}
The restriction $I_1|_{H=\lambda_i}$ admits following asymptotic form
\begin{align}
    I_1|_{H=\lambda_i}=e^{\mu_i/z}\left( R_{0,i}+R_{1,i} z+R_{2,i} z^2+\ldots\right)
\end{align}
with series $\mu_i, R_{k,i}\in \CC(\lambda_0,\lambda_1)[[q]]$. The series $\mu_i$ and $R_{k,i}$ are found by solving differential equations obtained from the coefficient of $z^k$ in \eqref{PFlp1}. For example, we have for $i=0,1$,
\begin{align*}
    &\lambda_i+\mathsf{D}\mu_i=L_i\,,\\
    &R_{0,i}=\Big(\frac{\lambda_i\prod_{j \ne i}(\lambda_i-\lambda_j)}{f_1(L_i)}\Big)^{\frac{1}{2}}\,,\\
\end{align*}
\begin{multline*}
     R_{1,i}=\Big(\frac{\lambda_i\prod_{j \ne i}(\lambda_i-\lambda_j)}{f(L_i)}\Big)^{\frac{1}{2}} \cdot\\
     \Big(\frac{-16 s_1^2 s_2^2 + 
   88 s_2^3 + (27 s_1^3 s_2 - 132 s_1 s_2^2) L_i + (-12 s_1^4 + 
    54 s_1^2 s_2) L_i^2}{24 s_1 (L_i s_1 - 2 s_2)^3}\\+\frac{12 \lambda_i^2 - 9 \lambda_i \lambda_{i+1} + \lambda_{i+1}^2}{24 (\lambda_i^3 - \lambda_i \lambda_{i+1}^2)}\Big)\,.
\end{multline*}
Here $s_1=\lambda_0+\lambda_1$ and $s_2=\lambda_0 \lambda_1$. In the above expression of $R_{1,i}$, we used the convention $\lambda_2=\lambda_0$.

\subsubsection{Proof of Conjecture \ref{MC}.}
We introduce new differential operator $\mathsf{D}_i$ defined by for $i=0,1$,
$$\mathsf{D}_i=(\mathsf{D}L_i)^{-1}\mathsf{D}\,. $$
By definition, $\mathsf{D}_i$ acts on rational functions in $L_i$ as the ordinary derivation with respect to $L_i$.
If we use following normalizations,
\begin{align*}
    R_{k,i}=f_1(L_i)^{-\frac{1}{2}} \Phi_{k,i}
\end{align*}
the Picard-Fuchs equation \eqref{PFlp2} yields the following differential equations,
\begin{align}\label{DFlp1}
    \mathsf{D}_i\Phi_{p,i}-A_{00,i}\Phi_{p-1,i}-A_{01,i}\mathsf{D}_i\Phi_{p-1,i}-A_{02,i}\mathsf{D}_i^2\Phi_{p-1,i}=0\,,
\end{align}
with 
\begin{align*}
    A_{00,i}&=\frac{-s_1^2 s_2^2 + (-s_1^3 s_2 + 8 s_1 s_2^2) L_i + (2 s_1^4 - 9 s_1^2 s_2) L_i^2}{4 (L_i s_1 - 2 s_2)^4}\,,\\
    A_{01,i}&=\frac{2 s_1 s_2^2 + (-s_1^2 s_2 - 8 s_2^2) L_i + (-s_1^3 + 10 s_1 s_2) L_i^2 - s_1^2 L_i^3}{2 (L_i s_1 - 2 s_2)^3}\,,\\
    A_{02,i}&=\frac{s_2^2 - 2 (s_1 s_2) L_i + (s_1^2 + s_2) L_i^2 - s_1 L_i^3}{(L_i s_1 - 2 s_2)^2}\,.
\end{align*}
Here $s_k$ is the $k$-th elementary symmetric functions in $\lambda_0,\lambda_1$. 
Since the differential equations \eqref{DFlp1} satisfy the condition \eqref{D1C}, we conclude the differential equations \eqref{DFlp1} is admissible.

\subsubsection{Gomov-Witten series.}
By the result of previous subsection, we obtain the following result which verifies the prediction (A) in Section \ref{LPPN}.
\begin{Thm}\label{GWPP1}For the Gromov-Witten series of $K\PP^1$, we have 
 $$\mathcal{F}^{\mathsf{GW},1}_g(Q(q)) \in \mathds{G}_1\,,$$
where $Q(q)$ is the mirror map of $K\PP^1$ defined by
$$Q(q):=q\cdot\text{exp}\Big(2\sum_{d=1}^\infty\frac{(2d-1)!}{(d!)^2}q^d\Big)\,.$$
\end{Thm}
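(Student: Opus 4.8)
The plan is to transport the statement to the stable quotient side and then run the vertex--edge localization of Sections~\ref{hgs}--\ref{svel} in the rank-one geometry $K\PP^1$, where $H^*_\T(\PP^1)$ is spanned only by $1$ and $H$. By the $n=1$ wall-crossing of \cite{CKg} (the analogue of Proposition~\ref{WC}, which is clean because $I^{K\PP^1}_0=1$), one has $\mathcal{F}^{\mathsf{GW},1}_g(Q(q))=\mathcal{F}^{\mathsf{SQ},1}_g(q)$ for $g\ge 2$, so it suffices to prove $\mathcal{F}^{\mathsf{SQ},1}_g\in\mathds{G}_1$. Applying the $\PP^1$-analogue of Proposition~\ref{VE}, the series $\mathcal{F}^{\mathsf{SQ},1}_g$ is a finite sum over decorated graphs of products of vertex and edge contributions, and the task reduces to placing each such factor in $\mathds{G}_1$. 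The key input is Conjecture~\ref{MC} for $n=1$, already established in Section~\ref{PP1}: $R_{k,i}\in\mathds{G}_1$ for all $k$.

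The crux is that $\mathds{G}_1$ is closed under $\DD=q\tfrac{d}{dq}$. Differentiating the defining relation $(\mathcal{L}-\lambda_0)(\mathcal{L}-\lambda_1)-q\mathcal{L}^2=0$ at $\mathcal{L}=L_i$ yields
\begin{equation*}
\DD L_i=\frac{q\,L_i^{3}}{f_1(L_i)},
\end{equation*}
while the same relation gives $q=(L_i^2-s_1L_i+s_2)/L_i^{2}\in\mathds{G}_1$ and $f_1(L_i)^{-1}=(f_1(L_i)^{-1/2})^2\in\mathds{G}_1$; hence $\DD L_i\in\mathds{G}_1$, and then $\DD(f_1(L_i)^{-1/2})=-\tfrac12\,f_1(L_i)^{-1/2}f_1(L_i)^{-1}\,\DD f_1(L_i)\in\mathds{G}_1$ as well, since $\DD f_1(L_i)=s_1\DD L_i$. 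Feeding this into the $\PP^1$-analogue of Lemma~\ref{RRR}, namely $R_{1\,p+1,i}=R_{0\,p+1,i}+\DD R_{0\,p,i}/L_i$, and using $R_{0\,k,i}=R_{k,i}\in\mathds{G}_1$, we conclude $R_{1\,k,i}\in\mathds{G}_1$ for all $k$. Together with Corollary~\ref{Poly} and the evaluation \eqref{fxxf}, the vertex contributions then lie in $\mathds{G}_1$ (the $\PP^1$-analogue of Lemma~\ref{L1}).

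For the edges I would use the $\PP^1$-analogue of Lemma~\ref{L2}. Because $H^*_\T(\PP^1)$ has rank two, the pairing $\sum_{r}\overline{\mathds{S}}_i(\phi_r)\overline{\mathds{S}}_j(\phi^r)$ entering $\overline{\mathds{V}}_{ij}$ involves only $\overline{\mathds{S}}_i(1)$ and $\overline{\mathds{S}}_i(H)$, whose asymptotic coefficients are precisely the $R_{0\,k,i},R_{1\,k,i}\in\mathds{G}_1$ controlled above; in particular no $\overline{\mathds{S}}_i(H^2)$ occurs, so there is no $R_{2\,k,i}$ and hence no auxiliary variable $X$. The only remaining factor is the normalization $C_1$ of $\overline{\mathds{S}}_i(H)$, which enters the edge through $C_1^{-2}$. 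The $\PP^1$-analogue of \eqref{c1c2l} gives $C_1^{2}$ equal to the reciprocal of the leading coefficient of the $L_i$-polynomial, which by Vieta is $L_0L_1/s_2$ and therefore a unit of $\mathds{G}_1$. Thus every edge contribution lies in $\mathds{G}_1$, each graph contributes an element of $\mathds{G}_1$, and summing over the finitely many graphs gives $\mathcal{F}^{\mathsf{SQ},1}_g\in\mathds{G}_1$.

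The main obstacle is exactly this ring-theoretic closure that forces the whole computation to remain inside $\mathds{G}_1$ with no new generator: the $\DD$-closure of $\mathds{G}_1$ (so the $R_1$-recursion does not leave the ring) and the reduction $C_1^2\in\mathds{G}_1$. The latter fails for $K\PP^2$, where the surviving piece $C_1^2C_2$ forces the propagator variable $A_2$ into the answer; here it holds because $\PP^1$ contributes no $C_2$, which is the structural reason the surface geometry $K\PP^1$ satisfies the plain finite-generation prediction (A) without any holomorphic anomaly generator. Finally, the stable range $2g-2>0$ confines the graph argument to $g\ge 2$; the degenerate cases $g=0,1$ can be checked directly from the genus-zero $I$-function data and Proposition~\ref{VEL}.
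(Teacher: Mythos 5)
Your proposal is correct and follows essentially the route the paper intends: the paper's own proof of Theorem~\ref{GWPP1} consists of the admissibility argument of Section~\ref{PP1} (giving $R_{k,i}\in\mathds{G}_1$) followed by the single sentence ``follows from the argument in \cite{LP}'', and what you have written is precisely that argument of \cite{LP} (the graph-sum of Propositions~\ref{VE}--\ref{VEL} and Lemmas~\ref{L1}--\ref{L2}) specialized to $K\PP^1$, including the two ring-theoretic facts that make it close up: $\DD$-closure of $\mathds{G}_1$ via $\DD L_i=qL_i^3/f_1(L_i)$, and $C_1^{\pm2}\in\mathds{G}_1$.

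One load-bearing assertion deserves justification: that $C_1$ enters each edge only through $C_1^{-2}$. In the expansion of $\sum_{r}\overline{\mathds{S}}_i(\phi_r)|_{z=x}\,\overline{\mathds{S}}_j(\phi^r)|_{z=y}$ from \eqref{wdvv} one a priori also gets cross terms $\overline{\mathds{S}}_i(1)\overline{\mathds{S}}_j(H)$ carrying a single $C_1^{-1}=(1-4q)^{1/2}$, and that square root is \emph{not} visibly an element of $\mathds{G}_1$, so if such terms survived the argument would break. They do not survive: for $K\PP^1$ the localized equivariant pairing is diagonal in the basis $\{1,H\}$ (one checks $\int_{K\PP^1}1\cdot H=-\tfrac12\sum_i(\lambda_i-\lambda_{1-i})^{-1}=0$), so $\sum_r\phi_r\otimes\phi^r=2s_2\,(1\otimes1)-2\,(H\otimes H)$ and only $C_1^0$ and $C_1^{-2}$ occur. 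With that one sentence added, your edge analysis is complete and the proof matches the paper's.
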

\noindent Theorem \ref{GWPP1} follows from the argument in \cite{LP}. The prediction (B) in Section \ref{LPPN} is trivial statement for $K\PP^1$.

\subsection{Local $\PP^2$}\label{PP2}
\subsubsection{Overview}In this section, we prove Conjecture \ref{MC} for the case $n=2$ with following specializations,
$$\lambda_i \ne \lambda_j \,\, \text{for}\,\,i \ne j\,,\\$$
\begin{align}\label{spl2}
    (\lambda_0 \lambda_1+\lambda_1 \lambda_2+\lambda_2\lambda_0)^2-3\lambda_0\lambda_1\lambda_2(\lambda_0+\lambda_1+\lambda_2)=0\,.
\end{align}
For the rest of the section, the specialization \eqref{spl2} will be imposed. Recall the $I$-function for $K\PP^2$.
 \begin{align*}
I_2(q) = \sum _{d=0}^{\infty}  \frac{ \prod _{k=0}^{3d-1}  (-3H - kz)}{\prod^2_{i=0}\prod _{k=1}^d (H-\lambda_i+kz)}q^d . \end{align*}

The function $I_2$ satisfies following Picard-Fuchs equation
\begin{align}
\label{PFlp2}\Big((M-\lambda_0)(M-\lambda_1)(M-\lambda_2)+3qM(3M+z)(3M+2z)\Big) 
I_2=0
\end{align}
Recall the notation used in above equation,
\begin{align*}
    \mathsf{D}=q \frac{d}{dq}\,,\,\,\,M=H+z\mathsf{D}\,.
\end{align*}

The restriction $I_2|_{H=\lambda_i}$ admits following asymptotic form
\begin{align*}
    I_2|_{H=\lambda_i}=e^{\mu_i/z}\left( R_{0,i}+R_{1,i} z+R_{2,i} z^2+\ldots\right)
\end{align*}
with series $\mu_i, R_{k,i}\in \CC(\lambda_0,\lambda_1,\lambda_2)[[q]]$. The series $\mu_i$ and $R_{k,i}$ are found by solving differential equations obtained from the coefficient of $z^k$ in \eqref{PFlp2}. For example,
\begin{align*}
    &\lambda_i+\mathsf{D}\mu_i=L_i\,,\\
     &R_{0,i}=\Big(\frac{\lambda_i\prod_{j \ne i}(\lambda_i-\lambda_j)}{f_2(L_i)}\Big)^{\frac{1}{2}}\,.\\
\end{align*}

\subsubsection{Proof of Conjecture \ref{MC}.}
We introduce new differential operator $\mathsf{D}_i$ defined by
$$\mathsf{D}_i=(\mathsf{D}L_i)^{-1}\mathsf{D}\,. $$

If we use following normalizations,
\begin{align*}
    R_{k,i}=f_2(L_i)^{-\frac{1}{2}} \Phi_{k,i}
\end{align*}
the Picard-Fuchs equation \eqref{PFlp2} yields the following differential equations,
\begin{multline}\label{DFlp2}
    \mathsf{D}_i\Phi_{p,i}-A_{00,i}\Phi_{p-1,i}-A_{01,i}\mathsf{D}_i\Phi_{p-1,i}-A_{02,i}\mathsf{D}_i^2\Phi_{p-1,i}\\-A_{10,i}\Phi_{p-2,i}-A_{11,i}\mathsf{D}_i\Phi_{p-2,i}-A_{12,i}\mathsf{D}_i^2\Phi_{p-2,i}-A_{13,i}\mathsf{D}_i^3\Phi_{p-2,i}=0\,,
\end{multline}
with $A_{jl,i}\in\CC(\lambda_0,\lambda_1,\lambda_2)[L_i,f_2(L_i)^{-1}]$.
We give the exact values of $A_{jl,i}$ for reader's convinience.
\begin{multline*}
    A_{00,i}=\frac{s_1}{9(s_1 L_i-s_2)^5}\Big(s_1 s_2^3+(-4s_1^2s_2^2+3s_2^3)L_i\\+(-s_1^3 s_2+12s1 s_2^2)L_i^2+(11s_1^4-36s_1^2 s_2)L_i^3\Big)\,,
\end{multline*}

\begin{multline*}
    A_{01,i}=\frac{-s_1}{3(s_1 L_i-s_2)^4}\Big(s_2^3-4(s_1 s_2^2)L_i+(3s_1^2s_2+9s_2^2)L_i^2\\+(3s_1^3-21s_1 s_2)L_i^3+3s_1^2 L_i^4   \Big)\,,
\end{multline*}

\begin{multline*}
    A_{02,i}=\frac{-1}{3(s_1 L_i-s_2)^3}\Big( s_2^3-5(s_1 s_2^2)L_i+9s_1^2 s_2 L_i^2+(-6s_1^3-3s_1 s_2)L_i^3\\+6s_1^2 L_i^4 \Big)\,,
\end{multline*}

\begin{multline*}
    A_{10,i}=\frac{s_1^2 L_i}{27(s_1 L_i-s_2)^9}\Big( (8 s_1^2 s_2^5 - 
   21 s_2^6) + (-48 s_1^3 s_2^4 + 126 s_1 s_2^5) L_i + (120 s_1^4 s_2^3 \\- 
    315 s_1^2 s_2^4) L_i^2 + (-124 s_1^5 s_2^2 + 264 s_1^3 s_2^3 + 
    144 s_1 s_2^4) L_i^3 \\+ (12 s_1^6 s_2 + 153 s_1^4 s_2^2 - 
    432 s_1^2 s_2^3) L_i^4 + (60 s_1^7 - 342 s_1^5 s_2 \\+ 
    432 s_1^3 s_2^2) L_i^5 + (-33 s_1^6 + 108 s_1^4 s_2) L_i^6    \Big)\,,
\end{multline*}

\begin{multline*}
    A_{11,i}=\frac{-s_1 L_i}{27(s_1 L_i-s_2)^8}\Big( (8 s_1^2 s_2^5 - 
   21 s_2^6) + (-48 s_1^3 s_2^4 + 126 s_1 s_2^5) L_i\\ + (120 s_1^4 s_2^3 - 
    315 s_1^2 s_2^4) L_i^2 + (-124 s_1^5 s_2^2 + 264 s_1^3 s_2^3 + 
    144 s_1 s_2^4) L_i^3 \\+ (12 s_1^6 s_2 + 153 s_1^4 s_2^2 - 
    432 s_1^2 s_2^3) L_i^4 + (60 s_1^7 - 342 s_1^5 s_2\\ + 
    432 s_1^3 s_2^2) L_i^5 + (-33 s_1^6 + 108 s_1^4 s_2) L_i^6 \Big)\,
\end{multline*}

\begin{multline*}
    A_{12,i}=\frac{s_1}{9(s_1 L_i-s_2)^7}\Big(-s_2^6 + 9 s_1 s_2^5 L_i + (-32 s_1^2 s_2^4 - 9 s_2^5) L_i^2 \\+ (57 s_1^3 s_2^3 + 
    60 s_1 s_2^4) L_i^3 + (-48 s_1^4 s_2^2 - 
    171 s_1^2 s_2^3) L_i^4\\ + (9 s_1^5 s_2 + 237 s_1^3 s_2^2 + 
    27 s_1 s_2^3) L_i^5 + (9 s_1^6 - 144 s_1^4 s_2 - 
    90 s_1^2 s_2^2) L_i^6 \\+ (9 s_1^5 + 108 s_1^3 s_2) L_i^7 - 18 s_1^4 L_i^8\Big)\,,
\end{multline*}

\begin{multline*}
    A_{13,i}=-\frac{(3 L_i^2 s_1^2 - 3 L_i s_1 s_2 + s_2^2) (-3 L_i^3 s_1 + 3 L_i^2 s_1^2 - 3 L_i s_1 s_2 + 
   s_2^2)^2}{27 (s_1 L_i - s_2)^6}\,.
\end{multline*}
Here $s_k$ is the $k$-th elementary symmetric functions in $\lambda_0,\lambda_1,\lambda_2$. 
Since the differential equations \eqref{DFlp2} satisfy the condition \eqref{D1C}, we conclude that the differential equations \eqref{DFlp2} is admissible.

\end{document}